\newtheorem{theorem}{Theorem}[section]
\newtheorem{corollary}[theorem]{Corollary}
\newtheorem{lemma}[theorem]{Lemma}
\newtheorem{proposition}[theorem]{Proposition}
\newtheorem{definition}[theorem]{Definition}
\theoremstyle{definition}
\newtheorem*{remark}{Remark}
\theoremstyle{definition}
\newtheorem{thmy}{Theorem}
\newtheorem{thmc}{Corollary}
\def\Hom{\mathrm{Hom}}
\def\cW{\mathcal{W}}
\def\g{\mathfrak{g}}
\def\h{\mathfrak{h}}
\def\ZZ{\mathbb{Z}}
\def\cO{\mathcal{O}}
\def\cS{\mathcal{S}}
\def\ch{\mathrm{ch}}
\def\ow{\overline{w}}
\def\p{\mathfrak{p}}
\begin{document}

\title{Geometric and algebraic parameterizations for Dirac cohomology of simple modules in $\cO^\p$ and their applications}
\author{
Ho-Man Cheung \\
Department of Mathematics \\  
Hong Kong University of Science and Technology \\ 
{\tt hmcheungae@connect.ust.hk}
}

\date{}

\maketitle

\begin{abstract}
In this paper, we show that the Dirac cohomology $H_{D}(L(\lambda))$ of a simple highest weight module $L(\lambda)$ in $\cO^\p$ can be parameterized by a specific set of weights: a subset $\cW_I(\lambda)$ of the orbit of the Weyl group $W$ acting on $\lambda+\rho$. 
As an application, we show that any simple module in $\cO^\p$ is determined up to isomorphism by its Dirac cohomology.
We describe four parameterizations of $H_D(L(\lambda))$ when $\lambda$ is regular. Two of these parameterizations are geometric in terms of a partial ordering on the dual of the Cartan subalgebra and a generalization of strong linkage, respectively. Using these geometric parameterizations, we derive two algebraic parameterizations in terms of the multiplicities of the composition factors of a Verma module and the embeddings between Verma modules, respectively.
As an application, for Verma modules with regular infinitesimal character, we obtain an extended version of the Verma-BGG Theorem.
We also investigate Dirac cohomology of Kostant modules.
Using Dirac cohomology, we give a new proof of the simplicity criterion for Verma modules and describe a new simplicity criterion for parabolic Verma modules with regular infinitesimal character.
\end{abstract}

\setcounter{tocdepth}{2}
\tableofcontents

\section{Introduction}
\label{S:1}

In this paper, we concentrate on algebraic methods
for studying the representations of a finite dimensional semisimple Lie algebra $\g$ over the complex numbers $\mathbb{C}$ with universal enveloping algebra $U(\g)$. 
The category $U(\g)$-Mod of
all $U(\g)$-modules is too large to be understood algebraically. However, many interesting and important representations of Lie groups can be investigated within the framework of the BGG category $\cO$ introduced in the early 1970s by Bernstein, Gelfand, and Gelfand \cite{BGG2}.

The category $\cO$ is the category of all finitely generated, locally $\mathfrak{b}$-finite and $\h$-semisimple
$\g$-modules, where $\g$ is a finite dimensional complex semisimple Lie algebra with Cartan subalgebra $\h$ and Borel subalgebra $\mathfrak{b}$ containing $\h$. The Verma module corresponding to $\lambda\in\h^*$ is 
\[
M(\lambda) := U(\g)\otimes_{U(\mathfrak{b})} \mathbb{C}_\lambda,
\]
where $\mathbb{C}_\lambda$ is a simple $\mathfrak{b}$-module with weight $\lambda$. Denote by $L(\lambda)$ the unique
simple quotient of $M(\lambda)$. Kazhdan-Lusztig theory guarantees that the formal character of $L(\lambda)$ can be expressed in terms of formal characters of Verma modules and Kazhdan-Lusztig polynomials.

A relative version $\cO^\p$ of category $\cO$ is often needed in the study of representations of Lie groups. This is a subcategory of category $\cO$ defined by replacing $\h$
with a Levi subalgebra $\mathfrak{l}$ and $\mathfrak{b}$ with a parabolic subalgebra $\p$ containing $\mathfrak{b}$.

The parabolic Verma module corresponding to a subset of simple roots $I$ and an element $\lambda\in \Lambda_I^+$ (cf. Section \ref{S2.2}) is defined to be
\[
M_I(\lambda) := U(\g)\otimes_{U(\p)} F(\lambda),
\]
where $F(\lambda)$ is the finite dimensional simple $\mathfrak{l}$-module with highest weight $\lambda$. Deodhar \cite{VD} and Casian and Collingwood \cite{CLCD} developed a relative version of Kazhdan-Lusztig theory for the category $\cO^\p$.

Dirac cohomology is a new tool in representation theory that turns out to be an intrinsic invariant of irreducible unitary representations and more general admissible representations. 
Here is some relevant history of this construction.
In \cite{HP1}, Huang and Pandžić proved Vogan's conjecture which reveals a deep relationship between the infinitesimal character of a Harish-Chandra module $V$ and the infinitesimal characters appearing in its Dirac cohomology $H_D(V)$. 
In \cite{BK1}, Kostant proved an analogous result in
$\cO^\p$, introduced the cubic Dirac operator and calculated the Dirac cohomology of finite dimensional modules in the equal rank case. 
In \cite{HX}, Huang and Xiao determined the Dirac cohomology of simple highest weight modules in terms of the sums of coefficients of the relative Kazhdan-Lusztig-Vogan polynomials.

Let $G$ be a connected real reductive group with maximal compact subgroup $K$ of the same rank as $G$. Let $\g$ and $\mathfrak{k}$ be the complexifications of the corresponding Lie algebras. 
In \cite{HPV}, Huang, Pandžić and Vogan identified the Dirac cohomology of certain unitary $(\g,K)$-modules $A_{\mathfrak{q}}(\lambda)$ with a geometric object, namely, the $\mathfrak{k}$-dominant part of
a face of the convex hull of the Weyl group orbit of the parameter $\lambda+\rho$.
In this paper, we give two similar geometric parameterizations of the Dirac cohomology of simple highest weight modules $L(\lambda)$ with regular infinitesimal character. We will also discuss two algebraic parameterizations of the Dirac cohomology of $L(\lambda)$ that are related to the Verma-BGG Theorem (cf. Theorem \ref{BGGVerma} and see \cite{BGG,BGG1}).

We use the rest of this introduction to sketch some of our main results.
Continue to let $\g$ be a finite dimensional complex semisimple Lie algebra with Cartan subalgebra $\h$.
Following \cite{HJ}, let $W=W_{\g}$ be the associated Weyl group and let $\Phi$ be its root system.
We write $\Phi^+$ for the set of positive roots in $\Phi$ and let $\Lambda_r = \ZZ \Phi$.
For $\eta \in \h^*$, 
define
\[
\Phi_{[\eta]}:=\{\alpha\in \Phi: \langle\eta,\alpha^\lor \rangle\in\mathbb{Z}\}
\quad\text{and}\quad
W_{[\eta]}:=\{w\in W: w\cdot\eta-\eta\in \Lambda_r\}.
\]
Fix a subset of simple roots $I$ and let $W_I$ be the corresponding standard parabolic subgroup of $W$, with longest element $w_I$ and root system $\Phi_I\subseteq \Phi$. Let $\Phi_I^+:=\Phi_I\cap\Phi^+$.
Define
\[
\Lambda^+_I := \{\nu \in \h^*  :  \langle\nu,\alpha^\lor\rangle \in \mathbb{Z}^{\ge 0} \ \text{for all }\alpha \in \Phi^+_I\}.
\]
Any simple module $V\in\cO^\p$ is isomorphic to $L(\lambda)$ for some $\lambda\in\Lambda_I^+$ and this implies $H_D(V)\cong H_D(L(\lambda))$ as an $\mathfrak{l}$-module (cf. Theorem \ref{determine}). We can therefore concentrate on $H_D(L(\lambda))$.
Since $\lambda \in \Lambda^+_I$, the subgroup $W_I$ is then contained in $W_{[\lambda]}$ (cf. Remark following Corollary \ref{category}), and we define 
\[
{}^IW_{[\lambda]} := \{w\in W_{[\lambda]}: w<s_\alpha w \ \text{for all }\alpha\in I\},
\]
where $<$ is the Bruhat ordering on $W$. 

Denote by $\Delta_{[\lambda]}$ the simple system corresponding to the positive system $\Phi_{[\lambda]}\cap\Phi^+$ in $\Phi_{[\lambda]}$. The orbit $W_{[\lambda]}\cdot\lambda$
contains a unique $\mu\in\mathfrak{h}^*$ that is \emph{antidominant} in the sense that 
$\langle \mu+\rho,\alpha^{\lor}\rangle\not\in\mathbb{Z}^{>0}$ for all $\alpha\in \Phi^+$, where $\rho = \frac{1}{2} \sum_{\alpha \in \Phi^+} \alpha$.
The set of \emph{singular simple roots associated to $\mu$} in $\Delta_{[\lambda]}$ is defined by 
\[
{\Sigma_\mu} : = \{\alpha\in \Delta_{[\lambda]}: \langle\mu+\rho,\alpha^\lor\rangle=0\}.
\]
The subgroup $W_{\Sigma_\mu} := \{w\in W: w(\mu+\rho)=\mu+\rho\}\subseteq W_{[\lambda]}$ is then the isotropy group of $\mu$. Let
\[
{}^IW_{[\lambda]}^{\Sigma_\mu} : = \{w\in {}^IW_{[\lambda]}: w<ws_\alpha\in {}^IW_{[\lambda]}\ \text{for all }\alpha\in {\Sigma_\mu}\},
\]
where $<$ is the Bruhat ordering on $W$. 

Following \cite{BBMH,HX}, we define
the \emph{relative Kazhdan-Lusztig-Vogan polynomial} associated to $\lambda$ of $x,w\in {}^IW_{[\lambda]}^{\Sigma_\mu}$
to be
\[
{}^IP_{x,w}^{\Sigma_\mu}(q) : = \sum_{i\ge 0}q^{\frac{\ell_{[\lambda]}(x,w)-i}{2}}\dim \mathrm{Ext}^i_{\cO^\p}\left(M_I(w_Ix\cdot\mu),L(w_Iw\cdot\mu)\right),\]
where $\ell_{[\lambda]}$ is the length function on $W_{[\lambda]}$ and $\ell_{[\lambda]}(x,w):=\ell_{[\lambda]}(w)-\ell_{[\lambda]}(x)$.
Note that this is always a polynomial in $q$ (cf. Theorem \ref{KLVparaKL}).

Definitions of $\mathfrak{l}$, $\mathfrak{u}$, $\rho(\mathfrak{u})$ and $\rho_\mathfrak{l}$ can be found in Section~\ref{2.1-sect}, and the definition of $\cO^\p_\mu$ can be found in the remark following Proposition \ref{antidominant}.
Our main results rely on the following theorem:


\begin{theorem}[{See \cite[Theorem 6.16]{HX}}]
\label{maindecom}
Let $L(\lambda)$ be a simple highest weight module in $\cO^\p_\mu$ of weight $\lambda$.
Let $\ow \in  {}^IW_{[\lambda]}^{\Sigma_\mu}$ be the unique element such that 
$\lambda=w_I\ow\cdot \mu$ (cf. Remark following Lemma \ref{simplemod}). Then, one has an $\mathfrak{l}$-module decomposition
\[
H_D(L(\lambda))\cong\bigoplus_{x\in {}^I W_{[\lambda]}^{\Sigma_\mu}} {}^IP_{x,\ow}^{\Sigma_\mu}(1)F(w_Ix\cdot \mu+\rho(\mathfrak{u})).
\]
\end{theorem}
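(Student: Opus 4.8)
The plan is to reduce the computation of $H_D(L(\lambda))$ to a computation of Lie algebra cohomology $H^{\bullet}(\mathfrak{u},L(\lambda))$ and then to identify the latter, as an $\mathfrak{l}$-module, with the right-hand side.

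\textbf{Step 1: From Dirac cohomology to $\mathfrak{u}$-cohomology.} Kostant's cubic Dirac operator $D$ for the pair $(\g,\mathfrak{l})$ acts on $L(\lambda)\otimes S$, where $S$ is the spin module for the orthogonal complement of $\mathfrak{l}$; since $\g=\bar{\mathfrak{u}}\oplus\mathfrak{l}\oplus\mathfrak{u}$ one has $S\cong\wedge^{\bullet}\mathfrak{u}^{*}\otimes\mathbb{C}_{\rho(\mathfrak{u})}$ as an $\mathfrak{l}$-module. Writing $D=d+d^{*}$ with $d$ (essentially) the Chevalley--Eilenberg differential computing $H^{\bullet}(\mathfrak{u},L(\lambda))$, Kostant's identity $D^{2}=-\,\mathrm{Cas}_{\g}\otimes1+\mathrm{Cas}_{\mathfrak{l}_{\Delta}}+c$ (for a scalar $c$) together with Vogan's conjecture (proved by Huang--Pand\v{z}i\'c, and in the present generality by Kostant and by Huang--Xiao) yields an $\mathfrak{l}$-module isomorphism
\[
H_D(L(\lambda))\;\cong\;\bigoplus_{i\ge0} H^{i}(\mathfrak{u},L(\lambda))\otimes\mathbb{C}_{\rho(\mathfrak{u})}.
\]
The delicate point is that $L(\lambda)$ carries no invariant Hermitian form, so $H_D$ is not literally $\ker D$; one passes through the Dirac index $H_D^{+}(L(\lambda))-H_D^{-}(L(\lambda))$, computes it by an Euler--Poincar\'e argument, and uses Step 3 (disjointness of $\mathfrak{l}$-types) to conclude that the even and odd parts of $H_D$ do not interact and that $H_D$ recovers the full (ungraded) cohomology.

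\textbf{Step 2: Identifying $H^{i}(\mathfrak{u},L(\lambda))$.} Since $L(\lambda)\in\cO^{\p}_\mu$ has infinitesimal character $\chi_{\mu+\rho}$, Kostant's theorem on $\mathfrak{u}$-cohomology forces every $\mathfrak{l}$-highest weight occurring in $H^{\bullet}(\mathfrak{u},L(\lambda))$ to be of the form $w(\mu+\rho)-\rho_{\mathfrak{l}}$; combined with $\Phi_I^{+}$-dominance this pins the candidates down to the weights $w_Ix\cdot\mu+\rho(\mathfrak{u})$ with $x\in{}^IW_{[\lambda]}^{\Sigma_\mu}$. For the multiplicities I would use the standard identification of $\mathfrak{u}$-cohomology with $\mathrm{Ext}$-groups in $\cO^{\p}$: an isomorphism
\[
\Hom_{\mathfrak{l}}\!\big(F(w_Ix\cdot\mu+\rho(\mathfrak{u})),\,H^{i}(\mathfrak{u},L(\lambda))\big)\;\cong\;\mathrm{Ext}^{i}_{\cO^{\p}}\!\big(M_I(w_Ix\cdot\mu),\,L(\lambda)\big),
\]
coming from Frobenius reciprocity between $\cO^{\p}$ and finite-dimensional $\mathfrak{l}$-modules (a Casselman--Osborne / Delorme-type spectral sequence, degenerate here). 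By the relative Kazhdan--Lusztig theory of Deodhar and of Casian--Collingwood, the dimension of this $\mathrm{Ext}$-group is exactly the coefficient of $q^{(\ell_{[\lambda]}(x,\ow)-i)/2}$ in ${}^IP_{x,\ow}^{\Sigma_\mu}(q)$, where $\ow$ is the element with $\lambda=w_I\ow\cdot\mu$.

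\textbf{Step 3: Assembling and ruling out cancellation.} Summing over $i$ gives $\sum_{i}\dim\mathrm{Ext}^{i}_{\cO^{\p}}(M_I(w_Ix\cdot\mu),L(\lambda))={}^IP_{x,\ow}^{\Sigma_\mu}(1)$, the stated multiplicity of $F(w_Ix\cdot\mu+\rho(\mathfrak{u}))$. It remains to see that the sum in Step~1 has no cancellation when collapsed to Dirac cohomology: the weights $x\cdot\mu$ are pairwise distinct for distinct $x\in{}^IW_{[\lambda]}^{\Sigma_\mu}$ (regularity of $\mu+\rho$ modulo the isotropy group $W_{\Sigma_\mu}$, which has been quotiented out), so the $\mathfrak{l}$-modules $F(w_Ix\cdot\mu+\rho(\mathfrak{u}))$ are pairwise non-isomorphic and each occurs in at most one cohomological degree, legitimizing the termwise sum and completing the identification. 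I expect the main obstacle to be Step~2: making precise the bridge from $\mathfrak{u}$-cohomology to $\mathrm{Ext}$-groups in the \emph{parabolic} category with a possibly \emph{singular} infinitesimal character, and tracking carefully the three $\rho$-shifts ($\rho$, $\rho_{\mathfrak{l}}$, $\rho(\mathfrak{u})$), the passage between $\lambda$ and $\mu$, and the longest-element twist $w_I$ that makes the weights $\Phi_I^{+}$-dominant and index parabolic Verma modules correctly; Step~1's analytic input and Step~3's combinatorics are by comparison routine.
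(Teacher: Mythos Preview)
This theorem is not proved in the paper at all; it is quoted from Huang--Xiao \cite[Theorem~6.16]{HX} and used as a black box on which the paper's own results rest. So there is no proof in the paper to compare your proposal against.

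That said, your sketch follows the broad strategy of \cite{HX} (reduce $H_D$ to $\mathfrak{u}$-cohomology tensored with $\mathbb{C}_{\rho(\mathfrak{u})}$, then identify the $\mathfrak{l}$-multiplicities in $\mathfrak{u}$-cohomology via $\mathrm{Ext}$-groups in $\cO^\p$), with two points worth flagging.

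First, your Step~1 is not how \cite{HX} obtains the isomorphism $H_D(L(\lambda))\cong\bigoplus_i H^i(\mathfrak{u},L(\lambda))\otimes\mathbb{C}_{\rho(\mathfrak{u})}$. They do not route through the Dirac index and then argue no cancellation; rather they split $D=C^-+C^+$ with $C^-$ (up to twist) the $\bar{\mathfrak{u}}$-homology differential, and use $\mathfrak{l}$-semisimplicity of $V\otimes S$ together with the formula for $D^2$ to identify $H_D$ directly with the cohomology of $C^-$. Your index route can be salvaged, but the justification you offer in Step~3 is not quite right: you claim each $\mathfrak{l}$-type $F(w_Ix\cdot\mu+\rho(\mathfrak{u}))$ ``occurs in at most one cohomological degree''. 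That is false in general---a fixed type can appear in several $H^i(\mathfrak{u},L(\lambda))$. What \emph{is} true, and is what an index argument actually needs, is that it appears only in degrees $i$ of a fixed parity, namely $i\equiv \ell_{[\lambda]}(x,\ow)\pmod 2$; this is the parity-vanishing of $\mathrm{Ext}$ in (parabolic) Kazhdan--Lusztig theory, a nontrivial input from the same machinery as your Step~2, not an elementary consequence of the distinctness of the weights $x\cdot\mu$.

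Second, a bookkeeping slip in Step~2: the Frobenius-reciprocity isomorphism should read
\[
\Hom_{\mathfrak{l}}\!\big(F(w_Ix\cdot\mu),\,H^i(\mathfrak{u},L(\lambda))\big)\;\cong\;\mathrm{Ext}^i_{\cO^\p}\!\big(M_I(w_Ix\cdot\mu),\,L(\lambda)\big),
\]
without the extra $\rho(\mathfrak{u})$ on the left; that shift has already been absorbed by the spin module in Step~1.
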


\begin{definition}\label{WIlambda}
	For $I\subseteq \Delta$ and $\lambda\in\Lambda_I^+$. Let \[
	\cW_I(\lambda):=\left\{w_Ix\cdot\mu+\rho(\mathfrak{u}): x\in {}^IW_{[\lambda]}^{\Sigma_\mu}, {}^IP_{x,\ow}^{\Sigma_\mu}(1)\neq 0\right\}+\rho_\mathfrak{l}.
	\]
\end{definition}

\begin{remark}
	Let $\cW(\lambda):=\cW_\emptyset(\lambda)$ as a convention.
\end{remark}

We can now state our main results.
First, it turns out that $\cW_I(\lambda)$ is a subset of four sets that are defined in terms of a generalization of strong linkage, the embeddings between Verma modules, the multiplicities of the composition factors of a Verma module, and a partial ordering on the dual of the Cartan subalgebra, respectively.
More precisely, we will prove the following:
\begin{thmy} [Theorem \ref{generalcase}]\label{generalcase'}
	Let $\lambda\in\Lambda_I^+$, $\mathcal{S}_{[\lambda]}(\lambda):=\{\nu\in\h^*: \nu\uparrow_{[\lambda]} \lambda\}$, $C_\mathfrak{l}:=\{\nu\in\h^*:\langle\nu,\alpha^\lor\rangle\ge 0, \ \forall \alpha\in I\}$ and $\mathcal{L}_{\eta}
	:=\{\nu\in\h^*: \nu\le \eta\}$. Then 
	\begin{align*}
	\cW_I(\lambda)
	&\subseteq \left(\mathcal{S}_{[\lambda]}(\lambda)+\rho\right)\cap C_\mathfrak{l}\\
	&\subseteq \{\nu\in\Lambda_I^+-\rho: M(\nu)\hookrightarrow M(\lambda)\}+\rho\\
	&=\{\nu\in\Lambda_I^+-\rho: [M(\lambda),L(\nu)]\neq 0\}+\rho\\
	&\subseteq W_{[\lambda]}(\lambda+\rho)
	\cap \mathcal{L}_{\lambda+\rho}\cap C_\mathfrak{l},
	\end{align*}
\end{thmy}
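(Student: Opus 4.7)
The plan is to prove the four links of the chain---three inclusions punctuated by one equality---in turn. The toolkit consists of Theorem~\ref{maindecom}, the BGG theorem on Verma embeddings, the strong linkage theorem, Harish-Chandra's theorem on central characters, and compatibility of antidominance with the Bruhat order, all in their $W_{[\lambda]}$-integral refinements.

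For the first inclusion, $\cW_I(\lambda) \subseteq (\cS_{[\lambda]}(\lambda)+\rho)\cap C_\mathfrak{l}$, I take a typical element $\eta = w_Ix\cdot\mu+\rho$ with $x\in {}^IW_{[\lambda]}^{\Sigma_\mu}$ and ${}^IP_{x,\ow}^{\Sigma_\mu}(1)\neq 0$. Nonvanishing at $1$ means $\mathrm{Ext}^i_{\cO^\p}(M_I(w_Ix\cdot\mu),L(\lambda))\neq 0$ for some $i\ge 0$; a standard highest-weight/block-decomposition argument in $\cO^\p$ converts this nonvanishing into the weight inequality $w_Ix\cdot\mu\leq \lambda$ together with membership in the same $W_{[\lambda]}$-block, hence into the strong linkage $w_Ix\cdot\mu\uparrow_{[\lambda]}\lambda$, i.e., $w_Ix\cdot\mu\in\cS_{[\lambda]}(\lambda)$. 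For $\eta\in C_\mathfrak{l}$, I compute, for $\alpha\in I$,
\[
\langle \eta,\alpha^\lor\rangle=\langle w_Ix(\mu+\rho),\alpha^\lor\rangle=\langle \mu+\rho, x^{-1}w_I\alpha^\lor\rangle.
\]
Since $w_I$ sends $\alpha\in I$ to $-\beta$ for some $\beta\in\Phi_I^+$, and $x\in{}^IW_{[\lambda]}$ gives $x^{-1}(\Phi_I^+)\subseteq\Phi_{[\lambda]}^+$, the vector $x^{-1}w_I\alpha^\lor$ is the negative of a positive coroot of $\Phi_{[\lambda]}$; antidominance of $\mu$ then renders the pairing nonnegative.

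The second inclusion follows from BGG and integrality: if $\nu+\rho\in(\cS_{[\lambda]}(\lambda)+\rho)\cap C_\mathfrak{l}$, then $\nu\uparrow_{[\lambda]}\lambda$ yields $M(\nu)\hookrightarrow M(\lambda)$ by the BGG theorem; membership in $C_\mathfrak{l}$ combined with integrality of $\nu+\rho$ on $\Phi_I$ (automatic from $\nu\in W_{[\lambda]}\cdot\lambda$ and $I\subseteq\Delta_{[\lambda]}$) upgrades to $\nu+\rho\in\Lambda_I^+$. The middle equality is immediate: both $M(\nu)\hookrightarrow M(\lambda)$ (BGG) and $[M(\lambda),L(\nu)]\neq 0$ (strong linkage) are equivalent to $\nu\uparrow\lambda$. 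For the last inclusion, $[M(\lambda),L(\nu)]\neq 0$ forces equality of Harish-Chandra central characters, hence $\nu+\rho\in W_{[\lambda]}(\lambda+\rho)$; strong linkage yields $\nu\leq\lambda$, so $\nu+\rho\in\mathcal{L}_{\lambda+\rho}$; and $\nu\in\Lambda_I^+-\rho$ places $\nu+\rho\in C_\mathfrak{l}$.

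The technical heart of the proof is the first step, converting the Ext-nonvanishing of parabolic KLV polynomials at $1$ into the strong linkage $w_Ix\cdot\mu\uparrow_{[\lambda]}\lambda$. Ext nonvanishing does not directly encode composition-factor data; rather, one must use a weight argument on the extending module (the highest weight of $M_I(\mu')$ forces $\mu'\le \lambda$ when $\mathrm{Ext}^i(M_I(\mu'),L(\lambda))\neq 0$) combined with the block decomposition of $\cO^\p$ to produce the desired strong linkage in $W_{[\lambda]}$. The remaining three links are then routine applications of classical $\cO$ theorems, adapted to the integral Weyl group $W_{[\lambda]}$ and the parabolic subcategory $\cO^\p$.
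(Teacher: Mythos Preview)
Your treatment of the $C_\mathfrak{l}$ membership and of the second inclusion, the equality, and the last inclusion is correct and matches the paper. The gap is in the first inclusion, at the step you yourself flag as ``the technical heart.''

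You argue: ${}^IP_{x,\ow}^{\Sigma_\mu}(1)\neq 0$ gives $\mathrm{Ext}^i_{\cO^\p}(M_I(w_Ix\cdot\mu),L(\lambda))\neq 0$; a weight argument then yields $w_Ix\cdot\mu\le\lambda$; together with membership in the same $W_{[\lambda]}$-block this gives $w_Ix\cdot\mu\uparrow_{[\lambda]}\lambda$. The first two moves are fine (the second is standard highest-weight-category theory: the projective resolution of a standard object $M_I(\nu)$ involves only $P(\eta)$ with $\eta\ge\nu$). But the final ``hence'' is exactly the nontrivial point. The implication
\[
\nu\le\lambda\ \text{and}\ \nu\in W_{[\lambda]}\cdot\lambda\ \Longrightarrow\ \nu\uparrow_{[\lambda]}\lambda
\]
is not a ``standard block-decomposition argument''; in this paper it is established only for \emph{regular} $\lambda$ (it is condition~(5)$\Rightarrow$(4) of Theorem~\ref{5equiv}, proved there as a consequence of the very theorem you are trying to prove together with Theorem~\ref{geompara2}). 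For singular $\lambda$ you have supplied no argument, and the chain of strict inclusions in Theorem~\ref{generalcase} versus the equalities in Theorem~\ref{geompara2} shows the paper does not take this implication for granted.

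The paper's actual route avoids this issue entirely. Lemma~\ref{multilemma}(1) converts ${}^IP_{x,\ow}^{\Sigma_\mu}(1)\neq 0$ directly into the Bruhat relation $w_Ix\le_{[\lambda]}w_I\ow$, by identifying ${}^IP_{x,\ow}^{\Sigma_\mu}$ with the parabolic Kazhdan--Lusztig polynomial $P^{[\lambda],\Sigma_\mu,q}_{(w_Ix)^{-1},(w_I\ow)^{-1}}$ (Theorem~\ref{KLVparaKL}, which in turn rests on Soergel's and Irving's alternating-sum formula relating singular-block Ext groups to regular-block ones) and invoking the vanishing of parabolic KL polynomials off the Bruhat order. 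Then Lemma~\ref{HJtypo}, in the direction that holds \emph{without} any regularity hypothesis, converts $w_Ix\le_{[\lambda]}w_I\ow$ into $w_Ix\cdot\mu\uparrow_{[\lambda]}w_I\ow\cdot\mu=\lambda$. So the passage through Bruhat order, not through the weight order, is what makes the argument work in the singular case.
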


We will show that the Dirac cohomology of simple highest weight modules $L(\lambda)$ is parameterized by $\cW_I(\lambda)$, in the sense of the following theorem.

\begin{thmy}[Theorem \ref{general1to1}]\label{general1to1'}
	Let $\lambda,\eta\in \Lambda_I^+$. The following statements are then equivalent:
	\begin{enumerate}
		\item $\lambda=\eta$.
		\item $H_D(L(\lambda))\cong H_D(L(\eta))$ as an $\mathfrak{l}$-module.
		\item $\cW_I(\lambda)=\cW_I(\eta)$. 
	\end{enumerate}
\end{thmy}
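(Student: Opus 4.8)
The plan is to run the cycle $(1)\Rightarrow(2)\Rightarrow(3)\Rightarrow(1)$; since $(1)\Rightarrow(2)$ and $(1)\Rightarrow(3)$ are immediate, the content lies in $(2)\Rightarrow(3)$ and $(3)\Rightarrow(1)$. The organizing principle is that $\cW_I(\lambda)$ is, up to the fixed translate by $\rho_\mathfrak{l}$, precisely the set of highest weights of the simple $\mathfrak{l}$-constituents of $H_D(L(\lambda))$, and that this finite set carries a canonical top element, namely $\lambda+\rho$; comparing these top elements then recovers $\lambda$ from $\cW_I(\lambda)$.

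For $(2)\Rightarrow(3)$, I would invoke Theorem~\ref{maindecom}: it exhibits $H_D(L(\lambda))$ as a finite direct sum of finite-dimensional simple $\mathfrak{l}$-modules $F(w_Ix\cdot\mu+\rho(\mathfrak{u}))$ with multiplicities ${}^IP_{x,\ow}^{\Sigma_\mu}(1)$, $x\in{}^IW_{[\lambda]}^{\Sigma_\mu}$. Since a finite-dimensional semisimple $\mathfrak{l}$-module is determined up to isomorphism by the multiset --- hence, in particular, by the set --- of highest weights of its simple summands, the set $\{\,\nu\in\h^*:\,F(\nu)\text{ occurs in }H_D(L(\lambda))\,\}$ is an invariant of the isomorphism class of $H_D(L(\lambda))$ as an $\mathfrak{l}$-module; by Definition~\ref{WIlambda} this set equals $\cW_I(\lambda)-\rho_\mathfrak{l}$. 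Therefore $H_D(L(\lambda))\cong H_D(L(\eta))$ forces $\cW_I(\lambda)-\rho_\mathfrak{l}=\cW_I(\eta)-\rho_\mathfrak{l}$, i.e.\ $\cW_I(\lambda)=\cW_I(\eta)$. Note that injectivity of $x\mapsto w_Ix\cdot\mu+\rho(\mathfrak{u})$ on ${}^IW_{[\lambda]}^{\Sigma_\mu}$ is not needed, as only the underlying sets are being compared.

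For $(3)\Rightarrow(1)$, the key claim is that $\lambda+\rho$ is the maximum of the finite set $\cW_I(\lambda)$ with respect to the partial order $\le$ on $\h^*$. That $\lambda+\rho\in\cW_I(\lambda)$ comes from putting $x=\ow$ in Definition~\ref{WIlambda}: one has $w_I\ow\cdot\mu+\rho(\mathfrak{u})+\rho_\mathfrak{l}=\lambda+\rho$ because $\lambda=w_I\ow\cdot\mu$ (cf.\ Theorem~\ref{maindecom}) and $\rho=\rho(\mathfrak{u})+\rho_\mathfrak{l}$, while ${}^IP_{\ow,\ow}^{\Sigma_\mu}(1)\neq0$ because $\ell_{[\lambda]}(\ow,\ow)=0$ and the coefficients in the defining sum are nonnegative dimensions, so ${}^IP_{\ow,\ow}^{\Sigma_\mu}(1)\ge\dim\Hom_{\cO^\p}(M_I(\lambda),L(\lambda))=1$ (using that $L(\lambda)$ is the unique simple quotient of $M_I(\lambda)$ for $\lambda\in\Lambda_I^+$). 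That every element of $\cW_I(\lambda)$ is $\le\lambda+\rho$ is exactly the last inclusion $\cW_I(\lambda)\subseteq\mathcal{L}_{\lambda+\rho}$ of Theorem~\ref{generalcase'}. Since $\le$ is a partial order, $\lambda+\rho$ is the unique maximum of $\cW_I(\lambda)$, and likewise $\eta+\rho=\max\cW_I(\eta)$; hence $\cW_I(\lambda)=\cW_I(\eta)$ forces $\lambda+\rho=\eta+\rho$, i.e.\ $\lambda=\eta$.

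The real weight of the argument is carried by the two quoted results: Theorem~\ref{maindecom}, which turns $\cW_I(\lambda)$ into the list of $\mathfrak{l}$-types of $H_D(L(\lambda))$, and the chain of inclusions in Theorem~\ref{generalcase'}, whose topmost bound pins $\lambda+\rho$ at the maximum of $\cW_I(\lambda)$. Granting these, what remains is bookkeeping: the decomposition $\rho=\rho(\mathfrak{u})+\rho_\mathfrak{l}$, the trivial diagonal nonvanishing ${}^IP_{\ow,\ow}^{\Sigma_\mu}(1)\neq0$, and the observation that the maximum of a poset is determined by the underlying set. Notably no multiplicity information enters, since $(3)$ concerns only the set $\cW_I(\lambda)$ and not the integers ${}^IP_{x,\ow}^{\Sigma_\mu}(1)$ --- which is also why the ostensibly ``hard'' direction $(3)\Rightarrow(1)$ is in fact the cheap one once Theorem~\ref{generalcase'} is in hand.
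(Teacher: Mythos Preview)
Your argument is correct and matches the paper's proof essentially step for step: $(2)\Rightarrow(3)$ via Theorem~\ref{maindecom} by reading off the set of $\mathfrak{l}$-highest weights, and $(3)\Rightarrow(1)$ by observing that $\lambda+\rho\in\cW_I(\lambda)$ while $\cW_I(\lambda)\subseteq\mathcal{L}_{\lambda+\rho}$ via Theorem~\ref{generalcase'}, then comparing. The only cosmetic difference is that the paper phrases the last step symmetrically ($\lambda\le\eta$ and $\eta\le\lambda$) rather than as ``$\lambda+\rho$ is the maximum,'' and it obtains ${}^IP_{\ow,\ow}^{\Sigma_\mu}(1)\neq 0$ by citing the identity ${}^IP_{w,w}^{\Sigma_\mu}(q)=1$ (Lemma~\ref{multilemma}, proved via parabolic Kazhdan--Lusztig machinery), whereas your direct justification through $\dim\Hom_{\cO^\p}(M_I(\lambda),L(\lambda))=1$ is a perfectly good and arguably more self-contained alternative.
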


Using Theorem \ref{general1to1'}, we show that a simple module in $\cO^\p$ is determined up to isomorphism by its Dirac cohomology.
\begin{thmy}[Theorem \ref{determine'}]\label{determine}
	Suppose $V$ and $W$ are simple modules in the category $\cO^\p$. Then $V\cong W$ as an $\mathfrak{g}$-module 
	if and only if $H_D(V)\cong H_D(W)$ as an $\mathfrak{l}$-module.
\end{thmy}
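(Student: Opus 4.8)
The plan is to reduce the theorem to Theorem~\ref{general1to1'} via the classification of simple objects in $\cO^\p$. Recall that every simple module in $\cO^\p$ is, up to isomorphism, of the form $L(\lambda)$ for a unique weight $\lambda\in\Lambda_I^+$; accordingly write $V\cong L(\lambda)$ and $W\cong L(\eta)$ with $\lambda,\eta\in\Lambda_I^+$. Once we know that Dirac cohomology is invariant under $\mathfrak{g}$-module isomorphism (in the $\mathfrak{l}$-equivariant sense), the two directions become, respectively, elementary and a direct appeal to the equivalence $(1)\Leftrightarrow(2)$ of Theorem~\ref{general1to1'}.

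For the ``only if'' direction I would argue directly: an isomorphism $\phi\colon V\to W$ of $\mathfrak{g}$-modules induces an isomorphism $\phi\otimes\mathrm{id}$ after tensoring with the spin module, and this map commutes with the cubic Dirac operator $D$, since $D$ is assembled from the $\mathfrak{g}$-action (through $\mathfrak{l}$ and $\mathfrak{u}$) together with fixed elements of the Clifford algebra, none of which is disturbed by $\phi$. Hence $\phi\otimes\mathrm{id}$ passes to an isomorphism $H_D(V)\xrightarrow{\ \sim\ }H_D(W)$, and this isomorphism is $\mathfrak{l}$-equivariant (indeed $\widetilde{L}$-equivariant) because $\phi$ is $\mathfrak{l}$-equivariant while the spinor factor is untouched. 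In particular $H_D(V)\cong H_D(L(\lambda))$ and $H_D(W)\cong H_D(L(\eta))$ as $\mathfrak{l}$-modules.

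For the ``if'' direction, assume $H_D(V)\cong H_D(W)$ as $\mathfrak{l}$-modules. Combining this with the isomorphisms of the previous paragraph yields $H_D(L(\lambda))\cong H_D(L(\eta))$ as $\mathfrak{l}$-modules with $\lambda,\eta\in\Lambda_I^+$, which is exactly statement~(2) of Theorem~\ref{general1to1'}. That theorem then forces $\lambda=\eta$, hence $V\cong L(\lambda)=L(\eta)\cong W$ as $\mathfrak{g}$-modules.

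The substance of the argument has been front-loaded into Theorem~\ref{general1to1'} (and, through it, into the parameterizations of $\cW_I(\lambda)$ supplied by Theorem~\ref{generalcase'}), so the present proof is short and the only obstacle is one of bookkeeping. The two points I would be careful about are: (a) invoking the classification of simples in $\cO^\p$ so that $V$ and $W$ may be replaced by $L(\lambda),L(\eta)$ with \emph{both} parameters in the single set $\Lambda_I^+$ --- note that no hypothesis on the infinitesimal characters of $V$ and $W$ is required (e.g.\ that they agree, or that they are regular), since Theorem~\ref{general1to1'} is stated for arbitrary $\lambda,\eta\in\Lambda_I^+$; and (b) checking that the $\mathfrak{l}$-module statement, rather than a $\widetilde{L}$-module statement, is what is needed, which it is because Theorem~\ref{general1to1'} is itself phrased with $\mathfrak{l}$-equivariance.
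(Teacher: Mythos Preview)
Your proposal is correct and follows essentially the same approach as the paper: for the forward direction you tensor a $\mathfrak{g}$-isomorphism with the identity on the spin module, check $\mathfrak{l}$-equivariance and commutation with $D$ to obtain an $\mathfrak{l}$-isomorphism on Dirac cohomology, and for the converse you reduce to $L(\lambda)$, $L(\eta)$ with $\lambda,\eta\in\Lambda_I^+$ and invoke Theorem~\ref{general1to1'}. The paper's write-up is more explicit in unwinding the three bookkeeping steps (that $f\otimes i_d$ is an $\mathfrak{l}$-map, that $(f\otimes i_d)\circ D_V=D_W\circ(f\otimes i_d)$, and that this descends to cohomology via the First Isomorphism Theorem), but your sketch captures exactly these points.
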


When $\lambda\in\Lambda_I^+$ is regular, it turns out that $H_D(L(\lambda))$ has two geometric parameterizations.
\begin{thmy} [Theorem \ref{geompara2}]\label{geompara2'}
	Let $\mathcal{R}$ be the set of regular weights in $\h^*$. Let $\lambda\in\Lambda_I^+\cap \mathcal{R}$.
	Then \begin{align*}
	\cW_I(\lambda)
	=W_{[\lambda]}(\lambda+\rho)\cap\mathcal{L}_{\lambda+\rho}\cap C_\mathfrak{l}
	=\left(\mathcal{S}_{[\lambda]}(\lambda)+\rho\right)\cap C_\mathfrak{l}.
	\end{align*}
\end{thmy}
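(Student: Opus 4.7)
The plan is to combine Theorem~\ref{generalcase'} with a single reverse inclusion to collapse the entire chain. Since that theorem already yields
\[
\cW_I(\lambda)\;\subseteq\;(\mathcal{S}_{[\lambda]}(\lambda)+\rho)\cap C_\mathfrak{l}\;\subseteq\;W_{[\lambda]}(\lambda+\rho)\cap\mathcal{L}_{\lambda+\rho}\cap C_\mathfrak{l},
\]
it suffices to prove $W_{[\lambda]}(\lambda+\rho)\cap\mathcal{L}_{\lambda+\rho}\cap C_\mathfrak{l}\subseteq\cW_I(\lambda)$; then all three sets must coincide.

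Fix $\nu$ in the right-hand intersection; the task is to exhibit $x\in{}^IW_{[\lambda]}^{\Sigma_\mu}$ with $\nu=w_Ix(\mu+\rho)$ and ${}^IP_{x,\ow}^{\Sigma_\mu}(1)\neq 0$. Regularity of $\lambda$ forces $\mu$ to be regular as well, so $\Sigma_\mu=\emptyset$, $W_{\Sigma_\mu}=\{1\}$, and ${}^IW_{[\lambda]}^{\Sigma_\mu}={}^IW_{[\lambda]}$. Writing $\nu=w(\mu+\rho)$ for some $w\in W_{[\lambda]}$ and factoring $w=w_0x$ with $w_0\in W_I$ and $x\in{}^IW_{[\lambda]}$, the defining condition $x^{-1}\alpha>0$ for $\alpha\in I$ together with the regular antidominance of $\mu+\rho$ forces $x(\mu+\rho)$ to be regular $W_I$-antidominant; the requirement $\nu\in C_\mathfrak{l}$ then pins down $w_0=w_I$, so that $\nu=w_Ix(\mu+\rho)$, which is the exact shape of an element of $\cW_I(\lambda)$.

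Next, I would translate the weight inequality $\nu\le\lambda+\rho=w_I\ow(\mu+\rho)$ into a Bruhat condition. Because $\mu+\rho$ is regular antidominant, the orbit map $y\mapsto y(\mu+\rho)$ intertwines Bruhat order on $W_{[\lambda]}$ with the standard weight order: $y(\mu+\rho)\le z(\mu+\rho)$ iff $y\le z$. Together with the standard fact that left multiplication by $w_I$ on ${}^IW_{[\lambda]}$ is length-adding and therefore induces a Bruhat-order isomorphism onto its image, this reduces the inequality to $x\le\ow$ in ${}^IW_{[\lambda]}$.

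The last and most delicate step is to show ${}^IP_{x,\ow}(1)>0$ whenever $x\le\ow$ in ${}^IW_{[\lambda]}$. I would derive this from Theorem~\ref{KLVparaKL}, which identifies the relative KLV polynomial with a parabolic Kazhdan--Lusztig polynomial, together with the standard positivity of those polynomials (nonnegative integer coefficients, nonvanishing on the Bruhat interval, hence strictly positive at $q=1$). Equivalently, it is the parabolic incarnation of the classical fact that $\sum_i\dim\mathrm{Ext}^i(M(x\cdot\mu),L(\ow\cdot\mu))>0$ precisely when $x\le\ow$. This positivity is the main obstacle; regularity is essential here, since in the singular case some $x$ from the Bruhat interval $[e,\ow]$ fail to lie in ${}^IW_{[\lambda]}^{\Sigma_\mu}$, and the clean geometric equalities break down.
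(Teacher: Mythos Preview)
Your proposal is correct and follows essentially the same route as the paper. The paper proves the first equality $\cW_I(\lambda)=W_{[\lambda]}(\lambda+\rho)\cap\mathcal{L}_{\lambda+\rho}\cap C_\mathfrak{l}$ by a direct computation from Definition~\ref{WIlambda} and then cites Theorem~\ref{generalcase} to get the second, whereas you cite Theorem~\ref{generalcase'} first and close the chain with one reverse inclusion; but the actual ingredients are the same --- your factoring $w=w_0x$ and pinning down $w_0=w_I$ via $C_\mathfrak{l}$ is the paper's Lemma~\ref{IW}, your order equivalence $y(\mu+\rho)\le z(\mu+\rho)\iff y\le z$ is Lemma~\ref{HJtypo}, your Bruhat compatibility of left multiplication by $w_I$ is Lemma~\ref{bruhat}, and your positivity step ${}^IP_{x,\ow}(1)>0\iff x\le\ow$ is Corollary~\ref{keyequiv} (derived there from Theorem~\ref{KLpoly} and the constant-term-$1$ property of ordinary Kazhdan--Lusztig polynomials rather than from general parabolic positivity, but to the same effect).
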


When $\lambda\in\Lambda_I^+$ is regular, there are two algebraic parameterizations of $H_D(L(\lambda))$.
\begin{thmy}[Theorem \ref{parameterization'}]\label{parameterization}
	Let $\lambda\in\Lambda_I^+\cap\mathcal{R}$. Then
	\begin{align*}
	\cW_I(\lambda)
	=\{\nu\in\Lambda_I^+: [M(\lambda),L(\nu)]\neq 0\}+\rho
	=\{\nu\in\Lambda_I^+: M(\nu)\hookrightarrow M(\lambda)\}+\rho.
	\end{align*}
\end{thmy}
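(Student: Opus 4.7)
The plan is to derive Theorem~\ref{parameterization} directly as a corollary of Theorems~\ref{generalcase'} and~\ref{geompara2'}. Theorem~\ref{generalcase'} supplies the chain of inclusions
\[
\cW_I(\lambda)\subseteq (\mathcal{S}_{[\lambda]}(\lambda)+\rho)\cap C_{\mathfrak l}\subseteq \{\nu: M(\nu)\hookrightarrow M(\lambda)\}+\rho=\{\nu: [M(\lambda),L(\nu)]\neq 0\}+\rho\subseteq W_{[\lambda]}(\lambda+\rho)\cap\mathcal{L}_{\lambda+\rho}\cap C_{\mathfrak l},
\]
which already contains, and identifies with each other, the two algebraic sets appearing in Theorem~\ref{parameterization}. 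Theorem~\ref{geompara2'}, whose hypothesis is exactly the regularity assumption being made here, further asserts that under this hypothesis the leftmost and rightmost terms of the chain both coincide with $\cW_I(\lambda)$. Sandwiched between two copies of $\cW_I(\lambda)$, the two middle algebraic sets are forced to equal $\cW_I(\lambda)$ as well, yielding exactly the two identifications claimed.

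Operationally, the steps would be: first, quote the inclusion chain from Theorem~\ref{generalcase'} verbatim; second, quote the endpoint equality $\cW_I(\lambda) = (\mathcal{S}_{[\lambda]}(\lambda)+\rho)\cap C_{\mathfrak l} = W_{[\lambda]}(\lambda+\rho)\cap\mathcal{L}_{\lambda+\rho}\cap C_{\mathfrak l}$ from Theorem~\ref{geompara2'}; third, conclude that every set in the chain equals $\cW_I(\lambda)$. A small reindexing needs to be performed to reconcile the ``$\nu\in\Lambda_I^+$'' quantification used in the statement of Theorem~\ref{parameterization} with the quantification used inside the chain of Theorem~\ref{generalcase'} after the external $+\rho$ shift; this is a routine substitution $\nu\mapsto\nu+\rho$, using that the relevant elements of $\cW_I(\lambda)$ automatically sit in the correct translate of $\Lambda_I^+$.

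The real obstacle is not located in this theorem at all: essentially all the substantive content has already been absorbed into the upstream results. It is Theorem~\ref{geompara2'} that carries the weight, since the regularity hypothesis on $\lambda$ is what upgrades the strict geometric containment delivered by Theorem~\ref{generalcase'} into an equality and thereby collapses the whole sandwich. Once that identification of endpoints is in hand, Theorem~\ref{parameterization} is a purely formal consequence.
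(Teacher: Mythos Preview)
Your sandwich argument is exactly what the paper does: the Remark following Theorem~\ref{geompara2} records precisely that Theorems~\ref{generalcase} and~\ref{geompara2} together yield
\[
\cW_I(\lambda)=\{\nu\in\Lambda_I^+-\rho: [M(\lambda),L(\nu)]\neq 0\}+\rho=\{\nu\in\Lambda_I^+-\rho: M(\nu)\hookrightarrow M(\lambda)\}+\rho.
\]
The only point where your write-up is too casual is the ``reindexing'' step. Passing from the condition $\nu\in\Lambda_I^+-\rho$ (which is what sits in the chain of Theorem~\ref{generalcase'}) to $\nu\in\Lambda_I^+$ (which is what Theorem~\ref{parameterization} asserts) is not a substitution $\nu\mapsto\nu+\rho$; it is the claim that these two constraints coincide on the set of $\nu$ with $[M(\lambda):L(\nu)]\neq 0$. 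One inclusion is trivial since $\Lambda_I^+\subseteq\Lambda_I^+-\rho$. For the other, if $[M(\lambda):L(\nu)]\neq 0$ then $\nu\in W\cdot\lambda$ is regular, so $\langle\nu+\rho,\alpha^\lor\rangle\neq 0$ for all $\alpha$; combined with $\langle\nu+\rho,\alpha^\lor\rangle\in\ZZ^{\ge 0}$ for $\alpha\in I$ this forces $\langle\nu+\rho,\alpha^\lor\rangle\ge 1$, hence $\nu\in\Lambda_I^+$. This is precisely the argument the paper isolates as Proposition~\ref{req'd subset} (replacing $C_{\mathfrak l}$ by $\Lambda_I^+$ via regularity), which it then feeds into the proof of Theorem~\ref{parameterization'}. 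So your plan is correct and matches the paper's, but you should spell out that regularity is used a second time, beyond its role in Theorem~\ref{geompara2'}, to justify the $\Lambda_I^+$ versus $\Lambda_I^+-\rho$ identification.
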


As an application, for Verma modules with regular infinitesimal character, we use Theorems~\ref{generalcase'}, \ref{geompara2'} and \ref{parameterization} to obtain an extended version of the Verma-BGG Theorem; see Theorem~\ref{5equiv}. 
%
%
%
%
%
Using Dirac cohomology, we are also able to give a new proof of the following simplicity criterion for Verma modules, which also appears as  \cite[Theorem 4.8]{HJ}.

\begin{thmy} [Theorem \ref{Verma'}] \label{Verma}
Let $\lambda\in\h^*$. Then 
$M(\lambda)\cong L(\lambda)$ as an $\mathfrak{g}$-module if and only if $\lambda $ is an antidominant weight.
\end{thmy}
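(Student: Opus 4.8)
The plan is to deduce this from the machinery developed earlier, specializing to the case $I=\emptyset$, so that $\cO^\p=\cO$, $\mathfrak{l}=\mathfrak{h}$, $\mathfrak{u}$ is the nilradical of $\mathfrak{b}$, $\rho(\mathfrak{u})=\rho$, $\rho_{\mathfrak{l}}=0$, and $M(\lambda)=M_\emptyset(\lambda)$. In this situation the finite-dimensional $\mathfrak{l}$-module $F(w_I x\cdot\mu+\rho(\mathfrak{u}))$ is just the one-dimensional weight space $\mathbb{C}_{x\cdot\mu+\rho}$, so Theorem \ref{maindecom} says that $H_D(L(\lambda))\cong\bigoplus_x {}^\emptyset P^{\Sigma_\mu}_{x,\ow}(1)\,\mathbb{C}_{x\cdot\mu+\rho}$, and the parameter set $\cW(\lambda)=\cW_\emptyset(\lambda)$ simply records which weights $x\cdot\mu+\rho$ occur. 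The key observation is the reduction: $M(\lambda)\cong L(\lambda)$ if and only if $M(\lambda)$ has a unique composition factor, namely $L(\lambda)$ itself, which by the equality in the third line of Theorem \ref{generalcase'} (valid for all $\lambda$, with $I=\emptyset$) is equivalent to $\{\nu\in\h^*-\rho: [M(\lambda),L(\nu)]\neq 0\}+\rho$ being the singleton $\{\lambda+\rho\}$.

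First I would handle the easy direction. If $\lambda$ is antidominant, then $\lambda$ itself is the antidominant weight $\mu-\rho$ in its linkage class translated by $\rho$; more precisely $\mu=\lambda+\rho$ in the notation of the excerpt and $\ow$ is the identity-type minimal element, so $M(\lambda)=M(w_I\ow\cdot\mu)$ with $\ow$ forced to be trivial. Then by Theorem \ref{generalcase'} one gets $\cW(\lambda)\subseteq W_{[\lambda]}(\lambda+\rho)\cap\mathcal{L}_{\lambda+\rho}\cap C_{\mathfrak{h}}$; but $\lambda+\rho=\mu$ is antidominant, so every element of $W_{[\lambda]}(\lambda+\rho)$ other than $\lambda+\rho$ itself is strictly larger in the ordering $\le$, hence excluded by the condition $\nu\le\lambda+\rho$. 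Thus $\cW(\lambda)=\{\lambda+\rho\}$; pushing this through the chain of inclusions and the equality in Theorem \ref{generalcase'}, and using that $\cW(\lambda)$ being the singleton forces the composition-factor set to be a singleton, we conclude $[M(\lambda):L(\nu)]=0$ for $\nu\ne\lambda$, i.e. $M(\lambda)=L(\lambda)$. (Alternatively, for this direction one can cite the standard fact that $\lambda$ antidominant implies $M(\lambda)$ simple, but I would prefer to keep the argument self-contained through the Dirac-cohomology parameterization to honor the "new proof" claim.)

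For the converse, suppose $\lambda$ is \emph{not} antidominant. Let $\mu$ be the antidominant weight in $W_{[\lambda]}\cdot\lambda$; then $\lambda=w_I\ow\cdot\mu$ with $\ow\neq e$ in ${}^\emptyset W^{\Sigma_\mu}_{[\lambda]}$, so $\ell_{[\lambda]}(\ow)\geq 1$. I claim $\cW(\lambda)$ contains a weight other than $\lambda+\rho$, which by Theorem \ref{generalcase'} gives a composition factor $L(\nu)$ of $M(\lambda)$ with $\nu\neq\lambda$, hence $M(\lambda)\not\cong L(\lambda)$. The natural witness is the weight $\mu$ itself (corresponding to $x=e$): it suffices to show ${}^\emptyset P^{\Sigma_\mu}_{e,\ow}(1)\neq 0$, equivalently that $L(\mu)$ is a composition factor of $M(\lambda)=M(w_I\ow\cdot\mu)$. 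This is where the main work lies. Since $\mu$ is antidominant, $M(\mu)=L(\mu)$, and the statement that $L(\mu)$ appears in $M(\lambda)$ for every $\lambda$ in the same linkage class is precisely the classical fact that the antidominant simple module is a submodule (the socle) of every Verma module in its block — which in the language of the excerpt is exactly the nonvanishing of the relevant relative KLV polynomial at $q=1$, or can be extracted from the equality line of Theorem \ref{generalcase'} together with the observation that the set $\{\nu\in\Lambda^+_I-\rho: M(\nu)\hookrightarrow M(\lambda)\}$ always contains $\mu-\rho$ when $\mu$ is antidominant and linked to $\lambda$. I would either invoke the $\mathrm{Hom}$-nonvanishing $M(\mu)\hookrightarrow M(\lambda)$ directly (standard, via translation functors or the Shapovalov form) and then read it off the displayed equality in Theorem \ref{generalcase'}, or argue combinatorially that $\cW(\lambda)\subsetneq\{\lambda+\rho\}$ is impossible since $\cW(\lambda)$ is nonempty (it records $H_D(L(\lambda))$, which is nonzero in the equal-rank/$\emptyset$ case by Theorem \ref{maindecom} as $\ow$ itself always contributes) and, when $\lambda$ is non-antidominant, cannot equal $\{\lambda+\rho\}$ because then Theorem \ref{general1to1'} applied with $\eta=\mu$ would force $\lambda=\mu$, contradicting $\ow\neq e$. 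The latter route is cleanest: $\cW(\mu)=\{\mu+\rho\}$ by the antidominant case just proved, so if $\cW(\lambda)=\{\lambda+\rho\}$ too then comparing would need $\lambda=\mu$, false. Hence $\cW(\lambda)$ strictly contains more than one weight, giving a nontrivial composition factor and $M(\lambda)\not\cong L(\lambda)$.

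The main obstacle is making the converse direction genuinely independent of the classical simplicity criterion rather than circular: the cleanest non-circular argument is the one via Theorem \ref{general1to1'} and the (independently established) antidominant case, since that theorem's proof rests on the parameterization results \ref{generalcase'} and \ref{geompara2'}/\ref{parameterization}, none of which presuppose Theorem \ref{Verma}. I would therefore structure the final write-up as: (1) prove $\cW(\lambda)=\{\lambda+\rho\}$ when $\lambda$ is antidominant, directly from Theorem \ref{generalcase'}; (2) note $H_D(L(\lambda))\neq 0$ always, so $\cW(\lambda)\neq\emptyset$; (3) if $\lambda$ is not antidominant, let $\mu$ be its antidominant representative; were $M(\lambda)$ simple we would get $\cW(\lambda)=\{\lambda+\rho\}$ (a singleton), whence by Theorem \ref{general1to1'} (with $\eta$ the antidominant weight, after translating into $\Lambda_I^+$-normal form) $\lambda$ and $\mu$ coincide — contradiction; (4) conclude via the composition-factor equality in Theorem \ref{generalcase'} that $M(\lambda)$ has a composition factor $L(\nu)$ with $\nu\neq\lambda$, so $M(\lambda)\not\cong L(\lambda)$.
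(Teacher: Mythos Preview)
Your argument for the antidominant direction is fine and essentially matches the paper's: both reduce to observing that every composition factor $L(x\cdot\lambda)$ of $M(\lambda)$ has $x\cdot\lambda\le\lambda$, while antidominance forces $\lambda\le x\cdot\lambda$, so only $L(\lambda)$ occurs.

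The gap is in the converse. Your ``cleanest'' route via Theorem~\ref{general1to1'} does not produce a contradiction. From $M(\lambda)\cong L(\lambda)$ you correctly get $\cW(\lambda)=\{\lambda+\rho\}$, and from the antidominant case you have $\cW(\mu)=\{\mu+\rho\}$. But these are \emph{different} singletons when $\lambda\neq\mu$, so Theorem~\ref{general1to1'} merely confirms $\lambda\neq\mu$; no contradiction arises. Nothing in Theorems~\ref{generalcase'} or~\ref{general1to1'} forces $\mu+\rho\in\cW(\lambda)$: that is exactly the nonvanishing $P^{\Sigma_\mu}_{e,\ow}(1)\neq 0$ you were trying to avoid proving directly. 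Your alternative route, invoking $M(\mu)\hookrightarrow M(\lambda)$, is (as you suspect) circular, since that embedding is the content of the Verma half of Theorem~\ref{BGGVerma}.

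The paper supplies the missing second summand by a direct combinatorial computation (Lemma~\ref{ParaKL=1}): if $\ow=s_{i_1}\cdots s_{i_r}$ is reduced with $r\ge 1$, then $P^{\Sigma_\mu}_{s_{i_1}\ow,\ow}(q)=1$. This is proved by rewriting the polynomial via Theorem~\ref{KLVparaKL} as a parabolic Kazhdan--Lusztig polynomial of type~$q$ and then applying the recursion of Proposition~\ref{recursion}, in which the sum over $\{u\le x\le vs: xs<x\}$ is empty. Together with $P^{\Sigma_\mu}_{\ow,\ow}(1)=1$ this shows $H_D(L(\lambda))$ contains at least the two distinct summands $F(\ow\cdot\mu+\rho)$ and $F(s_{i_1}\ow\cdot\mu+\rho)$, whereas $H_D(M(\lambda))\cong F(\lambda+\rho)$ is one-dimensional by Proposition~\ref{HDVerma}. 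Comparing via Theorem~\ref{determine'} gives the contradiction. So the witness is $x=s_{i_1}\ow$, not $x=e$, and the key input you are missing is precisely Lemma~\ref{ParaKL=1}.
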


By similar methods, we are able to derive a new simplicity criterion for parabolic Verma modules with regular infinitesimal character.
\begin{thmy} [Theorem \ref{ParaVerma'}]\label{ParaVerma}
Let $\lambda\in\Lambda_I^+\cap \mathcal{R}$.
Then $M_I(\lambda)\cong L(\lambda)$ as an $\mathfrak{g}$-module if and only if $\lambda=w_I\cdot\nu$ for some antidominant weight $\nu$.
\end{thmy}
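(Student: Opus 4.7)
The plan is to characterize simplicity of $M_I(\lambda)$ via the cardinality of $\cW_I(\lambda)$. The key inputs are Theorems \ref{maindecom} and \ref{parameterization}, along with the formula $H_D(M_I(\lambda))\cong F(\lambda+\rho(\mathfrak{u}))$ as an $\mathfrak{l}$-module, a standard consequence of Kostant's cubic Dirac operator applied to the parabolically induced module $U(\g)\otimes_{U(\p)}F(\lambda)$. Since $\lambda$ is regular, the orbit $W_{[\lambda]}\cdot\lambda$ contains a unique antidominant element $\mu$ and $\Sigma_\mu=\emptyset$, so I write $\lambda=w_I\ow\cdot\mu$ for a unique $\ow\in{}^IW_{[\lambda]}$. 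The theorem then reduces to the claim $M_I(\lambda)\cong L(\lambda)\iff\ow=1$.

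For $(\Leftarrow)$, assume $\lambda=w_I\cdot\nu$ with $\nu$ antidominant. Uniqueness of the antidominant representative forces $\nu=\mu$, whence $\ow=1$. The composition factors of $M(\lambda)=M(w_I\cdot\mu)$ are $L(y\cdot\mu)$ with $y\le w_I$ in Bruhat, so $y\in W_I$; among these, $y\cdot\mu\in\Lambda_I^+$ only for $y=w_I$, since $\mu$ is antidominant with respect to $W_I$. Every composition factor of $M_I(\lambda)\in\cO^\p$ lies in $\Lambda_I^+$ and is inherited from $M(\lambda)$ (because $M_I(\lambda)$ is a quotient of $M(\lambda)$), so $M_I(\lambda)\cong L(\lambda)$.

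For $(\Rightarrow)$, the assumption gives $H_D(L(\lambda))\cong H_D(M_I(\lambda))\cong F(\lambda+\rho(\mathfrak{u}))$. Regularity of $\mu$ makes the highest weights $w_Ix\cdot\mu+\rho(\mathfrak{u})$ appearing in Theorem \ref{maindecom} pairwise distinct, so the decomposition collapses and $\cW_I(\lambda)=\{\lambda+\rho\}$. Suppose for contradiction that $\ow\ne 1$; then $\ell(w_I\ow)=\ell(w_I)+\ell(\ow)$ gives $w_I<w_I\ow$ in the Bruhat order, and by the Verma-BGG embedding theorem $M(w_I\cdot\mu)\hookrightarrow M(w_I\ow\cdot\mu)=M(\lambda)$. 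Since $w_I\cdot\mu\in\Lambda_I^+$ (as $w_I$ is the longest element of $W_I$ and $\mu$ is antidominant), Theorem \ref{parameterization} places $w_I\cdot\mu+\rho$ in $\cW_I(\lambda)$, a second element distinct from $\lambda+\rho$, a contradiction. Hence $\ow=1$, i.e., $\lambda=w_I\cdot\mu$.

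The main obstacle is the clean identification $H_D(M_I(\lambda))\cong F(\lambda+\rho(\mathfrak{u}))$ in the paper's conventions; once this is in hand, the remainder is a short bookkeeping argument combining the classical Verma-BGG embedding theorem with the parameterizations already established in Theorems \ref{generalcase'} and \ref{parameterization}.
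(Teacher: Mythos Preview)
Your proof is correct and follows essentially the same strategy as the paper: both directions hinge on writing $\lambda=w_I\ow\cdot\mu$, comparing $H_D(L(\lambda))$ with $H_D(M_I(\lambda))\cong F(\lambda+\rho(\mathfrak{u}))$ (Proposition~\ref{HDVerma}), and showing that simplicity forces $\ow=e$. The only packaging difference is that for $(\Rightarrow)$ the paper works directly with Theorem~\ref{maindecom} and Corollary~\ref{keyequiv} (obtaining the extra summand $F(w_I\cdot\mu+\rho(\mathfrak{u}))$ from ${}^IP_{e,\ow}^{\mu}(1)\neq 0$), whereas you pass through $\cW_I(\lambda)$ and invoke Theorem~\ref{parameterization} and the Verma embedding $M(w_I\cdot\mu)\hookrightarrow M(\lambda)$; and for $(\Leftarrow)$ the paper parametrizes composition factors via ${}^IW_{[\lambda]}$ and Lemma~\ref{bruhat}, while you use the more elementary observation that $y\le_{[\lambda]} w_I$ forces $y\in W_I$.
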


Comparing this with Jantzen’s simplicity criterion for parabolic Verma modules with regular infinitesimal character, we derive the following non-trivial corollary.

\setcounter{thmc}{7}
\begin{thmc} [Corollary \ref{nontrivial'}]\label{nontrivial}
Let $\lambda\in\Lambda_I^+\cap \mathcal{R}$. Then 
$\Psi^+_\lambda = \emptyset$ if and only if $\lambda=w_I\cdot\nu$ for some antidominant weight $\nu$,
where 
$\Psi^+_\lambda := \{\beta \in \Psi^+ : \langle \lambda + \rho, \beta^\lor\rangle \in \mathbb{Z}^{>0}\}$ and $\Psi := \Phi \backslash \Phi_I$.
\end{thmc}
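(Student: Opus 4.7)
The plan is to derive the corollary as a direct comparison of two simplicity criteria for parabolic Verma modules with regular infinitesimal character; both criteria apply for $\lambda \in \Lambda_I^+ \cap \mathcal{R}$, so concatenating them gives exactly the claimed equivalence.

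First, I invoke Jantzen's classical simplicity criterion (the criterion alluded to in the sentence preceding the corollary), which asserts that
\[
M_I(\lambda) \cong L(\lambda) \iff \Psi^+_\lambda = \emptyset
\]
for $\lambda \in \Lambda_I^+ \cap \mathcal{R}$. Next, I apply Theorem \ref{ParaVerma}, which has already been proved earlier in the paper using Dirac cohomology and furnishes the alternative characterization
\[
M_I(\lambda) \cong L(\lambda) \iff \lambda = w_I\cdot\nu \text{ for some antidominant weight } \nu.
\]
Chaining the two biconditionals yields $\Psi^+_\lambda = \emptyset \iff \lambda = w_I\cdot\nu$ for some antidominant $\nu$, which is precisely the corollary.

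The reason the statement is labelled \emph{non-trivial} is that its two sides are combinatorial in nature---one a condition on the non-existence of strictly positive integral pairings $\langle\lambda+\rho,\beta^\lor\rangle$ for $\beta\in\Psi^+$, the other a description of $\lambda$ as a $w_I$-translate of an antidominant weight under the dot action---yet a direct root-system-theoretic proof does not appear evident. In the present approach, the nontrivial half is absorbed into Theorem \ref{ParaVerma}, whose proof proceeds via Dirac cohomology. Consequently, there is essentially no remaining obstacle for this corollary: the only point to verify is that the hypotheses of the two simplicity criteria genuinely match, which is immediate from their statements, both of which are stated precisely for regular $\lambda \in \Lambda_I^+$. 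All of the substantive work has already been carried out in the Dirac-cohomological proof of Theorem \ref{ParaVerma}.
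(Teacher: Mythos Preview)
Your proposal is correct and matches the paper's approach exactly: the paper simply states ``This leads to Corollary~\ref{nontrivial}'' immediately after recording Jantzen's simplicity criterion and Theorem~\ref{ParaVerma}, so the proof is precisely the concatenation of the two biconditionals you describe.
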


This paper is organized as follows. 

In Section~\ref{S:2}, we recall the definitions of the category $\cO^\p$ and Dirac cohomology. 

In Section~\ref{S:3}, we turn to Kazhdan-Lusztig-Vogan theory.
We derive two general identities relating relative Kazhdan-Lusztig-Vogan polynomials and parabolic Kazhdan-Lusztig polynomials. As an application, we 
determine when the sum of coefficients of a relative Kazhdan-Lusztig-Vogan polynomial associated to $\lambda$ is nonzero, in the case when $\lambda\in\Lambda^+_I$ is regular.

In Section~\ref{S:4}, we prove Theorems~\ref{generalcase'}, \ref{geompara2'} and \ref{parameterization}.
As an application, for Verma modules with regular infinitesimal character, we obtain an extended version of the Verma-BGG Theorem.

Section~\ref{S:5} contains our results on the Dirac cohomology of Kostant modules.

In Section~\ref{S:6}, finally, we use Dirac cohomology to prove Theorems~\ref{Verma} and \ref{ParaVerma}.

\subsection*{Acknowledgements}

I thank my thesis advisor Jing-Song Huang for guidance and Eric Marberg for useful comments.

\section{Notation and preliminaries}
\label{S:2}

\subsection{Notation}\label{2.1-sect}

Throughout this paper, we adopt the following notation.
Denote by $\g$ a finite dimensional complex semisimple Lie algebra and let $\h$ be a Cartan subalgebra of $\g$. 
Let $\Phi$
be the root system of $(\g,\h)$, write $W$ for the corresponding Weyl group of $\Phi$, and denote by $\g_\alpha$ the root subspace of $\g$ corresponding
to a root $\alpha$.
We fix a choice of positive roots $\Phi^+$, and let $\Delta$ be the corresponding subset of simple roots in $\Phi^+$. Note that each subset $I\subseteq\Delta$ generates a root system $\Phi_I\subseteq\Phi$, with positive roots $\Phi_I^+:=\Phi_I\cap \Phi^+$.
There are a number of subalgebras of $\g$ associated with the root system $\Phi_I$. By \cite[\S 9.1]{HJ}, the Lie algebra 
\[
\mathfrak{p}_I:=\h\oplus\sum_{\alpha\in\Phi_I\cup \left(\Phi^+\backslash\Phi_I^+\right)}\g_\alpha
\]
is a standard parabolic subalgebra of $\g$, the Lie algebra
\[
\mathfrak{l}_I:=\h\oplus\sum_{\alpha\in\Phi_I}\g_\alpha
\]
is the Levi subalgebra of $\p_I$ and the Lie algebras \[
\mathfrak{u}_I:=\sum_{\alpha\in\Phi^+\backslash\Phi_I^+}\g_\alpha
\qquad\text{and}\qquad \overline{\mathfrak{u}_I}:=\sum_{\alpha\in\Phi^+\backslash\Phi_I^+}\g_{-\alpha}
\]
are the nilradical of $\p_I$ and its dual space with respect to the Killing form $B$ of $\g$ such that $\p_I=\mathfrak{l}_I\oplus \mathfrak{u}_I$ and $\g=\overline{\mathfrak{u}_I}\oplus \p_I$.
We note that once $I$ is fixed, there is little use for other subsets of $\Delta$. Therefore, we omit the subscript if a subalgebra is obviously associated to $I$.
Let
\[
\rho:=\dfrac{1}{2}\sum_{\alpha\in\Phi^+} \alpha, 
\qquad 
\rho_\mathfrak{l}:=\dfrac{1}{2}\sum_{\alpha\in\Phi^+_I} \alpha, 
\qquad\text{and}\qquad
\rho(\mathfrak{u}):=\dfrac{1}{2}\sum_{\alpha\in\Phi^+\backslash \Phi^+_{I}} \alpha.
\]
The set of integral weights in $\h^*$ is
\[
\Lambda := \{\nu \in \h^*  :  \langle\nu,\alpha^\lor\rangle \in \mathbb{Z} \ \text{for all }\alpha \in \Phi\},
\]
and the set of dominant integral weights in $\h^*$ is
\[
\Lambda^+ := \{\nu \in \h^* : \langle \nu, \alpha^\lor\rangle \in \mathbb{Z}^{\ge 0} \ \text{for all }\alpha \in \Phi^+\},
\]
where $\langle\cdot, \cdot \rangle$ is the bilinear form on $\h^*$ induced from the Killing form $B$ of $\g$ and $\alpha^\lor:=\frac{2\alpha}{\langle\alpha,\alpha\rangle}$.
Denoted by $U(\g)$ the universal enveloping algebra of $\g$ with centre
$Z(\g)$.


\subsection{Preliminaries on Category $\cO^\p$}\label{S2.2}

In this section, we recall the definition and basic properties of category $\cO^\p$.
Continue to let $\g$ be a finite dimensional complex semisimple Lie algebra with Cartan subalgebra $\h$. Fix a Borel subalgebra $\mathfrak{b}$ containing $\h$ and a parabolic subalgebra $\p$ containing $\mathfrak{b}$. Let $I \subseteq\Delta$ be the subset of simple roots corresponding to $\p$.
Denote by $\Phi_I$ the subsystem generated by $I$, i.e.,
$\Phi_I:=\Phi\cap\sum_{\alpha\in I}\mathbb{Z}\alpha$, and let $\Phi_I^+:=\Phi_I\cap\Phi^+$.
Let
$\mathfrak{l} := \h\oplus\sum_{\alpha\in \Phi_I}\g_\alpha$ be the associated Levi subalgebra. Denote by $\mathfrak{u}$ the nilradical of $\p$ and let $\overline{\mathfrak{u}}$ be the dual space of $\mathfrak{u}$ with respect to the Killing form $B$ of $\g$.

\begin{definition}[{See \cite[Definition 2.1]{HX}}]
The category $\cO^\p$ is the full subcategory of $U(\g)$-Mod whose objects $M$ satisfy the following conditions:
\begin{enumerate}
    \item $M$ is a finitely generated $U(\g)$-module.
    \item $M$ is a direct sum of finite dimensional simple $U(\mathfrak{l})$-modules.
    \item $M$ is locally finite as a $U(\p)$-module.
\end{enumerate}
\end{definition}
The set of \emph{$\Phi^+_I$-dominant integral weights} in $\h^*$ is
\[
\Lambda^+_I := \{\nu \in \h^* : \langle \nu, \alpha^\lor\rangle \in \mathbb{Z}^{\ge 0} \ \text{for all }\alpha \in \Phi^+_I\},
\]
where $\langle \cdot, \cdot\rangle$ is again the bilinear form on $\h^*$ induced from the Killing form of $\g$. 
Let $F(\lambda)$ be the finite dimensional simple $\mathfrak{l}$-module with highest weight $\lambda$. We have $\lambda\in\Lambda^+_I$ by a result in \cite[\S 9.2]{HJ}.
Note that $F(\lambda)$ is a $\p$-module on which $\mathfrak{u}$ acts trivially.
The \emph{parabolic Verma module} with highest weight $\lambda$ is the induced module
\[
M_I(\lambda) := U(\g)\otimes _{U(\p)} F(\lambda).
\]
When $\p = \mathfrak{b}$, we obtain the ordinary Verma module $M(\lambda)$. By results in \cite[\S 9.4]{HJ}, $M_I(\lambda)$ is a quotient of $M(\lambda)$ and $L(\lambda)$ is the unique simple quotient of both $M_I(\lambda)$ and $M(\lambda)$. 
Furthermore, since every nonzero module in $\cO^\p$ has at least one nonzero maximal vector, Proposition \ref{9.3} implies that every simple module in $\cO^\p$ is isomorphic to $L(\lambda)$ for some $\lambda \in \Lambda^+_I$.
Let $Z(\g)$ be the centre of $U(\g)$ and let $\chi_\lambda$ be an algebra homomorphism
$Z(\g) \to \mathbb{C}$ such that
\[
z \cdot v = \chi_\lambda(z)v
\]
for all $z \in Z(\g)$, $v \in M(\lambda)$. Then $M_I(\lambda)$ and its subquotients (including $L(\lambda)$) have
the same infinitesimal character $\chi_\lambda$.
As is shown in \cite[\S1.2]{HJ}, every nonzero module $M \in \cO^\p$ has a finite filtration with nonzero quotients, each of which is a highest weight module in $\cO^\p$. 
Thus the action of $Z(\g)$ on $M$ is finite. Let
\[
M^\chi := \{v \in M  :  \text{for each $z \in Z(\g)$,}\ (z - \chi(z))^n\cdot v = 0  \ \text{for some $n \in \mathbb{Z}_{>0}$ depending on $z$}\}.
\]
Then $z - \chi(z)$ acts locally nilpotently on $M^\chi$ for all $z \in Z(\g)$ and $M^\chi$ is a $U(\g)$-submodule of $M$.
Denote by $\cO^\p_\chi$ the full subcategory of $\cO^\p$ whose objects are of the form $M^\chi$. We then have the following direct sum decomposition
\[
\cO^\p=\bigoplus_{\chi}\cO^\p_\chi,
\]
where $\chi = \chi_\lambda$ for some $\lambda \in\h^*$.
%
The dot action
of the Weyl group $W$ on $\h^*$ is given by
\[
w \cdot \lambda := w(\lambda + \rho) - \rho
\]
for all $\lambda \in \h^*$. Then $\chi_\lambda = \chi_\mu$ if and only if $\lambda \in W\cdot \mu$ by the Harish-Chandra
isomorphism $Z(\g) \to S(\h)^W$.
For any $U(\g)$-module $M$ and for any $\lambda \in\h^*$, let
\[
M_\lambda := \{v \in M  :  h\cdot v = \lambda(h)v \ \text{for all $h \in \h$}\}
\]
be a weight space relative to the action of $\h$. 
If $M_\lambda \neq 0$, then $\lambda$ is called a \emph{weight} of $M$. Since $M(\lambda)_\lambda\neq 0$ for all $\lambda\in\h^*$, any element of $\h^*$ is called a \emph{weight}. The \emph{multiplicity} of $\lambda$ in $M$ is defined to be $\dim M_\lambda$. 
In general, any module in the category of weight modules having finite dimensional weight spaces can be assigned a formal character 
\[
\ch(M) := \sum_{\lambda\in \h^*}
(\dim M_\lambda)e(\lambda).
\]
Let $\Gamma$ be the set of all $\mathbb{Z}^{\ge 0}$-linear combinations of simple roots in $\Delta$. 
Denote by $\mathcal{X}$ the additive group of functions
$f : \h^* \to Z$
whose support lies in a finite union of sets of the form
$\lambda - \Gamma$ for $\lambda \in \h^*$. 
The convolution product on $\mathcal{X}$ is given by
$(f * g)(\lambda) := \sum_{\mu+\nu=\lambda}
f(\mu)g(\nu)$.
Denote by $e(\lambda)$ the function in $\mathcal{X}$ which takes value $1$ at $\lambda$ and value $0$ at $\mu \neq \lambda$,
so that $e(\lambda) * e(\mu) = e(\lambda + \mu)$. 
It is easy to check that $\mathcal{X}$ is a commutative ring
under convolution whose multiplicative identity is $e(0)$.
All modules in $\cO^\p$ and all finite dimensional semisimple $\h$-modules have characters in $\mathcal{X}$.
\begin{proposition}[{\cite[Proposition 9.3]{HJ}}]
	\label{9.3}
Fix $\p = \p_I$ as above.
\begin{enumerate}
    \item $M \in \cO$ lies in $\cO^\p$ if and only if $M$ satisfies the equivalent conditions in \cite[Lemma 9.3]{HJ}.
    
    \item $\cO^\p$ is closed under duality in $\cO$.
    \item $\cO^\p$ is closed under direct sums, submodules, quotients, and extensions in $\cO$, as well as tensoring with finite dimensional $U(\g)$-modules.
    \item If $M \in \cO^\p$ decomposes as $M =\bigoplus_{\chi} M^\chi$ with $M^\chi$ in $\cO_\chi$, then each $M^\chi$ lies in $\cO^\p$; this gives a decomposition $\cO^\p = \bigoplus_\chi \cO_\chi^\p$. As a result, translation functors preserve $\cO^\p$.
    \item If the simple module $L(\lambda)$ lies in $\cO^\p$, then $\lambda \in \Lambda^+_I$.
\end{enumerate} 
\end{proposition}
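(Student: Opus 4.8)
The plan is to derive all five assertions from the already-established closure properties of the ambient category $\cO$ together with the reductivity of $\mathfrak{l}$, which guarantees that every finite-dimensional $U(\mathfrak{l})$-module is semisimple. For (1), I would unwind the definition of $\cO^\p$ and match its three conditions against the list of equivalent conditions in \cite[Lemma 9.3]{HJ}: for an object of $\cO$, being a direct sum of finite-dimensional simple $U(\mathfrak{l})$-modules is equivalent to being locally $U(\mathfrak{l})$-finite (using complete reducibility for $\mathfrak{l}$ together with the $\mathfrak{h}$-semisimplicity that holds automatically in $\cO$), and local finiteness of the $U(\p)$-action is then equivalent to $\mathfrak{u}$ acting locally nilpotently.

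For (3), I would treat the closure operations one at a time. Since $\cO$ is known to be closed under direct sums, submodules, quotients, extensions, and tensoring with finite-dimensional $U(\g)$-modules, it suffices to check in each case that the extra condition cutting $\cO^\p$ out of $\cO$ is preserved. Direct sums are immediate; a submodule or quotient of a module that is $U(\mathfrak{l})$-semisimple with finite-dimensional constituents is again of that type, and local $U(\p)$-finiteness is inherited; and for $M\otimes E$ with $E$ finite-dimensional, $U(\mathfrak{l})$-semisimplicity follows because a tensor product of finite-dimensional semisimple $U(\mathfrak{l})$-modules is semisimple, while local $U(\p)$-finiteness is routine. The extension case $0\to M'\to M\to M''\to 0$ is the one that needs care: given a finite-dimensional $\mathfrak{l}$-stable subspace of $M$, I would enlarge it to a finite-dimensional $\mathfrak{l}$-stable subspace that is an extension of finite-dimensional $\mathfrak{l}$-modules, hence semisimple by reductivity of $\mathfrak{l}$, and local $U(\p)$-finiteness of $M$ follows from that of $M'$ and $M''$. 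For (2), the duality functor on $\cO$ preserves formal characters and sends a direct sum of finite-dimensional simple $U(\mathfrak{l})$-modules to the direct sum of their $\mathfrak{l}$-duals, which are again of that form, so $M^\vee\in\cO^\p$ whenever $M\in\cO^\p$.

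For (4), since $M^\chi$ is a $U(\g)$-submodule of $M$, part (3) gives $M^\chi\in\cO^\p$; combined with the decomposition $\cO=\bigoplus_\chi\cO_\chi$ this produces $\cO^\p=\bigoplus_\chi\cO^\p_\chi$ with $\cO^\p_\chi:=\cO^\p\cap\cO_\chi$, and translation functors preserve $\cO^\p$ because each is a composite of tensoring with a finite-dimensional $U(\g)$-module and projecting onto a block. For (5), if $v^+$ is a highest weight vector of $L(\lambda)$ then $N:=U(\mathfrak{l})v^+$ is a highest weight module for $\mathfrak{l}$, hence indecomposable; since $L(\lambda)\in\cO^\p$ is $U(\mathfrak{l})$-semisimple, $N$ is both semisimple and indecomposable, hence simple, and it is finite-dimensional because it is isomorphic to a constituent of $L(\lambda)$ as an $\mathfrak{l}$-module. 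Therefore $N\cong F(\lambda)$, and such a finite-dimensional simple $\mathfrak{l}$-module exists exactly when $\lambda$ is $\Phi^+_I$-dominant integral, i.e.\ $\lambda\in\Lambda^+_I$. I expect the extension case in (3) to be the main obstacle, since it is the one place where $U(\mathfrak{l})$-semisimplicity must be pushed through a short exact sequence of possibly infinite-dimensional modules by localizing to finite-dimensional $\mathfrak{l}$-stable subspaces; everything else is bookkeeping against the known properties of $\cO$. A full proof is given in \cite[Proposition 9.3]{HJ}.
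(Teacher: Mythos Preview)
The paper does not prove this proposition at all; it is stated purely as a citation of \cite[Proposition 9.3]{HJ} and used as a black box. Your sketch is a reasonable outline of how such a proof would go and ends with the same citation, so there is nothing to compare against in the paper itself.
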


\begin{theorem} [{See \cite[Theorem 9.4]{HJ}}] 
	\label{9.4}
Let $\lambda \in \Lambda_I^+$.
\begin{enumerate}
    \item The module $M_I(\lambda)$ and its quotient $L(\lambda)$ both belong to $\cO^\p$.
    \item There is an exact sequence 
    $
    \bigoplus_{\alpha\in I}M(s_\alpha \cdot \lambda) \to M(\lambda) \to M_I(\lambda) \to 0.
    $
\end{enumerate}

\end{theorem}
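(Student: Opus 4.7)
For part (1), the strategy is to verify the three defining conditions of $\cO^\p$ for $M_I(\lambda)$ directly from the PBW decomposition, and then invoke Proposition~\ref{9.3}(3) for $L(\lambda)$. For part (2), the idea is to combine the universal property of Verma modules with the classical identification of the maximal submodule of the Levi-level Verma module $M^{\mathfrak{l}}(\lambda) := U(\mathfrak{l})\otimes_{U(\mathfrak{b}\cap\mathfrak{l})}\mathbb{C}_\lambda$.

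For (1), the PBW isomorphism $U(\g)\cong U(\overline{\mathfrak{u}})\otimes_\mathbb{C} U(\p)$ yields $M_I(\lambda)\cong U(\overline{\mathfrak{u}})\otimes_\mathbb{C} F(\lambda)$ as a $(U(\overline{\mathfrak{u}}),U(\mathfrak{l}))$-module. Finite generation is immediate from the finite-dimensional subspace $1\otimes F(\lambda)$. For $\mathfrak{l}$-semisimplicity, note that $U(\overline{\mathfrak{u}})$ is a union of finite-dimensional $\mathfrak{l}$-submodules (each graded piece $S^n(\overline{\mathfrak{u}})$ is preserved by the adjoint action of $\mathfrak{l}$), each a direct sum of simples since $\mathfrak{l}$ is reductive; tensoring with $F(\lambda)$ preserves this property. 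Local $U(\p)$-finiteness reduces to checking that $\mathfrak{u}$ acts locally nilpotently, which holds because the weights of $M_I(\lambda)$ all lie in $\lambda-\Gamma$ while $\mathfrak{u}$ strictly raises weights by roots in $\Phi^+\setminus\Phi_I^+$. Then $L(\lambda)\in\cO^\p$ because $\cO^\p$ is closed under quotients by Proposition~\ref{9.3}(3).

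For (2), the universal property of $M(\lambda)$ produces a map $\varphi:M(\lambda)\to M_I(\lambda)$ sending the canonical generator to $1\otimes v_\lambda^{F(\lambda)}$. Under the PBW identifications $M(\lambda)\cong U(\overline{\mathfrak{u}})\otimes_\mathbb{C} M^{\mathfrak{l}}(\lambda)$ and $M_I(\lambda)\cong U(\overline{\mathfrak{u}})\otimes_\mathbb{C} F(\lambda)$, the map $\varphi$ becomes $\mathrm{id}\otimes q$, where $q:M^{\mathfrak{l}}(\lambda)\twoheadrightarrow F(\lambda)$ is the canonical $\mathfrak{l}$-projection; in particular $\varphi$ is surjective and $\ker\varphi = U(\overline{\mathfrak{u}})\otimes\ker q$. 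Since $\lambda\in\Lambda_I^+$, each $n_\alpha := \langle\lambda,\alpha^\lor\rangle$ with $\alpha\in I$ is a nonnegative integer, and the classical description of the maximal submodule of $M^{\mathfrak{l}}(\lambda)$ identifies $\ker q$ with $\sum_{\alpha\in I} U(\mathfrak{l})\cdot f_\alpha^{n_\alpha+1} v_\lambda$.

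To close the exact sequence, one observes that each $f_\alpha^{n_\alpha+1}v_\lambda\in M(\lambda)$ for $\alpha\in I$ is a primitive vector of weight $s_\alpha\cdot\lambda$: annihilation by $e_\alpha$ is the standard $\mathfrak{sl}_2$ identity, and $[e_\beta,f_\alpha]=0$ for any other simple $\beta$ yields annihilation by all of $\mathfrak{n}^+$. The universal property then defines $\g$-maps $M(s_\alpha\cdot\lambda)\to M(\lambda)$, and the image of the direct sum is $\sum_{\alpha\in I}U(\g)\cdot f_\alpha^{n_\alpha+1}v_\lambda$. Using $\mathfrak{u}\cdot f_\alpha^{n_\alpha+1}v_\lambda=0$ and the PBW factorization $U(\g)=U(\overline{\mathfrak{u}})U(\mathfrak{l})U(\mathfrak{n}^+)$, this image simplifies to $U(\overline{\mathfrak{u}})\cdot\ker q = \ker\varphi$, giving exactness. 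The main obstacle is keeping straight the two PBW identifications that bridge the ``Levi-level'' kernel of $q$ and the ``global'' kernel of $\varphi$; once this bookkeeping is done, every remaining step is routine.
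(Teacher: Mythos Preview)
The paper does not supply its own proof of this theorem; it simply records the statement and cites \cite[Theorem~9.4]{HJ}. Your proposal is correct and is essentially the standard argument one finds in Humphreys: verify the axioms of $\cO^\p$ for $M_I(\lambda)$ via the PBW splitting $M_I(\lambda)\cong U(\overline{\mathfrak{u}})\otimes_\mathbb{C} F(\lambda)$, deduce $L(\lambda)\in\cO^\p$ from closure under quotients, and for part~(2) identify $\ker\big(M(\lambda)\twoheadrightarrow M_I(\lambda)\big)$ with $U(\overline{\mathfrak{u}})\otimes\ker\big(M^{\mathfrak l}(\lambda)\twoheadrightarrow F(\lambda)\big)$, then invoke the classical description of the maximal submodule of a dominant integral Verma module for $\mathfrak l$ together with the primitivity of $f_\alpha^{\,\langle\lambda,\alpha^\lor\rangle+1}v_\lambda$ in $M(\lambda)$. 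There is nothing to compare beyond noting that your write-up is exactly the argument the citation points to.
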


\subsection{Preliminaries on Dirac cohomology}

We recall the definitions and basic properties of Dirac cohomology associated to the Kostant cubic Dirac operator. Let $\mathfrak{r}$ be a reductive Lie subalgebra of the finite dimensional complex semisimple Lie algebra $\g$ and let $B$ be the Killing form of $\g$.
Suppose that the restriction $B|_\mathfrak{r}$ of B on $\mathfrak{r}$ is non-degenerate. Let $\g = \mathfrak{r} \oplus\mathfrak{s}$ be
the orthogonal decomposition with respect to $B$. It is easy to check that the
restriction $B|_\mathfrak{s}$ of $B$ on $\mathfrak{s}$ is also non-degenerate.
Denote by $C(\mathfrak{s})$ the Clifford algebra of $\mathfrak{s}$ with \[
uu' + u'u = -2B(u, u')
\]
for all $u, u' \in \mathfrak{s}$. 
Let $\{Z_1, Z_2, \cdots, Z_m\}$ be an orthonormal basis of $\mathfrak{s}$. In \cite{BK}, Kostant introduced the \emph{cubic Dirac operator} $D$ defined by 
\[
D := \sum_{1\le i\le m} Z_i \otimes Z_i + 1 \otimes v \in U(\g) \otimes C(\mathfrak{s}), 
\]
where $v \in C(\mathfrak{s})$ is the image of the fundamental $3$-form $\omega \in \bigwedge^3(\mathfrak{s}^*)$ such that \[
\omega(X, Y, Z) := \frac{1}{2} B(X, [Y, Z]),
\]
under the Chevalley map $\bigwedge(\mathfrak{s}^*) \to C(\mathfrak{s})$ and the identification of $\mathfrak{s}^*$ with $\mathfrak{s}$ via the Killing form $B$.
Explicitly, \[
v= \frac{1}{2}\sum_{1\le i,j,k\le m} B([Z_i, Z_j],Z_k)Z_iZ_jZ_k.
\]
%
\begin{definition}[{See \cite[Definition 3.2]{HX}}]
Let $\cS$ be a spin module of $C(\mathfrak{s})$. Consider the action of $D$ on
$V \otimes \cS$ 
\[
D: V \otimes \cS\to V \otimes \cS
\]
with $\g$ acting on $V$ and $C(\mathfrak{s})$ acting on $\cS$. 
The
\emph{Dirac cohomology} of $V$ is defined to be the $\mathfrak{r}$-module
\[
H_D(V) :=\dfrac{\ker(D)}{\ker(D) \cap \mathrm{Im}(D)}.
\] 
\end{definition}
Following \cite{HX}, we will only consider the case $\mathfrak{r} = \mathfrak{l}$ and $\mathfrak{s} = \mathfrak{u} + \overline{\mathfrak{u}}$ for the rest of the paper. In particular, we have the following proposition:

\begin{proposition}[{\cite[Proposition 3.7]{HX}}]
 \label{subset} 
Suppose that $V$ is in $\cO^\p_{\chi_\lambda}$. Then the Dirac cohomology $H_D(V)$ is a completely reducible finite dimensional $\mathfrak{l}$-module. Moreover, if the finite dimensional simple $\mathfrak{l}$-module $F(\eta)$ is contained in $H_D(V)$, then
$\eta + \rho_\mathfrak{l} = w(\lambda + \rho)$ for some $w \in W$.
\end{proposition}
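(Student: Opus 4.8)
The plan is to combine the Vogan-type result on Dirac cohomology in the category $\cO^\p$ setting (Kostant's analogue, proved in \cite{HX}) with the basic structural facts about category $\cO^\p$ already recalled in the excerpt. First I would observe that, since $V \in \cO^\p_{\chi_\lambda}$, Proposition \ref{9.3}(4) lets me assume $V$ sits in a single infinitesimal character block, so the central character statement is the right target. The finite-dimensionality and complete reducibility of $H_D(V)$ as an $\mathfrak{l}$-module should follow from the observation that $H_D(V)$ is a subquotient of $V \otimes \cS$: as an $\mathfrak{l}$-module, $V$ decomposes (condition (2) in the definition of $\cO^\p$) as a direct sum of finite dimensional simple $\mathfrak{l}$-modules, and the spin module $\cS$ for $C(\mathfrak{u}+\overline{\mathfrak u})$ is finite dimensional; the kernel of $D$ then meets only finitely many $\mathfrak{l}$-isotypic components nontrivially because $D$ shifts weights by the fixed weight $\rho(\mathfrak u)$ (up to Weyl conjugacy) and the $\g$-infinitesimal character of $V$ constrains which $\mathfrak{l}$-types can appear. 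Complete reducibility is automatic once we know the $\mathfrak{l}$-action on $H_D(V)$ factors through finite dimensional semisimple $\mathfrak{l}$-modules.

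For the central character claim, the key step is the identity $D^2 = -(\mathrm{Cas}_\g \otimes 1) + (\mathrm{Cas}_{\mathfrak{l}_\Delta}) + C$, where $\mathrm{Cas}_\g$ is the Casimir of $\g$ acting on $V$, $\mathrm{Cas}_{\mathfrak{l}_\Delta}$ is the Casimir associated to the diagonal copy $\mathfrak{l}_\Delta \hookrightarrow U(\mathfrak l) \otimes C(\mathfrak s)$, and $C$ is a scalar; this is Kostant's formula for the square of the cubic Dirac operator. I would then note that on $V \in \cO^\p_{\chi_\lambda}$ the Casimir $\mathrm{Cas}_\g$ acts by the scalar $\langle \lambda+\rho, \lambda+\rho\rangle - \langle\rho,\rho\rangle$. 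Restricting to $\ker D$ and passing to $H_D(V)$: if $F(\eta) \subseteq H_D(V)$, then on that $\mathfrak{l}$-isotypic piece $\mathrm{Cas}_{\mathfrak{l}_\Delta}$ acts by $\langle \eta + \rho_\mathfrak{l}, \eta+\rho_\mathfrak{l}\rangle - \langle \rho_\mathfrak{l},\rho_\mathfrak{l}\rangle$, while $D$ itself is zero on $H_D(V)$; comparing the two expressions for $D^2 = 0$ and using that the constant $C$ works out to $\langle\rho,\rho\rangle - \langle\rho_\mathfrak{l},\rho_\mathfrak{l}\rangle$, I get $\langle \eta+\rho_\mathfrak{l}, \eta+\rho_\mathfrak{l}\rangle = \langle\lambda+\rho,\lambda+\rho\rangle$. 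Finally, since $\eta+\rho_\mathfrak{l}$ and $\lambda+\rho$ are both integral (or at least lie in the same coset lattice) weights of equal length, a standard argument — they are conjugate under $W$ because the $W$-orbit is determined by the length within a coset, or more carefully because $\eta + \rho_\mathfrak{l}$ must be $W_{\mathfrak l}$-dominant and regular and its $\g$-length matches $\lambda + \rho$ — forces $\eta + \rho_\mathfrak{l} = w(\lambda+\rho)$ for some $w \in W$.

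The main obstacle I anticipate is the last step: equality of lengths $\langle\eta+\rho_\mathfrak l, \eta+\rho_\mathfrak l\rangle = \langle\lambda+\rho,\lambda+\rho\rangle$ does not by itself imply $W$-conjugacy for arbitrary weights. To close this gap one needs more than the square of $D$: one needs the full strength of the Vogan-type theorem, which says that the generalized infinitesimal character of $V$ (with respect to $Z(\g)$) and the $\mathfrak{l}$-infinitesimal character of any constituent of $H_D(V)$ are matched under the Harish-Chandra homomorphisms, i.e. the composite $Z(\g) \to Z(\mathfrak l) \otimes C(\mathfrak s)^{\mathfrak l} \to Z(\mathfrak l)$ carries $\chi_\lambda$ to $\chi^{\mathfrak l}_{\eta}$. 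This is precisely \cite[Theorem 6.16]{HX} or its underlying technical input, so the cleanest route is to cite that machinery rather than re-derive it; I would therefore structure the proof as: (i) finite-dimensionality and complete reducibility from the $\mathfrak l$-module structure of $V \otimes \cS$, (ii) central character matching by quoting the Vogan-type statement from \cite{HX}, and (iii) translating "$\chi_\lambda$ maps to $\chi^{\mathfrak l}_\eta$" into "$\eta + \rho_\mathfrak{l} \in W(\lambda+\rho)$" via the Harish-Chandra isomorphisms for $\g$ and $\mathfrak l$ together with compatibility of the relevant $\rho$-shifts. Since the proposition is attributed to \cite[Proposition 3.7]{HX}, I expect the intended proof is essentially this citation plus the short translation in step (iii).
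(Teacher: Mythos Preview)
The paper does not prove this proposition; it is stated with the citation \cite[Proposition 3.7]{HX} and used as a black box, exactly as you anticipate in your final sentence. So at the level of ``what the paper does,'' your proposal is on target: the intended argument here is simply to quote \cite{HX}.

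Your sketch of what a proof would look like is broadly correct in outline (Kostant's formula for $D^2$, the Vogan--Kostant homomorphism $\zeta: Z(\g)\to Z(\mathfrak l)$, and the Harish-Chandra isomorphisms to convert the central-character match into $\eta+\rho_\mathfrak l\in W(\lambda+\rho)$), and you correctly identify the genuine obstacle: equality of norms from $D^2=0$ alone does not give $W$-conjugacy. Two small points are worth flagging. First, the reference you give for the Vogan-type input, \cite[Theorem 6.16]{HX}, is the final decomposition formula (the paper's Theorem~\ref{maindecom}); the result you actually need is the earlier structural statement in \cite{HX} (their analogue of the Huang--Pand\v{z}i\'c/Kostant theorem) that $D$ differs from $\zeta(z)\otimes 1$ by an element of the form $Da+aD$, which is logically prior to Theorem~6.16 and indeed is what Proposition~3.7 of \cite{HX} packages. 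Second, your argument for finite-dimensionality is circular as written: you invoke the infinitesimal-character constraint on which $\mathfrak l$-types can occur in order to bound the number of isotypic components, but that constraint is precisely the second assertion of the proposition. A clean independent route is to use that $H_D(V)$ is computed by (a twist of) $\mathfrak u$-cohomology $H^\bullet(\mathfrak u,V)$, which is finite-dimensional for $V\in\cO^\p_{\chi_\lambda}$; complete reducibility then follows because $H_D(V)$ is an $\mathfrak l$-subquotient of a direct sum of finite-dimensional $\mathfrak l$-modules.
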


\begin{remark} By Proposition \ref{subset}, it follows that $\cW_I(\lambda)$ is a subset of $W(\lambda+\rho)$ since $L(\lambda)\in\cO_{\chi_\lambda}^\p$.
\end{remark}

\section{Kazhdan-Lusztig-Vogan Theory}
\label{S:3}

\subsection{Kazhdan-Lusztig polynomials}\label{kl-sect}

In \cite{KL}, Kazhdan and Lusztig define a family of polynomials $\{P_{x,w}: x,w\in W\}$ in a variable $q$. 
We briefly recall some of their well-known properties.
Let $S=\{s_\alpha:\alpha\in\Delta\}$ 
so that $(W,S)$ is a Coxeter system. Let $\ell$ be the associated length function on $W$ and let $\le$ denote the Bruhat ordering on $W$. 
The following then holds:
\begin{enumerate}[label=(\alph*)]
    \item $P_{x,w}(q)=0$ unless $x\le w$.
    \item $P_{w,w}(q)=1$.
    \item If $x<w$ then deg$(P_{x,w})\le \frac{1}{2}\left(\ell(w)-\ell(x)-1\right)$. 

    \item Let $\prec$ be the relation on $W$ with
    $x\prec w$ if and only if $x<w$ and deg$(P_{x,w})=\tfrac{1}{2}\left(\ell(w)-\ell(x)-1\right)$.
    Define $\mu(x,w)$ to be the coefficient of $P_{x,w}$ of degree $\tfrac{1}{2}\left(\ell(w)-\ell(x)-1\right)$. 
    Suppose $x\le w$, $s\in S$, and $ws<w$. Then 
     \[
    P_{x,w}(q)=q^{1-a}P_{xs,ws}(q)+q^aP_{x,ws}(q)-\sum_{\substack{ z \\
                  x\le z\prec ws\\
                 zs<z
                 }} \mu(z,ws)q^{\frac{1}{2}(\ell(w)-\ell(z))}P_{x,z}(q),
    \]
    where $a=1$ if $xs<x$ and $a=0$ if $xs>x$.
%
    These identities allow one to compute $P_{x,w}$ by induction on $\ell(w)$.

\item For each $x\le w$ in $W$, $P_{x,w}$ is a polynomial in $q$ with constant term $1$. 

\item It holds that $P_{x,w}=P_{x^{-1},w^{-1}}$. 

\item Let $M_w$ be the Verma module with highest weight $-w(\rho)-\rho$ and let $L_w$ be its unique simple quotient.
The \emph{Kazhdan-Lusztig Conjecture} (which is now a theorem \cite{BB,BJLKM}) asserts that 
\[
\ch(L_{w})=\sum_{x\le w}\varepsilon_x\varepsilon_w P_{x,w}(1) \ch(M_x), \ \text{where } \varepsilon_w:=(-1)^{\ell(w)}.
\]

\item Vogan proves in \cite{DV} that the Kazhdan-Lusztig Conjecture is equivalent to the formula
    \[
P_{x,w}(q)=\sum_{i\ge 0} q^i\dim\mathrm{Ext}_{\cO}^{\ell(w)-\ell(x)-2i}(M_x,L_w) \quad \text{for all $x\le w$}.
\]
\end{enumerate}
\begin{remark}
	We call $P_{x,w}$ the \emph{Kazhdan-Lusztig polynomial} of $x,w\in W$. Denote by $P_{x,w}^{[\lambda]}$ the Kazhdan-Lusztig polynomial of $x,w\in W_{[\lambda]}$ for $\lambda\in\mathfrak{h}^*$.
\end{remark}

\subsection{Parabolic Kazhdan-Lusztig polynomials}
In this section, we introduce the \emph{parabolic Kazhdan-Lusztig polynomial} $P_{u,v}^{J,y}$ of $u,v\in {}^JW$ of type $y$ for $y\in\{-1,q\}$.
Given ${w} \in W$,  we let $
D({w})
:= \{s \in S : {w} s <
{w}\}$.
Given $K\subseteq \Phi$, let $W_K$ be the subgroup of $W$ generated by $K$.
For $J\subseteq \Delta$, $W_J$ is the standard parabolic subgroup of $W$ generated by $J$. In the literature, the set ${}^JW$ of minimal length right coset representatives of $W_J$ in $ W$ is often characterized in different equivalent ways.

\begin{lemma}
	[{See \cite[Remark 3.6]{EHP}}]
	\label{JW} 
	Let $w\in W$. The following statements are then equivalent:
	\begin{enumerate}
		\item $w^{-1}\Phi_J^+\subseteq \Phi^+$.
		\item $\ell(s_\alpha w)=\ell(w)+1$ for all $\alpha\in J$.
		\item $\ell(s_\alpha w)>\ell(w)$ for all $\alpha\in J$.
		\item $s_\alpha w>w$ for all $\alpha\in J$.
		\item $w$ is the unique minimal length element in its right $W_J$-coset $W_Jw$.
	\end{enumerate}
\end{lemma}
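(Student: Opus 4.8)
The plan is to prove the equivalence of the five characterizations of the minimal-length right coset representatives ${}^JW$ by the standard cycle of implications, relying only on elementary properties of the length function $\ell$ and the Bruhat ordering on the Coxeter system $(W,S)$. I would arrange the argument as $(1)\Rightarrow(2)\Rightarrow(3)\Rightarrow(4)\Rightarrow(5)\Rightarrow(1)$, since each of these steps is short and uses only one basic fact.

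First, for $(1)\Rightarrow(2)$: recall the general fact that for a simple reflection $s_\alpha$ and any $w\in W$, one has $\ell(s_\alpha w)=\ell(w)\pm 1$, with the sign being $+$ precisely when $w^{-1}\alpha\in\Phi^+$ (equivalently, when $\ell(s_\alpha w)>\ell(w)$). If $w^{-1}\Phi_J^+\subseteq\Phi^+$, then in particular $w^{-1}\alpha\in\Phi^+$ for each simple $\alpha\in J$, so $\ell(s_\alpha w)=\ell(w)+1$. The implications $(2)\Rightarrow(3)$ and $(3)\Rightarrow(4)$ are immediate: $(2)$ is a special case of $(3)$, and $(3)$ says $\ell(s_\alpha w)>\ell(w)$, which by the subword/exchange characterization of Bruhat order (or simply because $\ell(s_\alpha w)=\ell(w)+1$ forces $w<s_\alpha w$) gives $s_\alpha w>w$; conversely $s_\alpha w>w$ forces $\ell(s_\alpha w)>\ell(w)$, so $(3)$ and $(4)$ are in fact trivially equivalent.

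The substantive step is $(4)\Rightarrow(5)$: assuming $s_\alpha w>w$ for all $\alpha\in J$, I must show $w$ is the unique element of minimal length in $W_Jw$. Every element of $W_Jw$ has the form $uw$ with $u\in W_J$, and the plan is to show $\ell(uw)=\ell(u)+\ell(w)$ for all $u\in W_J$; this immediately gives $\ell(uw)\ge\ell(w)$ with equality iff $u=e$, yielding both minimality and uniqueness. This identity is proved by induction on $\ell(u)$: writing $u=s_\alpha u'$ with $\alpha\in J$ and $\ell(u')=\ell(u)-1$, one uses the inductive hypothesis $\ell(u'w)=\ell(u')+\ell(w)$ together with the fact that $u'$, being a reduced word in the generators $\{s_\beta:\beta\in J\}$, sends $\alpha$ to a positive root in $\Phi_J^+$ (a standard property of parabolic subgroups: if $\ell(s_\alpha u')>\ell(u')$ then $u'^{-1}\alpha>0$, but we need the stronger statement that $u'^{-1}\alpha\in\Phi_J$, which holds because $u'\in W_J$ preserves $\Phi_J$), hence $w^{-1}(u'^{-1}\alpha)\in\Phi^+$ by hypothesis $(1)$-type reasoning on $w$; then $\ell(s_\alpha u'w)=\ell(u'w)+1=\ell(u)+\ell(w)$. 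Finally $(5)\Rightarrow(1)$: if $w$ is minimal in $W_Jw$ but $w^{-1}\beta\in\Phi^-$ for some $\beta\in\Phi_J^+$, pick $\beta$ so that we may reduce to a simple $\alpha\in J$ with $w^{-1}\alpha<0$ (using that $\Phi_J^+$ has a simple root sent negative whenever some positive root is), giving $\ell(s_\alpha w)=\ell(w)-1<\ell(w)$ with $s_\alpha w\in W_Jw$, contradicting minimality.

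I expect the main obstacle to be the bookkeeping in step $(4)\Rightarrow(5)$ — specifically, cleanly justifying that for $u'\in W_J$ with $\ell(s_\alpha u')>\ell(u')$ one has $u'^{-1}\alpha\in\Phi_J^+$ rather than merely $u'^{-1}\alpha\in\Phi^+$, so that the hypothesis on $w$ (which only controls roots in $\Phi_J^+$) can be applied. This is where the parabolic structure genuinely enters and is the only place the argument is not a one-liner; everything else is the standard deletion/exchange toolkit. Since the statement is quoted from \cite[Remark 3.6]{EHP}, an acceptable alternative is simply to cite that reference and sketch only the key implication $(4)\Rightarrow(5)$ via the length-additivity formula $\ell(uw)=\ell(u)+\ell(w)$ for $u\in W_J$.
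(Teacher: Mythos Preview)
The paper does not actually prove this lemma; it merely states it with a citation to \cite[Remark 3.6]{EHP}. Your proposal to supply a full proof therefore goes well beyond what the paper does, and your closing suggestion (simply cite the reference) is exactly the route the paper takes.

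That said, your sketch has a genuine circularity in the step $(4)\Rightarrow(5)$. You correctly reduce to showing $w^{-1}(u'^{-1}\alpha)\in\Phi^+$ with $u'^{-1}\alpha\in\Phi_J^+$, and then invoke ``hypothesis $(1)$-type reasoning on $w$''. But in your declared cycle $(1)\Rightarrow(2)\Rightarrow(3)\Rightarrow(4)\Rightarrow(5)\Rightarrow(1)$ you have not yet established $(1)$ from $(4)$: condition $(4)$ only tells you $w^{-1}\alpha\in\Phi^+$ for the \emph{simple} roots $\alpha\in J$, not for arbitrary $\beta\in\Phi_J^+$. Your diagnosis of the ``main obstacle'' is also slightly misplaced: showing $u'^{-1}\alpha\in\Phi_J^+$ is the easy part (immediate from $W_J\Phi_J=\Phi_J$ together with $\ell(s_\alpha u')>\ell(u')$); the real gap is upgrading the hypothesis on $w$ from $J$ to all of $\Phi_J^+$.

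The cleanest repair is to insert a direct proof of $(2)\Rightarrow(1)$ before attacking $(5)$, so that your induction for length-additivity can legitimately use $w^{-1}\Phi_J^+\subseteq\Phi^+$. One standard argument: induct on the height of $\beta\in\Phi_J^+$. For height $1$, $\beta\in J$ and $(2)$ applies. For larger height, choose $\alpha\in J$ with $\langle\beta,\alpha^\vee\rangle>0$; then $s_\alpha\beta\in\Phi_J^+$ has smaller height, so $w^{-1}(s_\alpha\beta)\in\Phi^+$ by induction, and $w^{-1}\beta = w^{-1}(s_\alpha\beta)+\langle\beta,\alpha^\vee\rangle\,w^{-1}\alpha$ is a root which is a nonnegative integral combination of positive roots, hence positive. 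With $(1)$ in hand, your inductive proof of $\ell(uw)=\ell(u)+\ell(w)$ for $u\in W_J$ goes through, and the rest of your cycle is fine.
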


\begin{remark}
	 We make some observations.
	 \begin{itemize}
	 	\item Although $W_{\Sigma_\mu}$ is defined as the isotropy group of $\mu$,
	 	$W_{\Sigma_\mu}$ is the subgroup of $W$ generated by ${\Sigma_\mu}$ (cf. Lemma \ref{parabolicgenerate}). This justifies the notation $W_{\Sigma_\mu}$.
	 	\item Define $W^J:=\{w^{-1}\in W : w\in {}^JW\}$.	Note that $W^J$ is the set of minimal length left coset representatives of $W_J$ in $ W$.
	 \end{itemize}
\end{remark}

In this section, the polynomial $R_{u,v}^{J,y}$ in our notation is the same as $R_{u,v}^{\{s_\alpha: \alpha\in J\},y}$ in the notation of \cite{Fran0}.
Similarly, the polynomial $P_{u,v}^{J,y}$ in our notation is the same as $P_{u,v}^{\{s_\alpha: \alpha\in J\},y}$ in the notation of \cite{Fran0}.
Since $(W,S)$ is a Coxeter system and $\{s_\alpha: \alpha\in J\}\subseteq S$, the following four results are special cases of results of Deodhar, and 
we refer the reader to \cite[\S2]{Fran0} for the statements in full generality and \cite[\S2 and \S3]{VD} for their proofs.

\begin{theorem} [{See \cite[\S2 and \S 3]{VD}}]
For each $y\in \{-1,q\}$, there is a unique family of polynomials $\{R_{u,v}^{J,y}(q)\}_{u,v\in {}^JW}\subseteq \mathbb{Z}[q]$ such that, for all $u,v\in {}^JW$:
\begin{enumerate}
    \item $R_{u,v}^{J,y}(q)=0$ if $u\not\le v$.
    \item $R_{u,u}^{J,y}(q)=1$.
    \item If $u<v$ and $s\in D(v)$ then \[
    R_{u,v}^{J,y}(q)=
    \begin{cases}
    R^{J,y}_{us,vs}(q), & \text{if $s\in D(u)$,} \\
    (q-1)R^{J,y}_{u,vs}(q)+q R^{J,y}_{us,vs}(q), & \text{if $s\not\in D(u)$ and $us\in {}^JW$,} \\
    (q-1-y)R^{J,y}_{u,vs}(q), & \text{if $s\not\in D(u)$ and $us\not\in {}^JW$.} \\
    \end{cases}
    \]
\end{enumerate}
\end{theorem}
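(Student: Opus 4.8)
The plan is to establish the existence and uniqueness of the family $\{R_{u,v}^{J,y}(q)\}_{u,v\in{}^JW}$ by induction on $\ell(v)$, following the standard Deodhar argument for parabolic $R$-polynomials but carefully checking that every object produced along the way stays inside ${}^JW$. Uniqueness is the easy direction: conditions (1) and (2) pin down $R_{u,v}^{J,y}$ whenever $v\not> u$, and condition (3) expresses $R_{u,v}^{J,y}$ in terms of polynomials $R_{*,vs}^{J,y}$ with $\ell(vs)=\ell(v)-1<\ell(v)$, so once $s\in D(v)$ is fixed the value is forced. The only subtlety is that $D(v)$ may contain more than one element, so I would need to verify that the recursion is independent of the choice of $s\in D(v)$; this is where the bulk of the work lies.

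First I would fix $u,v\in{}^JW$ with $u<v$ and $s\in D(v)$, i.e. $vs<v$. I would record the basic facts about minimal coset representatives that make the recursion well-posed: if $v\in{}^JW$ and $vs<v$ then $vs\in{}^JW$ (multiplying on the right by $s\in S$ cannot create a length-decrease under a simple reflection on the left, by Lemma \ref{JW}(4) applied together with the exchange condition); and if $u\in{}^JW$ and $us\not<u$ then either $us\in{}^JW$ or $us=w_K u$ for a suitable rank-one $W_K$ — more precisely $us\notin{}^JW$ forces $\ell(s_\alpha us)<\ell(us)$ for some $\alpha\in J$, and a short Coxeter-theoretic argument (Deodhar's) shows this happens only in the degenerate situation handled by the third branch. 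These are precisely the trichotomy cases $s\in D(u)$; $s\notin D(u)$ with $us\in{}^JW$; $s\notin D(u)$ with $us\notin{}^JW$. I would then set $R_{u,v}^{J,y}$ equal to the right-hand side of (3) for some chosen $s\in D(v)$, and $R_{u,v}^{J,y}:=0$, $1$ in the trivial ranges dictated by (1),(2).

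The main step is then \emph{well-definedness}: if $s,t\in D(v)$ are distinct, the two candidate values agree. Here I would invoke the standard two-step reduction — pass to $vst$ (or use the braid relation in the dihedral subgroup $\langle s,t\rangle$) — and run a double induction on $\ell(v)$, comparing the expansions of $R_{u,v}^{J,y}$ via $s$ then $t$ against the expansion via $t$ then $s$, using the already-established uniqueness at length $<\ell(v)$. This is exactly the computation in \cite[\S2--3]{VD}; the parabolic twist is that in the course of the comparison one meets elements $us$, $ut$, $ust$ and must track which of them lie in ${}^JW$, invoking the rank-one lemma above to handle the $(q-1-y)$ branch consistently. I expect this bookkeeping — ensuring that the three branches of (3) are mutually compatible across the two orders of reduction — to be the principal obstacle; everything else is formal. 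Once well-definedness is in hand, existence follows by defining $R_{u,v}^{J,y}$ through the recursion and uniqueness has already been observed, completing the proof.
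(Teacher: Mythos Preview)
The paper does not give its own proof of this statement: it is quoted as a result of Deodhar, and the paper explicitly says ``we refer the reader to \cite[\S2]{Fran0} for the statements in full generality and \cite[\S2 and \S3]{VD} for their proofs.'' So there is nothing in the paper to compare your argument against directly.

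That said, your sketch and Deodhar's actual proof in \cite{VD} take genuinely different routes. Deodhar does \emph{not} define $R_{u,v}^{J,y}$ by the recursion (3) and then verify independence of the choice of $s\in D(v)$; instead he builds, for each $y\in\{-1,q\}$, a module $M^J$ for the Hecke algebra with a distinguished basis indexed by ${}^JW$, and the $R$-polynomials arise as the coefficients describing the bar-involution on that module. Existence and well-definedness then come for free from the module structure (associativity of the Hecke algebra action), and the recursion (3) is read off from the action of a single generator $T_s$. Your approach---define via (3) for one $s$, then prove independence of $s$ by a dihedral/braid comparison---is in principle viable and more elementary, but the step you flag as ``the principal obstacle'' really is one: the case analysis tracking which of $us$, $ut$, $ust$, $uts$ lie in ${}^JW$, combined with the interaction of the $(q-1-y)$ branch with the other two, is substantial and you have not carried it out. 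If you want a self-contained combinatorial proof along your lines you would need to execute that braid-relation check in full; otherwise the Hecke-module construction in \cite{VD} is both shorter and what the paper is actually citing.
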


\begin{theorem} [{See \cite[\S 2 and \S 3]{VD}}]
	\label{paraKL}
For each $y\in \{-1,q\}$, there is a unique family of polynomials $\{P_{u,v}^{J,y}(q)\}_{u,v\in {}^JW}\subseteq \mathbb{Z}[q]$ such that, for all $u,v\in {}^JW$:
\begin{enumerate}
    \item $P_{u,v}^{J,y}(q)=0$ if $u\not\le v$.
    \item $P_{u,u}^{J,y}(q)=1$.
    \item $\deg (P_{u,v}^{J,y}(q))\le \frac{1}{2}(\ell(v)-\ell(u)-1)$, if $u<v$.
    \item $
    q^{\ell(v)-\ell(u)}P_{u,v}^{J,y}\left(\frac{1}{q}\right)=\sum_{u\le z\le v} R_{u,z}^{J,y}(q)P_{z,v}^{J,y}(q),
    $
    if $u\le v$.
\end{enumerate}
\end{theorem}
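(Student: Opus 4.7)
The plan is to follow Deodhar's original argument and establish the existence and uniqueness of $P_{u,v}^{J,y}$ by a single induction on the Bruhat length difference $\ell(v)-\ell(u)$, with the $R$-polynomials from the previous theorem serving as the known input.

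For uniqueness, I would rewrite condition (4) by peeling off the $z=u$ term, using (2) for $R_{u,u}^{J,y}$, to obtain
\[
q^{\ell(v)-\ell(u)} P_{u,v}^{J,y}\!\left(\tfrac{1}{q}\right) - P_{u,v}^{J,y}(q) \;=\; \sum_{u<z\le v} R_{u,z}^{J,y}(q)\, P_{z,v}^{J,y}(q).
\]
Proceeding by downward induction on $\ell(u)$ with $v$ fixed, the right-hand side is a polynomial whose coefficients are already determined. Writing $n := \ell(v)-\ell(u)$, the degree constraint (3) forces the coefficients of $P_{u,v}^{J,y}(q)$ to live in degrees $0,\ldots,\lfloor (n-1)/2\rfloor$, while $q^n P_{u,v}^{J,y}(1/q)$ contributes only in degrees $\lceil (n+1)/2\rceil,\ldots,n$. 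These two ranges are disjoint, so the equation uniquely recovers each coefficient of $P_{u,v}^{J,y}$ by reading off the appropriate degrees of the right-hand side.

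For existence, I would run the same induction, defining $P_{u,v}^{J,y}$ by the coefficient-matching procedure above. The content then becomes the assertion that the right-hand side genuinely has no coefficients in the middle range $\lceil n/2\rceil$ (for $n$ odd, the middle is empty; for $n$ even, one must kill the coefficient of $q^{n/2}$). This is the main obstacle, and it is verified by a careful case analysis using the recursion for $R_{u,z}^{J,y}$ in the previous theorem, together with the inductive degree bounds on the already-constructed $P_{z,v}^{J,y}$. The standard way to package this is to work inside the Hecke-algebra module with basis indexed by ${}^JW$ and parameter $y \in \{-1,q\}$, where the bar-involution and the self-dual basis elements $C_v^{J,y}$ produce the polynomials $P_{u,v}^{J,y}$ as structure constants; self-duality of $C_v^{J,y}$ is exactly the identity (4), and the degree bound comes from the defining property of the Kazhdan–Lusztig basis.

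Finally, I would verify (1) by noting that the induction proceeds only over $z$ with $u \le z \le v$ in the recursion, so $P_{u,v}^{J,y}=0$ whenever $u\not\le v$, and (2) by direct substitution at $u=v$. The bulk of the writing effort sits in the middle-coefficient cancellation; the cleanest route (following Deodhar) is to invoke the Hecke algebra framework rather than do the term-by-term combinatorial check, but in either case the crucial technical input is the recursive structure of $R_{u,v}^{J,y}$ already established in the previous theorem.
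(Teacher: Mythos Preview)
The paper does not supply its own proof of this theorem: it is stated with the attribution ``See \cite[\S 2 and \S 3]{VD}'' and then used as a black box, so there is no in-paper argument to compare against. Your sketch is a faithful outline of Deodhar's original proof strategy --- uniqueness via the disjoint-degree-range argument after separating the $z=u$ term, and existence via the Hecke-module/bar-involution framework --- and is consistent with what the citation points to.
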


It is well-known that Kazhdan-Lusztig polynomials and parabolic Kazhdan-Lusztig polynomials are closely related. In fact, we have the following proposition.

\begin{proposition} [{See \cite[Proposition 3.4 and Remark 3.8]{VD}}]
	\label{paraKL-KL}
Let $u,v\in {}^JW$. Then
\[
P^{J,-1}_{u,v}(q)= P_{w_Ju,w_Jv}(q)
\]
where $w_J$ is the longest element in $W_J$, and
\[
P^{J,q}_{u,v}(q)=\sum_{w\in W_J} (-1)^{\ell(w)} P_{wu,v}(q).
\]
\end{proposition}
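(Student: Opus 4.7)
The approach is to apply the uniqueness characterization of parabolic Kazhdan--Lusztig polynomials from Theorem~\ref{paraKL}: for each of the two identities, I would define $Q^{J,y}_{u,v}(q)$ to be the candidate right-hand side and verify that the family $\{Q^{J,y}_{u,v}\}$ satisfies properties (1)--(4), which by uniqueness forces $Q^{J,y}_{u,v} = P^{J,y}_{u,v}$. In both cases the bulk of the verification reduces to an analogous identity at the level of $R$-polynomials, which I would prove first by induction and then feed into the inversion formula (4).

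For the type $y=-1$ statement, set $Q_{u,v}(q) := P_{w_Ju,w_Jv}(q)$. Using the standard facts that $\ell(w_Ju) = \ell(w_J) + \ell(u)$ for $u \in {}^JW$ and that the map $u \mapsto w_Ju$ is order-preserving from ${}^JW$ into $W$ with $u \le v \iff w_Ju \le w_Jv$, properties (1) and (2) are immediate, and the degree bound (3) follows from $\ell(w_Jv) - \ell(w_Ju) = \ell(v) - \ell(u)$. For (4), I would first establish $R^{J,-1}_{u,v}(q) = R_{w_Ju, w_Jv}(q)$ by induction on $\ell(v)$, matching each of the three branches of the recursion for $R^{J,-1}$ to the corresponding case of the ordinary $R$-polynomial recursion applied to the pair $(w_Ju, w_Jv)$; the inversion formula for $P^{J,-1}_{u,v}$ then follows directly from the inversion formula for ordinary Kazhdan--Lusztig polynomials, provided one checks that the intermediate indices $z \in W$ with $w_Ju \le z \le w_Jv$ can all be written as $w_Jz'$ for some $z' \in {}^JW$ with $u \le z' \le v$.

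For the type $y=q$ statement, set $Q_{u,v}(q) := \sum_{w \in W_J}(-1)^{\ell(w)} P_{wu,v}(q)$. Since $\ell(wu) = \ell(w) + \ell(u)$ for $w \in W_J$ and $u \in {}^JW$, any nontrivial $w$ makes $wu$ strictly longer than $u$, so property (2) reduces to the single term $w = e$ and gives $Q_{u,u} = 1$; property (1) and the degree bound (3) then follow from the degree bounds for the ordinary $P_{wu,v}$. For (4) I would first show the auxiliary $R$-polynomial identity $R^{J,q}_{u,v}(q) = \sum_{w \in W_J}(-1)^{\ell(w)} R_{wu,v}(q)$ by the same three-branch induction, and then substitute and interchange the double sum $\sum_{z \in {}^JW}\sum_{w \in W_J}$ to match the inversion formula.

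The main obstacle will be the inductive verification of the two auxiliary $R$-polynomial identities, specifically handling the degenerate third branch of the recursion in Theorem~2.4, where $s \notin D(u)$ and $us \notin {}^JW$. In the type $-1$ case this branch contributes the factor $(q-1-(-1)) = q$, which must reproduce a specific cancellation among terms in the ordinary $R$-polynomial expansion for $(w_Ju, w_Jv)$; in the type $q$ case the factor is $(q-1-q) = -1$ and the corresponding cancellation on the ordinary side comes from pairing $w$ with $ws$ in the alternating sum $\sum_{w \in W_J}(-1)^{\ell(w)}$. Once these sign-matching arguments are nailed down, the rest of the proof is a routine application of the uniqueness of parabolic Kazhdan--Lusztig polynomials.
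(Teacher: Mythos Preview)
The paper does not give its own proof of this proposition; it simply cites Deodhar. So there is no proof in the paper to compare against. That said, your proposed strategy has a genuine flaw.

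The auxiliary $R$-polynomial identities you intend to prove by induction are \emph{false}. Take $W=S_3$ with simple reflections $s,t$, $J=\{s\}$, so $w_J=s$ and ${}^JW=\{e,t,ts\}$. Then
\[
R^{J,-1}_{e,ts}(q)=q(q-1)\quad\text{but}\quad R_{w_Je,\,w_Jts}(q)=R_{s,sts}(q)=(q-1)^2,
\]
and
\[
R^{J,q}_{e,ts}(q)=1-q\quad\text{but}\quad \sum_{w\in W_J}(-1)^{\ell(w)}R_{we,ts}(q)=R_{e,ts}-R_{s,ts}=(q-1)^2-(q-1)=(q-1)(q-2).
\]
So neither identity holds, and your induction in the degenerate third branch cannot produce the claimed cancellations. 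Relatedly, in the $y=-1$ case your parenthetical ``provided one checks that the intermediate indices $z\in W$ with $w_Ju\le z\le w_Jv$ can all be written as $w_Jz'$ for some $z'\in{}^JW$'' is itself false in general: the Bruhat interval $[w_Ju,w_Jv]$ in $W$ is strictly larger than $w_J\cdot[u,v]_{{}^JW}$, so the ordinary inversion formula does not collapse to the parabolic one by a bijection of index sets.

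The identities for the $P$-polynomials are nonetheless correct; the point is that they do not reduce to identities of $R$-polynomials in the naive way. Deodhar's argument works at the level of the Hecke algebra module $M^J$ (induced from the trivial or sign representation of $\mathcal{H}_J$): one shows that the images of the ordinary Kazhdan--Lusztig basis elements in $M^J$ are bar-invariant and satisfy the required triangularity and degree bounds, hence coincide with the parabolic KL basis. The alternating sum in the $y=q$ formula and the shift by $w_J$ in the $y=-1$ formula then drop out of how the standard bases of $\mathcal{H}$ and $M^J$ are related. If you want to pursue a uniqueness-style proof, you should work with the bar involution on $M^J$ directly rather than attempting to match $R$-polynomials term by term.
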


\begin{remark}
	By Proposition \ref{paraKL-KL} and property (e) in Section \ref{kl-sect}, the constant term of $P_{u,v}^{J,-1}(q)$ is $1$. 
\end{remark}


For $u,v\in {}^JW$, let 
\[
\tilde{\mu}(u,v):=\left[q^{\frac{1}{2}(\ell(v)-\ell(u)-1)}\right](P^{J,q}_{u,v}(q)) \in \ZZ
\]
denote the coefficient of degree of $\frac{1}{2}(\ell(v)-\ell(u)-1)$ in $P^{J,q}_{u,v}(q)$. Then we have the following proposition.

\begin{proposition} [{See \cite[Proposition 3.10]{VD}}]
	\label{recursion}
Let $u,v\in {}^JW$ and $u\le v$. Then for each $s\in D(v)$ we have 
\[
P^{J,q}_{u,v}(q)=\tilde{P}-\sum_{\{u\le w\le vs: ws<w\}} \tilde{\mu}(w,vs)q^{\frac{\ell(v)-\ell(w)}{2}}P^{J,q}_{u,w}(q)
\]
where
\[
\tilde{P}
=\begin{cases}
P^{J,q}_{us,vs}(q)+qP^{J,q}_{u,vs}(q), &\text{if $us<u$,}\\
qP^{J,q}_{us,vs}(q)+P^{J,q}_{u,vs}(q), &\text{if $u<us\in {}^JW$,}\\
0, &\text{if $u<us\not\in {}^JW$.}\\
\end{cases}
\]
\end{proposition}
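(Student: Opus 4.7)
The plan is to mirror the classical Kazhdan-Lusztig derivation of the recursion in property (d) of Section~\ref{kl-sect}, but carried out inside Deodhar's parabolic Hecke module $\mathcal{M}^{J,q}$. Recall that $\mathcal{M}^{J,q}$ has a standard basis $\{m_u : u \in {}^JW\}$ on which the Hecke algebra generator $T_s$ acts by one of three rules according to whether $us\in{}^JW$ with $us>u$, $us\in{}^JW$ with $us<u$, or $us\notin{}^JW$; in the last case the action is by a scalar (determined by $y=q$), which is exactly what kills the third branch of $\tilde{P}$. The polynomials $P^{J,q}_{u,v}$ arise as the coefficients of the bar-invariant Kazhdan-Lusztig basis $\{C'_v\}_{v\in{}^JW}$ of $\mathcal{M}^{J,q}$ when expanded in the standard basis, and the characterization in Theorem~\ref{paraKL} says that $C'_v$ is uniquely pinned down by bar-invariance together with the degree bound (3).

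Fix $s\in D(v)$, so $vs<v$. I would consider the product $C'_{vs}\cdot C'_s$ in $\mathcal{M}^{J,q}$, where $C'_s=T_s+1$ is the Kazhdan-Lusztig generator in the Hecke algebra; this product is bar-invariant and has leading coefficient $1$ on $m_v$. Expanding $C'_{vs}$ in the standard basis and applying the three-case $T_s$-action termwise collects a coefficient of $m_u$ that matches exactly the $\tilde{P}$ expression in the three cases: if $us<u$ the term becomes $P^{J,q}_{us,vs}(q)+qP^{J,q}_{u,vs}(q)$; if $u<us\in{}^JW$ it becomes $qP^{J,q}_{us,vs}(q)+P^{J,q}_{u,vs}(q)$; and if $u<us\notin{}^JW$ the contributions to $m_u$ coming from $m_u$ and $m_{us}$ in $C'_{vs}\cdot T_s$ cancel, giving $0$.

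In general $C'_{vs}\cdot C'_s\neq C'_v$: by bar-invariance together with the degree bound of Theorem~\ref{paraKL}(3), the difference is a $\mathbb{Z}$-linear combination of the $C'_w$ for $w<v$, and comparing top-degree coefficients forces
\[
C'_{vs}\cdot C'_s \;=\; C'_v \;+\; \sum_{\substack{w\in{}^JW \\ w\le vs,\; ws<w}} \tilde{\mu}(w,vs)\, C'_w.
\]
Solving for $C'_v$ and reading off the coefficient of $m_u$ on both sides produces the recursion in the statement. The main technical obstacle will be justifying this last identification of the correction term: one must show that only $w\in{}^JW$ with $ws<w$ and $w\le vs$ appear, and that the precise multiplicity is exactly $\tilde{\mu}(w,vs)$ as defined in Section~\ref{kl-sect}. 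This parallels the classical KL argument using the degree/parity bounds of $P^{J,q}_{w,vs}$, but the third (torsion) branch of the $T_s$-action must be checked separately to rule out stray contributions from $w$ with $ws\notin{}^JW$. As a sanity check, one can alternatively translate via Proposition~\ref{paraKL-KL} to the ordinary KL polynomials $\sum_{x\in W_J}(-1)^{\ell(x)}P_{xu,v}$ and apply the classical recursion of property (d) to each term, verifying that the alternating sum over $W_J$ collapses to the three-case $\tilde{P}$.
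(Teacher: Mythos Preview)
The paper does not supply its own proof of this proposition; it is quoted directly from Deodhar \cite[Proposition~3.10]{VD} and used as a black box (only in the proof of Lemma~\ref{ParaKL=1}). Your outline is essentially Deodhar's original argument via the parabolic Hecke module $\mathcal{M}^{J,q}$, so in that sense you are reproducing the cited proof rather than offering an alternative route.

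One point of your write-up is inaccurate and worth fixing before you rely on it. In the third case $u<us\notin{}^JW$, there is \emph{no} basis vector $m_{us}$ in $\mathcal{M}^{J,q}$ to cancel against, so the vanishing of $\tilde P$ cannot come from a cancellation ``between $m_u$ and $m_{us}$'' as you wrote. What actually happens is that for the family $y=q$ the torsion scalar is $q-1-y=-1$ (this is exactly the coefficient appearing in the third branch of the $R$-polynomial recursion quoted just above Theorem~\ref{paraKL}), so $m_u\cdot T_s=-m_u$ and hence $m_u\cdot(T_s+1)=0$ on the nose. Your earlier sentence ``the action is by a scalar (determined by $y=q$), which is exactly what kills the third branch'' is the correct mechanism; the later ``$m_u$ and $m_{us}$ cancel'' is not. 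This matters because if you mistakenly take the scalar to be $q$ (confusing the label $y=q$ with the eigenvalue), the computation gives $(q+1)P^{J,q}_{u,vs}$ instead of $0$ and the argument collapses. The rest of your plan (bar-invariance of $C'_{vs}\cdot C'_s$, identifying the $\tilde\mu$ correction terms via the degree bound of Theorem~\ref{paraKL}(3), and the alternating-sum sanity check through Proposition~\ref{paraKL-KL}) is sound and matches Deodhar's proof.
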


\begin{remark}
	Denote by $P^{[\lambda],J,y}_{u,v}(q)$ the parabolic Kazhdan-Lusztig polynomial of $u,v\in {}^JW_{[\lambda]}$ of type $y$ for $\lambda\in\mathfrak{h}^*$ and $J\subseteq\Delta_{[\lambda]}$.
\end{remark}

\subsection{Relative Kazhdan-Lusztig-Vogan polynomials}
In this section, for each $\lambda\in\Lambda^+_I$ and for each $x,w\in {}^IW_{[\lambda]}^{\Sigma_\mu}$, we will define a polynomial called
the \emph{relative Kazhdan-Lusztig-Vogan polynomial} associated to $\lambda$ of $x,w\in {}^IW_{[\lambda]}^{\Sigma_\mu}$. We will prove that it is related to a parabolic Kazhdan-Lusztig polynomial of ${}^KW_{[\lambda]}$ of type $y$ for $(K,y)\in\{({\Sigma_\mu},q),(I,-1)\}$. As an application, we determine when the sum of coefficients of a relative Kazhdan-Lusztig-Vogan polynomial associated to $\lambda$ is nonzero for regular $\lambda\in \Lambda^+_I$.
All definitions in this section (except Definition \ref{KLV}) follow the conventions in \cite{HJ}.

\begin{definition}[{See \cite[\S 1.8]{HJ}}]
	\label{dot}
	For $w \in W$ and $\eta\in \h^*$, define a shifted action of $W$ (called
	the \emph{dot action}) by $w \cdot \eta := w(\eta + \rho) - \rho$. If $\eta, \nu \in \h^*$, then we say that $\eta$ and $\nu$
	are \emph{linked} (or \emph{$W$-linked}) if for some $w \in W$, we have $\nu = w \cdot \eta$. 
\end{definition}

Linkage is clearly an equivalence relation on $\h^*$. The orbit $\{w \cdot \eta : w \in W\}$ of $\eta$ under
the dot action is called the \emph{linkage class} (or \emph{$W$-linkage class}) of $\eta$.

The weight
$\eta \in \h^*$ is \emph{regular} if $|W \cdot \eta| = |W|$ or, equivalently, if $\langle \eta + \rho,\alpha^\lor\rangle \neq 0$ for
all $\alpha\in\Phi$ (see {\cite[\S 1.8]{HJ}}). Weights which are not regular are called \emph{singular}.
We say the infinitesimal character $\chi_\eta$ is \emph{regular} if $\eta$ is regular in this sense.

Following \cite{HJ}, we denote by $E$ the Euclidean space spanned by $\Phi$. The $\mathbb{Z}$-span $\Lambda_r$ of $\Phi$ is called the \emph{root lattice}. 
For $\eta \in \h^*$, let
\[
\Phi_{[\eta]}:=\{\alpha\in \Phi  :  \langle \eta,\alpha^{\lor}\rangle\in\mathbb{Z} \}
\qquad\text{and}\qquad
W_{[\eta]}:=\{w\in W  :  w\cdot\eta-\eta\in \Lambda_r\}.
\]
Fix $\nu\in\h^*$, if $\eta-\nu$ is an integral weight, then 
$
\Phi_{[\eta]}=\Phi_{[\nu]} $ and $W_{[\eta]}=W_{[\nu]}.
$
If $\nu\in W_{[\eta]}\cdot\eta$, then 
$
W_{[\eta]}=W_{[\nu]} $ and $W_{[\eta]}\cdot\eta=W_{[\nu]}\cdot\nu.
$

\begin{theorem} [{\cite[Theorem 3.4]{HJ}}]
	\label{intweylgp}
	For $\eta\in \h^*$, the following holds:
	\begin{enumerate}
		\item $\Phi_{[\eta]}$ is an abstract root system in its $\mathbb{R}$-span $E(\eta)\subseteq E$.
		\item $W_{[\eta]}$ is the Weyl group of the root system $\Phi_{[\eta]}$. In particular, it is generated by the reflections $s_\alpha$ with $\alpha\in \Phi_{[\eta]}$.
	\end{enumerate}
\end{theorem}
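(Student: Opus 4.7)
My plan is to handle the two parts of the theorem in turn, using direct verification for Part (1), a short computation for the easy inclusion in Part (2), and an affine Weyl group reduction for the hard inclusion.

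For Part (1), I would check the axioms of an abstract root system directly. Finiteness of $\Phi_{[\eta]}$ and its spanning of $E(\eta)$ are immediate from the definition; closure under negation follows from $\langle \eta, (-\alpha)^\lor\rangle = -\langle \eta, \alpha^\lor\rangle$; the axiom $\mathbb{R}\alpha \cap \Phi_{[\eta]} = \{\pm\alpha\}$ and the integrality of $\langle \alpha, \beta^\lor\rangle$ for $\alpha, \beta \in \Phi_{[\eta]}$ are inherited from $\Phi$. Only the closure axiom for reflections requires a short computation: for $\alpha, \beta \in \Phi_{[\eta]}$ the identity
\[
\langle \eta, s_\alpha(\beta)^\lor\rangle = \langle \eta, \beta^\lor\rangle - \langle \beta, \alpha^\lor\rangle \langle \eta, \alpha^\lor\rangle
\]
lies in $\mathbb{Z}$, so $s_\alpha(\beta) \in \Phi_{[\eta]}$.

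For Part (2), write $W' := \langle s_\alpha : \alpha \in \Phi_{[\eta]}\rangle$ for the Weyl group of $\Phi_{[\eta]}$. The inclusion $W' \subseteq W_{[\eta]}$ is short: for $\alpha \in \Phi_{[\eta]}$, the identity $s_\alpha \cdot \eta - \eta = -\langle \eta + \rho, \alpha^\lor\rangle\,\alpha$ exhibits the shift as an integer multiple of $\alpha$, hence an element of $\Lambda_r$ (using that $\langle \rho, \alpha^\lor\rangle \in \mathbb{Z}$ for every root). For the reverse inclusion $W_{[\eta]} \subseteq W'$, I would realize $W_{[\eta]}$ as the image of a point-stabilizer in an affine Weyl group. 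Setting $\xi := \eta + \rho$ and letting $W_{\mathrm{aff}} := W \ltimes \Lambda_r$ act on $E$ by $(w, \lambda) \cdot x = wx + \lambda$, a direct computation shows that $(w, \lambda) \in \mathrm{Stab}_{W_{\mathrm{aff}}}(\xi)$ forces $\lambda = \xi - w\xi \in \Lambda_r$, so the natural projection $W_{\mathrm{aff}} \to W$ sends this stabilizer onto $W_{[\eta]}$.

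The main obstacle is then the last step: I would invoke the classical result that point-stabilizers in an affine Weyl group are generated by the affine reflections through hyperplanes containing the fixed point. Using that every root $\alpha$ is primitive in $\Lambda_r$, these generators have the form $s_{\alpha, \langle \xi, \alpha^\lor\rangle}$ with $\alpha \in \Phi_{[\eta]}$, and each projects to $s_\alpha$ in $W$, yielding $W_{[\eta]} \subseteq W'$. If one prefers an approach that stays within finite Weyl groups, the alternative is an induction on $\ell(w)$ for $w \in W_{[\eta]}$, producing at each step some $\alpha \in \Phi_{[\eta]}$ with $\ell(ws_\alpha) < \ell(w)$; organizing this cleanly requires selecting the right positive system inside $\Phi_{[\eta]}$ and identifying a simple root of $\Phi_{[\eta]}$ sent to a negative root by $w$, which is the technical heart of that alternative route.
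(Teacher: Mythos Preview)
The paper does not prove this statement; it is quoted verbatim from Humphreys' book as \cite[Theorem 3.4]{HJ} and used as a black box throughout. So there is no in-paper proof to compare against.

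Your outline is the standard one (and is essentially what Humphreys does): direct verification for Part~(1), the easy inclusion $W'\subseteq W_{[\eta]}$ by the one-line computation, and the affine Weyl group trick for $W_{[\eta]}\subseteq W'$. Two small points are worth tightening. First, in your displayed identity the factor $\langle\beta,\alpha^\lor\rangle$ should be $\langle\alpha,\beta^\lor\rangle$: one has $(s_\alpha\beta)^\lor=s_\alpha(\beta^\lor)$ and hence $\langle\eta,(s_\alpha\beta)^\lor\rangle=\langle s_\alpha\eta,\beta^\lor\rangle=\langle\eta,\beta^\lor\rangle-\langle\eta,\alpha^\lor\rangle\langle\alpha,\beta^\lor\rangle$; the conclusion is unaffected since both pairings are integers. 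Second, and more substantively, your affine argument places $\xi=\eta+\rho$ in $E$, but $\eta\in\h^*$ is in general complex. The fix is a short reduction: writing $\eta=\eta_1+i\eta_2$ with $\eta_1,\eta_2\in E$, the condition $w\cdot\eta-\eta\in\Lambda_r$ forces $w\eta_2=\eta_2$, so $W_{[\eta]}\subseteq\mathrm{Stab}_W(\eta_2)$, which by Chevalley's lemma (cited in the paper as Proposition~\ref{chevalley}) is the reflection subgroup for $\Phi_0:=\{\alpha\in\Phi:\langle\eta_2,\alpha^\lor\rangle=0\}$; then $\Phi_{[\eta]}\subseteq\Phi_0$ and one runs your affine argument inside $W(\Phi_0)\ltimes\mathbb{Z}\Phi_0$ with the real point $\eta_1+\rho$. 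Humphreys handles this reduction explicitly, so you should at least flag it.
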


Recall that a weight $\eta\in \h^*$ is \emph{antidominant} if $\langle \eta+\rho,\alpha^{\lor}\rangle\not\in\mathbb{Z}^{>0}$ for all $\alpha\in \Phi^+$.

\begin{proposition} [{\cite[Proposition 3.5]{HJ}}] 
	\label{antidominant}
	
	Let $\Phi_{[\eta]}$ and $W_{[\eta]}$ be the corresponding root system and Weyl group of $\eta\in\h^*$.
	Denote by $\Delta_{[\eta]}$ the simple system corresponding to the positive system $\Phi_{[\eta]}\cap \Phi^+$ in $\Phi_{[\eta]}$. Then, $\eta$ is antidominant if and only if the following equivalent conditions hold:
	\begin{enumerate}
		\item $\langle \eta+\rho,\alpha^{\lor}\rangle\le 0$ for all $\alpha\in\Delta_{[\eta]}$.
		\item $\eta\le s_\alpha\cdot \eta$ for all $\alpha\in \Delta_{[\eta]}$.
		\item $\eta\le w \cdot \eta$ for all $w \in W_{[\eta]}$.
	\end{enumerate}
	Therefore, there is a unique antidominant weight in the orbit $W_{[\eta]}\cdot \eta$.
\end{proposition}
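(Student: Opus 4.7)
The plan is to prove the equivalences (a) $\Leftrightarrow$ (b) $\Leftrightarrow$ (c) first, then establish antidominance $\Leftrightarrow$ (a), and finally deduce uniqueness from (c) by antisymmetry. Throughout, Theorem \ref{intweylgp} is the key structural input: it lets me treat $\Phi_{[\eta]}$ as a genuine root system with Weyl group $W_{[\eta]}$ and simple system $\Delta_{[\eta]}$.

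For (a) $\Leftrightarrow$ (b), the computation
\[
s_\alpha \cdot \eta - \eta = s_\alpha(\eta+\rho)-(\eta+\rho) = -\langle\eta+\rho,\alpha^\lor\rangle\,\alpha
\]
is the whole story. When $\alpha \in \Delta_{[\eta]}$, the number $\langle\eta+\rho,\alpha^\lor\rangle$ is an integer, because $\alpha \in \Phi_{[\eta]}$ forces $\langle\eta,\alpha^\lor\rangle \in \ZZ$ and $\langle\rho,\alpha^\lor\rangle$ is always the integer height of the positive coroot $\alpha^\lor$. Hence $\eta \le s_\alpha\cdot\eta$ (meaning $s_\alpha\cdot\eta - \eta \in \Gamma$) is equivalent to $\langle\eta+\rho,\alpha^\lor\rangle \le 0$. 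The implication (c) $\Rightarrow$ (b) is immediate since each $s_\alpha$ with $\alpha \in \Delta_{[\eta]}$ lies in $W_{[\eta]}$.

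The main obstacle is (b) $\Rightarrow$ (c). Condition (a) says $\eta+\rho$ lies in the closed antidominant chamber of the root system $\Phi_{[\eta]}$, and I would invoke the classical fact (proved by induction on the length of $w \in W_{[\eta]}$ in terms of the simple reflections $\{s_\alpha : \alpha \in \Delta_{[\eta]}\}$) that for such a weight, $w(\eta+\rho) - (\eta+\rho)$ is a non-negative integer combination of elements of $\Delta_{[\eta]}$. Because each $\alpha \in \Delta_{[\eta]}$ already lies in $\Phi^+$ and is therefore a non-negative integer combination of $\Delta$, this conclusion transports from the partial order of $\Phi_{[\eta]}$ to the ambient partial order $\le$, giving $\eta \le w\cdot\eta$. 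This chamber-to-orbit step is the one place where the proof really uses the structure of $\Phi_{[\eta]}$ as a root system.

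To finish, antidominance $\Rightarrow$ (a) is a direct restriction of the defining condition to $\alpha \in \Delta_{[\eta]} \subseteq \Phi^+$ combined with integrality. Conversely, for any $\alpha \in \Phi^+$, if $\alpha \notin \Phi_{[\eta]}$ then $\langle\eta+\rho,\alpha^\lor\rangle \notin \ZZ$ automatically, while if $\alpha \in \Phi_{[\eta]} \cap \Phi^+$ then $\alpha^\lor$ is a non-negative integer combination of the simple coroots dual to $\Delta_{[\eta]}$, making the pairing a sum of non-positive integers. For uniqueness in the orbit, if $\eta_1$ and $\eta_2 = w\cdot\eta_1$ are both antidominant, then (c) applied to $\eta_1$ gives $\eta_1 \le \eta_2$, and (c) applied to $\eta_2$ with $w^{-1}$ gives $\eta_2 \le \eta_1$, so antisymmetry of $\le$ forces $\eta_1 = \eta_2$. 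Existence of such an element in every $W_{[\eta]}$-orbit follows from the standard chamber argument applied inside the finite root system $\Phi_{[\eta]}$.
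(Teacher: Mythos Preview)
The paper does not supply its own proof of this proposition; it is quoted from \cite[Proposition 3.5]{HJ} without argument. Your proposal is a correct and self-contained proof, following the standard route: the reflection formula gives (a)$\iff$(b), the chamber/induction-on-length argument inside the root system $\Phi_{[\eta]}$ gives (b)$\Rightarrow$(c), the coroot decomposition $\alpha^\lor=\sum n_\beta\beta^\lor$ over $\beta\in\Delta_{[\eta]}$ handles (a)$\Rightarrow$antidominance, and antisymmetry of $\le$ yields uniqueness. This is essentially the argument Humphreys gives in the cited reference, so there is nothing to compare against within the paper itself.
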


\begin{remark} [{\cite[Remark 6.3]{HX}}]
	It holds that $M\in\cO^\p_{\chi_\lambda}$ has a direct sum decomposition $M=\bigoplus M_i$ such that all weights of each $M_i$ are contained in a single coset of the root lattice $\Lambda_r$ in $\h^*$. Therefore, the category $\cO^\p_{\chi_\lambda}$ decomposes as a direct sum of full subcategories, which can be indexed by the nonempty intersection of the orbit $W\cdot\lambda$ with the cosets $\h^*/\Lambda_r$. We use the antidominant weight $\mu$ in the intersection to parameterize the corresponding subcategory of $\cO^\p_{\chi_\lambda}$. 
	Following \cite{HX}, we denote this subcategory by $\cO^\p_{\mu}$. 
\end{remark}

Now we consider the simple highest weight module $L(\lambda)$ for $\lambda\in \Lambda^+_I$. For the rest of the paper, we will denote by $\mu$ the unique antidominant weight in  $W_{[\lambda]}\cdot\lambda$. Let $W_I$ be the Weyl group attached to the root system $\Phi_{I}$ with the longest element $w_I$. Then $W_{I}\subseteq W_{[\lambda]}$. 
Define
\[
{}^IW_{[\lambda]}:=\{w\in W_{[\lambda]}:w<s_\alpha w \ \text{for all }\alpha\in I\}, 
\]
where $<$ is the Bruhat ordering on $W$. 
We denote the set of singular
simple roots associated to $\mu$ in $\Delta_{[\lambda]}$ by
\[
{\Sigma_\mu} := \{\alpha \in \Delta_{[\lambda]} : \langle \mu + \rho, \alpha^\lor\rangle= 0\}.
\]
Then $W_{\Sigma_\mu}:= \{w \in W : w \cdot \mu = \mu\} \subseteq W_{[\lambda]}$ is the isotropy group of $\mu$. Let
\[
{}^IW_{[\lambda]}^{\Sigma_\mu} : = \{w \in {}^IW_{[\lambda]} : w < ws_\alpha\in {}^IW_{[\lambda]} \ \text{for all }\alpha\in{\Sigma_\mu}\},
\]
where $<$ is again the Bruhat ordering on $W$.

Let $S_{[\eta]}:=\{s_\alpha  :  \alpha\in\Delta_{[\eta]}\}$ be a generating set of $W_{[\eta]}$. 

\begin{theorem} [{\cite[Theorem 11]{WS}}] \label{Soergel} 
	Let $\g$, $\g'$ be semisimple Lie algberas, with respective Weyl group $W$ and $W'$. Fix antidominant weights $\nu$ for a Cartan subalgebra $\h\leftarrow \mathfrak{b}\subseteq\g$ and $\nu'$ for $\h'\leftarrow \mathfrak{b}'\subseteq\g'$, where $\mathfrak{b}$ and $\mathfrak{b}'$ are the Borel subalgebras compatible with $\h$ and $\h'$, respectively. Let $W_{[\nu]}$ and $W_{[\nu']}'$ be the corresponding reflection subgroups. Suppose there is an isomorphism of Coxeter system
	\[
	\begin{array}{rcl} (W_{[\nu]},S_{[\nu]}) &\simeq& (W'_{[\nu']},S'_{[\nu']})
	\\ x & \mapsto& x'
	\end{array}
	\]
	that takes the isotropy group of $\nu$ to the isotropy group of $\nu'$. Then, the corresponding subcategory $\cO_{\nu}$ is equivalent to $\cO'_{\nu'}$, with $L(x\cdot \nu)$ sent to $L(x'\cdot \nu')$ and $M(x\cdot\nu)$ sent to $M(x'\cdot\nu')$.
\end{theorem}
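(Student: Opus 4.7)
The plan is to reduce the equivalence to a purely combinatorial statement via Soergel's structure theorems, and then transport data across the given isomorphism of Coxeter systems. Throughout I identify $\cO_\nu$ with the full subcategory of $\cO$ whose objects have generalized infinitesimal character $\chi_\nu$ and whose weights lie in a single $\Lambda_r$-coset, and similarly for $\cO'_{\nu'}$. The key input is Soergel's \emph{Endomorphismensatz}, which identifies the endomorphism ring of the big projective $P(w_{[\nu]}\cdot\nu)\in\cO_\nu$ with $C^{W_{\Sigma_\nu}}$, where $C=S(E(\nu))/(S(E(\nu))^{W_{[\nu]}}_+)$ is the coinvariant algebra of $W_{[\nu]}$ and $W_{\Sigma_\nu}$ is the isotropy of $\nu$, together with Soergel's \emph{Struktursatz}, which says that the functor $\mathbb{V}=\Hom_{\cO}(P(w_{[\nu]}\cdot\nu),-)$ is fully faithful on the additive subcategory of projectives in $\cO_\nu$.

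First I would observe that for a finite Coxeter system the reflection representation is canonically determined up to isomorphism by $(W_{[\nu]},S_{[\nu]})$, so the coinvariant algebra $C$ together with its $W_{[\nu]}$-action is an invariant of the Coxeter datum. The hypothesized isomorphism $(W_{[\nu]},S_{[\nu]})\simeq(W'_{[\nu']},S'_{[\nu']})$ therefore induces an isomorphism of graded algebras $C\cong C'$ equivariant for the Weyl groups; since the same isomorphism identifies $W_{\Sigma_\nu}$ with $W'_{\Sigma_{\nu'}}$, it descends to an isomorphism $C^{W_{\Sigma_\nu}}\cong (C')^{W'_{\Sigma_{\nu'}}}$. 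Combining with the Endomorphismensatz and Struktursatz, one obtains an equivalence between the full subcategories of projectives in $\cO_\nu$ and $\cO'_{\nu'}$. Passing to the abelian hull (both $\cO_\nu$ and $\cO'_{\nu'}$ have enough projectives and finite-dimensional $\Hom$-spaces), this promotes to an equivalence of abelian categories $\cO_\nu\simeq\cO'_{\nu'}$.

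Next I would check that the equivalence carries $M(x\cdot\nu)$ to $M(x'\cdot\nu')$ and $L(x\cdot\nu)$ to $L(x'\cdot\nu')$. Under $\mathbb{V}$, the standard (Verma) objects become the ``cellular'' standard modules whose construction uses only the Bruhat order on ${}^{W_{\Sigma_\nu}}\!W_{[\nu]}$ and the action of translation/wall-crossing functors; these functors are themselves intrinsic to the Coxeter datum, being implemented on the combinatorial side by tensoring with bimodules built from $C$ (Soergel bimodules). The isomorphism of Coxeter systems matches simple reflections to simple reflections and hence preserves the Bruhat order, so it matches the parameterization of standard objects. The simples $L(x\cdot\nu)$ are then identified as the unique simple quotients of the corresponding $M(x\cdot\nu)$, and the correspondence on simples follows.

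The main obstacle I expect is the verification that $\mathbb{V}$ really does see Verma modules, translation functors, and the Bruhat-ordered labelling in a way that transports cleanly across the isomorphism of Coxeter systems. Concretely, one must confirm that the ``standard'' objects produced on the combinatorial side from the Coxeter data alone coincide with $\mathbb{V}(M(x\cdot\nu))$, and not just with some other filtration of the projective generator. This requires Soergel's explicit description of $\mathbb{V}(M(x\cdot\nu))$ as a quotient of $\mathbb{V}(P(w_{[\nu]}\cdot\nu))$ determined by a Bruhat-order condition, which is intrinsic to $(W_{[\nu]},S_{[\nu]},W_{\Sigma_\nu})$; once this identification is in hand, the equivalence and the matching of $M$'s and $L$'s follow together.
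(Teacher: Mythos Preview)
The paper does not prove this theorem; it is quoted verbatim from Soergel \cite[Theorem 11]{WS} and used as a black box (to derive Corollary~\ref{category}). So there is no ``paper's own proof'' to compare against.

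That said, your outline is essentially Soergel's original argument: the Endomorphismensatz identifies $\mathrm{End}(P(w_{[\nu]}\cdot\nu))$ with the invariants $C^{W_{\Sigma_\nu}}$ in the coinvariant algebra, the Struktursatz makes $\mathbb{V}$ fully faithful on projectives, and the coinvariant algebra together with its $W_{[\nu]}$-action is an invariant of the Coxeter system, so the given isomorphism transports the algebra and hence the category of projectives. Your acknowledged obstacle---that $\mathbb{V}$ carries $M(x\cdot\nu)$ and $L(x\cdot\nu)$ to objects describable purely in terms of $(W_{[\nu]},S_{[\nu]},W_{\Sigma_\nu})$ and the Bruhat order---is exactly the content of Soergel's computation of $\mathbb{V}(L(x\cdot\nu))$ in terms of intersection cohomology / Bott--Samelson type resolutions, which are intrinsic to the Coxeter data. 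One small caution: the claim that ``the reflection representation is canonically determined by the Coxeter system'' needs the word \emph{canonically} dropped (think $B_n$ versus $C_n$); what you actually need, and what is true, is that the coinvariant algebra with its $W$-module structure is determined up to (non-canonical) isomorphism, which suffices for the argument.
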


Given an arbitrary $\eta \in\h^*$, let $\eta^\natural$ denote the integral weight (relative to $\Phi_{[\eta]}$) in $E(\eta)$ characterized uniquely by the requirement that
$\langle \eta^\natural, \alpha^\lor\rangle = \langle \eta, \alpha^\lor\rangle$ for all $\alpha \in \Phi_{[\eta]}$; see \cite[\S 7.4]{HJ}. 

Consider $\mu\in\h^*$, the antidominant weight in $W_{[\lambda]}\cdot\lambda$.
Then, $\mu$ and $\mu^\natural$ have the same attached Coxeter systems and isotropy groups (cf. Lemma \ref{sameisotropy}). 
As noted in \cite[Remark 6.5]{HX},
there is a finite dimensional complex semisimple Lie algebra $\g^\natural$ compatible with the abstract reduced root system $\Phi_{[\mu]}=\Phi_{[\lambda]}$ and hence the Coxeter system $(W_{[\mu]},S_{[\mu]})=(W_{[\lambda]},S_{[\lambda]})$. Let $\h^\natural$ be a Cartan subalgebra of $\g^\natural$. Its dual $(\h^\natural)^*$ is a subspace in $\h^*$. Denote by $\cO^\natural$ the BGG category of $\g^\natural$. We denote by $\cO_{\mu^\natural}^\natural$ the full subcategory of $\cO^\natural$ corresponding to $\mu^\natural$ (which is antidominant for $\Phi_{[\lambda]}$) and write $\p^\natural$ for the standard parabolic subalgebra of $\g^\natural$ corresponding to $I$. Denote by $\g^\natural_\alpha$ the root subspace of $\g^\natural$ corresponding to a root $\alpha$ and let $\mathfrak{l}^\natural:=\h^\natural\oplus\sum_{\alpha\in \Phi_I} \g^\natural_\alpha$. Write $(\cO^\natural)^{\p^\natural}$ for the category of all finitely generated, locally $\mathfrak{p}^\natural$-finite and $\mathfrak{l}^\natural$-semisimple
$\g^\natural$-modules.
Finally, let $\rho_{[\lambda]}$ be the half sum of positive roots in $\Phi_{[\lambda]}^+:=\Phi_{[\lambda]}\cap\Phi^+$.

To summarize, we have the following corollary.

\begin{corollary} [{\cite[Corollary 6.6]{HX}}]  \label{category} 
	With the setting as above, there is an equivalence of categories $\mathcal{F}$ between $\cO_\mu$ and $\cO_{\mu^\natural}^\natural$ satisfying the following conditions:
	\begin{enumerate}[label=(\roman*)]
		\item $\mathcal{F}(L(x\cdot \mu))\cong L(x\cdot \mu^\natural)$ and $\mathcal{F}(M(x\cdot \mu))\cong M(x\cdot \mu^\natural)$ for $x\in W_{[\lambda]}$.
		
		\item If $x\cdot\mu$ is in $\Lambda_I^+$ then $\mathcal{F}(M_I(x\cdot \mu))\cong M_I(x\cdot \mu^\natural)$.
		\item For any $V\in\cO_\mu$, one has 
		$
		\mathrm{Ext}^i_\cO(M_I(x\cdot \mu),V)\cong  \mathrm{Ext}^i_{\cO^\natural}\left(M_I(x\cdot \mu^\natural),\mathcal{F}(V)\right).
		$
	\end{enumerate}
\end{corollary}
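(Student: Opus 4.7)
The plan is to obtain $\mathcal{F}$ directly from Soergel's theorem (Theorem \ref{Soergel}) applied to the pair $(\g,\mu)$ and $(\g^\natural,\mu^\natural)$, and then deduce each of (i), (ii), (iii) by a short argument. First I would verify the hypotheses of Theorem \ref{Soergel}: by construction $\g^\natural$ was chosen so that its integral root system matches $\Phi_{[\mu]}=\Phi_{[\lambda]}$, and by Lemma \ref{sameisotropy} (quoted in the sentence preceding the corollary) the passage $\mu \mapsto \mu^\natural$ preserves both the Coxeter system $(W_{[\mu]},S_{[\mu]})$ and the isotropy subgroup $W_{\Sigma_\mu}$. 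Thus the identity map qualifies as the required isomorphism of Coxeter systems carrying the isotropy group of $\mu$ to that of $\mu^\natural$, and Theorem \ref{Soergel} yields an equivalence $\mathcal{F}:\cO_\mu \to \cO^\natural_{\mu^\natural}$ together with the identifications $\mathcal{F}(L(x\cdot\mu)) \cong L(x\cdot\mu^\natural)$ and $\mathcal{F}(M(x\cdot\mu)) \cong M(x\cdot\mu^\natural)$ for all $x \in W_{[\lambda]}$. This gives (i).

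For (ii), I would first observe that if $x\cdot\mu \in \Lambda_I^+$ then also $x\cdot\mu^\natural \in \Lambda_I^+$: since $\Phi_I \subseteq \Phi_{[\mu]}$, the defining property $\langle \mu^\natural,\alpha^\lor\rangle = \langle \mu,\alpha^\lor\rangle$ for $\alpha \in \Phi_{[\mu]}$ propagates under the dot action of $W_{[\lambda]}$, so the pairings used to test $\Phi_I^+$-dominance agree. Now apply the exact functor $\mathcal{F}$ to the defining presentation from Theorem \ref{9.4}(2),
\[
\bigoplus_{\alpha \in I} M((s_\alpha x)\cdot\mu) \to M(x\cdot\mu) \to M_I(x\cdot\mu) \to 0,
\]
and use (i) on each Verma summand. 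The result is the exact sequence
\[
\bigoplus_{\alpha \in I} M((s_\alpha x)\cdot\mu^\natural) \to M(x\cdot\mu^\natural) \to \mathcal{F}(M_I(x\cdot\mu)) \to 0,
\]
in $\cO^\natural$. Applying Theorem \ref{9.4}(2) a second time, now inside $\cO^\natural$, identifies the cokernel of the left-hand map with $M_I(x\cdot\mu^\natural)$, which yields (ii).

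Finally, part (iii) is immediate from the general fact that any equivalence of abelian categories induces isomorphisms on $\mathrm{Ext}$-groups; concretely,
\[
\mathrm{Ext}^i_\cO(M_I(x\cdot\mu),V) \cong \mathrm{Ext}^i_{\cO^\natural}(\mathcal{F}(M_I(x\cdot\mu)),\mathcal{F}(V)),
\]
and substituting (ii) on the right gives the stated formula. The main obstacle in the whole argument is really the preliminary verification that passing from $\mu$ to $\mu^\natural$ leaves both the Coxeter system and the isotropy group invariant --- this is the content of Lemma \ref{sameisotropy} and is what makes Soergel's theorem applicable. Once that is in place, the proof is a routine transport of structure through the equivalence, with part (ii) being the only step that requires an additional compatibility check with the parabolic quotient.
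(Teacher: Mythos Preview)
Your approach is correct and is precisely the one the paper's setup is designed to support: the paper quotes this corollary from \cite{HX} without giving its own proof, but the material immediately preceding it (Theorem~\ref{Soergel}, the construction of $\g^\natural$, and Lemma~\ref{sameisotropy}) is laid out exactly so that Soergel's theorem can be applied with the identity map of Coxeter systems, yielding (i) directly and (iii) from the general fact that equivalences preserve $\mathrm{Ext}$.

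One point in your argument for (ii) deserves tightening. You apply $\mathcal{F}$ to the presentation of Theorem~\ref{9.4}(2) and then say that ``applying Theorem~\ref{9.4}(2) a second time \dots\ identifies the cokernel of the left-hand map with $M_I(x\cdot\mu^\natural)$.'' But Theorem~\ref{9.4}(2) only asserts the \emph{existence} of such an exact sequence; it does not say that every map $\bigoplus_{\alpha\in I} M(s_\alpha\cdot\lambda)\to M(\lambda)$ has cokernel $M_I(\lambda)$. What makes your step valid is the standard fact that $\dim\Hom_\cO(M(\nu),M(\lambda))\le 1$ for Verma modules, so each component $M((s_\alpha x)\cdot\mu^\natural)\to M(x\cdot\mu^\natural)$ of the transported map is, up to a nonzero scalar, the unique embedding, and hence the image (and therefore the cokernel) is independent of which nonzero maps are chosen. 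Since $\mathcal F$ sends nonzero maps to nonzero maps, the transported sequence agrees with the canonical one in $\cO^\natural$ and the identification with $M_I(x\cdot\mu^\natural)$ follows. You should state this uniqueness explicitly; otherwise the cokernel identification is not justified.
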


\begin{remark} We make some observations.
	\begin{itemize}
		\item  Note that $\Delta_{[\lambda]}=\Delta_{[\mu]}$, $\Phi_{[\lambda]}=\Phi_{[\mu]}$, $E(\lambda)=E(\mu)$, $W_{[\lambda]}=W_{[\mu]}$, ${}^I W_{[\lambda]}={}^I W_{[\mu]}$, ${}^I W_{[\lambda]}^{\Sigma_\mu}={}^I W_{[\mu]}^{\Sigma_\mu}$.
		\item $W_{\Sigma_\mu} = \{w \in W : w \cdot \mu = \mu\}=\{w \in W_{[\lambda]}: w \cdot \mu = \mu\}$ since $w \cdot \mu-\mu=0\in\Lambda_r$ for all $w\in W_{\Sigma_\mu}$.
		\item Since $\lambda\in \Lambda_I^+$, we have $\langle\lambda,\alpha^\lor\rangle\in\mathbb{Z}^{\ge 0}$ for all $\alpha\in I$. Therefore $\alpha\in\Phi_{[\lambda]}$ for all $\alpha\in I$. If $\alpha\in I$ then $\alpha\in  \Phi^+$ so $\alpha\in \Phi^+\cap \Phi_{[\lambda]}=\Phi_{[\lambda]}^+$. Suppose $\alpha\in I$ can be written as sum of two roots in $\Phi_{[\lambda]}^+$ on contrary. Then $\alpha\in I$ can be written as sum of two roots in $\Phi^+$, a contradiction to the fact that $\alpha\in I\subseteq \Delta$. Hence $\alpha\in I$ cannot be written as sum of two roots in $\Phi_{[\lambda]}^+$.
		This implies that $\alpha\in\Delta_{[\lambda]}$ for all $\alpha\in I$. Therefore $I\subseteq \Delta_{[\lambda]}$ and hence $W_I\subseteq W_{[\lambda]}$ by Theorem \ref{intweylgp}.
	\end{itemize}
\end{remark}

\begin{proposition} [{\cite[Proposition 2.72 (Chevalley's Lemma)]{AK}}]  \label{chevalley}
	Fix $v\in E$, and let $W_0=\{w\in W: wv=v\}$. Then $W_0$ is generated by the root reflection $s_\alpha$ such that $\langle v,\alpha^\lor\rangle=0$.
\end{proposition}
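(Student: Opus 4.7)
The plan is to prove both inclusions separately. The containment $W_0' \subseteq W_0$ is immediate: for any $\alpha \in \Phi$ with $\langle v, \alpha^\lor\rangle = 0$ one has $s_\alpha v = v - \langle v, \alpha^\lor\rangle \alpha = v$, so the subgroup $W_0'$ generated by such reflections fixes $v$. For the reverse inclusion $W_0 \subseteq W_0'$, I would use induction on a suitably chosen length function, peeling off a reflection in $W_0'$ from the left at each step.

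The main preparation is to perturb $v$ to a regular vector. Since $\Phi$ is finite, we can choose $v' \in E$ arbitrarily close to $v$ with $\langle v', \beta^\lor\rangle \neq 0$ for every $\beta \in \Phi$. Let $\Phi^+_v := \{\alpha \in \Phi : \langle v', \alpha^\lor\rangle > 0\}$ be the positive system determined by $v'$, with simple roots $\Delta_v$ and simple reflections $S_v = \{s_\alpha : \alpha \in \Delta_v\}$, and write $\ell_v$ for the length function of the Coxeter system $(W, S_v)$. Taking $v'$ sufficiently close to $v$, we may arrange that every $\alpha \in \Phi^+_v$ satisfies $\langle v, \alpha^\lor\rangle \geq 0$, since having $\langle v, \alpha^\lor\rangle < 0$ together with $\langle v', \alpha^\lor\rangle > 0$ is incompatible for $\|v' - v\|$ small.

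Now I would induct on $\ell_v(w)$ for $w \in W_0$. The base case $\ell_v(w) = 0$ is trivial. For the inductive step, by the standard descent property for Coxeter groups there exists $\alpha \in \Delta_v$ with $\ell_v(s_\alpha w) < \ell_v(w)$, equivalently $w^{-1}\alpha \in -\Phi^+_v$, which rewrites as $\langle wv', \alpha^\lor\rangle < 0$. Since $wv = v$ and $w$ acts as an isometry with respect to the Killing form, $\|wv' - v\| = \|v' - v\|$ is small, so $\langle wv', \alpha^\lor\rangle$ is close to $\langle v, \alpha^\lor\rangle$; combined with $\langle v, \alpha^\lor\rangle \geq 0$ from the previous step and the strict sign $\langle wv', \alpha^\lor\rangle < 0$, this forces $\langle v, \alpha^\lor\rangle = 0$, so $s_\alpha \in W_0'$. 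Then $s_\alpha w \in W_0$ has strictly smaller $\ell_v$-length, so by induction $s_\alpha w \in W_0'$ and hence $w = s_\alpha(s_\alpha w) \in W_0'$. The main obstacle is choosing a single perturbation $v'$ that simultaneously satisfies the required sign and proximity estimates for all finitely many roots and all descents arising in the induction; since $\Phi$ and $W_0$ are both finite, a generic sufficiently small perturbation handles all cases at once, so this is only a technical point once the induction is correctly set up.
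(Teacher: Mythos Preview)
The paper does not prove this proposition; it is simply quoted from Knapp \cite{AK} as an external input. So there is no ``paper's own proof'' to compare against, and your task reduces to whether your argument stands on its own.

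Your argument is correct. The only point that deserves a bit more care is the phrase ``this forces $\langle v,\alpha^\lor\rangle = 0$'': closeness of $\langle wv',\alpha^\lor\rangle$ to $\langle v,\alpha^\lor\rangle$ together with opposite weak/strict signs does not by itself give equality. What makes it work is that the nonzero values of $\langle v,\alpha^\lor\rangle$ over $\alpha\in\Phi$ are bounded away from $0$ by some $\epsilon>0$ (a finite set), so choosing $v'$ with $|\langle v'-v,\beta^\lor\rangle|<\epsilon$ for all $\beta\in\Phi$ suffices; then $|\langle wv'-v,\alpha^\lor\rangle| = |\langle v'-v,(w^{-1}\alpha)^\lor\rangle| < \epsilon$ for every $w\in W_0$, and the gap forces $\langle v,\alpha^\lor\rangle=0$. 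You gesture at exactly this in your last sentence, so the argument goes through once stated quantitatively.

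For comparison, the textbook proof (as in Knapp or Humphreys) avoids the perturbation entirely: one directly chooses a positive system so that $v$ lies in the closed fundamental chamber $\overline{C}$, i.e.\ $\langle v,\alpha^\lor\rangle\ge 0$ for all simple $\alpha$. Then for $w\in W_0$ with $s_\alpha w<w$, one has $w^{-1}\alpha<0$, hence $\langle v,\alpha^\lor\rangle=\langle wv,\alpha^\lor\rangle=\langle v,w^{-1}\alpha^\lor\rangle\le 0$; combined with $\langle v,\alpha^\lor\rangle\ge 0$ this gives $s_\alpha v=v$ immediately, with no $\epsilon$ needed. Your perturbation is just a concrete device for producing such a positive system, so the two arguments are really the same induction with different bookkeeping.
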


\begin{lemma}\label{sameisotropy}
	It holds that
	\begin{enumerate}
		\item $\mu$ and $\mu^\natural$ have the same attached Coxeter systems.
		\item the isotropy group of $\mu$ is the same as the isotropy group of $\mu^\natural$.
	\end{enumerate}
\end{lemma}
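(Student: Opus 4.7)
The plan is to reduce both parts to Chevalley's Lemma (Proposition~\ref{chevalley}) applied once to $\mu$ and once to $\mu^\natural$, after first identifying $\Phi_{[\mu]}=\Phi_{[\mu^\natural]}$. For part~(1), the inclusion $\Phi_{[\mu]}\subseteq\Phi_{[\mu^\natural]}$ is immediate from the defining identity $\langle \mu^\natural,\alpha^\lor\rangle=\langle \mu,\alpha^\lor\rangle\in\mathbb{Z}$ for $\alpha\in\Phi_{[\mu]}$. For the reverse inclusion I would first observe that the defining condition, together with the choice $\mu^\natural\in E(\mu)$, forces $\mu-\mu^\natural\in E(\mu)^\perp$ with respect to the Killing form (using that the coroots of $\Phi_{[\mu]}$ span $E(\mu)$). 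Then for $\alpha\notin\Phi_{[\mu]}$ I would decompose $\alpha^\lor$ along $E(\mu)\oplus E(\mu)^\perp$ and use this orthogonal decomposition, together with $\langle\mu,\alpha^\lor\rangle\notin\mathbb{Z}$, to conclude $\langle\mu^\natural,\alpha^\lor\rangle\notin\mathbb{Z}$. Having $\Phi_{[\mu]}=\Phi_{[\mu^\natural]}$, Theorem~\ref{intweylgp} identifies the two attached Coxeter systems.

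For part~(2), I would apply Chevalley's Lemma to $\mu+\rho$ (giving the isotropy of $\mu$ in $W$) and to $\mu^\natural+\rho_{[\lambda]}$ (giving the isotropy of $\mu^\natural$ in $W_{[\lambda]}$), expressing both as subgroups generated by root reflections $s_\alpha$ with $\alpha\in\Phi$ (resp.\ $\alpha\in\Phi_{[\mu]}$) satisfying a vanishing coroot-pairing condition. The integrality of $\langle\rho,\alpha^\lor\rangle$ forces $\alpha\in\Phi_{[\mu]}$ whenever $\langle\mu+\rho,\alpha^\lor\rangle=0$, so the two generating sets can be indexed by $\alpha\in\Phi_{[\mu]}$. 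On that set, the defining identity gives $\langle\mu^\natural,\alpha^\lor\rangle=\langle\mu,\alpha^\lor\rangle$, and a direct comparison of $\rho$ and $\rho_{[\lambda]}$ on the coroots of $\Phi_{[\mu]}$ completes the matching of the two zero-pairing conditions, yielding the equality of isotropy groups as subgroups of $W_{[\lambda]}$.

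The main obstacle will be the reverse inclusion in part~(1), together with the $\rho$ versus $\rho_{[\lambda]}$ compatibility used in part~(2): both require ruling out that the orthogonal correction $\mu-\mu^\natural$ (respectively $\rho-\rho_{[\lambda]}$) conspires with a non-parallel component of $\alpha^\lor$ to produce an integer or zero value where none should appear. I expect this delicate point to be controlled by the antidominance of $\mu$ and the explicit characterization of $\mu^\natural$ as the unique element of $E(\mu)$ with prescribed coroot pairings on $\Phi_{[\mu]}$, so that any would-be extra integral root outside $\Phi_{[\mu]}$ is excluded.
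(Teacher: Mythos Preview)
Your approach to part~(1) rests on a misreading of the statement. In the paper's context (the setup for Soergel's equivalence, Theorem~\ref{Soergel} and Corollary~\ref{category}), the ``attached Coxeter system'' of $\mu^\natural$ is meant relative to $\g^\natural$, whose full Weyl group is already $W_{[\lambda]}=W_{[\mu]}$. Thus one must compute $\big(W_{[\lambda]}\big)_{[\mu^\natural]}$ inside $\Phi_{[\lambda]}$, not $W_{[\mu^\natural]}$ inside the ambient $\Phi$. With this reading part~(1) is immediate: $\mu^\natural$ is $\Phi_{[\mu]}$-integral by construction, so $\big(W_{[\lambda]}\big)_{[\mu^\natural]}=W_{[\lambda]}=W_{[\mu]}$. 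Your alternative claim $\Phi_{[\mu]}=\Phi_{[\mu^\natural]}$ (computed inside $\Phi$) is in fact false. For instance, in type $A_2$ take $\mu$ antidominant with $\langle\mu,\alpha_1^\lor\rangle=-2$ and $\langle\mu,\alpha_2^\lor\rangle=\tfrac12$; then $\Phi_{[\mu]}=\{\pm\alpha_1\}$ and $\mu^\natural=-\alpha_1$, but $\langle\mu^\natural,\alpha_2^\lor\rangle=1\in\mathbb{Z}$, so $\alpha_2\in\Phi_{[\mu^\natural]}\setminus\Phi_{[\mu]}$. Neither antidominance of $\mu$ nor the orthogonal decomposition you describe rescues the reverse inclusion.

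Your plan for part~(2) is closer to correct and in spirit resembles what the paper carries out in the \emph{next} lemma (Lemma~\ref{parabolicgenerate}), where Chevalley's Lemma is indeed invoked on $\mu^\natural+\rho_{[\lambda]}\in E(\lambda)$. The paper's own proof of part~(2) takes a more direct route: it first shows $\langle\rho,\beta^\lor\rangle=\langle\rho_{[\lambda]},\beta^\lor\rangle$ for $\beta\in\Phi_{[\lambda]}$ by observing that $s_\beta$ permutes $\Phi^+\setminus\Phi_{[\lambda]}^+$, hence $\langle\rho-\rho_{[\lambda]},\beta\rangle=0$; then an explicit telescoping computation gives $\mu-w\cdot\mu=\mu^\natural-w\cdot\mu^\natural$ for every $w\in W_{[\lambda]}$, from which the equality of isotropy groups is immediate. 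Note also that Proposition~\ref{chevalley} is stated for $v\in E$; applying it directly to $\mu+\rho$ requires $\mu$ real, which is not assumed, so even for part~(2) you would need to route through $\mu^\natural+\rho_{[\lambda]}$ (which is in $E(\lambda)$) rather than $\mu+\rho$. The $\rho$ versus $\rho_{[\lambda]}$ compatibility you flag is exactly the ingredient needed, and the paper dispatches it in two lines as above.
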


\begin{proof}	
	We prove each part in turn.
	\begin{enumerate}
		\item 
		The attached Coxeter system of $\mu$ is $(W_{[\mu]},S_{[\mu]})$. 
		Since $\mu^\natural\in E(\mu)$ is $\Phi_{[\mu]}$-integral, it holds that the attached Coxeter system of $\mu^\natural$ is \[
		\left(\left(W_{[\mu]}\right)_{[\mu^\natural]},\left(S_{[\mu]}\right)_{[\mu^\natural]}\right)=(W_{[\mu]},S_{[\mu]}).
		\] 
		The claim follows.
		\item 
		For all $\beta\in\Phi_{[\lambda]}$, 
		\begin{align*}
		\langle
		\rho-\rho_{[\lambda]},\beta
		\rangle
		&=\langle
		s_\beta(\rho-\rho_{[\lambda]}),s_\beta(\beta)
		\rangle\\
		&=\begin{cases}
		\left\langle
		s_\beta\left(\frac{1}{2}\sum_{\alpha\in \Phi^+\backslash \Phi_{[\lambda]}^+}\alpha\right),s_\beta(\beta)
		\right\rangle,&\text{if $\beta\in \Phi_{[\lambda]}^+$}\\
		\left\langle
		s_{-\beta}\left(\frac{1}{2}\sum_{\alpha\in \Phi^+\backslash \Phi_{[\lambda]}^+}\alpha\right),s_\beta(\beta)
		\right\rangle,&\text{if $\beta\in -\Phi_{[\lambda]}^+$}\\
		\end{cases}\\
		&=\left\langle
		\frac{1}{2}\sum_{\alpha\in \Phi^+\backslash \Phi_{[\lambda]}^+}\alpha,-\beta
		\right\rangle\\
		&=-\langle
		\rho-\rho_{[\lambda]},\beta
		\rangle.
		\end{align*}
		This implies $\langle
		\rho,\beta^\lor
		\rangle=\langle
		\rho_{[\lambda]},\beta^\lor
		\rangle$ for all $\beta\in \Phi_{[\lambda]}$.
		Recall that $\langle
		\mu,\beta^\lor
		\rangle=\langle
		\mu^\natural,\beta^\lor
		\rangle$ for all $\beta\in \Phi_{[\lambda]}$.
		Hence $\langle
		\mu+\rho,\beta^\lor
		\rangle=\langle
		\mu^\natural+\rho_{[\lambda]},\beta^\lor
		\rangle$ for all $\beta\in \Phi_{[\lambda]}$.
		
		Note that $W_{[\lambda]}$ is the Weyl group of $\Phi_{[\lambda]}$ (cf. Theorem \ref{intweylgp}). 
		Let $w=s_{\alpha_1}\cdots s_{\alpha_k}\in W_{[\lambda]}$ be an arbitrary expression for some $\alpha_1,\cdots, \alpha_k\in\Phi_{[\lambda]}$. Then
		\allowdisplaybreaks
		\begin{align*}
		\mu-w\cdot\mu
		&= \mu-(s_{\alpha_1}\cdots s_{\alpha_k})\cdot\mu\\
		&=\mu-s_{\alpha_1}\cdot\left((s_{\alpha_2}\cdots s_{\alpha_k})\cdot\mu\right)\\
		&=\mu-\left((s_{\alpha_2}\cdots s_{\alpha_k})\cdot\mu-\left\langle (s_{\alpha_2}\cdots s_{\alpha_k})\cdot\mu+\rho, \alpha_1^\lor\right\rangle\alpha_1\right)\\
		&=\mu-(s_{\alpha_2}\cdots s_{\alpha_k})\cdot\mu+\left\langle (s_{\alpha_2}\cdots s_{\alpha_k})\cdot\mu+\rho, \alpha_1^\lor\right\rangle\alpha_1\\
		&\quad \vdots \\
		&=\mu-s_{\alpha_k}\cdot\mu+\sum_{i=1}^{k-1}\left\langle (s_{\alpha_{i+1}}\cdots s_{\alpha_k})\cdot\mu+\rho, \alpha_i^\lor\right\rangle\alpha_i\\
		&=\sum_{i=1}^{k-1}\left\langle (s_{\alpha_{i+1}}\cdots s_{\alpha_k})\cdot\mu+\rho, \alpha_i^\lor\right\rangle\alpha_i+\left\langle \mu+\rho, \alpha_k^\lor\right\rangle\alpha_k\\
		&=\sum_{i=1}^{k-1}\left\langle (s_{\alpha_{i+1}}\cdots s_{\alpha_k})(\mu+\rho), \alpha_i^\lor\right\rangle\alpha_i+\left\langle \mu+\rho, \alpha_k^\lor\right\rangle\alpha_k\\
		&=\sum_{i=1}^{k-1}\left\langle \mu+\rho, \left((s_{\alpha_{i+1}}\cdots s_{\alpha_k})^{-1}\alpha_i\right)^\lor\right\rangle\alpha_i+\left\langle \mu+\rho, \alpha_k^\lor\right\rangle\alpha_k.
		\end{align*}
		Similarly,
		\begin{align*}
		\mu^\natural-w\cdot\mu^\natural
		&=\sum_{i=1}^{k-1}\left\langle \mu^\natural+\rho_{[\lambda]}, \left((s_{\alpha_{i+1}}\cdots s_{\alpha_k})^{-1}\alpha_i\right)^\lor\right\rangle\alpha_i+\left\langle \mu^\natural+\rho_{[\lambda]}, \alpha_k^\lor\right\rangle\alpha_k.
		\end{align*}
		Since $(s_{\alpha_{i+1}}\cdots s_{\alpha_k})^{-1}\alpha_i,\alpha_k\in\Phi_{[\lambda]}=\Phi_{[\mu]}$ for all $1\le i\le k-1$, we have $\mu-w\cdot\mu=\mu^\natural-w\cdot\mu^\natural$ for all $w\in W_{[\lambda]}$.
		
		Let ${\Sigma}^\natural_{\mu^\natural}$ be the set of singular simple roots associated to $\mu^\natural$ in $\left(\Delta_{[\lambda]}\right)_{[\mu^\natural]}$, where $\left(\Delta_{[\lambda]}\right)_{[\mu^\natural]}$ is the set of simple roots in $\left(\Phi_{[\lambda]}\right)_{[\mu^\natural]}\cap \Phi_{[\lambda]}^+$, i.e., ${\Sigma}^\natural_{\mu^\natural}
		:=\{\alpha\in \left(\Delta_{[\lambda]}\right)_{[\mu^\natural]}: \langle \mu^\natural+\rho_{[\lambda]},\alpha^\lor\rangle=0\}$.
		
		Since $\mu^\natural\in E(\lambda)$ is $\Phi_{[\lambda]}$-integral, we have $\left(\Phi_{[\lambda]}\right)_{[\mu^\natural]}=\Phi_{[\lambda]}$ and then $\left(\Phi_{[\lambda]}\right)_{[\mu^\natural]}\cap \Phi_{[\lambda]}^+=\Phi_{[\lambda]}\cap \Phi_{[\lambda]}^+=\Phi_{[\lambda]}^+$. This implies $\left(\Delta_{[\lambda]}\right)_{[\mu^\natural]}=\Delta_{[\lambda]}$.
		Then \[
		{\Sigma}^\natural_{\mu^\natural}
		=\{\alpha\in \left(\Delta_{[\lambda]}\right)_{[\mu^\natural]}: \langle \mu^\natural+\rho_{[\lambda]},\alpha^\lor\rangle=0\}
		=\{\alpha\in \Delta_{[\lambda]}: \langle \mu^\natural+\rho_{[\lambda]},\alpha^\lor\rangle=0\}.
		\] 	
		The isotropy group of $\mu^\natural$ is defined to be $\left(W_{[\lambda]}\right)_{{\Sigma}^\natural_{\mu^\natural}}
		:=\{w\in W_{[\lambda]} : w\cdot\mu^\natural=\mu^\natural\}$.
		This implies
		\[
		\left(W_{[\lambda]}\right)_{{\Sigma}^\natural_{\mu^\natural}}
		=\{w\in W_{[\lambda]} : w\cdot\mu^\natural=\mu^\natural\}
		=\{w\in W_{[\lambda]} : w\cdot\mu=\mu\}=W_{\Sigma_\mu}.
		\]
		That is, the isotropy group of $\mu$ is the same as the isotropy group of $\mu^\natural$.
	\end{enumerate}
\end{proof}

\begin{lemma}\label{parabolicgenerate}
	It holds that
	$W_{\Sigma_\mu}=\langle s_\alpha\in W : \alpha\in {\Sigma_\mu}\rangle$. Hence $\left(W_{[\lambda]}\right)_{{\Sigma}^\natural_{\mu^\natural}}
	=\langle s_\alpha\in W_{[\lambda]}: \alpha\in {\Sigma}^\natural_{\mu^\natural}\rangle$.
	%
\end{lemma}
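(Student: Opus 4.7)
The first task is to recognize that the dot-action isotropy group $W_{\Sigma_\mu}$ coincides with the ordinary stabilizer of $\mu+\rho$: by Definition \ref{dot}, $w\cdot\mu=\mu$ if and only if $w(\mu+\rho)=\mu+\rho$. So the plan is to apply Chevalley's Lemma (Proposition \ref{chevalley}) with $v=\mu+\rho\in E$, which immediately yields that $W_{\Sigma_\mu}$ is generated by the root reflections $s_\alpha$ with $\alpha\in\Phi$ and $\langle\mu+\rho,\alpha^\lor\rangle=0$.

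Next I would cut this generating set down to roots lying in $\Phi_{[\lambda]}$. Since $\langle\rho,\alpha^\lor\rangle\in\mathbb{Z}$ for every $\alpha\in\Phi$, the vanishing $\langle\mu+\rho,\alpha^\lor\rangle=0$ forces $\langle\mu,\alpha^\lor\rangle\in\mathbb{Z}$, that is, $\alpha\in\Phi_{[\mu]}=\Phi_{[\lambda]}$. Consequently $W_{\Sigma_\mu}\subseteq W_{[\lambda]}$ and the generators of $W_{\Sigma_\mu}$ produced by Chevalley are reflections attached to roots of the subsystem $\Phi_{[\lambda]}$.

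At this point I reinterpret the problem inside the root system $\Phi_{[\lambda]}$ with Weyl group $W_{[\lambda]}$, simple system $\Delta_{[\lambda]}$, and antidominant chamber defined by the inequalities $\langle\,\cdot\,,\alpha^\lor\rangle\le 0$ for $\alpha\in\Delta_{[\lambda]}$. Because $\mu$ is antidominant, Proposition \ref{antidominant}(1) tells us that $\mu+\rho$ lies in the closure of this antidominant chamber. The main substantive step is then to invoke the classical fact (a strengthening of Chevalley's Lemma for points on chamber walls, valid in any finite Coxeter group) that the stabilizer of a point in the closure of a fundamental chamber is the standard parabolic subgroup generated by the simple reflections through the walls containing that point. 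Applied here, this gives
\[
W_{\Sigma_\mu}=\langle s_\alpha : \alpha\in\Delta_{[\lambda]},\ \langle\mu+\rho,\alpha^\lor\rangle=0\rangle=\langle s_\alpha : \alpha\in\Sigma_\mu\rangle,
\]
which is the first claim. If that chamber-closure result is not taken as known, the alternative route is a direct induction on length: one shows that if $w(\mu+\rho)=\mu+\rho$ and $\ell(w)>0$, some reduced expression must start with a simple reflection $s_\alpha$ with $\alpha\in\Sigma_\mu$, and peel it off. This bootstrap is the only place where real work is needed, and it is the step I would expect to be the main obstacle.

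For the second identity, the work is already done: Lemma \ref{sameisotropy}(2) shows $(W_{[\lambda]})_{\Sigma^\natural_{\mu^\natural}}=W_{\Sigma_\mu}$, and its proof establishes $\langle\mu+\rho,\beta^\lor\rangle=\langle\mu^\natural+\rho_{[\lambda]},\beta^\lor\rangle$ for all $\beta\in\Phi_{[\lambda]}$, so in particular $\Sigma_\mu=\Sigma^\natural_{\mu^\natural}$. Substituting these two equalities into the first claim yields $(W_{[\lambda]})_{\Sigma^\natural_{\mu^\natural}}=\langle s_\alpha\in W_{[\lambda]}:\alpha\in\Sigma^\natural_{\mu^\natural}\rangle$, completing the lemma.
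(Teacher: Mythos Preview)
Your overall strategy---Chevalley's Lemma followed by a reduction to simple reflections in $\Delta_{[\lambda]}$---is the same as the paper's, and the treatment of the second identity via Lemma~\ref{sameisotropy} and $\Sigma_\mu=\Sigma^\natural_{\mu^\natural}$ is exactly what the paper does. However, there is a technical gap in your step~2: you write ``apply Chevalley's Lemma (Proposition~\ref{chevalley}) with $v=\mu+\rho\in E$'', but $\mu$ is an arbitrary element of $\h^*$ and there is no reason for $\mu+\rho$ to lie in the real span $E$ of the roots. Proposition~\ref{chevalley} as stated in the paper requires $v\in E$, so this application is not justified.

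The paper handles this by never working with $\mu+\rho$ directly: it applies Chevalley's Lemma to $\mu^\natural+\rho_{[\lambda]}\in E(\lambda)$ inside the Weyl group $W_{[\lambda]}$, using the identity $\langle\mu+\rho,\beta^\lor\rangle=\langle\mu^\natural+\rho_{[\lambda]},\beta^\lor\rangle$ for $\beta\in\Phi_{[\lambda]}$ (established in Lemma~\ref{sameisotropy}) to transfer conclusions back to $\mu$. Your step~3 (observing that any $\alpha$ with $\langle\mu+\rho,\alpha^\lor\rangle=0$ lies in $\Phi_{[\lambda]}$) is correct and is exactly what makes this passage to $\mu^\natural$ lossless; you just need to perform that passage \emph{before} invoking Chevalley, not after. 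A second, smaller difference: where you appeal to the chamber-closure description of stabilizers as a black box, the paper gives a self-contained argument that the simple roots of $\Phi(\Sigma_\mu):=\{\alpha\in\Phi_{[\lambda]}:\langle\mu+\rho,\alpha^\lor\rangle=0\}$ already lie in $\Delta_{[\lambda]}$ (hence in $\Sigma_\mu$), using antidominance of $\mu$ to rule out a decomposition $\alpha=\beta+\gamma$ in $\Phi_{[\lambda]}^+$. Either route works once you are correctly inside $E(\lambda)$.
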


This result justifies the notations $W_{\Sigma_\mu}$ and $\left(W_{[\lambda]}\right)_{{\Sigma}^\natural_{\mu^\natural}}$. 

\begin{proof}
	Recall that $\langle\mu^\natural+\rho_{[\lambda]},\alpha^\lor\rangle=\langle\mu+\rho,\alpha^\lor\rangle$ for all $\alpha\in\Phi_{[\lambda]}$. Then
	\begin{align*}
	&\left(W_{[\lambda]}\right)_{{\Sigma}^\natural_{\mu^\natural}}&\\
	&=\{w\in W_{[\lambda]} : w\cdot\mu^\natural=\mu^\natural\}&\text{(by Definition of $\left(W_{[\lambda]}\right)_{{\Sigma}^\natural_{\mu^\natural}}$)}\\
	&=\{w\in W_{[\lambda]} : w(\mu^\natural+\rho_{[\lambda]})=\mu^\natural+\rho_{[\lambda]}\}&\text{(by Definition \ref{dot})}\\
	&=\langle s_\alpha\in W_{[\lambda]}: \langle\mu^\natural+\rho_{[\lambda]},\alpha^\lor\rangle=0\rangle & \text{(by Proposition \ref{chevalley} and $\mu^\natural+\rho_{[\lambda]}\in E(\lambda)$)}\\
	&=\langle s_\alpha\in W_{[\lambda]}: \langle\mu+\rho,\alpha^\lor\rangle=0\rangle&\text{(as $\langle\mu^\natural+\rho_{[\lambda]},\alpha^\lor\rangle=\langle\mu+\rho,\alpha^\lor\rangle$ for all $\alpha\in\Phi_{[\lambda]}$)}\\
	&=\langle s_\alpha\in W_{[\lambda]}: s_\alpha\cdot\mu=\mu\rangle&\text{(as $\mu-s_\alpha\cdot\mu=\langle\mu+\rho,\alpha^\lor\rangle\alpha$ for all $\alpha\in\Phi_{[\lambda]}$)}\\
	&\subseteq \{w\in W_{[\lambda]} : w\cdot\mu=\mu\}&\text{(by Definition of a generating set of a group)}\\
	&=\{w\in W : w\cdot\mu=\mu\}&\text{(by Remark following Corollary \ref{category})}\\
	&=W_{\Sigma_\mu}.&\text{(by Definition of $W_{{\Sigma}_{\mu}}$)}
	\end{align*}
	By Lemma \ref{sameisotropy}, it holds that $\left(W_{[\lambda]}\right)_{{\Sigma}^\natural_{\mu^\natural}}=W_{\Sigma_\mu}$ and hence $
	W_{\Sigma_\mu}
	=\langle s_\alpha\in W_{[\lambda]}: \langle\mu+\rho,\alpha^\lor\rangle=0\rangle.
	$
	Let $\left(\Phi_{[\lambda]}\right)_{\Sigma_\mu}:=\Phi_{[\lambda]}\cap\sum_{\alpha\in\Sigma_\mu}\mathbb{Z}\alpha$ and $\Phi({\Sigma_\mu}):=\{\alpha\in\Phi_{[\lambda]}: \langle\mu+\rho,\alpha^\lor\rangle=0\}$. It is clear that $\left(\Phi_{[\lambda]}\right)_{\Sigma_\mu}$ and $\Phi({\Sigma_\mu})$ are root systems. Note that ${\Sigma_\mu}$ is the set of simple roots in the positive root system $\left(\Phi_{[\lambda]}\right)_{\Sigma_\mu}\cap \Phi^+$. 
	Let $\Delta({\Sigma_\mu})$ be the set of simple roots in the positive root system $\Phi({\Sigma_\mu})\cap \Phi^+$.
	Clearly, $\left(\Phi_{[\lambda]}\right)_{\Sigma_\mu}\subseteq \Phi({\Sigma_\mu})$. Note that $\Delta({\Sigma_\mu})\subseteq \Phi({\Sigma_\mu})\cap \Phi^+\subseteq\Phi_{[\lambda]}^+$ and $\langle\mu+\rho,\alpha^\lor\rangle=0$ for all $\alpha\in \Delta({\Sigma_\mu})$. Suppose $\alpha\in \Delta({\Sigma_\mu})$ can be written as sum of two roots in $\Phi_{[\lambda]}^+$ on contrary, then $\alpha=\beta+\gamma$ for some $\beta,\gamma\in \Phi_{[\lambda]}^+$. This implies that
	\[
	0
	=\dfrac{\langle\mu+\rho,\alpha^\lor\rangle\langle \alpha,\alpha\rangle}{2}
	=\langle\mu+\rho,\alpha\rangle
	=\langle\mu+\rho,\beta\rangle
	+\langle\mu+\rho,\gamma\rangle.
	\]
	By Proposition \ref{antidominant} and the positive definiteness of the inner product, we have 
	\[
	\langle\mu+\rho,\beta\rangle\le 0 
	\qquad\text{and}\qquad
	\langle\mu+\rho,\gamma\rangle\le 0,
	\]
	and hence \[
	\langle\mu+\rho,\beta\rangle
	=\langle\mu+\rho,\gamma\rangle=0.
	\]
	This implies that
	\[
	\langle\mu+\rho,\beta^\lor\rangle
	=\langle\mu+\rho,\gamma^\lor\rangle=0,
	\]
	i.e., $\beta,\gamma\in \Phi({\Sigma_\mu})\cap \Phi^+$.
	Then $\alpha\in \Delta({\Sigma_\mu})$ can be written as sum of two roots in $\Phi({\Sigma_\mu})\cap \Phi^+$, a contradiction to the fact that $\alpha\in \Delta({\Sigma_\mu})$. Therefore $\alpha\in \Delta({\Sigma_\mu})$ cannot be written as sum of two roots in $\Phi_{[\lambda]}^+$ and hence $\alpha\in\Delta_{[\lambda]}$. In particular, $\alpha \in\Sigma_\mu$ for all $\alpha\in\Delta(\Sigma_\mu)$, i.e., $\Delta(\Sigma_\mu)\subseteq \Sigma_\mu$. This implies that $\Phi({\Sigma_\mu})\subseteq \left(\Phi_{[\lambda]}\right)_{\Sigma_\mu}$. Therefore $\left(\Phi_{[\lambda]}\right)_{\Sigma_\mu}=\Phi({\Sigma_\mu})$ and hence $\Sigma_\mu=\Delta(\Sigma_\mu)$.
	Since $W_{\Sigma_\mu}$ is the Weyl group of $\Phi({\Sigma_\mu})$, we have $W_{\Sigma_\mu}
	=\langle s_\alpha\in W_{[\lambda]}: \alpha\in \Delta({\Sigma_\mu})\rangle
	=\langle s_\alpha\in W_{[\lambda]}: \alpha\in {\Sigma_\mu}\rangle$.
	Then the claim follows from the fact that $\Sigma_\mu\subseteq \Delta_{[\lambda]}$.	
\end{proof}


Following \cite{BBMH,HX}, we adopt the following terminology.

\begin{definition} \label{KLV}
	For $\lambda\in\Lambda_I^+$ define the \emph{relative Kazhdan-Lusztig-Vogan polynomial} 
	associated to $\lambda$ of $x,w\in {}^IW_{[\lambda]}^{\Sigma_\mu}$ to be
	\[
	{}^IP_{x,w}^{\Sigma_\mu}(q)
	:=\sum_{i\ge 0} q^{\frac{\ell_{[\lambda]}(x,w)-i}{2}}\dim\mathrm{Ext}_{\cO^\p}^i(M_I(w_Ix\cdot\mu), L(w_Iw\cdot\mu)),
	\]
	where $\ell_{[\lambda]}$ is the length function on $W_{[\lambda]}$, $\ell_{[\lambda]}(x,w):=\ell_{[\lambda]}(w)-\ell_{[\lambda]}(x)$ and $\mu$ is the unique antidominant weight in $W_{[\lambda]}\cdot\lambda$.
\end{definition}

\begin{remark}
	We also call ${}^IP_{x,w}^{\Sigma_\mu}(q)$ the relative Kazhdan-Lusztig-Vogan polynomial on $\cO^\p_\mu$ of $x,w\in {}^IW_{[\lambda]}^{\Sigma_\mu}$. 
	Let ${}^IP_{x,w}^{\mu}(q):={}^IP_{x,w}^\emptyset(q)$, $P_{x,w}^{\Sigma_\mu}(q):={}^\emptyset P_{x,w}^{\Sigma_\mu}(q)$ and $P_{x,w}^{\mu}(q):={}^\emptyset P_{x,w}^\emptyset(q)$ as conventions. Note that we have ${}^IW_{[\lambda]}^\emptyset={}^IW_{[\lambda]}$, ${}^\emptyset W_{[\lambda]}^{\Sigma_\mu}=W_{[\lambda]}^{\Sigma_\mu}$ and ${}^\emptyset W_{[\lambda]}^\emptyset=W_{[\lambda]}$.	
\end{remark}



We have some results about ${}^IW_{[\lambda]}^{\Sigma_\mu}$. Before that, we need the following lemma. 

\begin{lemma}\label{bruhatorders}
	Suppose $\eta\in\mathfrak{h}^*$, for all $x,w\in W_{[\eta]}$, it holds that $x\le w$ if and only if $x\le_{[\eta]} w$, where $\le_{[\eta]}$ is the Bruhat ordering on $W_{[\eta]}$.
\end{lemma}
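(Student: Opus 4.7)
The goal is to show that the Bruhat order on $W$, restricted to the reflection subgroup $W_{[\eta]}$, agrees with the intrinsic Bruhat order $\le_{[\eta]}$ attached to its canonical Coxeter presentation $(W_{[\eta]},S_{[\eta]})$. The plan is to reduce both orderings to a common chain/reflection characterization and exploit the compatibility of the two length functions on $W_{[\eta]}$.

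The key preliminary step I would establish is the following compatibility: for any $y \in W_{[\eta]}$ and any $\alpha \in \Phi_{[\eta]}$, one has $\ell(s_\alpha y) > \ell(y)$ if and only if $\ell_{[\eta]}(s_\alpha y) > \ell_{[\eta]}(y)$. This rests on the standard identities $\ell(s_\alpha y) > \ell(y) \iff y^{-1}(\alpha) \in \Phi^+$ and $\ell_{[\eta]}(s_\alpha y) > \ell_{[\eta]}(y) \iff y^{-1}(\alpha) \in \Phi_{[\eta]}^+$, combined with the fact that $y^{-1}(\alpha) \in \Phi_{[\eta]}$ (since $W_{[\eta]}$ permutes $\Phi_{[\eta]}$) and $\Phi_{[\eta]}^+ = \Phi_{[\eta]} \cap \Phi^+$ by Proposition \ref{antidominant} and Theorem \ref{intweylgp}. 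An analogous statement holds for left multiplication.

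For the direction $x \le_{[\eta]} w \Rightarrow x \le w$, I would invoke the chain characterization of Bruhat order: $x \le_{[\eta]} w$ iff there is a sequence $x = x_0, x_1, \ldots, x_k = w$ in $W_{[\eta]}$ with each $x_i = t_i x_{i-1}$ for a reflection $t_i \in W_{[\eta]}$ and $\ell_{[\eta]}(x_{i-1}) < \ell_{[\eta]}(x_i)$. Every $t_i$ is also a reflection in $W$, and by the compatibility above the length strictly increases in $\ell$ along the same chain, so $x \le w$ in $W$.

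The harder direction is $x \le w \Rightarrow x \le_{[\eta]} w$, because a priori a chain witnessing $x \le w$ in $W$ may pass through elements outside $W_{[\eta]}$. I would prove this by induction on $\ell_{[\eta]}(w)$. The base case $w = e$ is trivial. For the inductive step, pick $\alpha \in \Delta_{[\eta]}$ with $s_\alpha w <_{[\eta]} w$; by the compatibility, also $s_\alpha w < w$ in $W$. Now apply the lifting property of Bruhat order on $W$: either (i) $s_\alpha x < x$, in which case $s_\alpha x \le s_\alpha w$ in $W$, or (ii) $s_\alpha x > x$, in which case $x \le s_\alpha w$ in $W$. In both cases, the resulting inequality is between elements of $W_{[\eta]}$ with the larger element having smaller $\ell_{[\eta]}$-length than $w$, so the inductive hypothesis promotes the inequality to $\le_{[\eta]}$. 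A second application of the lifting property, this time internal to the Coxeter system $(W_{[\eta]},S_{[\eta]})$, yields $x \le_{[\eta]} w$.

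The main obstacle is this converse direction: showing that even though the chain of reflections witnessing $x \le w$ in $W$ is allowed to leave $W_{[\eta]}$, the relation can nevertheless be realized by a chain staying inside $W_{[\eta]}$. The length-compatibility observation is what makes the lifting property interact cleanly with both Coxeter structures, so that the induction closes without ever needing to control elements outside $W_{[\eta]}$.
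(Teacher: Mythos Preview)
Your chain/lifting approach is different from the paper's subword approach, and the forward direction $x\le_{[\eta]}w\Rightarrow x\le w$ together with your length-compatibility observation is correct and clean.

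The gap is in the converse. You write ``apply the lifting property of Bruhat order on $W$'' with respect to $s_\alpha$ for $\alpha\in\Delta_{[\eta]}$, but the lifting (Z-)property in $(W,S)$ requires $s_\alpha\in S$, i.e.\ $\alpha\in\Delta$. In general $\Delta_{[\eta]}\not\subseteq\Delta$, so $s_\alpha$ need not be a simple reflection of $W$ and the lifting property is unavailable. For an arbitrary reflection the implication fails already in $S_3$: with $t=s_1s_2s_1$, $w=t$, $x=s_1$ one has $x\le w$, $tw=e<w$, $tx=s_1s_2>x$, yet $x\not\le tw$. Your specific conclusion (with $x,w\in W_{[\eta]}$) is in fact true, but only because the lemma you are proving is true---this is Dyer's theorem that Bruhat order restricts to reflection subgroups---so appealing to lifting in $W$ at this point is circular. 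The paper's one-line proof also leans on this result, phrased as the subword property for reduced $S_{[\eta]}$-expressions; neither route bypasses Dyer's theorem, but your induction as written does not close without it.
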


\begin{proof}
	If $w \in W_{[\eta]}$, then $w$ has a reduced expression in $W$ involving only factors in $S_{[\eta]}$. Then 
	$x \le w$ if and only if $x$ has a reduced expression which occurs as a subexpression of this reduced
	expression, which holds if and only if $x\le_{[\eta]} w$.
\end{proof}

\begin{remark}
	Since $\lambda\in\Lambda_I^+$, we have $I\subseteq \Delta_{[\lambda]}$ by the remark following Corollary \ref{category}, and hence ${}^IW_{[\lambda]}$ can be defined by using the Bruhat ordering on $W_{[\lambda]}$ instead of $W$, i.e., ${}^IW_{[\lambda]}={}^I\left(W_{[\lambda]}\right)$, where \[{}^I\left(W_{[\lambda]}\right)
	:=\left\{w\in W_{[\lambda]}: w<_{[\lambda]}s_\alpha w \ \text{for all }\alpha\in I\right\}.
	\]
	By Lemma \ref{JW}, ${}^IW_{[\lambda]}$ is the set of minimal length right coset representatives of $\left(W_{[\lambda]}\right)_I$ in $ W_{[\lambda]}$.
	
	Similarly, since ${\Sigma_\mu}\subseteq \Delta_{[\lambda]}$, ${}^IW_{[\lambda]}^{\Sigma_\mu}$ can be defined by using the Bruhat ordering on $W_{[\lambda]}$ instead of $W$, i.e., ${}^IW_{[\lambda]}^{\Sigma_\mu}
	={}^I\left(W_{[\lambda]}\right)^{\Sigma_\mu}$, where \[
	{}^I\left(W_{[\lambda]}\right)^{\Sigma_\mu}
	:=\left\{w\in {}^I\left(W_{[\lambda]}\right): w<_{[\lambda]}ws_\alpha\in {}^I\left(W_{[\lambda]}\right)\ \text{for all }\alpha\in {\Sigma_\mu}\right\}.
	\]
	In particular, it holds that $W_{[\lambda]}^{\Sigma_\mu}
	=\left(W_{[\lambda]}\right)^{\Sigma_\mu}$. 
\end{remark}

\begin{lemma}
	[{See \cite[Proposition 5.4]{EHP}}]
	\label{simplemod} 
	There is a bijection
	\begin{align*}
	{}^IW_{[\lambda]}^{\Sigma_\mu} &\simeq \{\text{simple modules in } \cO^\p_\mu\} / \cong\\
	x&\mapsto [L(w_Ix\cdot\mu)]
	\end{align*}
	where $[L(w_Ix\cdot\mu)]$ is the isomorphism class of $L(w_Ix\cdot\mu)$.
\end{lemma}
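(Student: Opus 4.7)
The plan is to proceed in three steps. First, I would identify the isomorphism classes of simple modules in $\cO^\p_\mu$ with a concrete set of weights. By Proposition~\ref{9.3}(5) every simple object in $\cO^\p$ is of the form $L(\nu)$ for some $\nu \in \Lambda_I^+$; combined with the Harish-Chandra isomorphism and the fact (recorded in the remark after Proposition~\ref{antidominant}) that $\cO^\p_\mu$ is the direct summand of $\cO^\p_{\chi_\lambda}$ whose objects have weights in $\mu+\Lambda_r$, this yields a bijection between isomorphism classes of simple modules in $\cO^\p_\mu$ and the set $\Lambda_I^+\cap W_{[\lambda]}\cdot\mu$ of $\Phi_I^+$-dominant weights in the $W_{[\lambda]}$-dot-orbit of $\mu$.

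Second, I parameterize this target set using the Coxeter-theoretic structure of $W_{[\lambda]}$. By Lemma~\ref{parabolicgenerate} together with Proposition~\ref{chevalley} the dot-stabilizer of $\mu$ in $W_{[\lambda]}$ is $W_{\Sigma_\mu}$, so the $W_I$-dot-orbits in $W_{[\lambda]}\cdot\mu$ biject with the double cosets $W_I\backslash W_{[\lambda]}/W_{\Sigma_\mu}$, and each $W_I$-orbit contains exactly one $\Phi_I^+$-dominant element. I would then argue that ${}^IW_{[\lambda]}^{\Sigma_\mu}$ is a set of canonical double-coset representatives: membership in ${}^IW_{[\lambda]}$ makes $x$ minimal length in its $W_I$-right coset by Lemma~\ref{JW}, the inequality $x<xs_\gamma$ for $\gamma\in\Sigma_\mu$ makes $x$ minimal length in its $W_{\Sigma_\mu}$-left coset, and the auxiliary condition $xs_\gamma\in{}^IW_{[\lambda]}$ pins down a unique representative per double coset.

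Finally, I would verify that the unique $\Phi_I^+$-dominant element of the $W_I$-dot-orbit of $x\cdot\mu$ is $w_Ix\cdot\mu$. For $\alpha\in I$, setting $\beta:=-w_I\alpha\in I$, a direct computation using $\langle\rho,\alpha^\lor\rangle=1$ gives
\[
\langle w_Ix\cdot\mu,\alpha^\lor\rangle \;=\; -\langle\mu+\rho,\,x^{-1}\beta^\lor\rangle - 1.
\]
Because $x\in{}^IW_{[\lambda]}$ forces $x^{-1}\beta\in\Phi_{[\lambda]}^+$ by Lemma~\ref{JW}, antidominance of $\mu$ makes the pairing on the right $\le 0$, and it is an integer by $\Phi_{[\lambda]}$-integrality of $\mu+\rho$. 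Upgrading to $\le -1$, which is what gives $w_Ix\cdot\mu\in\Lambda_I^+$, amounts to excluding $x^{-1}\beta\in\Phi(\Sigma_\mu)^+$ (the positive subsystem identified in the proof of Lemma~\ref{parabolicgenerate}); this is precisely where the extra clause $xs_\gamma\in{}^IW_{[\lambda]}$ in the definition of ${}^IW_{[\lambda]}^{\Sigma_\mu}$ enters, via a length/reflection argument applied to the identity $s_\beta x = xs_{x^{-1}\beta}$. Injectivity of $x\mapsto L(w_Ix\cdot\mu)$ is then immediate since $w_Ix\cdot\mu=w_Ix'\cdot\mu$ forces $x^{-1}x'\in W_{\Sigma_\mu}$, and distinct elements of ${}^IW_{[\lambda]}^{\Sigma_\mu}$ lie in distinct double cosets. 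The main obstacle I anticipate is exactly this last dominance step: extracting from the auxiliary condition the exclusion of the degenerate case $\langle\mu+\rho,x^{-1}\beta^\lor\rangle=0$ rather than merely the non-strict inequality.
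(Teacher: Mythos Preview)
Your approach is quite different from the paper's. The paper does not argue directly with Coxeter combinatorics; instead it invokes Soergel's equivalence (Corollary~\ref{category}) to transport the problem to the category $\cO^\natural_{\mu^\natural}$ for the semisimple Lie algebra $\g^\natural$ with root system $\Phi_{[\lambda]}$, where $\mu^\natural$ is integral, and then cites \cite[Proposition~2.2]{BBDN} for the integral case. The only work done in the paper is checking that the equivalence respects the $\Lambda_I^+$ condition and that $\Sigma_\mu=\Sigma^\natural_{\mu^\natural}$. Your direct route is more self-contained, but it is not yet complete.

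There is a genuine error in Step~2: the assertion that ``each $W_I$-orbit contains exactly one $\Phi_I^+$-dominant element'' is false when $\mu$ is singular. If $y\in W_{[\lambda]}$ satisfies $y^{-1}\alpha\in\Phi(\Sigma_\mu)$ for some $\alpha\in\Phi_I$, then the $W_I$-dot-orbit of $y\cdot\mu$ lies on a $\Phi_I$-wall and contains \emph{no} element of $\Lambda_I^+$. So ${}^IW_{[\lambda]}^{\Sigma_\mu}$ is not a set of representatives for all double cosets $W_I\backslash W_{[\lambda]}/W_{\Sigma_\mu}$; it parametrizes only those double cosets whose associated $W_I$-orbit actually meets $\Lambda_I^+$. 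You should drop the double-coset counting and instead prove the bijection with $\Lambda_I^+\cap W_{[\lambda]}\cdot\mu$ directly: well-definedness and injectivity as you sketch, plus an explicit surjectivity argument (given $\nu\in\Lambda_I^+\cap W_{[\lambda]}\cdot\mu$, take $x$ minimal in $w_I\nu$'s stabilizer coset and verify the three defining conditions of ${}^IW_{[\lambda]}^{\Sigma_\mu}$).

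For the dominance obstacle you flagged, here is the missing ingredient. Suppose $x\in{}^IW_{[\lambda]}^{\Sigma_\mu}$ and $\delta:=x^{-1}\beta\in\Phi(\Sigma_\mu)^+$ for some $\beta\in I$. Since $x\in{}^IW_{[\lambda]}$ gives $\ell_{[\lambda]}(s_\beta x)=\ell_{[\lambda]}(x)+1$, and since $x$ is the minimal-length element of $xW_{\Sigma_\mu}$ (from $x<xs_\gamma$ for all $\gamma\in\Sigma_\mu$) with $s_\delta\in W_{\Sigma_\mu}$, we get $\ell_{[\lambda]}(xs_\delta)=\ell_{[\lambda]}(x)+\ell_{[\lambda]}(s_\delta)$. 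Comparing with $s_\beta x=xs_\delta$ forces $\ell_{[\lambda]}(s_\delta)=1$, so $\delta\in\Delta_{[\lambda]}\cap\Phi(\Sigma_\mu)=\Sigma_\mu$. But then $xs_\delta=s_\beta x$ has $(xs_\delta)^{-1}\beta=-\delta<0$, contradicting $xs_\delta\in{}^IW_{[\lambda]}$. This is the length/reflection argument you alluded to; it works, but only after using minimality of $x$ in its $W_{\Sigma_\mu}$-coset to force $\delta$ to be \emph{simple}.
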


\begin{proof}
	Recall that ${}^I W_{[\lambda]}^{\Sigma_\mu}={}^I W_{[\mu]}^{\Sigma_\mu}$. Then by \cite[Proposition 2.2]{BBDN}, the assertion holds for the case when $\mu$ is integral.
	By Corollary \ref{category}, there is a bijection between the set of simple modules in $\cO_{\mu^\natural}^\natural$ up to isomorphism and the set of simple modules in $\cO_\mu$ up to isomorphism. More precisely, there is a bijection
	\begin{align*}
	\left\{[L(x\cdot\mu^{\natural})] : x\in \left(W_{[\mu]}\right)_{[\mu^\natural]}=W_{[\mu]}\right\}
	&\simeq \left\{[L(x\cdot\mu)] : x\in W_{[\mu]}\right\}  \\
	[L(x\cdot\mu^\natural)]&\mapsto [L(x\cdot\mu)].
	\end{align*}
	Since $\lambda\in\Lambda_I^+$, we have $I\subseteq\Delta_{[\lambda]}$ by the remark following Corollary \ref{category}. Then for all $x\in W_{[\lambda]}=W_{[\mu]}$, we have $x^{-1}\Phi_I\subseteq \Phi_{[\lambda]}$ by Theorem \ref{intweylgp}. For all $\alpha\in I$ and $x\in W_{[\mu]}$, we get \[
	\langle x\cdot\mu,\alpha^\lor\rangle
	=\langle \mu,(x^{-1}\alpha)^\lor\rangle
	=\langle\mu^\natural,(x^{-1}\alpha)^\lor\rangle
	=\langle x\cdot\mu^\natural,\alpha^\lor\rangle
	\]
	since $x^{-1}\alpha\in \Phi_{[\lambda]}=\Phi_{[\mu]}$.
	Hence for all $x\in W_{[\mu]}$, we have $x\cdot\mu\in \Lambda_I^+\iff x\cdot\mu^\natural\in \Lambda_I^+$. Then there is a bijection
	\begin{align*}
	\left\{[L(x\cdot\mu^{\natural})] : x\in W_{[\mu]}, x\cdot\mu^\natural\in \Lambda_I^+\right\}
	&\simeq 
	\left\{[L(x\cdot\mu)] : x\in W_{[\mu]}, x\cdot\mu\in \Lambda_I^+\right\} 
	\\
	[L(x\cdot\mu^\natural)]&\mapsto [L(x\cdot\mu)].
	\end{align*}
	By Proposition \ref{9.3} and Theorem \ref{9.4}, it holds that
	\[
	\left\{[L(x\cdot\mu^{\natural})] : x\in W_{[\mu]}, x\cdot\mu^\natural\in \Lambda_I^+\right\}
	=
	\left\{\text{simple modules in } (\cO^\natural)^{\p^\natural}_{\mu^\natural}\right\}/\cong
	\]
	and \[
	\left\{[L(x\cdot\mu)] : x\in W_{[\mu]}, x\cdot\mu\in \Lambda_I^+\right\} 
	=\left\{\text{simple modules in } \cO^\p_{\mu}\right\}/\cong.
	\]
	Since $\mu^\natural\in E(\lambda)$ is $\Phi_{[\lambda]}$-integral, by the integral case, we get a bijection
	\begin{align*}
	{}^I\left(W_{[\lambda]}\right)^{{\Sigma}^\natural_{\mu^\natural}}
	&\simeq
	\left\{\text{simple modules in } (\cO^\natural)^\mathfrak{p^\natural}_{\mu^\natural}\right\}/\cong  \\
	x&\mapsto [L(w_Ix\cdot\mu^\natural)].
	\end{align*}
	Note that
	\[{\Sigma_\mu}
	=\{\alpha\in\Delta_{[\lambda]} :\langle\mu+\rho,\alpha^\lor\rangle=0\}
	=\{\alpha\in\Delta_{[\lambda]} :\langle\mu^\natural+\rho_{[\lambda]},\alpha^\lor\rangle=0\}
	={\Sigma}^\natural_{\mu^\natural}.
	\]
	By the remark following Lemma \ref{bruhatorders}, this implies ${}^IW_{[\lambda]}^{\Sigma_\mu}
	={}^I\left(W_{[\lambda]}\right)^{\Sigma_\mu}
	={}^I\left(W_{[\lambda]}\right)^{{\Sigma}^\natural_{\mu^\natural}}$. The claim follows.
\end{proof}

\begin{remark}
	Suppose $\lambda\in\Lambda_I^+$, then by Theorem \ref{9.4}, it holds that $L(\lambda)$ is a simple module in $\mathcal{O}_\mu^\p$. By Lemma \ref{simplemod}, there exists a unique element $\ow \in  {}^IW_{[\lambda]}^{\Sigma_\mu}$ such that $\lambda=w_I\ow\cdot \mu$.
\end{remark}


We will express Kazhdan-Lusztig polynomials in terms of Ext groups in order to relate relative Kazhdan-Lusztig-Vogan polynomials and parabolic Kazhdan-Lusztig polynomials. 
Before that, we need a result which relates four different partial orderings. Two of these are the Bruhat orderings on $W$ and $W_{[\lambda]}$, and the remaining two are partial orderings on $\mathfrak{h}^*$ as defined below.

\begin{definition}[{See \cite[\S0.6 and \S0.7]{HJ}}]
	Let $\le$ denote
	the partial ordering
	on $\h^*$ with $\nu\le \eta$ if and only if $\eta-\nu\in\Gamma$, where $\Gamma$ is defined to be the set of all $\mathbb{Z}^{\ge 0}$-linear combinations of simple roots.
\end{definition}

\def\up{\mathbin{\uparrow_{[\lambda]}}}

\begin{definition}[{See \cite[\S5.1]{HJ}}]
	Given $\eta, \nu \in \h^*$, write $\nu \up \eta$ if $\nu = \eta$ or there is a root $\alpha\in \Phi_{[\lambda]}^+$ such
	that $\nu = s_\alpha \cdot \eta < \eta$ or equivalently $\langle\eta + \rho, \alpha^\lor\rangle \in \mathbb{Z}^{>0}$. 
	If
	$\nu = \eta$ or there exist $\alpha_1, \cdots, \alpha_r \in \Phi_{[\lambda]}^+$ such that
	\[
	\nu = (s_{\alpha_1} \cdots s_{\alpha_r}) \cdot \eta \up (s_{\alpha_2} \cdots s_{\alpha_r}) \cdot \eta \up \cdots \uparrow s_{\alpha_r} \cdot \eta \up \eta,
	\]
	we say that $\nu$ is \emph{$[\lambda]$-strongly linked} to $\eta$ and write $\nu \up \eta$.
	When $\lambda$ is integral, we say that $\nu$ is \emph{strongly linked} to $\eta$ and write $\nu \uparrow \eta$.
\end{definition}

\begin{remark} 
	Note that it is clear that $\nu \uparrow_{[\lambda]} \eta$ implies $\nu \uparrow \eta$.
\end{remark}

We need the following lemma to relate the partial orderings.

\begin{lemma}[{See \cite[Lemma 1.3.1]{AF}}]
	\label{distinct} 
	Each $w\in W_{[\lambda]}$ can be expressed as $w=s_{\beta_l}\cdots s_{\beta_{2}} s_{\beta_1}$ for some distinct positive roots $\{\beta_1, \beta_2, \cdots, \beta_l\}\subseteq \Phi_{[\lambda]}^+$.
\end{lemma}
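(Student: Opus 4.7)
The plan is to invoke classical Coxeter theory applied to a reduced expression. By Theorem~\ref{intweylgp}, $W_{[\lambda]}$ is the Weyl group of the abstract root system $\Phi_{[\lambda]}$, with simple system $\Delta_{[\lambda]}$; consequently $(W_{[\lambda]}, S_{[\lambda]})$ is a genuine Coxeter system and $\ell_{[\lambda]}$ is the associated Coxeter length. In particular, all of the standard machinery of reduced expressions is available inside $W_{[\lambda]}$, independently of the ambient $W$.

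Fix $w\in W_{[\lambda]}$ and choose a reduced expression $w = s_{\gamma_1}s_{\gamma_2}\cdots s_{\gamma_l}$ with $\gamma_i\in\Delta_{[\lambda]}$ and $l=\ell_{[\lambda]}(w)$. I would then set
\[
\beta_i := s_{\gamma_1}s_{\gamma_2}\cdots s_{\gamma_{i-1}}(\gamma_i) \in \Phi_{[\lambda]}, \qquad i = 1,\dots,l,
\]
with the convention $\beta_1=\gamma_1$. A direct induction on $i$, using the conjugation identity $y\, s_\alpha\, y^{-1} = s_{y(\alpha)}$, yields
\[
s_{\beta_i}s_{\beta_{i-1}}\cdots s_{\beta_1} \;=\; s_{\gamma_1}s_{\gamma_2}\cdots s_{\gamma_i}
\]
for every $i$; taking $i=l$ then produces the desired factorization $w = s_{\beta_l}s_{\beta_{l-1}}\cdots s_{\beta_1}$.

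What remains is to verify that $\beta_1,\dots,\beta_l$ lie in $\Phi_{[\lambda]}^+$ and are pairwise distinct. This is the standard statement that, for a reduced expression, the roots $\beta_i$ defined above are precisely the positive roots of $\Phi_{[\lambda]}$ sent to negative roots by $w^{-1}$, so in particular they are positive and distinct. The main (and essentially only) obstacle is to justify applying the strong exchange condition: if some $\beta_i$ were negative, or if $\beta_i = \beta_j$ for some $i<j$, one could delete two letters from $s_{\gamma_1}\cdots s_{\gamma_l}$ to obtain a strictly shorter word representing $w$, contradicting reducedness. Since Theorem~\ref{intweylgp} identifies $W_{[\lambda]}$ as a Weyl group with simple reflections $S_{[\lambda]}$, the strong exchange condition applies verbatim in $W_{[\lambda]}$, and the lemma follows.
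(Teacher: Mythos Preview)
Your argument is correct and is essentially the same approach as the paper's: the paper simply invokes \cite[Lemma~1.3.1]{AF} applied to $W_{[\lambda]}$, justified by Theorem~\ref{intweylgp}, whereas you have unpacked the standard proof of that lemma (defining $\beta_i = s_{\gamma_1}\cdots s_{\gamma_{i-1}}(\gamma_i)$ from a reduced word and using the exchange condition to see that these are the distinct positive roots inverted by $w^{-1}$). Both routes rest on the same input, namely that $(W_{[\lambda]},S_{[\lambda]})$ is a Coxeter system by Theorem~\ref{intweylgp}.
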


\begin{proof}
	Apply \cite[Lemma 1.3.1]{AF} to $W_{[\lambda]}$. Note that $W_{[\lambda]}$ is the Weyl group of $\Phi_{[\lambda]}$ (cf. Theorem \ref{intweylgp}) and $s_{\alpha}=s_{-\alpha}$ for all $\alpha\in \Phi_{[\lambda]}^+$.
\end{proof}

Now we are able to relate the partial orderings.
\begin{lemma}\label{HJtypo}
	For all $x,w\in W_{[\lambda]}$, it holds that
	$x\le w\iff x\le_{[\lambda]} w\implies x\cdot\mu\uparrow_{[\lambda]} w\cdot\mu\implies x\cdot\mu\le w\cdot\mu$. 
	If further assume $\lambda\in\Lambda_I^+$ is regular, then the following statements are equivalent:
	\begin{enumerate}
		\item $x\le w$.
		\item $x\le_{[\lambda]} w$.
		\item $x\cdot\mu\uparrow_{[\lambda]} w\cdot \mu$.
		\item $x\cdot\mu\le w\cdot \mu$.
	\end{enumerate}
	Recall that $\mu$ is the unique antidominant weight in $W_{[\lambda]}\cdot\lambda$.
\end{lemma}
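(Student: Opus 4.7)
The first equivalence $x \le w \iff x \le_{[\lambda]} w$ is immediate from Lemma~\ref{bruhatorders}, and the implication $x \cdot \mu \uparrow_{[\lambda]} w \cdot \mu \implies x \cdot \mu \le w \cdot \mu$ is purely formal: every elementary link $\nu = s_\alpha \cdot \eta < \eta$ with $\alpha \in \Phi_{[\lambda]}^+ \subseteq \Phi^+$ satisfies $\eta - \nu = \langle \eta + \rho,\alpha^\lor\rangle\,\alpha \in \mathbb{Z}^{>0}\cdot\alpha \subseteq \Gamma$, and transitivity of $\le$ closes the chain.

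For the nontrivial implication $x \le_{[\lambda]} w \implies x \cdot \mu \uparrow_{[\lambda]} w \cdot \mu$, the plan is to induct along a chain of Bruhat covers in $W_{[\lambda]}$. A single cover $y <_{[\lambda]} w = s_\gamma y$ with $\ell_{[\lambda]}(w) = \ell_{[\lambda]}(y)+1$ forces $y^{-1}\gamma \in \Phi_{[\lambda]}^+$, and the sign computation
\[
\langle w \cdot \mu + \rho,\,\gamma^\lor\rangle = \langle \mu + \rho,\,(w^{-1}\gamma)^\lor\rangle = -\langle \mu + \rho,\,(y^{-1}\gamma)^\lor\rangle \in \mathbb{Z}^{\ge 0}
\]
(the nonnegativity uses antidominance of $\mu$ together with $y^{-1}\gamma \in \Phi_{[\lambda]}^+$) yields $y \cdot \mu \uparrow_{[\lambda]} w \cdot \mu$, either as the trivial link $y\cdot\mu = w\cdot\mu$ or as a genuine elementary step via $s_\gamma$. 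Concatenating the Bruhat-cover chain then finishes the induction.

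The main remaining task, and the principal obstacle, is the regular-case converse $x \cdot \mu \le w \cdot \mu \implies x \le_{[\lambda]} w$. First I would verify that regularity of $\lambda$ forces $\mu$ to be regular with respect to $W_{[\lambda]}$: writing $\lambda = u \cdot \mu$ with $u \in W_{[\lambda]}$ yields $\langle \mu + \rho,\alpha^\lor\rangle = \langle \lambda + \rho,(u\alpha)^\lor\rangle \ne 0$ for every $\alpha \in \Phi_{[\lambda]}$, so $W_{\Sigma_\mu}$ is trivial and the orbit map $W_{[\lambda]} \to W_{[\lambda]}\cdot\mu$ is a bijection. I would then transfer to the integral case through the equivalence $\mathcal{F}\colon \cO_\mu \xrightarrow{\sim} \cO_{\mu^\natural}^\natural$ of Corollary~\ref{category} (which preserves the isotropy data by Lemma~\ref{sameisotropy}) and invoke the classical result of Jantzen (\S5) that for antidominant regular weights the Bruhat order on the Weyl group coincides with the induced $\le$-order on the orbit, closing the cycle of equivalences. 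The delicate point throughout is that antidominance must be applied with respect to the full root system $\Phi$ (for the comparisons that land in $\Gamma$) while the reflections involved all lie in the subsystem $\Phi_{[\lambda]}$; the inclusion $\Phi_{[\lambda]}^+ \subseteq \Phi^+$ is what makes the sign calculations compatible.
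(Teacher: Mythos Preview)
Your treatment of the general chain of implications matches the paper's: both reduce to single Bruhat covers and use antidominance of $\mu$ to control the sign of $\langle w\cdot\mu+\rho,\gamma^\lor\rangle$.

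For the regular-case implication $(4)\Rightarrow(2)$, however, your route through $\mathcal{F}$ has a genuine gap. The equivalence of Corollary~\ref{category} is between \emph{module categories}; it sends $L(x\cdot\mu)\mapsto L(x\cdot\mu^\natural)$ and $M(x\cdot\mu)\mapsto M(x\cdot\mu^\natural)$, but it says nothing directly about the partial order $\le$ on $\h^*$. The hypothesis $x\cdot\mu\le w\cdot\mu$ means $w\cdot\mu-x\cdot\mu\in\Gamma=\mathbb{Z}^{\ge0}\Delta$, whereas Jantzen's classical statement applied inside $\g^\natural$ concerns the cone $\Gamma_{[\lambda]}=\mathbb{Z}^{\ge0}\Delta_{[\lambda]}$. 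Since $\Delta_{[\lambda]}$ is generally \emph{not} contained in $\Delta$, the easy inclusion $\Gamma_{[\lambda]}\subseteq\Gamma$ only goes one way; to close your argument you would still need to prove that an element of $\mathbb{Z}\Phi_{[\lambda]}$ lying in $\Gamma$ automatically lies in $\Gamma_{[\lambda]}$, and you have not addressed this. (This fails for arbitrary root subsystems, so if it holds for integral root systems $\Phi_{[\lambda]}$ it requires a real argument.)

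The paper avoids this issue entirely by working directly inside $W_{[\lambda]}$: it writes $x=s_{\alpha_1}\cdots s_{\alpha_k}w$ with the $\alpha_i\in\Phi_{[\lambda]}^+$ \emph{distinct} (Lemma~\ref{distinct}), expands
\[
w\cdot\mu-x\cdot\mu=\sum_{i=1}^{k}\bigl\langle\mu+\rho,\bigl((s_{\alpha_{i+1}}\cdots s_{\alpha_k}w)^{-1}\alpha_i\bigr)^{\!\lor}\bigr\rangle\,\alpha_i,
\]
and uses regularity together with antidominance of $\mu$ to argue that $x\cdot\mu<w\cdot\mu$ forces each coefficient to be positive, hence each intermediate reflection decreases $\ell_{[\lambda]}$, giving $x<_{[\lambda]}w$. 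No comparison between $\Gamma$ and $\Gamma_{[\lambda]}$ is needed.
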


\begin{proof}	
	Note that the equivalence $x\le w\iff x\le_{[\lambda]} w$ is true for all $x,w\in W_{[\lambda]}$ by Lemma \ref{bruhatorders}.
	Recall that $W_{[\lambda]}$ is the Weyl group of $\Phi_{[\lambda]}$ (cf. Theorem \ref{intweylgp}). 
	For all $\alpha\in\Phi_{[\lambda]}^+$ and $w\in W_{[\lambda]}$, we have
	\begin{align*}
	w\cdot \mu-s_\alpha\cdot(w\cdot \mu)
	&=w\cdot \mu-(s_\alpha(w\cdot \mu+\rho)-\rho)\\
	&=w\cdot \mu-(w\cdot \mu+\rho-\langle w\cdot \mu+\rho,\alpha^\lor\rangle\alpha -\rho)\\
	&=\langle w\cdot \mu+\rho,\alpha^\lor\rangle\alpha \tag{$**$}
	\end{align*}
	so
	\begin{align*}
		s_\alpha w<_{[\lambda]}w
		&\iff \ell_{[\lambda]}(s_\alpha w)<\ell_{[\lambda]}(w)&\text{(by Definition of the Bruhat ordering on $W_{[\lambda]}$)}\\		
		&\iff w^{-1}\alpha\in-\Phi_{[\lambda]}^+ & \text{(by \cite[\S 0.3, Standard fact (4)]{HJ})}\\
		&\implies \langle\mu+\rho,(w^{-1}\alpha)^\lor
		\rangle\in\mathbb{Z}^{\ge 0} & \text{(by Proposition \ref{antidominant} and $\Phi_{[\lambda]}=\Phi_{[\mu]}$)}\\
		&\iff \langle w(\mu+\rho),\alpha^\lor
		\rangle\in\mathbb{Z}^{\ge 0}&\\
		&\iff \langle w\cdot\mu+\rho,\alpha^\lor
		\rangle\in\mathbb{Z}^{\ge 0}&\\
		&\iff s_\alpha\cdot(w\cdot \mu)\le w\cdot \mu& \text{(by ($**$))}\\
		&\iff (s_\alpha w)\cdot \mu\le w\cdot \mu.
	\end{align*}

	
	Write $u \xrightarrow[{[\lambda]}]{s_{\alpha}} v$ if
	$\alpha \in \Phi^+_{[\lambda]}$ and 
	$v = s_\alpha u$ and $\ell_{[\lambda]}(u)<\ell_{[\lambda]}(v)$. The definition of the Bruhat ordering on $W_{[\lambda]}$ implies that for all $\alpha\in\Phi_{[\lambda]}^+$ and $w\in W_{[\lambda]}$, we have $s_\alpha w\xrightarrow[{[\lambda]}]{s_{\alpha}} w\iff s_\alpha w<_{[\lambda]}w$. We deduce from above that
	$
	x\le_{[\lambda]} w
	$
	if and only if $x=w$ or
	\[
	x=s_{\alpha_1}\cdots s_{\alpha_r}w
	\xrightarrow[{[\lambda]}]{s_{\alpha_1}}
	s_{\alpha_2}\cdots s_{\alpha_r}w
	\xrightarrow[{[\lambda]}]{s_{\alpha_2}}
	\cdots 
	\xrightarrow[{[\lambda]}]{s_{\alpha_{r-1}}}
	s_{\alpha_r}w
	\xrightarrow[{[\lambda]}]{s_{\alpha_r}}
	w
	\]
	for some $\alpha_1,\cdots, \alpha_r\in\Phi_{[\lambda]}^+$,
	which holds
	if and only if $x=w$ or
	\[
	x=s_{\alpha_1}\cdots s_{\alpha_r}w
	<_{[\lambda]}s_{\alpha_2}\cdots s_{\alpha_r}w
	<_{[\lambda]}\cdots <_{[\lambda]}s_{\alpha_r}w
	<_{[\lambda]}w
	\]
	for some $\alpha_1,\cdots, \alpha_r\in\Phi_{[\lambda]}^+$.
	This condition  
	implies that $x\cdot \mu = w\cdot \mu$ or \[
	x\cdot\mu=(s_{\alpha_1}\cdots s_{\alpha_r}w)\cdot\mu
	\le(s_{\alpha_2}\cdots s_{\alpha_r}w)\cdot\mu
	\le\cdots\le (s_{\alpha_r} w)\cdot\mu
	\le w\cdot\mu
	\]
	for some $\alpha_1,\cdots, \alpha_r\in\Phi_{[\lambda]}^+$,
	which holds
	if and only if
	$x\cdot\mu=w\cdot\mu$ or 
	\[
	x\cdot\mu=(s_{\alpha_1}\cdots s_{\alpha_r})\cdot(w\cdot\mu)\uparrow_{[\lambda]} (s_{\alpha_2}\cdots s_{\alpha_r})\cdot(w\cdot\mu)\uparrow_{[\lambda]} \cdots \uparrow_{[\lambda]}  s_{\alpha_r}\cdot(w\cdot\mu)\uparrow_{[\lambda]}  w\cdot\mu
	\]
	for some $\alpha_1,\cdots, \alpha_r\in\Phi_{[\lambda]}^+$,
	which finally is equivalent to
	\[
	x\cdot\mu\uparrow_{[\lambda]} w\cdot\mu.
	\]
	By the definition of $[\lambda]$-strong linkage, we have $x\cdot\mu\uparrow_{[\lambda]} w\cdot\mu\implies x\cdot\mu\le w\cdot\mu$.
	We conclude the first statement as needed.

	Now suppose $\lambda\in\Lambda_I^+$ is regular. 
	By the first statement, it suffices to show the implication (4)$\implies$(2) to obtain the second statement. For all $\alpha\in\Phi_{[\lambda]}^+$ and $w\in W_{[\lambda]}$,
	we have the following equivalent statements:
	\begin{align*}
		s_\alpha w<_{[\lambda]}w
		&\iff \ell_{[\lambda]}(s_\alpha w)<\ell_{[\lambda]}(w) &\text{(by Definition of the Bruhat ordering on $W_{[\lambda]}$)}\\
		&\iff w^{-1}\alpha\in-\Phi_{[\lambda]}^+ & \text{(by \cite[\S 0.3, Standard fact (4)]{HJ})}\\
		&\iff \langle\mu+\rho,(w^{-1}\alpha)^\lor
		\rangle\in\mathbb{Z}^{> 0} & \text{(by Proposition \ref{antidominant}, $\mu$ is regular and $\Phi_{[\lambda]}=\Phi_{[\mu]}$)}\\
		&\iff \langle w(\mu+\rho),\alpha^\lor
		\rangle\in\mathbb{Z}^{> 0}&\\
		&\iff \langle w\cdot\mu+\rho,\alpha^\lor
		\rangle\in\mathbb{Z}^{> 0}&\\
		&\iff s_\alpha\cdot(w\cdot \mu)< w\cdot \mu& \text{by ($**$)}\\
		&\iff (s_\alpha w)\cdot \mu< w\cdot \mu.
	\end{align*}
	
	Now we can show that (4)$\implies$(2).
	Suppose $x=s_{\alpha_{1}}\cdots s_{\alpha_k}w$ where $\alpha_{1},\cdots,\alpha_{k}$ are distinct positive roots in $\Phi_{[\lambda]}^+$, which exist by Lemma \ref{distinct}. We then have
	\allowdisplaybreaks
	\begin{align*}
	w\cdot\mu-x\cdot\mu
	&= w\cdot\mu-(s_{\alpha_1}\cdots s_{\alpha_k}w)\cdot\mu\\
	&=w\cdot\mu-s_{\alpha_1}\cdot\left((s_{\alpha_2}\cdots s_{\alpha_k}w)\cdot\mu\right)\\
	&=w\cdot\mu-\left((s_{\alpha_2}\cdots s_{\alpha_k}w)\cdot\mu-\left\langle (s_{\alpha_2}\cdots s_{\alpha_k}w)\cdot\mu+\rho, \alpha_1^\lor\right\rangle\alpha_1\right)\\
	&=w\cdot\mu-(s_{\alpha_2}\cdots s_{\alpha_k}w)\cdot\mu+\left\langle (s_{\alpha_2}\cdots s_{\alpha_k}w)\cdot\mu+\rho, \alpha_1^\lor\right\rangle\alpha_1\\
	&\quad  \vdots\\
	&=w\cdot\mu-(s_{\alpha_k}w)\cdot\mu+\sum_{i=1}^{k-1}\left\langle (s_{\alpha_{i+1}}\cdots s_{\alpha_k}w)\cdot\mu+\rho, \alpha_i^\lor\right\rangle\alpha_i\\
	&=\sum_{i=1}^{k-1}\left\langle (s_{\alpha_{i+1}}\cdots s_{\alpha_k}w)\cdot\mu+\rho, \alpha_i^\lor\right\rangle\alpha_i+\left\langle w\cdot\mu+\rho, \alpha_k^\lor\right\rangle\alpha_k\\
	&=\sum_{i=1}^{k-1}\left\langle (s_{\alpha_{i+1}}\cdots s_{\alpha_k}w)(\mu+\rho), \alpha_i^\lor\right\rangle\alpha_i+\left\langle w(\mu+\rho), \alpha_k^\lor\right\rangle\alpha_k\\
	&=\sum_{i=1}^{k-1}\left\langle \mu+\rho, \left((s_{\alpha_{i+1}}\cdots s_{\alpha_k}w)^{-1}\alpha_i\right)^\lor\right\rangle\alpha_i+\left\langle \mu+\rho, (w^{-1}\alpha_k)^\lor\right\rangle\alpha_k.
	\end{align*}
	Since
	$\mu$ is regular, antidominant and since, by Lemma \ref{distinct}, $\alpha_1,\cdots,\alpha_k$ are distinct positive roots, it follows that $x\cdot\mu<w\cdot\mu$ holds if and only if
	the following equivalent conditions hold:
	\begin{itemize}
		
		\item $ \left\langle \mu+\rho, \left((s_{\alpha_{i+1}}\cdots s_{\alpha_k}w)^{-1}\alpha_i\right)^\lor\right\rangle\in\mathbb{Z}^{>0}\ \text{ for all $1\le i\le k-1$ and } \\ 
		\left\langle \mu+\rho, (w^{-1}\alpha_k)^\lor\right\rangle\in\mathbb{Z}^{>0}$.
		\item  $(s_{\alpha_{i+1}}\cdots s_{\alpha_k}w)^{-1}\alpha_i\in -\Phi_{[\lambda]}^+ \ \text{ for all $1\le i\le k-1$ and } w^{-1}\alpha_k\in -\Phi_{[\lambda]}^+$.
		\item $\ell_{[\lambda]}(s_{\alpha_{i}}s_{\alpha_{i+1}}\cdots s_{\alpha_k}w)<\ell_{[\lambda]}(s_{\alpha_{i+1}}\cdots s_{\alpha_k}w) \ \text{ for all $1\le i\le k-1$ and } \\ \ell_{[\lambda]}(s_{\alpha_{k}}w)<\ell_{[\lambda]}(w)$.
		\item $s_{\alpha_{i}}s_{\alpha_{i+1}}\cdots s_{\alpha_k}w<_{[\lambda]}s_{\alpha_{i+1}}\cdots s_{\alpha_k}w \ \text{ for all $1\le i\le k-1$ and } s_{\alpha_{k}}w<_{[\lambda]}w$.
	\end{itemize}
	The equivalence of these conditions follows from Proposition~\ref{antidominant}, the fact that $\Phi_{[\lambda]}=\Phi_{[\mu]}$, \cite[\S 0.3, Standard fact (4)]{HJ} and the definition of the Bruhat ordering on $W_{[\lambda]}$.
	The last property implies that 
	$x=s_{\alpha_{1}}\cdots s_{\alpha_k}w <_{[\lambda]}\cdots <_{[\lambda]}s_{\alpha_{k-1}}s_{\alpha_k}w<_{[\lambda]}s_{\alpha_{k}}w<_{[\lambda]}w$.
	Therefore, $x\cdot\mu<w\cdot \mu\implies x<_{[\lambda]} w$. 
	Because $\mu$ is regular,
	we have $x\cdot \mu = w\cdot \mu$ if and only if $x=w$,
	so this prove that (4)$\implies$(2).	
	This completes the proof.
\end{proof}

We have a result which relates Ext groups and the Bruhat ordering on $W$.

\begin{lemma}[{See \cite[Theorem 6.11]{HJ}}]
	\label{Extnonzero} 
	Let $u,v\in W$. If $\mathrm{Ext}_{\cO}^i(M(u\cdot(-2\rho)), L(v\cdot(-2\rho)))\neq \{0\}$ for some $i\ge 0$ then $u\le v$.
\end{lemma}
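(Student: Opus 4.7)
The plan is to derive the claim from the Vogan-Kazhdan-Lusztig correspondence (property (h) of Section \ref{kl-sect}), combined with property (a) which states $P_{x,w} = 0$ unless $x \le w$.

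First, I would observe that $\lambda := -2\rho$ is integral, antidominant, and regular: for every simple root $\alpha$, one has $\langle -2\rho + \rho, \alpha^\lor\rangle = \langle -\rho, \alpha^\lor\rangle = -1$. Hence $W_{[\lambda]} = W$, $\Delta_{[\lambda]} = \Delta$, and $-2\rho$ is the unique antidominant representative of its own orbit. Lemma \ref{HJtypo} applied with $I = \emptyset$ then gives the chain of equivalences $u \le v \iff u\cdot(-2\rho) \le v\cdot(-2\rho) \iff u\cdot(-2\rho) \uparrow_{[\lambda]} v\cdot(-2\rho)$, so producing any one of these conditions will suffice.

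By property (h) of Section \ref{kl-sect}, Vogan's reformulation of the Kazhdan-Lusztig conjecture gives, for every $x \le w$ in $W$,
$$P_{x,w}(q) = \sum_{i \ge 0} q^i \dim \mathrm{Ext}^{\ell(w) - \ell(x) - 2i}_\cO\bigl(M(x\cdot(-2\rho)),\, L(w\cdot(-2\rho))\bigr).$$
The ingredient I would invoke on top of this is the parity-vanishing refinement implicit in the Kazhdan-Lusztig theorem and its Koszul/graded lift (Beilinson-Ginzburg-Soergel): for every pair $x,w \in W$ and every $i \ge 0$,
$$\dim \mathrm{Ext}^i_\cO\bigl(M(x\cdot(-2\rho)),\, L(w\cdot(-2\rho))\bigr) = \bigl[q^{(\ell(w)-\ell(x)-i)/2}\bigr]\,P_{x,w}(q),$$
with the right-hand side read as $0$ whenever $(\ell(w)-\ell(x)-i)/2$ is not a non-negative integer. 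Setting $(x,w) = (u,v)$: if some $\mathrm{Ext}^i_\cO\bigl(M(u\cdot(-2\rho)), L(v\cdot(-2\rho))\bigr)$ is non-zero, the identity forces $P_{u,v}(q) \ne 0$, and property (a) of Section \ref{kl-sect} then yields $u \le v$.

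The principal obstacle is justifying this identity in the regime $u \not\le v$, where property (h) as quoted in the paper is silent; one really needs the parity vanishing of $\mathrm{Ext}^i$ in category $\cO$. A more elementary alternative that avoids invoking Koszul duality would argue by induction on $i$. For $i = 0$, $\mathrm{Hom}_\cO\bigl(M(u\cdot(-2\rho)), L(v\cdot(-2\rho))\bigr) \ne 0$ forces $u\cdot(-2\rho) = v\cdot(-2\rho)$, and hence $u = v$ by regularity of $-2\rho$. For $i \ge 1$, applying $\mathrm{Hom}_\cO(-, L(v\cdot(-2\rho)))$ to the short exact sequence $0 \to N \to M(u\cdot(-2\rho)) \to L(u\cdot(-2\rho)) \to 0$ (with $N$ the maximal submodule) yields a long exact sequence that reduces the problem to controlling $\mathrm{Ext}^j_\cO\bigl(L(\sigma), L(v\cdot(-2\rho))\bigr)$ for composition factors $L(\sigma)$ of $N$, whose highest weights $\sigma = w\cdot(-2\rho)$ satisfy $w < u$ by the strong linkage principle (via Lemma \ref{HJtypo}); one then closes by a joint induction on $\ell(u)$ and $i$.
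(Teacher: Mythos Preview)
Your primary route through Vogan's formula and Koszul duality works, but be clear about what you are really citing: property~(h) as stated in Section~\ref{kl-sect} is asserted only for $x\le w$, and the extension of the Ext--to--coefficient identity to arbitrary pairs is, in the regime $x\not\le w$, precisely the lemma you are proving. Invoking Beilinson--Ginzburg--Soergel does supply an independent proof of that identity (the Koszul grading forces every $\mathrm{Ext}$ group to be a coefficient of $P_{x,w}$), so there is no circularity; but you are deploying substantially heavier machinery than the statement warrants.

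Your sketched inductive alternative does not close. From the long exact sequence for $0\to N\to M(u\cdot(-2\rho))\to L(u\cdot(-2\rho))\to 0$, nonvanishing of $\mathrm{Ext}^i\bigl(M(u\cdot(-2\rho)),L(v\cdot(-2\rho))\bigr)$ forces either $\mathrm{Ext}^i\bigl(L(u\cdot(-2\rho)),L(v\cdot(-2\rho))\bigr)\neq 0$ or, after d\'evissage on $N$, $\mathrm{Ext}^i\bigl(L(w\cdot(-2\rho)),L(v\cdot(-2\rho))\bigr)\neq 0$ for some $w<u$. Even granting an inductive conclusion $w\le v$ from the second case, this does not yield $u\le v$; and duality in $\cO$ gives $\mathrm{Ext}^i\bigl(L(u\cdot(-2\rho)),L(v\cdot(-2\rho))\bigr)\cong\mathrm{Ext}^i\bigl(L(v\cdot(-2\rho)),L(u\cdot(-2\rho))\bigr)$, so the first case carries no order information between $u$ and $v$ either. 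Iterating only ever produces $w'\le v$ for various $w'<u$; no joint induction on $(\ell(u),i)$ repairs this.

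The paper's proof is different and lighter. Your $i=0$ argument matches it. For $i\ge 1$ the paper reparametrizes by the longest element $w_0$, writing $u\cdot(-2\rho)=(uw_0)\cdot\kappa$ and $v\cdot(-2\rho)=(vw_0)\cdot\kappa$ with $\kappa:=w_0\cdot(-2\rho)\in\Lambda^+$, and then invokes \cite[Theorem~6.11]{HJ} directly (that result is stated for dominant $\kappa$ and asserts that a nonzero $\mathrm{Ext}^i$ forces strong linkage). This gives $u\cdot(-2\rho)\uparrow v\cdot(-2\rho)$, and Lemma~\ref{HJtypo} in the regular case converts strong linkage back to $u\le v$.
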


\begin{proof}
	Suppose $\mathrm{Ext}_{\cO}^i(M(u\cdot(-2\rho)), L(v\cdot(-2\rho)))\neq \{0\}$ for some $i\ge 0$. 
	
	For $i=0$, we have $\mathrm{Hom}_{\cO}(M(u\cdot(-2\rho)), L(v\cdot(-2\rho)))\neq \{0\}$. Let $\varphi:M(u\cdot(-2\rho))\to L(v\cdot(-2\rho))$ be a nonzero $\mathfrak{g}$-module homomorphism and $v^+$ be a maximal vector of weight $u\cdot(-2\rho)$ in $M(u\cdot(-2\rho))$. Then $\varphi(v^+)\neq 0$. Let $\mathfrak{n}:=\bigoplus_{\alpha>0}\mathfrak{g}_\alpha$. Then for all $n\in\mathfrak{n}$, $n\cdot\varphi(v^+)=\varphi(n\cdot v^+)=\varphi(0)=0$. 
	Let $\eta=u\cdot(-2\rho)$ and $\nu=v\cdot(-2\rho)$. Then for all $h\in\mathfrak{h}$, $h\cdot\varphi(v^+)=\varphi(h\cdot v^+)=\varphi(\eta(h) v^+)=\eta(h)\varphi(v^+)$. Hence $\varphi(v^+)$ is a maximal vector of weight $u\cdot(-2\rho)$ in $L(v\cdot(-2\rho))$. Then by \cite[Theorem 1.2]{HJ}, $\varphi(v^+)$ is a maximal vector of weight $v\cdot(-2\rho)$ in $L(v\cdot(-2\rho))$ since $L(v\cdot(-2\rho))$ is simple. Hence for all $h\in\mathfrak{h}$, $\eta(h)\varphi(v^+)=h\cdot\varphi(v^+)=\nu(h)\varphi(v^+)$, i.e., $u\cdot(-2\rho)=\eta=\nu=v\cdot(-2\rho)$. Since $-2\rho\in E$ is regular, we get $u=v$.
	
	For $i\ge 1$, we have $\mathrm{Ext}_{\cO}^i(M(uw_0\cdot\kappa), L(vw_0\cdot\kappa))\neq \{0\}$ for some $i\ge 0$, where
	$w_0$ is the longest element in $W$ and $\kappa=w_0\cdot(-2\rho)$. Note that $w_0=w_0^{-1}$ and $w_0\Phi^+=-\Phi^+$.
	Then for all $\alpha\in\Phi^+$, we get $\langle \kappa, \alpha^\lor\rangle=\langle -2\rho, (w_0\alpha)^\lor\rangle=-2\langle \rho, (w_0\alpha)^\lor\rangle\in \mathbb{Z}^{\ge 0}$. Hence $\kappa\in\Lambda^+$. Then by \cite[Theorem 6.11]{HJ}, we get $uw_0\cdot\kappa\uparrow vw_0\cdot\kappa$ and hence $u\cdot(-2\rho)\uparrow_{[-2\rho]} v\cdot(-2\rho)$. 
	Note that $-2\rho\in E$ is the unique integral, regular, antidominant weight in $W_{[-2\rho]}\cdot(-2\rho)$ and this implies that $W=W_{[-2\rho]}$. 
	Then by Lemma \ref{HJtypo}, we have $u\le v$. 
\end{proof}

Now we can express Kazhdan-Lusztig polynomials in terms of Ext groups. 

\begin{proposition}[{See \cite[Theorem 8.11]{HJ}}]
	\label{KLKLV} 
	
	For all $u,v\in W_{[\lambda]}$, it holds that
	\[
	P_{u,v}^{[\lambda]}(q)
	=\sum_{i\ge 0} q^{\frac{\ell_{[\lambda]}(u,v)-i}{2}}\dim\mathrm{Ext}_{\cO^\natural}^i(M(u\cdot(-2\rho_{[\lambda]})), L(v\cdot(-2\rho_{[\lambda]}))).
	\]
\end{proposition}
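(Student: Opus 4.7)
My plan is to deduce this formula by applying the Kazhdan-Lusztig Conjecture (property (h) of Section~\ref{kl-sect}) to the auxiliary Lie algebra $\g^\natural$ and then reindexing the summation, using parity vanishing and a range bound on Ext degrees to convert a sum restricted by parity into a sum over all $i \ge 0$.

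First, I would identify the input. The Lie algebra $\g^\natural$ is finite dimensional complex semisimple with Weyl group $W_{[\lambda]}$, length function $\ell_{[\lambda]}$, and half-sum of positive roots $\rho_{[\lambda]}$, and $-2\rho_{[\lambda]}$ is integral, regular, and antidominant for $\Phi_{[\lambda]}$. A direct computation gives $u \cdot (-2\rho_{[\lambda]}) = -u(\rho_{[\lambda]}) - \rho_{[\lambda]}$ for every $u \in W_{[\lambda]}$, identifying the Verma module $M(u \cdot (-2\rho_{[\lambda]}))$ with the module $M_u$ appearing in property (h) as applied to $\g^\natural$. That property then yields
\[
P^{[\lambda]}_{u,v}(q) \;=\; \sum_{k \ge 0} q^k \dim \mathrm{Ext}^{\ell_{[\lambda]}(u,v) - 2k}_{\cO^\natural}\bigl(M(u \cdot (-2\rho_{[\lambda]})),\, L(v \cdot (-2\rho_{[\lambda]}))\bigr).
\]

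Next I would substitute $i = \ell_{[\lambda]}(u,v) - 2k$, so $k = (\ell_{[\lambda]}(u,v) - i)/2$. As $k$ runs over $\mathbb{Z}^{\ge 0}$, the index $i$ runs precisely over non-negative integers with $i \le \ell_{[\lambda]}(u,v)$ and $i \equiv \ell_{[\lambda]}(u,v) \pmod{2}$, and the formula above becomes the right-hand side of the claim restricted to those $i$. To pass to the sum over all $i \ge 0$ as stated, I would need
\[
\mathrm{Ext}^i_{\cO^\natural}\bigl(M(u \cdot (-2\rho_{[\lambda]})),\, L(v \cdot (-2\rho_{[\lambda]}))\bigr) = 0
\]
whenever $i > \ell_{[\lambda]}(u,v)$ or $i \not\equiv \ell_{[\lambda]}(u,v) \pmod 2$.

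The hard part will be establishing these two vanishing statements. Both are classical in the Kazhdan-Lusztig framework: the range bound follows from the projective dimension of Verma modules in a regular integral block of $\cO^\natural$ (bounded by the length $\ell_{[\lambda]}(u,v)$ via BGG-style resolutions), while the parity vanishing follows from Beilinson-Bernstein localization, which realizes these Ext groups as hypercohomology of shifted IC sheaves supported on Schubert varieties, concentrated in a single parity. Once these are invoked, the extra terms introduced by extending the sum to all $i \ge 0$ are identically zero, and the identity follows.
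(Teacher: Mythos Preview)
Your approach via property (h) is essentially the same route the paper takes, just packaged differently: the paper cites \cite[Theorem~8.11]{HJ}, which already records both the Ext formula and the parity vanishing in one statement, so the sign $(-1)^{\ell_{[\lambda]}(u,v)-i}$ drops out immediately without a separate reindexing step or a direct appeal to Beilinson--Bernstein localization.

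There is, however, a gap. Property (h) in Section~\ref{kl-sect} is stated only for $x \le w$, so your argument as written covers only the case $u \le_{[\lambda]} v$. The proposition is asserted for \emph{all} $u,v \in W_{[\lambda]}$, and when $u \not\le_{[\lambda]} v$ you must still verify that both sides vanish. The left side is zero by property (a), but for the right side you need
\[
\mathrm{Ext}^i_{\cO^\natural}\bigl(M(u\cdot(-2\rho_{[\lambda]})),\, L(v\cdot(-2\rho_{[\lambda]}))\bigr) = 0 \quad\text{for every } i \ge 0.
\]
The paper isolates exactly this as Lemma~\ref{Extnonzero} (applied to $\g^\natural$): nonvanishing of any such Ext group forces $u \le_{[\lambda]} v$. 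Your range and parity vanishing do not cover this case, since when $u \not\le_{[\lambda]} v$ the integer $\ell_{[\lambda]}(u,v)$ may be negative or otherwise uninformative, and parity alone says nothing about degrees of the correct residue. You should add this case explicitly.
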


\begin{proof}
	By Kazhdan-Lusztig Conjecture (cf. Property (g) in Section~\ref{kl-sect}) and \cite[Theorem 8.11]{HJ}, for all $u\le_{[\lambda]}v\in W_{[\lambda]}$, we have 
	\[
	P_{u,v}^{[\lambda]}(q)
	=\sum_{i\ge 0} (-1)^{\ell_{[\lambda]}(u,v)-i}q^{\frac{\ell_{[\lambda]}(u,v)-i}{2}}\dim\mathrm{Ext}_{\cO^\natural}^i(M(u\cdot(-2\rho_{[\lambda]})), L(v\cdot(-2\rho_{[\lambda]})))
	\]
	and $\ell_{[\lambda]}(u,v)-i\equiv 1 \ (\text{mod }2) \implies \mathrm{Ext}_{\cO^\natural}^i(M(u\cdot(-2\rho_{[\lambda]})), L(v\cdot(-2\rho_{[\lambda]})))=\{0\}$.
	Then we have \[
	P_{u,v}^{[\lambda]}(q)
	=\sum_{i\ge 0} q^{\frac{\ell_{[\lambda]}(u,v)-i}{2}}\dim\mathrm{Ext}_{\cO^\natural}^i(M(u\cdot(-2\rho_{[\lambda]})), L(v\cdot(-2\rho_{[\lambda]})))
	\]
	for all $u\le_{[\lambda]}v\in W_{[\lambda]}$.
	By Theorem \ref{paraKL}, $P_{u,v}^{[\lambda]}(q)=0$ if $u\not\le_{[\lambda]}v$.
	It suffices to show the RHS is also zero if $u\not\le_{[\lambda]}v$.
	Applying Lemma \ref{Extnonzero} to $\mathfrak{g}^\natural$, it holds that $\mathrm{Ext}_{\cO^\natural}^i(M(u\cdot(-2\rho_{[\lambda]})), L(v\cdot(-2\rho_{[\lambda]})))\neq \{0\}$ for some $i\ge 0$ implies $u\le_{[\lambda]}v$.
	Taking the contrapositive, it holds that $u\not \le_{[\lambda]}v$ implies $\mathrm{Ext}_{\cO^\natural}^i(M(u\cdot(-2\rho_{[\lambda]})), L(v\cdot(-2\rho_{[\lambda]})))=\{0\}$ for all $i\ge 0$. Hence the RHS is zero if $u\not \le_{[\lambda]}v$. The claim follows. 
\end{proof}

For arbitrary $\lambda\in\Lambda_I^+$, we can relate relative Kazhdan-Lusztig-Vogan polynomials associated to $\lambda$ and parabolic Kazhdan-Lusztig polynomials of ${}^{\Sigma_\mu}W_{[\lambda]}$ of type $q$ by the results due to Soergel \cite{WS1} and Irving \cite{RSI}:


\begin{theorem}\label{KLVparaKL}
	For all $x,w\in {}^IW_{[\lambda]}^{\Sigma_\mu}$, it holds that \[
	{}^IP_{x,w}^{\Sigma_\mu}(q)
	=\sum_{t\in \left(W_{[\lambda]}\right)_{\Sigma_{\mu}}}
	(-1)^{\ell_{[\lambda]}(t)} P^{[\lambda]}_{w_Ixt,w_Iw}(q)
	=P^{[\lambda],{\Sigma_\mu},q}_{(w_Ix)^{-1},(w_Iw)^{-1}}(q).
	\]
\end{theorem}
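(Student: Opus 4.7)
The plan is to prove the two equalities separately, with the right-hand equality being a combinatorial manipulation and the left-hand equality being the substantive one requiring representation-theoretic input.

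For the right-hand equality, I would apply Proposition \ref{paraKL-KL} to the Coxeter system $(W_{[\lambda]}, S_{[\lambda]})$ with $J = \Sigma_\mu$, taking $u = (w_I x)^{-1}$ and $v = (w_I w)^{-1}$. The condition $x, w \in {}^I W_{[\lambda]}^{\Sigma_\mu}$ translates under inversion (using the length additivity $\ell_{[\lambda]}(w_I x) = \ell_{[\lambda]}(w_I) + \ell_{[\lambda]}(x)$ from $x \in {}^I W_{[\lambda]}$ together with the condition $x < x s_\alpha$ for $\alpha \in \Sigma_\mu$) into the requirement that $(w_I x)^{-1}, (w_I w)^{-1}$ lie in ${}^{\Sigma_\mu} W_{[\lambda]}$, so that $P^{[\lambda], \Sigma_\mu, q}_{(w_I x)^{-1}, (w_I w)^{-1}}(q)$ is defined. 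Proposition \ref{paraKL-KL} then gives $P^{[\lambda], \Sigma_\mu, q}_{(w_I x)^{-1}, (w_I w)^{-1}}(q) = \sum_{s \in (W_{[\lambda]})_{\Sigma_\mu}} (-1)^{\ell_{[\lambda]}(s)} P^{[\lambda]}_{s (w_I x)^{-1}, (w_I w)^{-1}}(q)$. Re-indexing $s = t^{-1}$ (which preserves both the subgroup $(W_{[\lambda]})_{\Sigma_\mu}$ and the length function) and applying the symmetry $P_{a, b}(q) = P_{a^{-1}, b^{-1}}(q)$ from property (f) of Section \ref{kl-sect} converts this expression into the middle one.

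For the left-hand equality, I would reduce to the integral case via Corollary \ref{category}, whose item (iii) identifies $\mathrm{Ext}^i_{\mathcal{O}^\mathfrak{p}}(M_I(w_I x \cdot \mu), L(w_I w \cdot \mu))$ with the corresponding Ext in $(\mathcal{O}^\natural)^{\mathfrak{p}^\natural}_{\mu^\natural}$, where $\mu^\natural$ is $\Phi_{[\lambda]}$-integral but possibly singular. In this integral setting, the results of Soergel \cite{WS1} and Irving \cite{RSI} apply: the BGG--Lepowsky resolution of $M_I(w_I x \cdot \mu^\natural)$ by ordinary Verma modules, combined with translation functors relating the singular block $(\mathcal{O}^\natural)_{\mu^\natural}$ to an adjacent regular block, produces an Euler-characteristic identity. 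Using Proposition \ref{KLKLV} (for $\g^\natural$) to interpret the Ext dimensions of ordinary Verma modules as graded coefficients of $P^{[\lambda]}_{w_I x t, w_I w}$, and using that the stabilizer of $\mu^\natural$ coincides with $W_{\Sigma_\mu} = (W_{[\lambda]})_{\Sigma_\mu}$ by Lemma \ref{sameisotropy}, the identity becomes the middle expression, with the signs $(-1)^{\ell_{[\lambda]}(t)}$ arising from the alternating differentials of the resolution.

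The main obstacle is convention matching. The formulas of Soergel and Irving are variously stated in terms of opposite Bruhat orderings, an involution $y \mapsto y w_0$, or different normalizations of the dot action, so aligning them with Definition \ref{KLV} requires care. A related complication is that the grading in the Poincaré polynomial uses $\ell_{[\lambda]}$, not the ambient length $\ell$; this reliance on Lemma \ref{bruhatorders} is what makes $P^{[\lambda]}$ the correct object (rather than an ordinary $P$) on the right. Finally, one must confirm that the indexing set in the middle sum is the full stabilizer $W_{\Sigma_\mu}$ and not merely a coset, which follows because $w_I x t \cdot \mu^\natural = w_I x \cdot \mu^\natural$ for all $t \in W_{\Sigma_\mu}$ (so the corresponding Vermas all map to the same point of the orbit) together with the characterization of $W_{\Sigma_\mu}$ as a reflection subgroup in Lemma \ref{parabolicgenerate}.
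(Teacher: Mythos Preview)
Your proposal is essentially correct and follows the same overall strategy as the paper: reduce to the integral setting via Corollary~\ref{category}, invoke the Soergel/Irving formula for singular-block Ext groups to obtain the alternating sum over $(W_{[\lambda]})_{\Sigma_\mu}$, interpret the summands via Proposition~\ref{KLKLV}, and then use the inversion symmetry (property~(f)) together with Proposition~\ref{paraKL-KL} to reach the parabolic polynomial $P^{[\lambda],\Sigma_\mu,q}$.

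One point of difference worth noting: where you invoke the BGG--Lepowsky resolution of $M_I$ by ordinary Vermas to pass from $\mathrm{Ext}_{\mathcal{O}^{\mathfrak{p}}}$ to $\mathrm{Ext}_{\mathcal{O}}$, the paper instead cites this as a single isomorphism $\mathrm{Ext}^i_{(\mathcal{O}^\natural)^{\mathfrak{p}^\natural}}(M_I(-),L(-)) \cong \mathrm{Ext}^i_{\mathcal{O}^\natural}(M(-),L(-))$ obtained from the Lyndon--Hochschild--Serre spectral sequence (see \cite{TEBS}), and only \emph{afterwards} applies Soergel/Irving to the ordinary-category Ext. This separation is cleaner because it avoids entangling the parabolic-to-ordinary passage with the singular-to-regular translation; your description folds these together, which is fine conceptually but makes the bookkeeping of the length shift $\ell_{[\lambda]}(w_Ixt) = \ell_{[\lambda]}(w_I) + \ell_{[\lambda]}(x) + \ell_{[\lambda]}(t)$ (needed to match the $q$-grading) harder to track. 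The paper also explicitly uses \cite[Corollary~2.2]{BBDN} to verify that $w_Ix, w_Iw \in (W_{[\lambda]})^{\Sigma_\mu}$, which is the precise statement underlying your claim that $(w_Ix)^{-1}, (w_Iw)^{-1} \in {}^{\Sigma_\mu}W_{[\lambda]}$.
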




\begin{proof}
	It holds that  
	\begin{equation}\label{Ext}
	\mathrm{Ext}_{(\cO^\natural)^{\p^\natural}}^i(M_I(w_Ix\cdot\mu^\natural), L(w_Iw\cdot\mu^\natural))\cong \mathrm{Ext}_{\cO^\natural}^i(M(w_Ix\cdot\mu^\natural), L(w_Iw\cdot\mu^\natural)) 	
	\end{equation}
	for all $x,w\in {}^I\left(W_{[\lambda]}\right)^{\Sigma^\natural_{\mu^\natural}}$.
	This isomorphism is well-known; see \cite[\S 9.2]{BBMH} and \cite[\S 6]{HX}. This isomorphism can be proved by using the argument of the Lyndon-Hochschild-Serre spectral sequence as in \cite[Chapter 15]{TEBS}.
	Recall that ${}^IW_{[\lambda]}^{\Sigma_{\mu}}={}^I\left(W_{[\lambda]}\right)^{\Sigma^\natural_{\mu^\natural}}$.
	It holds that ${}^I\left(W_{[\lambda]}\right)^{{\Sigma}^\natural_{\mu^\natural}}
	={}^I\left(W_{[\lambda]}\right)\cap w_I\left(W_{[\lambda]}\right)^{{\Sigma}^\natural_{\mu^\natural}}$ by applying \cite[Corollary 2.2]{BBDN} to $\g^\mathfrak{\natural}$.
	Then for all $x,w\in {}^IW_{[\lambda]}^{\Sigma_{\mu}}$, we have 
	$w_Ix, w_Iw\in w_I{}^I\left(W_{[\lambda]}\right)\cap \left(W_{[\lambda]}\right)^{{\Sigma}^\natural_{\mu^\natural}}\subseteq \left(W_{[\lambda]}\right)^{{\Sigma}^\natural_{\mu^\natural}}$,
	i.e., $w_Ix$ and $w_Iw$ are both the minimal length left coset representatives of $\left(W_{[\lambda]}\right)_{\Sigma^\natural_{\mu^\natural}}$ in $W_{[\lambda]}$. 
	Let $M(u):=M(u\cdot(-2\rho_{[\lambda]}))$ and $L(u):=L(u\cdot(-2\rho_{[\lambda]}))$.	
	Then by the Nil-cohomology Theorem due to Soergel (see \cite[page 566]{WS1}) or the result due to Irving (see \cite[Theorem 1.3.1 and Lemma 1.3.2]{RSI}), we have 
	\begin{align*}
	&\dim\mathrm{Ext}_{\cO^\natural}^i(M(w_Ix\cdot\mu^\natural), L(w_Iw\cdot\mu^\natural))\\
	&=\sum_{t\in \left(W_{[\lambda]}\right)_{\Sigma^\natural_{\mu^\natural}}}(-1)^{\ell_{[\lambda]}(t)}
	\dim\mathrm{Ext}_{\cO^\natural}^{i-\ell_{[\lambda]}(t)}(M(w_Ixt), L(w_Iw))
	\end{align*}
	for all $x,w\in {}^IW_{[\lambda]}^{\Sigma_{\mu}}$. Recall that ${\Sigma^\natural_{\mu^\natural}}
	=\Sigma_{\mu}$. This implies that $\left(W_{[\lambda]}\right)_{\Sigma^\natural_{\mu^\natural}}
	=\left(W_{[\lambda]}\right)_{\Sigma_{\mu}}$ and ${}^{\Sigma^\natural_{\mu^\natural}}\left(W_{[\lambda]}\right)
	={}^{\Sigma_{\mu}}\left(W_{[\lambda]}\right)$.
	For all $x,w\in {}^IW_{[\lambda]}^{\Sigma_{\mu}}$ and $t\in \left(W_{[\lambda]}\right)_{\Sigma_{\mu}}$, it holds that $w_I\in \left(W_{[\lambda]}\right)_{I}$, $x,w\in {}^I\left(W_{[\lambda]}\right)$, $t^{-1}\in \left(W_{[\lambda]}\right)_{\Sigma_{\mu}}$ and $(w_Ix)^{-1}\in {}^{\Sigma^\natural_{\mu^\natural}}\left(W_{[\lambda]}\right)
	={}^{\Sigma_{\mu}}\left(W_{[\lambda]}\right)$. Then by applying \cite[Proposition 3.4 and Remark 3.6]{EHP} to $W_{[\lambda]}$ and the fact that $\ell_{[\lambda]}(v)=\ell_{[\lambda]}(v^{-1})$ for all $v\in W_{[\lambda]}$, we get $\ell_{[\lambda]}(w_Iw)=\ell_{[\lambda]}(w_I)+\ell_{[\lambda]}(w)$ and $\ell_{[\lambda]}(w_Ixt)
	=\ell_{[\lambda]}((w_Ixt)^{-1})
	=\ell_{[\lambda]}(t^{-1}(w_Ix)^{-1})
	=\ell_{[\lambda]}(t^{-1})+\ell_{[\lambda]}((w_Ix)^{-1})
	=\ell_{[\lambda]}(t)+\ell_{[\lambda]}(w_Ix)
	=\ell_{[\lambda]}(t)+\ell_{[\lambda]}(w_I)+\ell_{[\lambda]}(x)
	=\ell_{[\lambda]}(w_I)+\ell_{[\lambda]}(x)+\ell_{[\lambda]}(t)$.
	Hence for all $x,w\in {}^IW_{[\lambda]}^{\Sigma_{\mu}}$, we have
	\allowdisplaybreaks
	\begin{align*}
	&{}^IP^{\Sigma_\mu}_{x,w}(q)&\\
	&=\sum_{i\ge 0} q^{\frac{\ell_{[\lambda]}(x,w)-i}{2}}\dim\mathrm{Ext}_{\cO^\p}^i(M_I(w_Ix\cdot\mu), L(w_Iw\cdot\mu))&\\
	&=\sum_{i\ge 0} q^{\frac{\ell_{[\lambda]}(x,w)-i}{2}}\dim\mathrm{Ext}_{(\cO^\natural)^{\p^\natural}}^i(M_I(w_Ix\cdot\mu^\natural), L(w_Iw\cdot\mu^\natural))&\\
	&=\sum_{i\ge 0} q^{\frac{\ell_{[\lambda]}(x,w)-i}{2}}\dim\mathrm{Ext}_{\cO^\natural}^i(M(w_Ix\cdot\mu^\natural), L(w_Iw\cdot\mu^\natural))&\\
	&=\sum_{i\ge 0} q^{\frac{\ell_{[\lambda]}(x,w)-i}{2}} \sum_{t\in \left(W_{[\lambda]}\right)_{\Sigma_{\mu}}}(-1)^{\ell_{[\lambda]}(t)}
	\dim\mathrm{Ext}_{\cO^\natural}^{i-\ell_{[\lambda]}(t)}(M(w_Ixt), L(w_Iw))&\\
	&=\sum_{t\in \left(W_{[\lambda]}\right)_{\Sigma_{\mu}}}(-1)^{\ell_{[\lambda]}(t)}
	\sum_{i\ge 0} q^{\frac{\ell_{[\lambda]}(w_Iw)-\ell_{[\lambda]}(w_Ixt)-\left(i-\ell_{[\lambda]}(t)\right)}{2}} \dim\mathrm{Ext}_{\cO^\natural}^{i-\ell_{[\lambda]}(t)}(M(w_Ixt), L(w_Iw))&\\
	&=\sum_{t\in \left(W_{[\lambda]}\right)_{\Sigma_{\mu}}}(-1)^{\ell_{[\lambda]}(t)}
	\sum_{i-\ell_{[\lambda]}(t)\ge 0} q^{\frac{\ell_{[\lambda]}(w_Ixt,w_Iw)-\left(i-\ell_{[\lambda]}(t)\right)}{2}} \dim\mathrm{Ext}_{\cO^\natural}^{i-\ell_{[\lambda]}(t)}(M(w_Ixt), L(w_Iw))&\\
	&=\sum_{t\in \left(W_{[\lambda]}\right)_{\Sigma_{\mu}}}(-1)^{\ell_{[\lambda]}(t)}
	\sum_{i\ge 0} q^{\frac{\ell_{[\lambda]}(w_Ixt,w_Iw)-i}{2}} \dim\mathrm{Ext}_{\cO^\natural}^{i}(M(w_Ixt), L(w_Iw))&\\
	&=\sum_{t\in \left(W_{[\lambda]}\right)_{\Sigma_{\mu}}}(-1)^{\ell_{[\lambda]}(t)} P_{w_Ixt,w_Iw}^{[\lambda]}(q) &\\ 
	&=\sum_{t\in \left(W_{[\lambda]}\right)_{\Sigma_{\mu}}}(-1)^{\ell_{[\lambda]}(t)}P_{(w_Ixt)^{-1},(w_Iw)^{-1}}^{[\lambda]}(q) & \\
	&=\sum_{t^{-1}\in \left(W_{[\lambda]}\right)_{\Sigma_{\mu}}}(-1)^{\ell_{[\lambda]}(t^{-1})}P_{t^{-1}(w_Ix)^{-1},(w_Iw)^{-1}}^{[\lambda]}(q)&\\
	&=\sum_{t\in \left(W_{[\lambda]}\right)_{\Sigma_{\mu}}}(-1)^{\ell_{[\lambda]}(t)}P_{t(w_Ix)^{-1},(w_Iw)^{-1}}^{[\lambda]}(q)&\\
	&=P^{[\lambda],{\Sigma_\mu},q}_{(w_Ix)^{-1},(w_Iw)^{-1}}(q). & 
	\end{align*}
	The first equality follows from Definition \ref{KLV}.
	The second equality follows from Corollary \ref{category} and the fact that $\cO^\p$ and $(\cO^\natural)^{\p^\natural}$ are full subcategories of $\cO$ and $\cO^\natural$, respectively.
	The third equality follows from the isomorphism (\ref{Ext}). 
	The fourth equality follows from the results due to Soergel and Irving.
	The fifth equality follows from the fact that $\ell_{[\lambda]}(w_Iw)=\ell_{[\lambda]}(w_I)+\ell_{[\lambda]}(w)$, $\ell_{[\lambda]}(w_Ixt)=\ell_{[\lambda]}(w_I)+\ell_{[\lambda]}(x)+\ell_{[\lambda]}(t)$ for all $x,w\in {}^IW_{[\lambda]}^{\Sigma_\mu}$ and $t\in \left(W_{[\lambda]}\right)_{\Sigma_{\mu}}$, and the definition of $\ell_{[\lambda]}(u,v)$.
	The sixth equality follows from the definition of $\ell_{[\lambda]}(u,v)$ and the fact that $\mathrm{Ext}_{\cO^\natural}^{k}(M(w_Ixt), L(w_Iw)):=\{0\}$ for all $k\in\mathbb{Z}^{<0}$.
	The seventh equality follows from the replacement of $i-\ell_{[\lambda]}(t)$ by $i$. 
	The eighth equality follows from Proposition \ref{KLKLV} with $u=w_Ixt$ and $v=w_Iw$.
	The ninth equality follows from that fact that $P_{u,v}^{[\lambda]}=P_{u^{-1},v^{-1}}^{[\lambda]}$ (cf. Property (f) in Section~\ref{kl-sect}). The tenth equality follows from that fact that $\ell_{[\lambda]}(t)=\ell_{[\lambda]}(t^{-1})$ for all $t\in W_{[\lambda]}$ and $t\in \left(W_{[\lambda]}\right)_{\Sigma_{\mu}}\iff t^{-1}\in \left(W_{[\lambda]}\right)_{\Sigma_{\mu}}$.
	The eleventh equality follows from the replacement of $t^{-1}$ by $t$.
	The last equality follows from Proposition \ref{paraKL-KL} and $(w_Ix)^{-1}, (w_Iw)^{-1}\in {}^{{\Sigma}_{\mu}}\left(W_{[\lambda]}\right)={}^{{\Sigma}_{\mu}}W_{[\lambda]}$ (cf. Remark following Lemma \ref{bruhatorders}). 
\end{proof}

Now assume $\lambda\in\Lambda_I^+$ is regular. 
Note that $\lambda$ is regular iff $\mu$ is regular.
We have the following result about ${}^IW_{[\lambda]}$. 


\begin{lemma}\label{IW}
	It holds that ${}^IW_{[\lambda]}=\{w\in W_{[\lambda]} : w\cdot\mu\in -C_{\mathfrak{l}}-\rho\}$, where $C_\mathfrak{l}:=\{\nu\in\h^*:\langle\nu,\alpha^\lor\rangle\ge 0, \ \forall \alpha\in I\}$. 
\end{lemma}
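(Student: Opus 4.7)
The plan is to unwind both sides to the same condition on the sign of $\langle w\cdot\mu+\rho,\alpha^{\lor}\rangle$ for $\alpha\in I$, using regularity of $\mu$ (which follows from regularity of $\lambda$, since $\mu\in W_{[\lambda]}\cdot\lambda$ and regularity depends only on the $W$-orbit of $\lambda+\rho$). The right-hand side is immediate from unpacking: $w\cdot\mu\in -C_{\mathfrak{l}}-\rho$ says exactly that $\langle w\cdot\mu+\rho,\alpha^{\lor}\rangle\le 0$ for every $\alpha\in I$. So the task is to rewrite the Bruhat condition on the left in the same form.

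First I would show that for $w\in W_{[\lambda]}$ and $\alpha\in I$, the relation $w<s_{\alpha}w$ in $W$ is equivalent to $w<_{[\lambda]}s_{\alpha}w$ in $W_{[\lambda]}$. This is immediate from Lemma~\ref{bruhatorders} together with the observation (already used in the remark following Corollary~\ref{category}) that $I\subseteq\Delta_{[\lambda]}$. So $w\in{}^{I}W_{[\lambda]}$ iff $w<_{[\lambda]}s_{\alpha}w$ for all $\alpha\in I$.

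Next I would translate this to a root-theoretic condition. By the standard length characterization in the Coxeter group $W_{[\lambda]}$ (applied in the proof of Lemma~\ref{HJtypo}), for $\alpha\in I\subseteq\Delta_{[\lambda]}$ and $w\in W_{[\lambda]}$,
\[
s_{\alpha}w<_{[\lambda]}w \iff w^{-1}\alpha\in -\Phi_{[\lambda]}^{+},
\]
and since $w^{-1}\alpha\in\Phi_{[\lambda]}$, this dichotomy is exhaustive. Under the antidominance of $\mu$ (Proposition~\ref{antidominant}) and the fact that $\Phi_{[\mu]}=\Phi_{[\lambda]}$, together with the \emph{regularity} of $\mu$, one has
\[
w^{-1}\alpha\in -\Phi_{[\lambda]}^{+} \iff \langle\mu+\rho,(w^{-1}\alpha)^{\lor}\rangle\in\mathbb{Z}^{>0},
\]
the analogous positivity argument we saw in Lemma~\ref{HJtypo} (regularity upgrades the ``$\in\mathbb{Z}^{\ge 0}$'' to ``$\in\mathbb{Z}^{>0}$''). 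Finally I would use the $W$-invariance of $\langle\cdot,\cdot\rangle$ to rewrite
\[
\langle\mu+\rho,(w^{-1}\alpha)^{\lor}\rangle=\langle w(\mu+\rho),\alpha^{\lor}\rangle=\langle w\cdot\mu+\rho,\alpha^{\lor}\rangle.
\]

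To conclude, the condition $w<_{[\lambda]}s_{\alpha}w$ is exactly the negation of $s_{\alpha}w<_{[\lambda]}w$, so it is equivalent to $\langle w\cdot\mu+\rho,\alpha^{\lor}\rangle\notin\mathbb{Z}^{>0}$. Because $\alpha\in\Phi_{[\lambda]}$ and $w\in W_{[\lambda]}$ force $\langle w\cdot\mu+\rho,\alpha^{\lor}\rangle\in\mathbb{Z}$, and because regularity of $\mu$ prevents this integer from being $0$, the condition is the same as $\langle w\cdot\mu+\rho,\alpha^{\lor}\rangle\le 0$. Requiring this for every $\alpha\in I$ is precisely $w\cdot\mu+\rho\in -C_{\mathfrak{l}}$, i.e.\ $w\cdot\mu\in -C_{\mathfrak{l}}-\rho$. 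The only delicate point is keeping the regularity hypothesis visible at the step where we rule out $\langle w\cdot\mu+\rho,\alpha^{\lor}\rangle=0$; without it one would only get $\langle w\cdot\mu+\rho,\alpha^{\lor}\rangle\in\mathbb{Z}^{\le 0}$ as a necessary but not sufficient condition, so this is the only real obstacle and it is handled cleanly by the hypothesis $\lambda\in\mathcal{R}$.
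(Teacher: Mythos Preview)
Your proposal is correct and follows essentially the same approach as the paper: both arguments reduce the Bruhat condition $w<s_\alpha w$ (equivalently $w^{-1}\alpha\in\Phi_{[\lambda]}^+$) to a sign condition on $\langle w\cdot\mu+\rho,\alpha^\lor\rangle$ via the antidominance and regularity of $\mu$, using the $W$-invariance of the pairing. The only cosmetic difference is that the paper invokes Lemma~\ref{JW} to phrase the left-hand side as $w^{-1}(\Phi_{[\lambda]})_I^+\subseteq\Phi_{[\lambda]}^+$ before translating, whereas you work directly with the per-simple-root Bruhat inequality and its negation; the substance is identical.
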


\begin{proof}
	By the remark following Lemma \ref{bruhatorders}, we get ${}^IW_{[\lambda]}={}^I\left(W_{[\lambda]}\right)$.
	Let $\left(\Phi_{[\lambda]}\right)_I:=\Phi_{[\lambda]}\cap\sum_{\alpha\in I}\mathbb{Z}\alpha$ and $\left(\Phi_{[\lambda]}\right)_I^+:=\left(\Phi_{[\lambda]}\right)_I\cap \Phi_{[\lambda]}^+$.
	Then by Lemma \ref{JW}, ${}^I\left(W_{[\lambda]}\right)=\{w\in W_{[\lambda]} : w^{-1}\left(\Phi_{[\lambda]}\right)_I^+\subseteq \Phi_{[\lambda]}^+\}$.
	It suffices to show the equivalence $w^{-1}\left(\Phi_{[\lambda]}\right)_I^+\subseteq \Phi_{[\lambda]}^+ \iff w\cdot\mu\in -C_{\mathfrak{l}}-\rho$ is true for all $w\in W_{[\lambda]}$.
	Since $\mu$ is regular and antidominant, we have $\langle \mu+\rho,\alpha^\lor\rangle<0$ for all $\alpha\in\Delta_{[\mu]}=\Delta_{[\lambda]}$ by Proposition \ref{antidominant}.
	Consider $w\in W_{[\lambda]}$. Suppose $w^{-1}\left(\Phi_{[\lambda]}\right)_I^+\subseteq \Phi_{[\lambda]}^+$. 
	Then for all $\alpha\in I$, 
	$\langle -w(\mu+\rho),\alpha^\lor\rangle
	=-\langle \mu+\rho,(w^{-1}\alpha)^\lor\rangle>0$.
	Hence $-w(\mu+\rho)\in C_\mathfrak{l}$ or equivalently, $w\cdot\mu\in -C_\mathfrak{l}-\rho$.
	
	Conversely, suppose $w\cdot\mu\in -C_\mathfrak{l}-\rho$. Then $-w(\mu+\rho)\in C_\mathfrak{l}$ 
	and hence 
	$\langle \mu+\rho,(w^{-1}\alpha)^\lor\rangle=-\langle -w(\mu+\rho),\alpha^\lor\rangle\le 0$ for all $\alpha\in I$. 
	Since $\lambda\in\Lambda_I^+$, we have $I\subseteq\Delta_{[\lambda]}$ by the remark following Corollary \ref{category}. Then by Theorem \ref{intweylgp}, we have $w^{-1}\alpha\in \Phi_{[\lambda]}=\Phi_{[\mu]}$ for all $\alpha\in I$ since $w\in W_{[\lambda]}$. Since $\mu$ is regular, we get $\langle \mu+\rho,(w^{-1}\alpha)^\lor\rangle < 0$ for all $\alpha\in I$. 
	Then $w^{-1}\alpha\in \Phi_{[\lambda]}^+$ for all $\alpha\in I$. 
	Therefore, $w^{-1}\left(\Phi_{[\lambda]}\right)_I^+\subseteq \Phi_{[\lambda]}^+$.
\end{proof}

For regular $\lambda\in\Lambda_I^+$, we can relate relative Kazhdan-Lusztig-Vogan polynomials associated to $\lambda$ and parabolic Kazhdan-Lusztig polynomials of ${}^IW_{[\lambda]}$ of type $-1$.

\begin{theorem}\label{KLpoly}
	For all $x,w\in {}^I W_{[\lambda]}$, it holds that 
	\[
	{}^IP_{x,w}^{\mu}(q)=P_{w_Ix,w_Iw}^{[\lambda]}(q)=P^{[\lambda],I,-1}_{x,w}(q).
	\]
\end{theorem}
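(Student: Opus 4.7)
The plan is to observe that the regularity hypothesis forces $\Sigma_\mu = \emptyset$, at which point both claimed equalities reduce to specializations of results already established in the excerpt. Since $\lambda \in \Lambda_I^+$ is regular, its $W_{[\lambda]}$-conjugate antidominant weight $\mu$ is also regular, so
\[
\Sigma_\mu = \{\alpha \in \Delta_{[\lambda]} : \langle \mu+\rho, \alpha^\lor\rangle = 0\} = \emptyset.
\]
In particular $\left(W_{[\lambda]}\right)_{\Sigma_\mu} = \{e\}$ and ${}^IW_{[\lambda]}^{\Sigma_\mu} = {}^IW_{[\lambda]}$, and by the convention introduced in the remark following Definition \ref{KLV}, the polynomial ${}^IP_{x,w}^{\mu}(q) = {}^IP_{x,w}^{\emptyset}(q)$ is exactly the relative Kazhdan-Lusztig-Vogan polynomial associated to $\lambda$ in the case $\Sigma_\mu = \emptyset$.

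For the first equality, I would apply Theorem \ref{KLVparaKL} to $x, w \in {}^IW_{[\lambda]} = {}^IW_{[\lambda]}^{\Sigma_\mu}$. Since $\left(W_{[\lambda]}\right)_{\Sigma_\mu} = \{e\}$, the alternating sum over $t$ collapses to a single term with $t = e$ and $\ell_{[\lambda]}(e) = 0$, producing
\[
{}^IP_{x,w}^{\mu}(q) = \sum_{t \in \left(W_{[\lambda]}\right)_{\Sigma_\mu}} (-1)^{\ell_{[\lambda]}(t)} P_{w_I x t, w_I w}^{[\lambda]}(q) = P_{w_I x, w_I w}^{[\lambda]}(q).
\]

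For the second equality, I would apply Proposition \ref{paraKL-KL} to the Coxeter system $(W_{[\lambda]}, S_{[\lambda]})$ with the parabolic subset $I \subseteq \Delta_{[\lambda]}$; the inclusion $I \subseteq \Delta_{[\lambda]}$ is valid by the remark following Corollary \ref{category}, and $w_I$ is the longest element of the standard parabolic $W_I \subseteq W_{[\lambda]}$. By the remark following Lemma \ref{bruhatorders}, the elements $x, w \in {}^IW_{[\lambda]}$ coincide with ${}^I(W_{[\lambda]})$, so specializing Proposition \ref{paraKL-KL} with $y = -1$ gives
\[
P_{x, w}^{[\lambda], I, -1}(q) = P_{w_I x, w_I w}^{[\lambda]}(q),
\]
which combined with the first equality completes the proof. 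I do not anticipate a substantive obstacle; the argument is essentially bookkeeping to ensure the correct parabolic subgroup, longest element, and length function are matched when invoking the two previously established identities, with the only conceptual point being the identification $\Sigma_\mu = \emptyset$ in the regular case.
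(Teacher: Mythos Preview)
Your proposal is correct and follows essentially the same approach as the paper: observe that regularity of $\lambda$ forces $\Sigma_\mu = \emptyset$, then apply Theorem \ref{KLVparaKL} with the trivial isotropy group to obtain the first equality, and Proposition \ref{paraKL-KL} (specialized to the Coxeter system $(W_{[\lambda]}, S_{[\lambda]})$ with parabolic subset $I$) for the second. Your version is slightly more detailed in justifying why Proposition \ref{paraKL-KL} applies in this setting, but the argument is identical in structure.
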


\begin{proof}
	
	Since $\lambda$ is regular, we have $\Sigma_\mu=\emptyset$ and hence $\left(W_{[\lambda]}\right)_{\Sigma_{\mu}}=\{e\}$. Then for all $x,w\in {}^IW_{[\lambda]}$, we get 
	\begin{align*}
	{}^IP_{x,w}^{\mu}(q)
	&=(-1)^{\ell_{[\lambda]}(e)}P_{w_Ixe,w_Iw}^{[\lambda]}(q) & (\text{by Theorem \ref{KLVparaKL}})\\
	&=P_{w_Ix,w_Iw}^{[\lambda]}(q)& (\text{as $\ell_{[\lambda]}(e)=0$ and $w_Ixe=w_Ix$}) \\   
	&=P^{[\lambda],I,-1}_{x,w}(q).& (\text{by Proposition \ref{paraKL-KL}})
	\end{align*}
\end{proof}

\begin{remark} 
	The equality ${}^IP_{x,w}^{\mu}(q)=P^{[\lambda],I,-1}_{x,w}(q)$ is well-known when $\lambda$ is integral; see \cite[page 822]{HX} and \cite[page 147]{EHP}.
	Note that we have $P_{x,w}^{\mu}(q)=P^{[\lambda]}_{x,w}$.
\end{remark}


As an application of Theorem \ref{KLpoly}, we can determine when 
${}^IP_{x,w}^\mu(1)$ is nonzero.

\begin{corollary}\label{keyequiv}
	For all $x,w\in {}^IW_{[\lambda]}$, we have ${}^IP_{x,w}^{\mu}(1)\neq 0$ if and only if $w_Ix\le_{[\lambda]} w_Iw$.
\end{corollary}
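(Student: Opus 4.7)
The plan is to reduce the statement to a question about ordinary Kazhdan-Lusztig polynomials on $W_{[\lambda]}$ via Theorem \ref{KLpoly}, and then apply two standard properties. Since we are assuming $\lambda \in \Lambda_I^+$ is regular (so $\Sigma_\mu = \emptyset$), Theorem \ref{KLpoly} yields
\[
{}^IP_{x,w}^{\mu}(q) \;=\; P_{w_Ix,w_Iw}^{[\lambda]}(q),
\]
so it suffices to show $P_{w_Ix,w_Iw}^{[\lambda]}(1) \neq 0$ if and only if $w_Ix \le_{[\lambda]} w_Iw$.

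For the forward direction, I would invoke property (a) of the Kazhdan-Lusztig polynomials listed in Section \ref{kl-sect}, applied to the Weyl group $W_{[\lambda]}$: the polynomial $P_{w_Ix,w_Iw}^{[\lambda]}(q)$ vanishes identically unless $w_Ix \le_{[\lambda]} w_Iw$. Taking the contrapositive immediately gives $P_{w_Ix,w_Iw}^{[\lambda]}(1) \neq 0 \Rightarrow w_Ix \le_{[\lambda]} w_Iw$.

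For the backward direction, the cleanest route is to re-express $P_{w_Ix,w_Iw}^{[\lambda]}(1)$ as a sum of non-negative integers and then verify that at least one summand is nonzero. Specifically, Proposition \ref{KLKLV} gives
\[
P_{w_Ix,w_Iw}^{[\lambda]}(1) \;=\; \sum_{i\ge 0} \dim \mathrm{Ext}_{\cO^\natural}^{i}\bigl(M(w_Ix\cdot(-2\rho_{[\lambda]})),\, L(w_Iw\cdot(-2\rho_{[\lambda]}))\bigr),
\]
which is manifestly a non-negative integer. Assuming $w_Ix \le_{[\lambda]} w_Iw$, property (e) of Section \ref{kl-sect} (constant term equal to $1$) applied to $W_{[\lambda]}$ tells us that at least one Ext-dimension on the right-hand side equals $1$, so $P_{w_Ix,w_Iw}^{[\lambda]}(1) \ge 1 > 0$.

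The main (and in fact only) subtlety is to be careful that all inputs are being used at the level of the integral Weyl group $W_{[\lambda]}$ rather than $W$: Theorem \ref{KLpoly} delivers a polynomial associated to $W_{[\lambda]}$, and both properties (a) and (e) apply verbatim to $W_{[\lambda]}$ after transporting through the equivalence $\mathcal{F}$ of Corollary \ref{category} (equivalently, by working directly in $\cO^\natural$ as in the proof of Proposition \ref{KLKLV}). Beyond this bookkeeping, the argument is a one-line chain of implications.
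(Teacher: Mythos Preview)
Your proposal is correct and follows essentially the same route as the paper: reduce to $P_{w_Ix,w_Iw}^{[\lambda]}$ via Theorem \ref{KLpoly}, then use properties (a) and (e) from Section \ref{kl-sect} together with the non-negativity of the Ext dimensions (Proposition \ref{KLKLV}) to handle the two directions. The paper packages the non-negativity step slightly differently---it first records the equivalence ${}^IP_{x,w}^{\mu}(q)=0 \iff {}^IP_{x,w}^{\mu}(1)=0$ and then appeals to the constant term---but this is only a cosmetic reordering of your argument.
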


\begin{proof}
	For all $x,w\in {}^IW_{[\lambda]}$, we have the following equivalent statements:
	\begin{align*}
	{}^IP_{x,w}^{\mu}(q)=0
	&\iff 
	\dim \mathrm{Ext}^i_{\cO^\p}(M_I(w_Ix\cdot \mu),L(w_Iw\cdot \mu))=0,\  \forall i\ge 0 \\
	&\iff \sum_{i\ge 0}\dim \mathrm{Ext}^i_{\cO^\p}(M_I(w_Ix\cdot \mu),L(w_Iw\cdot \mu))=0 \\
	&\iff {}^IP_{x,w}^{\mu}(1)=0.
	\end{align*}
	By property (a) in Section~\ref{kl-sect} and Theorem \ref{KLpoly}, it holds that
	$w_Ix\not\le_{[\lambda]} w_Iw\implies {}^IP_{x,w}^{\mu}(q)=P_{w_Ix,w_Iw}^{[\lambda]}(q)=0\implies {}^IP_{x,w}^{\mu}(1)=0$. Taking contrapositives, ${}^IP_{x,w}^{\mu}(1)\neq 0\implies w_Ix\le_{[\lambda]} w_Iw$.
	Conversely, by property (e) in Section~\ref{kl-sect} and Theorem \ref{KLpoly}, $w_Ix\le_{[\lambda]} w_Iw\implies {}^IP_{x,w}^{\mu}(q)=P_{w_Ix,w_Iw}^{[\lambda]}(q)$ has constant term $1\implies {}^IP_{x,w}^{\mu}(q)\neq 0\implies {}^IP_{x,w}^{\mu}(1)\neq 0$.
\end{proof}

\section{Parameterizations for Dirac cohomology of $L(\lambda)\in\cO^\p$}
\label{S:4}

\subsection{The general case}

Any simple module $V\in\cO^\p$ is isomorphic to $L(\lambda)$ for some $\lambda\in\Lambda_I^+$ and this implies $H_D(V)\cong H_D(L(\lambda))$ as an $\mathfrak{l}$-module (cf. Theorem \ref{determine'}). We can therefore concentrate on $H_D(L(\lambda))$. In this section we will show that 
$\cW_I(\lambda)$ is a parameterization of $H_D(L(\lambda))$.

We will need the following lemma:

\begin{lemma}\label{multilemma}
	For all $x,w\in {}^IW_{[\lambda]}^{\Sigma_\mu}$, it holds that:
	\begin{enumerate}
		\item ${}^IP_{x,w}^{\Sigma_\mu}(1)\neq 0\implies w_Ix\le_{[\lambda]} w_Iw$.
		\item ${}^IP^{\Sigma_\mu}_{w,w}(q)=1$.
		\item $\{w\cdot\mu\in \h^*: w\in {}^IW_{[\lambda]}\}\subseteq -C_{\mathfrak{l}}-\rho$, where $C_\mathfrak{l}:=\{\nu\in\h^*:\langle\nu,\alpha^\lor\rangle\ge 0, \ \forall \alpha\in I\}$.
	\end{enumerate}
	Recall that $\mu$ is the unique antidominant weight in $W_{[\lambda]}\cdot\lambda$.
\end{lemma}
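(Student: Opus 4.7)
The plan is to dispatch parts (1) and (2) via the dictionary between relative Kazhdan--Lusztig--Vogan polynomials and parabolic Kazhdan--Lusztig polynomials set up in Section~\ref{S:3}, and to prove (3) directly as the non-regular analogue of one implication in Lemma~\ref{IW}.

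For (1), I invoke Theorem~\ref{KLVparaKL} to rewrite
\[
{}^IP_{x,w}^{\Sigma_\mu}(q)=P^{[\lambda],\Sigma_\mu,q}_{(w_Ix)^{-1},(w_Iw)^{-1}}(q).
\]
The defining Ext-expression for ${}^IP_{x,w}^{\Sigma_\mu}(q)$ has non-negative integer coefficients, so ${}^IP_{x,w}^{\Sigma_\mu}(1)$ equals $\sum_{i\ge 0}\dim\mathrm{Ext}^i_{\cO^\p}(M_I(w_Ix\cdot\mu),L(w_Iw\cdot\mu))$, which vanishes if and only if every summand is zero, if and only if the polynomial itself is identically zero. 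Thus ${}^IP_{x,w}^{\Sigma_\mu}(1)\neq 0$ forces $P^{[\lambda],\Sigma_\mu,q}_{(w_Ix)^{-1},(w_Iw)^{-1}}(q)\neq 0$; Theorem~\ref{paraKL}(1) then gives $(w_Ix)^{-1}\le (w_Iw)^{-1}$ in $W$. Inversion-invariance of the Bruhat order followed by Lemma~\ref{bruhatorders} yields $w_Ix\le_{[\lambda]} w_Iw$. For (2), combining Theorem~\ref{KLVparaKL} with Theorem~\ref{paraKL}(2) immediately produces
\[
{}^IP_{w,w}^{\Sigma_\mu}(q)=P^{[\lambda],\Sigma_\mu,q}_{(w_Iw)^{-1},(w_Iw)^{-1}}(q)=1.
\]

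For (3), let $w\in {}^IW_{[\lambda]}$. Since $\lambda\in\Lambda_I^+$ forces $I\subseteq\Delta_{[\lambda]}$ (remark after Corollary~\ref{category}), we have ${}^IW_{[\lambda]}={}^I(W_{[\lambda]})$ and $\Phi_I^+=(\Phi_{[\lambda]})_I^+$, so Lemma~\ref{JW} applied inside the Coxeter system $(W_{[\lambda]},S_{[\lambda]})$ yields $w^{-1}\alpha\in\Phi_{[\lambda]}^+$ for every $\alpha\in I$. Antidominance of $\mu$ (Proposition~\ref{antidominant}(3), applied to the reflection $s_{w^{-1}\alpha}\in W_{[\lambda]}$) then gives $\langle\mu+\rho,(w^{-1}\alpha)^\lor\rangle\le 0$, which is $\langle w\cdot\mu+\rho,\alpha^\lor\rangle\le 0$ for every $\alpha\in I$. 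This is exactly the assertion $w\cdot\mu\in -C_\mathfrak{l}-\rho$. I do not anticipate any real obstacle: each part is a short translation of a result already in place, with the only judgement call being to remember that the Ext-side and the KL-side of ${}^IP_{x,w}^{\Sigma_\mu}(q)$ are both needed (to make the ``$(1)\neq 0 \iff \text{polynomial}\neq 0$'' step in~(1)).
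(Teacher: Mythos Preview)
Your proposal is correct and follows essentially the same route as the paper: both parts (1) and (2) are reduced to Theorem~\ref{paraKL} via the identification ${}^IP_{x,w}^{\Sigma_\mu}(q)=P^{[\lambda],\Sigma_\mu,q}_{(w_Ix)^{-1},(w_Iw)^{-1}}(q)$ of Theorem~\ref{KLVparaKL}, together with the observation (from the Ext definition) that the polynomial vanishes iff its value at $1$ does; part (3) is the same direct computation using Lemma~\ref{JW} and antidominance of $\mu$. One cosmetic remark: in (1), Theorem~\ref{paraKL} applied to $W_{[\lambda]}$ already gives $(w_Ix)^{-1}\le_{[\lambda]}(w_Iw)^{-1}$, so the detour through ``$\le$ in $W$'' plus Lemma~\ref{bruhatorders} is unnecessary (though harmless).
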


\begin{proof}
	We prove each part in turn.
	\begin{enumerate}
		\item 
		For all $x,w\in {}^IW_{[\lambda]}^{\Sigma_\mu}$, we have the following equivalent statements:
		\begin{align*}
		{}^IP_{x,w}^{{\Sigma_\mu}}(q)=0
		&\iff 
		\dim \mathrm{Ext}^i_{\cO^\p}(M_I(w_Ix\cdot \mu),L(w_Iw\cdot \mu))=0,\  \forall i\ge 0 \\
		&\iff \sum_{i\ge 0}\dim \mathrm{Ext}^i_{\cO^\p}(M_I(w_Ix\cdot \mu),L(w_Iw\cdot \mu))=0 \\
		&\iff {}^IP_{x,w}^{{\Sigma_\mu}}(1)=0.
		\end{align*}
		
		Therefore by Theorem \ref{KLVparaKL} and Theorem \ref{paraKL},
		\begin{align*}
		w_Ix\not\le_{[\lambda]} w_Iw &\iff (w_Ix)^{-1}\not\le_{[\lambda]} (w_Iw)^{-1}\\ 
		&\implies {}^IP_{x,w}^{\Sigma_\mu}(q)=P^{[\lambda],{\Sigma_\mu},q}_{(w_Ix)^{-1},(w_Iw)^{-1}}(q)=0\\
		&\iff {}^IP_{x,w}^{\Sigma_\mu}(1)=0.    
		\end{align*}
		
		Taking contrapositives, we get that ${}^IP_{x,w}^{\Sigma_\mu}(1)\neq 0\implies w_Ix\le_{[\lambda]} w_Iw$.
		
		\item By Theorem \ref{KLVparaKL}, we have ${}^IP_{x,w}^{\Sigma_\mu}(q)=P^{[\lambda],{\Sigma_\mu},q}_{(w_Ix)^{-1},(w_Iw)^{-1}}(q)$, so by Theorem \ref{paraKL}, we get   \[
		{}^IP_{w,w}^{\Sigma_\mu}(q)=P^{[\lambda],{\Sigma_\mu},q}_{(w_Iw)^{-1},(w_Iw)^{-1}}(q)=1.
		\] 
		\item 
		Since $\mu$ is antidominant, we have $\langle \mu+\rho,\alpha^\lor\rangle\le 0$ for all $\alpha\in\Delta_{[\mu]}=\Delta_{[\lambda]}$ by Proposition \ref{antidominant}.
		Let $w\in {}^IW_{[\lambda]}$, so that 
		$w\cdot \mu\in\mathfrak{h}^*$. By the remark following Lemma~\ref{bruhatorders} and Lemma~\ref{JW}, we get
		$w^{-1}\left(\Phi_{[\lambda]}\right)_I^+\subseteq \Phi_{[\lambda]}^+$. 
		Thus for all $\alpha\in I$, 
		$\langle -w(\mu+\rho),\alpha^\lor\rangle
		=-\langle \mu+\rho,(w^{-1}\alpha)^\lor\rangle\ge 0$.
		Then $-w(\mu+\rho)\in C_\mathfrak{l}$ so $w\cdot\mu\in -C_\mathfrak{l}-\rho$. Hence $\{w\cdot\mu\in \h^*: w\in {}^IW_{[\lambda]}\}\subseteq -C_{\mathfrak{l}}-\rho$.
	\end{enumerate}
\end{proof}

\begin{definition} [{See \cite[\S 1.11]{HJ}}]
	Let $[M : L(\eta)]$ denote the multiplicity of $L(\eta)$ in a Jordan-H\"older series of $M$.
\end{definition}

\begin{definition} [{See \cite[\S 4.2]{HJ}}]
	We write $M(\nu)\hookrightarrow M(\eta)$ to indicate that there is
	an embedding from Verma module $M(\nu)$ into Verma module $M(\eta)$.\end{definition}

\begin{theorem}[{\cite[Theorem 5.1]{HJ}}]
	\label{BGGVerma} 
	Let $\eta, \nu \in \h^*$.
	\begin{enumerate}
		\item (Verma) If $\nu$ is strongly linked to $\eta$, then $M(\nu)\hookrightarrow M(\eta)$; in
		particular, $[M(\eta) : L(\nu)] \neq 0$.
		\item (BGG) If $[M(\eta) : L(\nu)] \neq 0$, then $\nu$ is strongly linked to $\eta$.
	\end{enumerate}
\end{theorem}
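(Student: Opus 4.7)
The plan is to prove the two implications separately, following the classical arguments of Verma and Bernstein--Gelfand--Gelfand as presented in Chapter 5 of \cite{HJ}. For part (1), by induction on the length of a strong linkage chain it suffices to treat the one-step case: if $\nu = s_\alpha\cdot\eta < \eta$ for some $\alpha\in\Phi^+$, then $M(\nu)\hookrightarrow M(\eta)$. Set $m := \langle\eta+\rho,\alpha^\lor\rangle\in\ZZ^{>0}$. When $\alpha$ is simple, the element $f_\alpha^m v^+\in M(\eta)$, with $v^+$ a highest-weight generator, is a maximal vector of weight $\eta - m\alpha = s_\alpha\cdot\eta$, as one checks from the $\mathfrak{sl}_2$-commutator identity $[e_\alpha,f_\alpha^m] = m f_\alpha^{m-1}(h_\alpha - m + 1)$. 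The submodule it generates is isomorphic to $M(s_\alpha\cdot\eta)$ because any nonzero homomorphism of Verma modules is automatically injective (the target is torsion-free as a $U(\overline{\mathfrak{n}})$-module). For a general positive root $\alpha$, I would invoke the Shapovalov elements $\theta_{\alpha,m}\in U(\overline{\mathfrak{n}})_{-m\alpha}$ producing the required maximal vector; alternatively, one reduces to the simple-root case via rank-two arguments combined with translation functors across walls.

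For part (2), I would argue by induction on the partial ordering $\le$ on $\h^*$ restricted to the linkage class of $\eta$. The case $\nu = \eta$ is immediate since every weight is strongly linked to itself. When $\nu\ne\eta$, the key input is the Jantzen filtration $M(\eta) = M^0(\eta)\supseteq M^1(\eta)\supseteq\cdots$ together with the Jantzen sum formula
\[
\sum_{i\ge 1}\ch M^i(\eta) \;=\; \sum_{\substack{\alpha\in\Phi^+ \\ \langle\eta+\rho,\alpha^\lor\rangle\in\ZZ^{>0}}} \ch M(s_\alpha\cdot\eta).
\]
Since $M(\eta)/M^1(\eta)\cong L(\eta)$, the hypothesis $[M(\eta):L(\nu)]\ne 0$ forces $L(\nu)$ to appear as a composition factor of some $M^i(\eta)$ with $i\ge 1$, and thus to contribute to the left-hand character above. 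The sum formula then produces an $\alpha\in\Phi^+$ with $\langle\eta+\rho,\alpha^\lor\rangle\in\ZZ^{>0}$ (so $s_\alpha\cdot\eta\uparrow\eta$ in one step) and $[M(s_\alpha\cdot\eta):L(\nu)]\ne 0$. Since $s_\alpha\cdot\eta < \eta$, the inductive hypothesis gives $\nu\uparrow s_\alpha\cdot\eta$; concatenating with the one-step linkage $s_\alpha\cdot\eta\uparrow\eta$ yields $\nu\uparrow\eta$.

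The main technical obstacles are the inputs rather than the inductive assembly. For part (1), constructing Shapovalov-type maximal vectors for non-simple positive roots requires either an explicit $U(\overline{\mathfrak{n}})$-formula or a careful translation-functor argument across walls, and this is the central difficulty. For part (2), establishing the Jantzen sum formula rests on analyzing the determinant of the contravariant (Shapovalov) form on each weight space of $M(\eta)$, which is the hardest technical point of the proof. Once both ingredients are in place, the remaining arguments amount to formal bookkeeping with characters, maximal vectors, and the Noetherian property of the linkage class under $\le$ that validates the induction.
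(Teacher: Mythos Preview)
Your sketch is correct and follows the classical Verma/BGG arguments exactly as presented in Chapter~5 of \cite{HJ}. Note, however, that the paper does not supply its own proof of this statement: it simply quotes it as \cite[Theorem~5.1]{HJ} and uses it as a black box (in the proofs of Theorems~\ref{generalcase}, \ref{parameterization'}, \ref{5equiv}, and \ref{kostant-prop}). So there is no ``paper's own proof'' to compare against; your reconstruction of the argument from the cited reference is appropriate and accurate.
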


\begin{remark} Let $\eta, \nu \in \h^*$. Then $M(\nu)\hookrightarrow M(\eta)$ if and only if $[M(\eta) : L(\nu)] \neq 0$.
\end{remark}

Now we are able to prove Theorem \ref{generalcase'}.

\begin{theorem}[Theorem \ref{generalcase'}]
	\label{generalcase}
	Let $\lambda\in\Lambda_I^+$, $\mathcal{S}_{[\lambda]}(\lambda):=\{\nu\in\h^*: \nu\uparrow_{[\lambda]} \lambda\}$, $C_\mathfrak{l}:=\{\nu\in\h^*:\langle\nu,\alpha^\lor\rangle\ge 0, \ \forall \alpha\in I\}$ and $\mathcal{L}_{\eta}
	:=\{\nu\in\h^*: \nu\le \eta\}$. Then 
	\begin{align*}
	\cW_I(\lambda)
	&\subseteq \left(\mathcal{S}_{[\lambda]}(\lambda)+\rho\right)\cap C_\mathfrak{l}\\
	&\subseteq \{\nu\in\Lambda_I^+-\rho: M(\nu)\hookrightarrow M(\lambda)\}+\rho\\
	&=\{\nu\in\Lambda_I^+-\rho: [M(\lambda),L(\nu)]\neq 0\}+\rho\\
	&\subseteq W_{[\lambda]}(\lambda+\rho)
	\cap \mathcal{L}_{\lambda+\rho}\cap C_\mathfrak{l},
	\end{align*}
\end{theorem}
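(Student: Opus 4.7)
The plan is to establish the four set relations in the order they appear, exploiting the identity $\rho(\mathfrak{u})+\rho_\mathfrak{l}=\rho$ so that a typical element of $\cW_I(\lambda)$ takes the form $w_Ix(\mu+\rho)$ with $x\in{}^IW_{[\lambda]}^{\Sigma_\mu}$ and ${}^IP_{x,\ow}^{\Sigma_\mu}(1)\ne 0$, where $\ow$ is characterised by $\lambda=w_I\ow\cdot\mu$. The main tools are Lemma~\ref{multilemma} (turning nonvanishing of a relative Kazhdan--Lusztig--Vogan polynomial into a Bruhat comparison), Lemma~\ref{HJtypo} (converting Bruhat comparisons into $[\lambda]$-strong linkage), Proposition~\ref{antidominant} (antidominance of $\mu$), Theorem~\ref{BGGVerma} (the Verma--BGG Theorem and its remark), together with the elementary facts that $w_I\alpha\in -\Phi_I^+$ for every $\alpha\in I$ and that a $\Phi_I$-integral weight which is non-negative on $I$ is automatically non-negative on all of $\Phi_I^+$ (because every $\beta^\lor$ with $\beta\in\Phi_I^+$ is a non-negative integer combination of $\{\alpha^\lor:\alpha\in I\}$).

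For the first inclusion $\cW_I(\lambda)\subseteq(\mathcal{S}_{[\lambda]}(\lambda)+\rho)\cap C_\mathfrak{l}$, I would verify the two memberships separately for $w_Ix(\mu+\rho)\in\cW_I(\lambda)$. The strong-linkage condition $w_Ix\cdot\mu\uparrow_{[\lambda]}\lambda$ follows because Lemma~\ref{multilemma}(1) upgrades ${}^IP_{x,\ow}^{\Sigma_\mu}(1)\ne 0$ into $w_Ix\le_{[\lambda]}w_I\ow$, to which Lemma~\ref{HJtypo} applies. For membership in $C_\mathfrak{l}$, the condition $x\in{}^IW_{[\lambda]}$ together with Lemma~\ref{JW} (applied in $W_{[\lambda]}$) gives $x^{-1}(\Phi_I^+)\subseteq\Phi_{[\lambda]}^+$, so the antidominance of $\mu$ yields $\langle x(\mu+\rho),\beta^\lor\rangle\le 0$ for every $\beta\in\Phi_I^+$; applying this with $\beta=-w_I\alpha\in\Phi_I^+$ for $\alpha\in I$ and using $w_I^{-1}=w_I$ flips the sign to give $\langle w_Ix(\mu+\rho),\alpha^\lor\rangle\ge 0$.

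For the second inclusion, any $\eta\in(\mathcal{S}_{[\lambda]}(\lambda)+\rho)\cap C_\mathfrak{l}$ satisfies $\eta-\rho\uparrow_{[\lambda]}\lambda$, which in particular gives $\eta-\rho\uparrow\lambda$; the Verma half of Theorem~\ref{BGGVerma} then supplies $M(\eta-\rho)\hookrightarrow M(\lambda)$. To place $\eta$ in $\Lambda_I^+$, I would use the presentation $\eta=w(\lambda+\rho)$ with $w\in W_{[\lambda]}$ implicit in the strong-linkage chain, so that for each $\alpha\in I\subseteq\Delta_{[\lambda]}$ we have $w^{-1}\alpha\in\Phi_{[\lambda]}$ and hence $\langle\eta,\alpha^\lor\rangle=\langle\lambda+\rho,(w^{-1}\alpha)^\lor\rangle\in\mathbb{Z}$; the condition $\eta\in C_\mathfrak{l}$ forces this integer to be non-negative, and non-negativity on all of $\Phi_I^+$ follows from the coroot-expansion fact stated above. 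The equality in the third line of the statement is just the two-way form of Verma--BGG recorded in the remark following Theorem~\ref{BGGVerma}.

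Finally, for the last inclusion, if $\nu+\rho$ lies in $\{\nu\in\Lambda_I^+-\rho:[M(\lambda),L(\nu)]\ne 0\}+\rho$, then $L(\nu)$ is a composition factor of $M(\lambda)$, so $\nu$ is a weight of $M(\lambda)$, whence $\nu\le\lambda$ and $\nu-\lambda\in\Lambda_r$. Equality of infinitesimal characters combined with $\nu-\lambda\in\Lambda_r$ gives $\nu=w\cdot\lambda$ for some $w\in W_{[\lambda]}$, so $\nu+\rho=w(\lambda+\rho)\in W_{[\lambda]}(\lambda+\rho)$; together with $\nu+\rho\le\lambda+\rho$ and $\nu+\rho\in\Lambda_I^+\subseteq C_\mathfrak{l}$, this places $\nu+\rho$ in the right-hand set. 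I expect the main obstacle to be the first inclusion, where one has to juggle antidominance of $\mu$, the interaction of the longest element $w_I$ with $I$, and the description of ${}^IW_{[\lambda]}$ in terms of coset representatives; the remaining steps reduce to invocations of Verma--BGG and routine integrality bookkeeping across $\Phi_{[\lambda]}$.
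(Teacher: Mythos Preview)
Your proposal is correct and follows essentially the same route as the paper: the same key lemmas (Lemma~\ref{multilemma}(1), Lemma~\ref{HJtypo}, Theorem~\ref{BGGVerma}) drive the four relations in the same order. Two cosmetic differences: for the $C_\mathfrak{l}$-membership in the first inclusion the paper packages your direct computation as Lemma~\ref{multilemma}(3) together with the set identity $-w_IC_\mathfrak{l}=C_\mathfrak{l}$, and for the final inclusion the paper invokes the BGG direction of Theorem~\ref{BGGVerma} to obtain $\nu\uparrow\lambda$, whereas your argument via highest-weight comparison and infinitesimal character is slightly more elementary and equally valid.
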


\begin{proof}
	Let $\ow \in  {}^IW_{[\lambda]}^{\Sigma_\mu}$ be the unique element such that 
	$\lambda=w_I\ow\cdot \mu$, which exists by the remark following Lemma \ref{simplemod}.
	We prove the first inclusion in three steps.
	\begin{itemize}
		\item First we show $w_I\cdot\left(\{x\in {}^IW_{[\lambda]}: w_Ix\le_{[\lambda]} w_I\ow\}\cdot\mu\right)\subseteq w_I\cdot \left({}^IW_{[\lambda]}\cdot\mu\right)\cap \mathcal{S}_{[\lambda]}(\lambda)$.
		
		Clearly, $w_I\cdot\left(\{x\in {}^IW_{[\lambda]}: w_Ix\le_{[\lambda]} w_I\ow\}\cdot\mu\right)\subseteq w_I\cdot \left({}^IW_{[\lambda]}\cdot\mu\right)$.
		
		Suppose $\eta\in w_I\cdot\left(\{x\in {}^IW_{[\lambda]}: w_Ix\le_{[\lambda]} w_I\ow\}\cdot\mu\right)$. Then $\eta=w_Ix\cdot \mu$ with $w_Ix\le_{[\lambda]} w_I\ow$.
		
		By Lemma \ref{HJtypo}, we get $w_Ix\le_{[\lambda]} w_I\ow\implies \eta=w_Ix\cdot\mu\uparrow_{[\lambda]} w_I\ow\cdot\mu=\lambda$.
		Then $\eta\in \mathcal{S}_{[\lambda]}(\lambda)$ and hence 
		$w_I\cdot\left(\{x\in {}^IW_{[\lambda]}: w_Ix\le_{[\lambda]} w_I\ow\}\cdot\mu\right)\subseteq w_I\cdot \left({}^IW_{[\lambda]}\cdot\mu\right)\cap \mathcal{S}_{[\lambda]}(\lambda)$.
		
		\item Next, we show ${}^IW_{[\lambda]}\cdot\mu\subseteq W_{[\lambda]}\cdot \mu\cap(-C_\mathfrak{l}-\rho)$.
		This holds since, by Lemma \ref{multilemma}, 
		\begin{align*}
		{}^IW_{[\lambda]}\cdot\mu
		&=\{w\cdot\mu\in W_{[\lambda]}\cdot\mu: w\in {}^IW_{[\lambda]}\}\\
		&\subseteq\{w\cdot\mu\in W_{[\lambda]}\cdot\mu: w\cdot \mu\in-C_\mathfrak{l}-\rho\}\\
		&=W_{[\lambda]}\cdot \mu\cap(-C_\mathfrak{l}-\rho).    
		\end{align*}
		
		\item Finally, we show $\cW_I(\lambda)
		\subseteq \left(\mathcal{S}_{[\lambda]}(\lambda)+\rho\right)\cap C_\mathfrak{l}$.
		
		Let $X,Y$ be sets. 
		Note that $W_I\subseteq W_{[\lambda]}$, $\mu\in W_{[\lambda]}\cdot \lambda$, $-w_IC_\mathfrak{l}=C_\mathfrak{l}$ and $\mathcal{S}_{[\lambda]}(\lambda)\subseteq W_{[\lambda]}\cdot \lambda$. Therefore,
		\allowdisplaybreaks
		\begin{align*}
			\cW_I(\lambda)
			&=\{w_Ix\cdot\mu+\rho(\mathfrak{u}): x\in {}^IW_{[\lambda]}^{\Sigma_\mu}, {}^IP_{x,\ow}^{\Sigma_\mu}(1)\neq 0\}+\rho_\mathfrak{l}& \text{(by Definition \ref{WIlambda})}\\
			&=w_I\cdot\left(\{x\in {}^IW_{[\lambda]}^{\Sigma_\mu}: {}^IP_{x,\ow}^{\Sigma_\mu}(1)\neq 0\}\cdot\mu\right)+\rho&\\
			&\subseteq w_I\cdot\left(\{x\in {}^IW_{[\lambda]}: w_Ix\le_{[\lambda]} w_I\ow\}\cdot\mu\right)+\rho & \text{(by Lemma \ref{multilemma})}\\
			&\subseteq w_I\cdot \left({}^IW_{[\lambda]}\cdot\mu\right)\cap \mathcal{S}_{[\lambda]}(\lambda)+\rho& \text{(by step 1)}\\
			&\subseteq w_I\cdot\left(W_{[\lambda]}\cdot \mu\cap(-C_\mathfrak{l}-\rho)\right)\cap \mathcal{S}_{[\lambda]}(\lambda)+\rho& \text{(by step 2)}\\
			&=w_I\cdot \left(W_{[\lambda]}\cdot \mu\right)\cap w_I\cdot(-C_\mathfrak{l}-\rho)
			\cap \mathcal{S}_{[\lambda]}(\lambda)+\rho& \text{($w_I\cdot (X\cap Y)=w_I\cdot X\cap w_I\cdot Y$)}\\
			&=\left(w_IW_{[\lambda]}\right)\cdot \mu\cap w_I\cdot(-C_\mathfrak{l}-\rho)
			\cap \mathcal{S}_{[\lambda]}(\lambda)+\rho& \text{($w_I\cdot \left(W_{[\lambda]}\cdot \mu\right)=\left(w_IW_{[\lambda]}\right)\cdot \mu$)}\\
			&=W_{[\lambda]}\cdot \mu\cap w_I\cdot(-C_\mathfrak{l}-\rho)
			\cap \mathcal{S}_{[\lambda]}(\lambda)+\rho& \text{($w_IW_{[\lambda]}=W_{[\lambda]}$)}\\
			&=W_{[\lambda]}\cdot \lambda\cap w_I\cdot(-C_\mathfrak{l}-\rho)
			\cap \mathcal{S}_{[\lambda]}(\lambda)+\rho& \text{($W_{[\lambda]}\cdot\mu=W_{[\lambda]}\cdot\lambda$)}\\
			&=W_{[\lambda]}\cdot \lambda\cap\left(-w_IC_\mathfrak{l}-\rho\right)
			\cap \mathcal{S}_{[\lambda]}(\lambda)+\rho& \text{($w_I\cdot(-C_\mathfrak{l}-\rho)=-w_IC_\mathfrak{l}-\rho$)}\\
			&=W_{[\lambda]}\cdot \lambda\cap\left( 
			C_\mathfrak{l}-\rho\right)
			\cap \mathcal{S}_{[\lambda]}(\lambda)+\rho& \text{($-w_IC_\mathfrak{l}=C_\mathfrak{l}$)}\\
			&=\left(W_{[\lambda]}\cdot \lambda+\rho\right)\cap\left( 
			C_\mathfrak{l}-\rho+\rho\right)
			\cap (\mathcal{S}_{[\lambda]}(\lambda)+\rho)& \text{($X\cap Y+\rho=(X+\rho)\cap (Y+\rho)$)}\\
			&=\left(W_{[\lambda]}\cdot \lambda+\rho\right)
			\cap (\mathcal{S}_{[\lambda]}(\lambda)+\rho)\cap C_\mathfrak{l}& \text{($C_\mathfrak{l}-\rho+\rho=C_\mathfrak{l}$)}\\
			&=\left(W_{[\lambda]}\cdot \lambda\cap \mathcal{S}_{[\lambda]}(\lambda) +\rho\right)\cap C_\mathfrak{l}& \text{($X\cap Y+\rho=(X+\rho)\cap (Y+\rho)$)}\\
			&=\left(\mathcal{S}_{[\lambda]}(\lambda) +\rho\right)\cap C_\mathfrak{l}.& \text{($\mathcal{S}_{[\lambda]}(\lambda)\subseteq W_{[\lambda]}\cdot \lambda$)}
		\end{align*}
	\end{itemize}

	Now we prove the second inclusion.
	Suppose $\eta\in \mathcal{S}_{[\lambda]}(\lambda)$ is such that $\eta+\rho\in (\mathcal{S}_{[\lambda]}(\lambda)+\rho)\cap C_\mathfrak{l}$.
	Then, by definition, it holds that $\eta\uparrow_{[\lambda]}\lambda$, which implies $\eta\uparrow \lambda$. By Theorem \ref{BGGVerma}, we get $M(\eta)\hookrightarrow M(\lambda)$. It remains to check that $\eta+\rho \in \Lambda_I^+$.  Because we have $\eta\in \mathcal{S}_{[\lambda]}(\lambda)\subseteq W_{[\lambda]}\cdot \lambda$, it holds that $\eta=w\cdot\lambda$ for some $w\in W_{[\lambda]}$. Then by the definition of $W_{[\lambda]}$, we have $\eta-\lambda\in\Lambda_r$. Recall that $\lambda\in \Lambda_I^+$. Then for all $\alpha\in I$, we get
	\[
	\langle \eta+\rho,\alpha^\lor\rangle
	=\langle \eta-\lambda,\alpha^\lor\rangle+\langle\lambda,\alpha^\lor\rangle+\langle \rho,\alpha^\lor\rangle\in \mathbb{Z}.
	\]
	Since $\eta+\rho\in C_\mathfrak{l}$, we also get $\langle \eta+\rho,\alpha^\lor\rangle\ge 0$ for all $\alpha\in I$. Thus $\eta+\rho\in\Lambda_I^+$ as desired. This proves the second inclusion, and the following equality is 
	clear from the remark after Theorem \ref{BGGVerma}.

	The last thing to show is that 
	\[\{\nu\in\Lambda_I^+-\rho: [M(\lambda),L(\nu)]\neq 0\}+\rho\\
	\subseteq W_{[\lambda]}(\lambda+\rho)
	\cap \mathcal{L}_{\lambda+\rho}\cap C_\mathfrak{l}.
	\]
	Suppose $\eta+\rho$ belongs to left hand set, in which case $[M(\lambda),L(\eta)]\neq 0$. By Theorem \ref{BGGVerma}, we get $\eta\uparrow \lambda$, and in particular it holds that $\eta\le \lambda$ and $\eta=w\cdot\lambda$ for some $w\in W_{[\lambda]}$. Also $\eta+\rho\in \Lambda_I^+\subseteq C_\mathfrak{l}$. Therefore $\eta+\rho\in W_{[\lambda]}(\lambda+\rho)\cap\mathcal{L}_{\lambda+\rho}\cap C_\mathfrak{l}$, which proves the last inclusion.       
\end{proof}


We will need the following well-known lemma in the proof of the next theorem.
\begin{lemma}\label{Schur} The following properties hold.
	\begin{enumerate}
		\item (Schur's Lemma.)
		If $V, W$ are simple $\mathfrak{l}$-modules, then as a vector space, 
		\[
		\Hom_\mathfrak{l}(V,W)\cong \begin{cases}
		\mathbb{C},& \text{if $V\cong W$ as an $\mathfrak{l}$-module}\\
		0,&  \text{if $V\not\cong W$ as an $\mathfrak{l}$-module} \\
		\end{cases}.
		\]
		\item If $M,N,P$ are $\mathfrak{l}$-modules and $M$ is isomorphic to a quotient of $N$ as an $\mathfrak{l}$-module, then there is an injection from $\Hom_\mathfrak{l}(M,P)$ to $\Hom_\mathfrak{l}(N,P)$.
		\item If $M_1,M_2,\cdots,M_k$ and $M$ are $\mathfrak{l}$-modules, then \[
		\Hom_\mathfrak{l}\left(\bigoplus_{i=1}^k M_i,M\right)\cong  \bigoplus_{i=1}^k\Hom_\mathfrak{l}\left(M_i,M\right)
		\] as a vector space.
		\item If $\bigoplus_{i=1}^k F(\eta_i)\cong \bigoplus_{i=1}^l F(\nu_i)$ as an $\mathfrak{l}$-module, then $\{\eta_i\in \Lambda_I^+ : 1\le i\le k\}=\{\nu_i\in\Lambda_I^+ : 1\le i\le l\}$.
	\end{enumerate}
	
\end{lemma}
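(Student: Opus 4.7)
The plan is to appeal to standard facts from the representation theory of finite-dimensional $\mathfrak{l}$-modules over $\mathbb{C}$; in this context all simple modules in $\cO^\p$ are finite dimensional, so Schur's Lemma applies in its strongest form. For item (1), any nonzero $\mathfrak{l}$-module map $\varphi : V \to W$ between simple modules has $\ker\varphi = 0$ (since $\ker\varphi$ is a proper submodule of $V$) and $\operatorname{Im}\varphi = W$ (since it is a nonzero submodule of $W$), hence is an isomorphism. If $V \cong W$ then $\Hom_\mathfrak{l}(V,W) \cong \mathrm{End}_\mathfrak{l}(V)$, and choosing an eigenvalue $c \in \mathbb{C}$ of any endomorphism $\varphi$ shows that $\varphi - c\cdot \mathrm{id}$ is a non-invertible endomorphism of a simple module, hence zero, so $\mathrm{End}_\mathfrak{l}(V) = \mathbb{C}\cdot \mathrm{id}$.

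For item (2), given a surjection $q : N \twoheadrightarrow M$, the precomposition assignment $\varphi \mapsto \varphi \circ q$ defines a $\mathbb{C}$-linear map $\Hom_\mathfrak{l}(M,P) \to \Hom_\mathfrak{l}(N,P)$; if $\varphi \circ q = 0$ then $\varphi$ vanishes on $\operatorname{Im} q = M$, so $\varphi = 0$, proving injectivity. For item (3), the universal property of the (finite) direct sum says that a homomorphism $\bigoplus_{i=1}^k M_i \to M$ is uniquely determined by its restrictions to each $M_i$, and conversely any family $(\varphi_i : M_i \to M)_{i=1}^k$ assembles into a unique homomorphism on the direct sum; this yields the claimed isomorphism of vector spaces.

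For item (4), the idea is to test both sides with $\Hom_\mathfrak{l}(F(\eta), -)$ for each $\eta \in \Lambda_I^+$. Applying the dual version of item (3) for Hom out of a fixed object into a finite direct sum (which follows from the same universal-property argument applied to the target), together with Schur's Lemma from item (1), yields
\[
\dim \Hom_\mathfrak{l}\bigl(F(\eta), \textstyle\bigoplus_{i=1}^k F(\eta_i)\bigr) = \#\{i : \eta_i = \eta\},
\]
and the analogous identity on the right-hand side. The assumed $\mathfrak{l}$-module isomorphism $\bigoplus_{i=1}^k F(\eta_i) \cong \bigoplus_{i=1}^l F(\nu_i)$ then forces $\#\{i : \eta_i = \eta\} = \#\{i : \nu_i = \eta\}$ for every $\eta \in \Lambda_I^+$, giving equality of multisets and in particular of underlying sets. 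I do not anticipate any real obstacle here; the only point worth flagging is that item (3) as stated concerns Hom out of a direct sum, whereas the argument for item (4) uses Hom into a direct sum, but both directions arise from the same universal-property argument in the finite case.
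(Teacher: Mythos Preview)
Your proof is correct and, for parts (1)--(3), essentially matches the paper's: part (1) is Schur's Lemma (the paper just cites Knapp), part (2) is precomposition with a surjection, and part (3) is the universal property of a finite direct sum, all handled the same way.

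For part (4) there is a small but genuine difference in route. The paper applies the contravariant functor $\Hom_\mathfrak{l}(-,F(\eta_j))$ to both sides, so that part (3) applies directly as stated, and then uses Schur's Lemma to find, for each $\eta_j$, some $\nu_p$ with $\nu_p=\eta_j$ (and symmetrically). You instead apply the covariant functor $\Hom_\mathfrak{l}(F(\eta),-)$, which as you note requires the dual of part (3); this yields equality of multiplicities $\#\{i:\eta_i=\eta\}=\#\{i:\nu_i=\eta\}$ for every $\eta$, hence multiset equality, which is stronger than the set equality actually claimed. The trade-off is that the paper's argument stays within the lemma as written, while yours needs one extra (equally easy) ingredient but gives a cleaner and slightly stronger conclusion.
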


\begin{proof} For the proof of (1), see \cite[Proposition 5.1 and Corollary 5.2]{AK}.
	
	Now we prove (2). Suppose $M\cong M'=N/Q$ as an $\mathfrak{l}$-module for some $\mathfrak{l}$-module $M'$ and $\mathfrak{l}$-submodule $Q$ of $N$. Then there is an $\mathfrak{l}$-module isomorphism $g:M\to M'$. 	
	This induces an isomorphism of vector spaces:	
	\begin{align*}
	\psi:\Hom_\mathfrak{l}(M,P)&\rightarrow \Hom_\mathfrak{l}(M',P)\\
	f&\mapsto f\circ g^{-1}.
	\end{align*}
	There is a quotient map $\pi:N\to N/Q=M'$, which is an $\mathfrak{l}$-module homomorphism.
	This induces an injection 
	\begin{align*}
	\varphi:\Hom_\mathfrak{l}(M',P)&\rightarrow \Hom_\mathfrak{l}(N,P)\\
	f&\mapsto f\circ \pi.
	\end{align*}
	The composition $\varphi\circ\psi$ is an injection from $\Hom_\mathfrak{l}(M,P)$ to $\Hom_\mathfrak{l}(N,P)$.
	
	Next, we prove (3). We have the following two linear maps:
	\begin{align*}
	\Hom_\mathfrak{l}\left(\bigoplus_{i=1}^k M_i,M\right)
	&\rightarrow  \bigoplus_{i=1}^k\Hom_\mathfrak{l}\left(M_i,M\right)\\
	f&\mapsto (f_1,f_2,\cdots,f_k),
	\end{align*}
	where $f_i(x):=f(0,\cdots,0,x,0,\cdots,0)$ with $x$ is in the $i$ th entry and
	\begin{align*}
	\bigoplus_{i=1}^k\Hom_\mathfrak{l}\left(M_i,M\right)
	&\rightarrow  \Hom_\mathfrak{l}\left(\bigoplus_{i=1}^k M_i,M\right)\\
	(g_1,g_2,\cdots,g_k)&\mapsto g,
	\end{align*}
	where $g(x_1,\cdots,x_k):=\sum_{i=1}^k g_{i}(x_i)$.
	Two maps are inverse to each other.
	This gives a vector space isomorphism between $\Hom_\mathfrak{l}\left(\bigoplus_{i=1}^k M_i,M\right)$ and $\bigoplus_{i=1}^k\Hom_\mathfrak{l}\left(M_i,M\right)$.
	
	Finally, we prove (4). 
	Suppose $\bigoplus_{i=1}^k F(\eta_i)\cong \bigoplus_{i=1}^l F(\nu_i)$ as an $\mathfrak{l}$-module. Then for each $j\in\{1,\cdots,k\}$, we have 
	\[
	\Hom_\mathfrak{l}\left(\bigoplus_{i=1}^k F(\eta_i),F(\eta_j)\right)
	\cong \Hom_\mathfrak{l}\left(\bigoplus_{i=1}^l F(\nu_i),F(\eta_j)\right)	
	\]
	as a vector space. This implies that
	\[
	\bigoplus_{i=1}^k \Hom_\mathfrak{l}\left(F(\eta_i),F(\eta_j)\right)
	\cong \bigoplus_{i=1}^l\Hom_\mathfrak{l}\left( F(\nu_i),F(\eta_j)\right)
	\]
	as a vector space by part (3) of Lemma \ref{Schur}. Since $F(\eta_i)\not\cong F(\eta_j)$ as an $\mathfrak{l}$-module for $i\neq j$, we have $\mathbb{C}\cong \bigoplus_{i=1}^l\Hom_\mathfrak{l}\left( F(\nu_i),F(\eta_j)\right)$ as a vector space by Schur's Lemma.
	This implies that $\Hom_\mathfrak{l}\left(F(\nu_p),F(\eta_j)\right)\cong \mathbb{C}$ as a vector space for some $1\le p\le l$.
	By Schur's Lemma again, we get $F(\eta_j)\cong F(\nu_p)$ as an $\mathfrak{l}$-module for some $1\le p\le l$. This implies that $\eta_j=\nu_p$ for some $1\le p\le l$. Since $F(\eta_i)$ is the finite dimensional simple $\mathfrak{l}$-module with highest weight $\eta_i$, we have $\eta_i\in \Lambda_I^+$ by a result in \cite[\S 9.2]{HJ}. Similarly, $\nu_i\in \Lambda_I^+$.
	This implies that $\eta_j=\nu_p\in \{\nu_i\in\Lambda_I^+ : 1\le i\le l\}$. Hence $\{\eta_i\in \Lambda_I^+ : 1\le i\le k\}\subseteq \{\nu_i\in\Lambda_I^+ : 1\le i\le l\}$. Similarly, we have $\{\nu_i\in\Lambda_I^+ : 1\le i\le l\}\subseteq \{\eta_i\in \Lambda_I^+ : 1\le i\le k\}$. Therefore, the claim follows.
\end{proof}

We are now able to prove Theorem
\ref{general1to1'}.

\begin{theorem} [Theorem \ref{general1to1'}]
	\label{general1to1}
	Let $\lambda,\eta\in \Lambda_I^+$. The following statements are then equivalent:
	\begin{enumerate}
		\item $\lambda=\eta$.
		\item $H_D(L(\lambda))\cong H_D(L(\eta))$ as an $\mathfrak{l}$-module.
		\item $\cW_I(\lambda)=\cW_I(\eta)$. 
	\end{enumerate}
\end{theorem}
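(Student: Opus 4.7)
The plan is to prove the equivalence through the cycle $(1) \Rightarrow (2) \Rightarrow (3) \Rightarrow (1)$. The implication $(1) \Rightarrow (2)$ is immediate, since isomorphic $\mathfrak{g}$-modules have isomorphic Dirac cohomology as $\mathfrak{l}$-modules.

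For $(2) \Rightarrow (3)$, the plan is to read off $\cW_I(\lambda)$ from the isomorphism class of $H_D(L(\lambda))$ as an $\mathfrak{l}$-module using Theorem \ref{maindecom}:
\[
H_D(L(\lambda)) \cong \bigoplus_{x\in {}^IW_{[\lambda]}^{\Sigma_\mu}} {}^IP_{x,\overline{w}}^{\Sigma_\mu}(1)\, F(w_Ix\cdot\mu + \rho(\mathfrak{u})).
\]
First I would observe that the weights $w_Ix\cdot\mu + \rho(\mathfrak{u})$ are pairwise distinct as $x$ varies over ${}^IW_{[\lambda]}^{\Sigma_\mu}$: if $w_Ix\cdot\mu = w_Iy\cdot\mu$, then $L(w_Ix\cdot\mu) = L(w_Iy\cdot\mu)$, and the bijection in Lemma \ref{simplemod} forces $x = y$. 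Consequently, by part (4) of Lemma \ref{Schur}, the collection of highest weights of the simple $\mathfrak{l}$-summands that actually appear (i.e., those with nonzero coefficient ${}^IP_{x,\overline{w}}^{\Sigma_\mu}(1)$) is an invariant of $H_D(L(\lambda))$. Shifting by $\rho_\mathfrak{l}$, this set is exactly $\cW_I(\lambda)$ by Definition \ref{WIlambda}. Hence $H_D(L(\lambda)) \cong H_D(L(\eta))$ yields $\cW_I(\lambda) = \cW_I(\eta)$.

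For $(3) \Rightarrow (1)$, the idea is to recover $\lambda$ from $\cW_I(\lambda)$ as a maximum element. By the remark following Lemma \ref{simplemod}, $\overline{w}\in {}^IW_{[\lambda]}^{\Sigma_\mu}$ satisfies $w_I\overline{w}\cdot\mu = \lambda$, and by part (2) of Lemma \ref{multilemma}, ${}^IP_{\overline{w},\overline{w}}^{\Sigma_\mu}(q) = 1$, so the coefficient at $x=\overline{w}$ is nonzero. Since $\rho = \rho_\mathfrak{l} + \rho(\mathfrak{u})$, this shows $\lambda + \rho \in \cW_I(\lambda)$. On the other hand, Theorem \ref{generalcase} gives $\cW_I(\lambda) \subseteq \mathcal{L}_{\lambda+\rho}$, so every element of $\cW_I(\lambda)$ is $\le \lambda+\rho$ in the standard partial order on $\mathfrak{h}^*$. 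Therefore $\lambda + \rho$ is the unique maximal element of $\cW_I(\lambda)$. The same applies to $\eta$, so the equality $\cW_I(\lambda) = \cW_I(\eta)$ forces $\lambda + \rho = \eta + \rho$, and hence $\lambda = \eta$.

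The main subtlety is the distinctness of the weights $w_Ix\cdot\mu + \rho(\mathfrak{u})$ in $(2) \Rightarrow (3)$, which is exactly what prevents different terms in the Theorem \ref{maindecom} decomposition from collapsing and ensures $\cW_I(\lambda)$ is intrinsic to $H_D(L(\lambda))$; this is settled cleanly by Lemma \ref{simplemod}. The implication $(3) \Rightarrow (1)$ is then structurally easy because $\cW_I(\lambda)$ both contains $\lambda+\rho$ (via the trivial polynomial ${}^IP_{\overline{w},\overline{w}}^{\Sigma_\mu} = 1$) and is bounded above by $\lambda+\rho$ (via Theorem \ref{generalcase}).
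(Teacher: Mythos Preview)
Your proof is correct and follows essentially the same approach as the paper's: the implications use Theorem~\ref{maindecom} with Lemma~\ref{Schur} for $(2)\Rightarrow(3)$, and Lemma~\ref{multilemma}(2) together with Theorem~\ref{generalcase} for $(3)\Rightarrow(1)$. The only differences are cosmetic: your distinctness check via Lemma~\ref{simplemod} is an extra (correct but unnecessary) detail, and your ``unique maximal element'' phrasing of $(3)\Rightarrow(1)$ is equivalent to the paper's symmetric argument that $\lambda\le\eta$ and $\eta\le\lambda$.
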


\begin{proof}
	The implication (1)$\implies$(2) is trivial. 
	For (2)$\implies$(3), suppose $H_D(L(\lambda))\cong H_D(L(\eta))$ as an $\mathfrak{l}$-module. By Theorem \ref{maindecom} and Lemma \ref{Schur}, the set of highest weights appearing in the $\mathfrak{l}$-module decomposition of $H_D(L(\lambda))$ is equal to that of $H_D(L(\eta))$. Shifting by $\rho_\mathfrak{l}$ on both sides, we get $\cW_I(\lambda)=\cW_I(\eta)$.
	
	To show (3)$\implies$(1),
	suppose $\cW_I(\lambda)=\cW_I(\eta)$. Let $\ow \in  {}^IW_{[\lambda]}^{\Sigma_\mu}$ be the unique element such that $\lambda=w_I\ow\cdot \mu$,
	as in the remark following Lemma \ref{simplemod}.
	By Lemma \ref{multilemma}, we have ${}^IP^{\Sigma_\mu}_{\ow,\ow}(q)=1$ and hence $\lambda+\rho\in \cW_I(\lambda)$. By Theorem \ref{generalcase}, we get $\lambda+\rho\in \cW_I(\eta)\subseteq \mathcal{L}_{\eta+\rho}$. This implies $\lambda\le \eta$. Similarly, $\eta\le \lambda$. Hence we get $\lambda=\eta$.
\end{proof}


We are now able to prove Theorem \ref{determine}.

\begin{theorem} [Theorem \ref{determine}] 
	\label{determine'}
	Suppose $V$ and $W$ are simple modules in the category $\cO^\p$. Then $V\cong W$ as an $\mathfrak{g}$-module
	if and only if $H_D(V)\cong H_D(W)$ as an $\mathfrak{l}$-module.
\end{theorem}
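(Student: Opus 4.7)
The forward direction is immediate: if $V\cong W$ as $\mathfrak{g}$-modules, then any functorial construction applied to them, including Dirac cohomology, will produce isomorphic $\mathfrak{l}$-modules. So the content is entirely in the reverse direction.

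For the reverse direction, the plan is to reduce the statement to Theorem \ref{general1to1'} (the equivalence (1)$\iff$(2) there). First I would invoke Proposition \ref{9.3}(5) together with the discussion after Theorem \ref{9.4}: every simple module in $\cO^\p$ is isomorphic to some $L(\lambda)$ with $\lambda\in\Lambda_I^+$, because any nonzero module in $\cO^\p$ has a nonzero maximal vector and $\lambda$ is forced into $\Lambda_I^+$ by the fact that $L(\lambda)$ must be $U(\mathfrak{l})$-locally finite. Thus I can fix $\lambda,\eta\in\Lambda_I^+$ with $V\cong L(\lambda)$ and $W\cong L(\eta)$ as $\mathfrak{g}$-modules.

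Next, the hypothesis $H_D(V)\cong H_D(W)$ as $\mathfrak{l}$-modules transports across these isomorphisms (Dirac cohomology is functorial with respect to $\mathfrak{g}$-module isomorphisms, and the $\mathfrak{l}$-action on $H_D$ is the one inherited from $U(\mathfrak{l})\otimes C(\mathfrak{s})$), giving $H_D(L(\lambda))\cong H_D(L(\eta))$ as $\mathfrak{l}$-modules. Applying Theorem \ref{general1to1'} — specifically the implication (2)$\Rightarrow$(1) — then yields $\lambda=\eta$, whence $V\cong L(\lambda)=L(\eta)\cong W$ as $\mathfrak{g}$-modules.

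There is no real obstacle here, since the heavy lifting is done by Theorem \ref{general1to1'}. The only point deserving care is checking that the reductions $V\cong L(\lambda)$, $W\cong L(\eta)$ (with $\lambda,\eta\in \Lambda_I^+$) are genuinely available for arbitrary simple modules of $\cO^\p$, and that Dirac cohomology respects isomorphisms in the category, so that the hypothesis descends to the standard form required by Theorem \ref{general1to1'}. Both are covered by the cited preliminaries.
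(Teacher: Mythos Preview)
Your proposal is correct and follows essentially the same approach as the paper: both directions match, with the reverse direction reducing to Theorem~\ref{general1to1} exactly as you describe. The only difference is that the paper spells out the forward direction in detail (explicitly verifying that $f\otimes i_d$ is an $\mathfrak{l}$-module isomorphism commuting with $D$ and hence inducing an isomorphism on Dirac cohomology), whereas you appeal to functoriality; this is a matter of exposition rather than a different argument.
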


\begin{proof}
	Suppose $V\cong W$ as an $\mathfrak{g}$-module. Then there exists an $\mathfrak{g}$-module isomorphism $f:V\rightarrow W$. 	
	Let $f\otimes i_d$ be the tensor product of $\mathbb{C}$-linear maps $f$ and $i_d$, where $i_d:\mathcal{S}\rightarrow \mathcal{S}$ is the identity map on $\mathcal{S}$, i.e., $\left(f\otimes i_d\right)(\tilde{v}\otimes s)=f(\tilde{v})\otimes s$ and $f\otimes i_d$ is a $\mathbb{C}$-linear map.
	Let $D_V$ and $D_W$ be the actions of $D$ on $V\otimes\mathcal{S}$ and $W\otimes \mathcal{S}$, respectively. We prove $H_D(V)\cong H_D(W)$ as an $\mathfrak{l}$-module in three steps.
	\begin{itemize}
		\item First we show $f\otimes i_d$ is an $\mathfrak{l}$-module isomorphism.
		
		By \cite[Remark 3.6]{HX}, $V\otimes \mathcal{S}$ and $W\otimes \mathcal{S}$ are $\mathfrak{l}$-modules. Since $f$ is an $\mathfrak{g}$-module isomorphism, then 
		for all $r\in \mathfrak{l}$, $\tilde{v}\in V$ and $s\in \mathcal{S}$, we have 
		\begin{align*}
		r\cdot\left((f\otimes i_d)(\tilde{v}\otimes s)\right)
		&=r\cdot\left(f(\tilde{v})\otimes s\right)\\
		&=\left(r\cdot f(\tilde{v})\right)\otimes s+f(\tilde{v})\otimes \left(r\cdot s\right)\\
		&=f(r\cdot\tilde{v})\otimes s+f(\tilde{v})\otimes \left(r\cdot s\right)\\
		&=\left(f\otimes i_d\right)\left(\left(r\cdot\tilde{v}\right)\otimes s\right)+\left(f\otimes i_d\right)\left(\tilde{v}\otimes \left(r\cdot s\right)\right)\\
		&=\left(f\otimes i_d\right)\left(\left(r\cdot\tilde{v}\right)\otimes s+\tilde{v}\otimes \left(r\cdot s\right)\right)\\
		&=\left(f\otimes i_d\right)\left(r\cdot\left(\tilde{v}\otimes s\right)\right).
		\end{align*}
		Since the action of $\mathfrak{l}$ and $f\otimes i_d$ are $\mathbb{C}$-linear, $f\otimes i_d$ is an $\mathfrak{l}$-module homomorphism. Clearly, $f\otimes i_d$ is bijective. The claim follows.
		\item Next, we show $(f\otimes i_d)\circ D_V=D_W\circ(f\otimes i_d)$.
		
		Since $f$ is an $\mathfrak{g}$-module isomorphism, then for all $\tilde{v}\in V$ and $s\in \mathcal{S}$, we have
		\allowdisplaybreaks
		\begin{align*}
		(f\otimes i_d)\circ D_V(\tilde{v}\otimes s)
		&=(f\otimes i_d)\left(\left(\sum_{1\le i\le m} Z_i \otimes Z_i + 1 \otimes v \right)\cdot (\tilde{v}\otimes s)\right)\\
		&=(f\otimes i_d)\left(\sum_{1\le i\le m} \left(Z_i \cdot\tilde{v}\right) \otimes \left(Z_i \cdot s\right)+ \tilde{v} \otimes \left(v \cdot s\right) \right)\\
		&=\sum_{1\le i\le m} (f\otimes i_d) \left(\left(Z_i \cdot\tilde{v}\right) \otimes \left(Z_i \cdot s\right)\right)+ (f\otimes i_d) \left(\tilde{v} \otimes \left(v \cdot s\right) \right)\\
		&=\sum_{1\le i\le m} f\left(Z_i \cdot\tilde{v}\right) \otimes \left(Z_i \cdot s\right)+  f\left(\tilde{v} \right) \otimes \left(v \cdot s\right)\\
		&=\sum_{1\le i\le m} \left(Z_i \cdot f\left(\tilde{v}\right)\right) \otimes \left(Z_i \cdot s\right)+  f\left(\tilde{v} \right) \otimes \left(v \cdot s\right)\\
		&=\left(\sum_{1\le i\le m} Z_i \otimes Z_i + 1 \otimes v\right)\cdot\left(f(\tilde{v})\otimes s\right)\\
		&=D_W\circ (f\otimes i_d)(\tilde{v}\otimes s).
		\end{align*}
		Since $D_V$, $D_W$ and $f\otimes i_d$ are $\mathbb{C}$-linear, the claim follows.
		\item Finally, we show $H_D(V)\cong H_D(W)$ as an $\mathfrak{l}$-module.
		
		It is easy to check that $\ker \left(D_V\right)=\left(f\otimes i_d\right)^{-1}\left(\ker \left(D_W\right)\right)$ and $\mathrm{Im} \left(D_V\right)=\left(f\otimes i_d\right)^{-1}\left(\mathrm{Im} \left(D_W\right)\right)$.
		Since $f\otimes i_d:V\otimes \mathcal{S}\rightarrow W\otimes \mathcal{S}$ is an $\mathfrak{l}$-module isomorphism, its restriction \[
		\left(f\otimes i_d\right)\big|_{\left(f\otimes i_d\right)^{-1}\left(\ker \left(D_W\right)\right)}: \left(f\otimes i_d\right)^{-1}\left(\ker \left(D_W\right)\right)
		\rightarrow \ker \left(D_W\right)
		\]
		is also an $\mathfrak{l}$-module isomorphism. Let $\pi_W:\ker \left(D_W\right)\rightarrow \dfrac{\ker \left(D_W\right)}{\left(\ker \left(D_W\right)\cap\mathrm{Im} \left(D_W\right)\right)}$ be the quotient map. Note that $\pi_W$ is a surjective $\mathfrak{l}$-module homomorphism. Then \[
		\pi_W\circ \left(f\otimes i_d\right)\big|_{\left(f\otimes i_d\right)^{-1}\left(\ker \left(D_W\right)\right)}: \left(f\otimes i_d\right)^{-1}\left(\ker \left(D_W\right)\right)\rightarrow \dfrac{\ker \left(D_W\right)}{\left(\ker \left(D_W\right)\cap\mathrm{Im} \left(D_W\right)\right)}
		\]
		is a surjective $\mathfrak{l}$-module homomorphism with kernel $\left(f\otimes i_d\right)^{-1}\left(\ker \left(D_W\right)\cap\mathrm{Im} \left(D_W\right)\right)$. Then by the First Isomorphism Theorem, we have
		\begin{align*}
		H_D(V)
		&=\dfrac{\ker \left(D_V\right)}{\ker \left(D_V\right)\cap \mathrm{Im} \left(D_V\right)}\\
		&=\dfrac{\left(f\otimes i_d\right)^{-1}\left(\ker \left(D_W\right)\right)}{\left(f\otimes i_d\right)^{-1}\left(\ker \left(D_W\right)\right)\cap \left(f\otimes i_d\right)^{-1}\left(\mathrm{Im} \left(D_W\right)\right)}\\
		&=\dfrac{\left(f\otimes i_d\right)^{-1}\left(\ker \left(D_W\right)\right)}{\left(f\otimes i_d\right)^{-1}\left(\ker \left(D_W\right)\cap\mathrm{Im} \left(D_W\right)\right)}\\
		&\cong
		\dfrac{\ker \left(D_W\right)}{\ker \left(D_W\right)\cap\mathrm{Im} \left(D_W\right)}\\
		&=H_D(W)
		\end{align*}
		as an $\mathfrak{l}$-module.
	\end{itemize}
	Conversely, suppose $H_D(V)\cong H_D(W)$ as an $\mathfrak{l}$-module. Since $V\cong L(\lambda)$ and $W\cong L(\lambda')$ as $\mathfrak{g}$-modules for some $\lambda,\lambda'\in\Lambda_I^+$, we get $H_D(L(\lambda))\cong H_D(V)\cong H_D(W)\cong H_D(L(\lambda'))$ as an $\mathfrak{l}$-module. 
	Theorem \ref{general1to1} then implies that $\lambda=\lambda'$, so $V\cong L(\lambda)=L(\lambda')\cong W$ as an $\mathfrak{g}$-module.
\end{proof}

\subsection{The case for regular infinitesimal character}

Under the assumption that $\lambda\in \Lambda_I^+$ is regular, we can view $\cW_I(\lambda)$ in four different ways. As a result, we get four parameterizations of $H_D(L(\lambda))$ when $\lambda\in \Lambda_I^+$ is regular.



Now we are able to describe two geometric parameterizations of $H_D(L(\lambda))$.

\begin{theorem} [Theorem \ref{geompara2'}] 
	\label{geompara2}
	Let $\mathcal{R}$ be the set of regular weights in $\h^*$. Let $\lambda\in\Lambda_I^+\cap \mathcal{R}$.
	Then \begin{align*}
	\cW_I(\lambda)
	=W_{[\lambda]}(\lambda+\rho)\cap\mathcal{L}_{\lambda+\rho}\cap C_\mathfrak{l}
	=\left(\mathcal{S}_{[\lambda]}(\lambda)+\rho\right)\cap C_\mathfrak{l}.
	\end{align*}
\end{theorem}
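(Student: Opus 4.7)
The plan is to use Theorem \ref{generalcase} as a scaffold and then prove one reverse inclusion to collapse the entire chain into equalities. Theorem \ref{generalcase} already gives
\[
\cW_I(\lambda)
\subseteq \left(\mathcal{S}_{[\lambda]}(\lambda)+\rho\right)\cap C_\mathfrak{l}
\subseteq \cdots \subseteq W_{[\lambda]}(\lambda+\rho)\cap \mathcal{L}_{\lambda+\rho}\cap C_\mathfrak{l},
\]
so it is enough to establish $W_{[\lambda]}(\lambda+\rho)\cap \mathcal{L}_{\lambda+\rho}\cap C_\mathfrak{l}\subseteq \cW_I(\lambda)$; regularity will drive this step.

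Fix $\eta+\rho$ in the right-hand set. Since $\lambda$ is regular, so is $\mu$, hence $\Sigma_\mu=\emptyset$, $W_{\Sigma_\mu}=\{e\}$, and ${}^IW_{[\lambda]}^{\Sigma_\mu}={}^IW_{[\lambda]}$. By regularity of $\mu$ and Proposition \ref{antidominant} there is a \emph{unique} $w\in W_{[\lambda]}$ with $\eta=w\cdot\lambda$. With $\ow\in{}^IW_{[\lambda]}$ satisfying $\lambda=w_I\ow\cdot\mu$, I would set
\[
x := w_I w w_I \ow \in W_{[\lambda]},
\]
so that $w_Ix = w w_I \ow$ and therefore $w_Ix\cdot\mu = w\cdot\lambda = \eta$, which is the only candidate element of ${}^IW_{[\lambda]}$ that could produce $\eta+\rho$ in $\cW_I(\lambda)$.

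The main obstacle is verifying $x\in{}^IW_{[\lambda]}$. By Lemma \ref{IW} this amounts to checking $x\cdot\mu\in -C_\mathfrak{l}-\rho$, i.e.\ $\langle -w_I(\eta+\rho),\alpha^\lor\rangle\ge 0$ for all $\alpha\in I$. Using $w_I^{-1}=w_I$ and $w_I(\Phi_I^+)=-\Phi_I^+$, the quantity rewrites as $\langle \eta+\rho,\beta^\lor\rangle$ with $\beta=-w_I\alpha\in\Phi_I^+$. Expanding $\beta$ as a nonnegative integer combination of simple roots in $I$ and using $\eta+\rho\in C_\mathfrak{l}$ (which controls only the simple coroots), together with positivity of $\langle\alpha,\alpha\rangle$, gives $\langle \eta+\rho,\beta\rangle\ge 0$ and hence $\langle\eta+\rho,\beta^\lor\rangle\ge 0$. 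So $x\in{}^IW_{[\lambda]}$.

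To finish, I need ${}^IP_{x,\ow}^{\Sigma_\mu}(1)\neq 0$. Since $\Sigma_\mu=\emptyset$ this equals ${}^IP_{x,\ow}^{\mu}(1)$, and by Corollary \ref{keyequiv} nonvanishing is equivalent to $w_Ix\le_{[\lambda]} w_I\ow$, i.e.\ $w w_I\ow\le_{[\lambda]} w_I\ow$. Applying the regular-case equivalences in Lemma \ref{HJtypo} to $u=ww_I\ow$ and $v=w_I\ow$, this becomes $(ww_I\ow)\cdot\mu \le (w_I\ow)\cdot\mu$, which is exactly $\eta\le\lambda$, the hypothesis $\eta+\rho\in\mathcal{L}_{\lambda+\rho}$. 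Therefore $w_Ix\cdot\mu+\rho(\mathfrak{u})+\rho_\mathfrak{l}=\eta+\rho\in\cW_I(\lambda)$, closing the loop. Combining with Theorem \ref{generalcase} yields both asserted equalities simultaneously.
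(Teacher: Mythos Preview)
Your argument is correct and uses the same key ingredients as the paper (Lemma \ref{IW}, Lemma \ref{HJtypo}, Corollary \ref{keyequiv}, and Theorem \ref{generalcase}); the only difference is organizational---you invoke Theorem \ref{generalcase} first and then prove a single reverse inclusion element-wise, whereas the paper computes $\cW_I(\lambda)=W_{[\lambda]}(\lambda+\rho)\cap\mathcal{L}_{\lambda+\rho}\cap C_\mathfrak{l}$ directly via set equalities and then appeals to Theorem \ref{generalcase} for the second identity. One small simplification: since $w_I$ sends simple roots in $I$ to negatives of simple roots in $I$, your $\beta=-w_I\alpha$ already lies in $I$, so $\langle\eta+\rho,\beta^\lor\rangle\ge 0$ follows immediately from $\eta+\rho\in C_\mathfrak{l}$ without expanding $\beta$ over $I$.
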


\begin{proof}
	Let $\ow \in  {}^IW_{[\lambda]}^{\Sigma_\mu}$ be the unique element such that 
	$\lambda=w_I\ow\cdot \mu$, which exists by the remark following Lemma \ref{simplemod}.
	Since $\lambda$ is regular, it holds that ${}^IP_{x,\ow}^{\Sigma_\mu} (q)={}^IP_{x,\ow}^{\mu}(q)$ and ${}^IW_{[\lambda]}^{\Sigma_\mu}={}^IW_{[\lambda]}$. 
	We prove the first equality in the theorem statement in three steps.
	
	\begin{itemize}
		\item First we show $w_I\cdot\left(\{x\in {}^IW_{[\lambda]}: w_Ix\le_{[\lambda]} w_I\ow\}\cdot\mu\right)=w_I\cdot \left({}^IW_{[\lambda]}\cdot\mu\right)\cap \mathcal{L}_\lambda$.
		
		Clearly, $w_I\cdot\left(\{x\in {}^IW_{[\lambda]}: w_Ix\le_{[\lambda]} w_I\ow\}\cdot\mu\right)\subseteq w_I\cdot \left({}^IW_{[\lambda]}\cdot\mu\right)$.
		
		Suppose $\eta\in w_I\cdot\left(\{x\in {}^IW_{[\lambda]}: w_Ix\le_{[\lambda]} w_I\ow\}\cdot\mu\right)$. Then $\eta=w_Ix\cdot \mu$ with $w_Ix\le_{[\lambda]} w_I\ow$.
		
		By Lemma \ref{HJtypo}, we get $w_Ix\le_{[\lambda]} w_I\ow\iff \eta=w_Ix\cdot\mu\le w_I\ow\cdot\mu=\lambda$.
		Then $\eta\in \mathcal{L}_\lambda$ and hence 
		$w_I\cdot\left(\{x\in {}^IW_{[\lambda]}: w_Ix\le_{[\lambda]} w_I\ow\}\cdot\mu\right)\subseteq w_I\cdot \left({}^IW_{[\lambda]}\cdot\mu\right)\cap \mathcal{L}_\lambda$.
		
		Conversely, suppose $\eta\in w_I\cdot \left({}^IW_{[\lambda]}\cdot\mu\right)\cap \mathcal{L}_\lambda$. Then $\eta=w_Ix\cdot\mu$ with $x\in {}^IW_{[\lambda]}$ and $\eta\le \lambda$.
		Then we get $w_Ix\cdot\mu\le w_I\ow\cdot\mu$, which is equivalent to $w_Ix\le_{[\lambda]} w_I\ow$ by Lemma \ref{HJtypo}.
		Then $\eta\in w_I\cdot\left(\{x\in {}^IW_{[\lambda]}: w_Ix\le_{[\lambda]} w_I\ow\}\cdot\mu\right)$ and hence 
		$w_I\cdot \left({}^IW_{[\lambda]}\cdot\mu\right)\cap \mathcal{L}_\lambda \subseteq w_I\cdot\left(\{x\in {}^IW_{[\lambda]}: w_Ix\le_{[\lambda]} w_I\ow\}\cdot\mu\right)$. Therefore, $w_I\cdot\left(\{x\in {}^IW_{[\lambda]}: w_Ix\le_{[\lambda]} w_I\ow\}\cdot\mu\right)=w_I\cdot \left({}^IW_{[\lambda]}\cdot\mu\right)\cap \mathcal{L}_\lambda$.
		
		\item Next, we observe that, by Lemma \ref{IW},  
		\begin{align*}
		{}^IW_{[\lambda]}\cdot\mu
		&=\{w\cdot\mu\in W_{[\lambda]}\cdot\mu: w\in {}^IW_{[\lambda]}\}\\
		&=\{w\cdot\mu\in W_{[\lambda]}\cdot\mu: w\cdot \mu\in-C_\mathfrak{l}-\rho\}\\
		&=W_{[\lambda]}\cdot \mu\cap(-C_\mathfrak{l}-\rho).    
		\end{align*}
		
		\item Finally, we show $\cW_I(\lambda)=W_{[\lambda]}(\lambda+\rho)\cap\mathcal{L}_{\lambda+\rho}\cap C_\mathfrak{l}$.
		
		Let $X,Y$ be sets.
		Note that $W_I\subseteq W_{[\lambda]}$, $\mu\in W_{[\lambda]}\cdot \lambda$ and $-w_IC_\mathfrak{l}=C_\mathfrak{l}$. 
		Therefore,
		\allowdisplaybreaks
		\begin{align*}
			\cW_I(\lambda)
			&= \{w_Ix\cdot\mu+\rho(\mathfrak{u}): x\in {}^IW_{[\lambda]}, {}^IP_{x,\ow}^{\mu}(1)\neq 0\}+\rho_\mathfrak{l}&\text{(by Definition \ref{WIlambda})}\\
			&=\{w_Ix\cdot\mu: x\in {}^IW_{[\lambda]}, w_Ix\le_{[\lambda]} w_I\ow\}+\rho &\text{(by Corollary \ref{keyequiv})}\\
			&=w_I\cdot\left(\{x\in {}^IW_{[\lambda]}: w_Ix\le_{[\lambda]} w_I\ow\}\cdot\mu\right)+\rho&\\
			&=w_I\cdot \left({}^IW_{[\lambda]}\cdot\mu\right)\cap \mathcal{L}_\lambda+\rho& \text{(by step 1)}\\
			&=w_I\cdot\left(W_{[\lambda]}\cdot \mu\cap(-C_\mathfrak{l}-\rho)\right)\cap \mathcal{L}_\lambda+\rho& \text{(by step 2)}\\
			&=w_I\cdot \left(W_{[\lambda]}\cdot \mu\right)\cap w_I\cdot(-C_\mathfrak{l}-\rho)
			\cap \mathcal{L}_\lambda+\rho&\text{($w_I\cdot (X\cap Y)=w_I\cdot X\cap w_I\cdot Y$)}\\
			&=\left(w_IW_{[\lambda]}\right)\cdot \mu\cap w_I\cdot(-C_\mathfrak{l}-\rho)
			\cap \mathcal{L}_\lambda+\rho&\text{($w_I\cdot \left(W_{[\lambda]}\cdot \mu\right)=\left(w_IW_{[\lambda]}\right)\cdot \mu$)}\\
			&=W_{[\lambda]}\cdot \mu\cap w_I\cdot(-C_\mathfrak{l}-\rho)
			\cap \mathcal{L}_\lambda+\rho&
			\text{($w_IW_{[\lambda]}=W_{[\lambda]}$)}\\
			&=W_{[\lambda]}\cdot \lambda\cap w_I\cdot(-C_\mathfrak{l}-\rho)
			\cap \mathcal{L}_\lambda+\rho&
			\text{($W_{[\lambda]}\cdot\mu=W_{[\lambda]}\cdot\lambda$)}\\
			&=W_{[\lambda]}\cdot \lambda\cap\left(-w_IC_\mathfrak{l}-\rho\right)
			\cap \mathcal{L}_\lambda+\rho&\text{($w_I\cdot(-C_\mathfrak{l}-\rho)=-w_IC_\mathfrak{l}-\rho$)}\\
			&=W_{[\lambda]}\cdot\lambda\cap\left( 
			C_\mathfrak{l}-\rho\right)
			\cap \mathcal{L}_\lambda+\rho&\text{($-w_IC_\mathfrak{l}=C_\mathfrak{l}$)}\\
			&=\left(W_{[\lambda]}\cdot\lambda+\rho\right)\cap\left( 
			C_\mathfrak{l}-\rho+\rho\right)
			\cap (\mathcal{L}_\lambda+\rho)&\text{($X\cap Y+\rho=(X+\rho)\cap (Y+\rho)$)}\\
			&=W_{[\lambda]}(\lambda+\rho)\cap\left( 
			C_\mathfrak{l}-\rho+\rho\right)
			\cap (\mathcal{L}_\lambda+\rho)&\text{($W_{[\lambda]}\cdot\lambda+\rho=W_{[\lambda]}(\lambda+\rho)$)}\\
			&=W_{[\lambda]}(\lambda+\rho)
			\cap (\mathcal{L}_\lambda+\rho)\cap C_\mathfrak{l}&\text{($C_\mathfrak{l}-\rho+\rho=C_\mathfrak{l}$)}\\
			&=W_{[\lambda]}(\lambda+\rho)
			\cap \mathcal{L}_{\lambda+\rho}\cap C_\mathfrak{l}.&\text{($\mathcal{L}_{\lambda}+\rho=\mathcal{L}_{\lambda+\rho}$)}
		\end{align*}
	\end{itemize}
	This proves that $\cW_I(\lambda)
	=W_{[\lambda]}(\lambda+\rho)\cap\mathcal{L}_{\lambda+\rho}\cap C_\mathfrak{l}$. The second equality holds by
	Theorem \ref{generalcase}.
	%
\end{proof}

\begin{remark}
	Theorems \ref{generalcase} and \ref{geompara2} together imply that for $\lambda\in\Lambda_I^+\cap\mathcal{R}$, it holds that
	\[
	\cW_I(\lambda)
	=\{\nu\in\Lambda_I^+-\rho: [M(\lambda),L(\nu)]\neq 0\}+\rho
	=\{\nu\in\Lambda_I^+-\rho: M(\nu)\hookrightarrow M(\lambda)\}+\rho.
	\]
\end{remark}

We can use Theorem \ref{geompara2} to derive two algebraic parameterizations of $H_D(L(\lambda))$ in terms of the multiplicities of the composition factors of a Verma module and the embeddings between Verma modules, respectively. Before showing that, we need the following proposition. 
\begin{proposition}\label{req'd subset}
	Let $\lambda\in\Lambda_I^+\cap\mathcal{R}$. Then
	\[
	\cW_I(\lambda)
	=W_{[\lambda]}\cdot\lambda\cap\mathcal{L}_\lambda\cap \Lambda_I^++\rho
	=\mathcal{S}_{[\lambda]}(\lambda)\cap \Lambda_I^++\rho.
	\]
\end{proposition}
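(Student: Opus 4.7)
The plan is to deduce Proposition \ref{req'd subset} directly from Theorem \ref{geompara2} by translating both equalities there by $-\rho$ and then replacing the half-space condition $\nu+\rho\in C_\mathfrak{l}$ by the dominance condition $\nu\in\Lambda_I^+$. This replacement is valid precisely because $\lambda$ is regular.

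Shifting each equality of Theorem \ref{geompara2} by $-\rho$ yields
\[
\cW_I(\lambda)-\rho \;=\; W_{[\lambda]}\cdot\lambda\cap\mathcal{L}_\lambda\cap(C_\mathfrak{l}-\rho) \;=\; \mathcal{S}_{[\lambda]}(\lambda)\cap(C_\mathfrak{l}-\rho).
\]
Since every element of $\mathcal{S}_{[\lambda]}(\lambda)$ is obtained from $\lambda$ by a sequence of dot-action reflections $s_\alpha\cdot(-)$ with $\alpha\in\Phi_{[\lambda]}^+$, we have $\mathcal{S}_{[\lambda]}(\lambda)\subseteq W_{[\lambda]}\cdot\lambda$, so both equalities in the proposition will follow at once from the single identity
\[
W_{[\lambda]}\cdot\lambda\cap(C_\mathfrak{l}-\rho) \;=\; W_{[\lambda]}\cdot\lambda\cap\Lambda_I^+.
\]

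To establish this identity, I fix $\nu=w\cdot\lambda$ with $w\in W_{[\lambda]}$, so $\nu-\lambda\in\Lambda_r$ by definition of $W_{[\lambda]}$. For each $\alpha\in I\subseteq\Delta\subseteq\Phi$, the pairing $\langle\nu-\lambda,\alpha^\lor\rangle$ lies in $\mathbb{Z}$; combined with $\langle\lambda,\alpha^\lor\rangle\in\mathbb{Z}^{\ge 0}$ (since $\lambda\in\Lambda_I^+$) and $\langle\rho,\alpha^\lor\rangle=1$, this yields $\langle\nu+\rho,\alpha^\lor\rangle\in\mathbb{Z}$. Regularity of $\lambda$ makes $\nu+\rho=w(\lambda+\rho)$ regular, so $\langle\nu+\rho,\alpha^\lor\rangle\neq 0$ for every $\alpha\in\Phi$. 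Therefore, for each $\alpha\in I$, the condition $\langle\nu+\rho,\alpha^\lor\rangle\ge 0$ is equivalent to $\langle\nu+\rho,\alpha^\lor\rangle\ge 1$, i.e.\ to $\langle\nu,\alpha^\lor\rangle\in\mathbb{Z}^{\ge 0}$. Since positive coroots of $\Phi_I$ are nonnegative integer combinations of the simple coroots $\{\alpha^\lor:\alpha\in I\}$, dominance on $I$ extends to dominance on $\Phi_I^+$, and the condition $\nu+\rho\in C_\mathfrak{l}$ is seen to be exactly $\nu\in\Lambda_I^+$.

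The main (and essentially only) obstacle is seeing that the regularity hypothesis is strong enough to upgrade $\langle\nu+\rho,\alpha^\lor\rangle\ge 0$ to $\ge 1$ on the simple roots $\alpha\in I$; without regularity one could have $\langle\nu+\rho,\alpha^\lor\rangle=0$, producing elements of $W_{[\lambda]}\cdot\lambda\cap(C_\mathfrak{l}-\rho)$ outside $\Lambda_I^+$. Once this step is in place, the remaining work is routine set-theoretic bookkeeping and the desired equalities follow immediately by intersecting the displayed identity with $\mathcal{L}_\lambda$ and with $\mathcal{S}_{[\lambda]}(\lambda)$, respectively, and then adding $\rho$.
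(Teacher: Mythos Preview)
Your proposal is correct and follows essentially the same route as the paper: start from Theorem~\ref{geompara2}, shift by $-\rho$, and use regularity together with the integrality coming from $\nu-\lambda\in\Lambda_r$ to trade the half-space condition $\nu+\rho\in C_\mathfrak{l}$ for the dominance condition $\nu\in\Lambda_I^+$. The only cosmetic difference is that the paper passes through the open chamber $C_\mathfrak{l}^\circ:=\{\nu:\langle\nu,\alpha^\lor\rangle>0\ \forall\alpha\in I\}$ as an intermediate step (first replacing $C_\mathfrak{l}$ by $C_\mathfrak{l}^\circ$ using regularity, then matching $C_\mathfrak{l}^\circ-\rho$ with $\Lambda_I^+$), whereas you argue the equivalence $\langle\nu+\rho,\alpha^\lor\rangle\ge 0\iff\langle\nu+\rho,\alpha^\lor\rangle\ge 1$ directly; the substance is identical.
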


\begin{proof} 
	Let $C_\mathfrak{l}^\circ:=\{\nu\in\h^*:\langle\nu,\alpha^\lor\rangle> 0, \ \forall \alpha\in I\}$. 
	Since $\lambda$ is regular, we get $\eta$ is regular for all $\eta\in W_{[\lambda]}\cdot \lambda$. This implies that
	\begin{align*}
	\cW_I(\lambda)
	=W_{[\lambda]}(\lambda+\rho)\cap\mathcal{L}_{\lambda+\rho}\cap C_\mathfrak{l}
	=W_{[\lambda]}(\lambda+\rho)\cap\mathcal{L}_{\lambda+\rho}\cap C_\mathfrak{l}^\circ.
	\end{align*}
	Since $W_{[\lambda]}\cdot\lambda=W_{[\lambda]}(\lambda+\rho)-\rho$ and $\mathcal{L}_\lambda=\mathcal{L}_{\lambda+\rho}-\rho$, we have
	\begin{align*}
	\cW_I(\lambda)-\rho
	&=W_{[\lambda]}(\lambda+\rho)\cap\mathcal{L}_{\lambda+\rho}\cap C_\mathfrak{l}^\circ-\rho\\
	&=\left(W_{[\lambda]}(\lambda+\rho)-\rho\right)\cap\left(\mathcal{L}_{\lambda+\rho}-\rho\right)\cap \left(C_\mathfrak{l}^\circ-\rho\right)\\
	&=W_{[\lambda]}\cdot\lambda\cap\mathcal{L}_\lambda\cap \left(C_\mathfrak{l}^\circ-\rho\right).
	\end{align*}

	Suppose $\eta\in W_{[\lambda]}\cdot\lambda\cap\mathcal{L}_\lambda\cap \left(C_\mathfrak{l}^\circ-\rho\right)$. It holds that $\eta=w\cdot\lambda$ for some $w\in W_{[\lambda]}$. Then by the definition of $W_{[\lambda]}$, we have $\eta-\lambda\in \Lambda_r$. Since $\lambda\in\Lambda_I^+$, we get $\langle\eta+\rho,\alpha^\lor\rangle=\langle\eta-\lambda,\alpha^\lor\rangle+\langle\lambda,\alpha^\lor\rangle+\langle\rho,\alpha^\lor\rangle\in \mathbb{Z}$ for all $\alpha\in I$.
	Since $\eta\in C_\mathfrak{l}^\circ-\rho$, we get $\eta+\rho\in C_\mathfrak{l}^\circ$ and then $\langle\eta+\rho,\alpha^\lor\rangle>0$ for all $\alpha\in I$. This implies that $\langle\eta+\rho,\alpha^\lor\rangle\in \mathbb{Z}^{>0}$ for all $\alpha\in I$, or equivalently that
	$\langle\eta,\alpha^\lor\rangle\in \mathbb{Z}^{\ge 0}$ for all $\alpha\in I$. This implies $\eta\in \Lambda_I^+$.
	Hence $W_{[\lambda]}\cdot\lambda\cap\mathcal{L}_\lambda\cap \left(C_\mathfrak{l}^\circ-\rho\right)\subseteq W_{[\lambda]}\cdot\lambda\cap\mathcal{L}_\lambda\cap \Lambda_I^+$.

	Conversely, suppose $\eta\in W_{[\lambda]}\cdot\lambda\cap\mathcal{L}_\lambda\cap \Lambda_I^+$. Then $\langle\eta,\alpha^\lor \rangle\in \mathbb{Z}^{\ge 0}$ for all $\alpha\in I$ and then $\langle\eta+\rho,\alpha^\lor \rangle\in \mathbb{Z}^{>0}$ for all $\alpha\in I$. This implies $\eta\in C_{\mathfrak{l}}^\circ-\rho$.
	Hence $W_{[\lambda]}\cdot\lambda\cap\mathcal{L}_\lambda\cap \Lambda_I^+\subseteq W_{[\lambda]}\cdot\lambda\cap\mathcal{L}_\lambda\cap \left(C_\mathfrak{l}^\circ-\rho\right)$.
	Therefore, $\cW_I(\lambda)-\rho
	=W_{[\lambda]}\cdot\lambda\cap\mathcal{L}_\lambda\cap \left(C_\mathfrak{l}^\circ-\rho\right)
	=W_{[\lambda]}\cdot\lambda\cap\mathcal{L}_\lambda\cap \Lambda_I^+$.
	By a similar argument, we get $\cW_I(\lambda)-\rho=\mathcal{S}_{[\lambda]}(\lambda)\cap \Lambda_I^+$.
\end{proof}

Now we are able to describe two algebraic parameterizations of $H_D(L(\lambda))$.

\begin{theorem} [Theorem \ref{parameterization}]
	\label{parameterization'}
	Let $\lambda\in\Lambda_I^+\cap\mathcal{R}$. Then
	\begin{align*}
	\cW_I(\lambda)
	=\{\nu\in\Lambda_I^+: [M(\lambda),L(\nu)]\neq 0\}+\rho
	=\{\nu\in\Lambda_I^+: M(\nu)\hookrightarrow M(\lambda)\}+\rho.
	\end{align*}
\end{theorem}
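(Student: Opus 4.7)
The plan is to derive Theorem \ref{parameterization'} by combining Proposition \ref{req'd subset} with the Verma-BGG Theorem \ref{BGGVerma}. Proposition \ref{req'd subset} already gives the geometric identity $\cW_I(\lambda) = W_{[\lambda]}\cdot\lambda \cap \mathcal{L}_\lambda \cap \Lambda_I^+ + \rho = \mathcal{S}_{[\lambda]}(\lambda)\cap\Lambda_I^+ + \rho$, so what remains is to prove the set equality
\[
W_{[\lambda]}\cdot\lambda \cap \mathcal{L}_\lambda \cap \Lambda_I^+ \;=\; \{\nu \in \Lambda_I^+ : [M(\lambda),L(\nu)]\neq 0\}.
\]
Once that is established, the second equality in the theorem statement is immediate from the remark after Theorem \ref{BGGVerma}, which identifies $[M(\lambda),L(\nu)]\neq 0$ with $M(\nu)\hookrightarrow M(\lambda)$.

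For the inclusion $\supseteq$, assume $\nu \in \Lambda_I^+$ satisfies $[M(\lambda),L(\nu)]\neq 0$. Then $L(\nu)$ appears as a subquotient of $M(\lambda)$, so $\nu$ is a weight of $M(\lambda)$, which forces $\nu = \lambda - \gamma$ for some $\gamma \in \Gamma$. This gives both $\nu \le \lambda$ and $\nu - \lambda \in \Lambda_r$. Combined with the equality of central characters $\chi_\nu = \chi_\lambda$ (so $\nu \in W\cdot\lambda$), we conclude $\nu \in W_{[\lambda]}\cdot\lambda$, as required.

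For the reverse inclusion $\subseteq$, suppose $\nu \in W_{[\lambda]}\cdot\lambda \cap \mathcal{L}_\lambda \cap \Lambda_I^+$. Let $\mu$ be the antidominant weight in $W_{[\lambda]}\cdot\lambda$, and write $\lambda = w_0\cdot\mu$ and $\nu = x\cdot\mu$ with $w_0, x \in W_{[\lambda]}$ (uniquely determined since $\mu$ is regular). Since $\lambda$ is regular, Lemma \ref{HJtypo} provides the equivalences
\[
\nu \le \lambda \;\iff\; x\cdot\mu \le w_0\cdot\mu \;\iff\; x\cdot\mu \uparrow_{[\lambda]} w_0\cdot\mu \;\iff\; \nu \uparrow_{[\lambda]} \lambda,
\]
so $\nu \uparrow_{[\lambda]} \lambda$, which in particular implies $\nu \uparrow \lambda$ by the remark following the definition of $[\lambda]$-strong linkage. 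The Verma direction of Theorem \ref{BGGVerma} then yields $M(\nu)\hookrightarrow M(\lambda)$, and hence $[M(\lambda),L(\nu)]\neq 0$, completing the argument.

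The main obstacle is bookkeeping rather than anything substantive: the non-trivial input is that, in the regular case, $[\lambda]$-strong linkage on $W_{[\lambda]}\cdot\lambda$ coincides with the coarse partial order $\le$ on $\mathfrak{h}^*$, a fact supplied by Lemma \ref{HJtypo}. This is precisely the step that breaks down outside the regular case, which explains why the algebraic parameterizations require $\lambda \in \Lambda_I^+ \cap \mathcal{R}$ and why the inclusions in Theorem \ref{generalcase'} need not be equalities in general.
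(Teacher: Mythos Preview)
Your proof is correct and follows essentially the same strategy as the paper: reduce to Proposition~\ref{req'd subset} and then match the geometric description with the composition-factor set via the Verma--BGG Theorem~\ref{BGGVerma}. The only notable differences are cosmetic: for the inclusion $\subseteq$ you start from the $W_{[\lambda]}\cdot\lambda\cap\mathcal{L}_\lambda$ description and invoke Lemma~\ref{HJtypo} to obtain $[\lambda]$-strong linkage, whereas the paper starts from the equivalent $\mathcal{S}_{[\lambda]}(\lambda)$ description (also supplied by Proposition~\ref{req'd subset}) and so gets $\nu\uparrow_{[\lambda]}\lambda$ for free; and for the inclusion $\supseteq$ you argue directly from weight and central-character considerations, which is slightly more elementary than the paper's appeal to the BGG direction of Theorem~\ref{BGGVerma}.
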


\begin{proof}
	By Proposition \ref{req'd subset}, if $\lambda\in\Lambda_I^+\cap\mathcal{R}$
	then
	$\cW_I(\lambda)-\rho
	=W_{[\lambda]}\cdot\lambda\cap\mathcal{L}_\lambda\cap \Lambda_I^+
	=\mathcal{S}_{[\lambda]}(\lambda)\cap \Lambda_I^+$.
	Suppose $\eta\in \cW_I(\lambda)-\rho$. Then $\eta\in \mathcal{S}_{[\lambda]}(\lambda)$ or, equivalently, $\eta\uparrow_{[\lambda]} \lambda$. This implies $\eta\uparrow \lambda$. By Theorem \ref{BGGVerma}, we get $[M(\lambda),L(\eta)]\neq 0$. Since $\eta\in\Lambda_I^+$, we get $\eta\in \{\nu\in\Lambda_I^+: [M(\lambda),L(\nu)]\neq 0\}$.

	Conversely, suppose $\eta\in \{\nu\in\Lambda_I^+: [M(\lambda),L(\nu)]\neq 0\}$. Then $[M(\lambda),L(\eta)]\neq 0$, so by Theorem \ref{BGGVerma}, we get $\eta\uparrow \lambda$. In particular, by the definitions of strong linkage and  $W_{[\lambda]}$, we have $\eta\le\lambda$ and $\eta=w\cdot\lambda$ for some $w\in W_{[\lambda]}$. Since $\eta\in \Lambda_I^+$, we get $\eta\in W_{[\lambda]}\cdot\lambda\cap\mathcal{L}_\lambda\cap \Lambda_I^+=\cW_I(\lambda)-\rho$. 
	By the remark following Theorem \ref{BGGVerma}, we obtain the second algebraic parameterization.
\end{proof}


One application of the parameterizations of $H_D(L(\lambda))$ is to obtain an extended version of the Verma-BGG Theorem for Verma modules with regular infinitesimal character. We can show that the condition in terms of strong linkage in the Verma-BGG Theorem is equivalent to some seemingly weaker or stronger conditions.

\begin{theorem}\label{5equiv}
	Let $\lambda\in \Lambda_I^+\cap\mathcal{R}$ and $\eta\in\Lambda_I^+$. The following statements are then equivalent:
	\begin{enumerate}
		\item $[M(\lambda),L(\eta)]\neq 0$.
		\item $M(\eta)\hookrightarrow M(\lambda)$.
		\item $\eta$ is strongly linked to $\lambda$.
		\item $\eta$ is $[\lambda]$-strongly linked to $\lambda$.
		\item $\eta\le \lambda$ and $\eta=w\cdot\lambda$ for some $w\in W_{[\lambda]}$.
		\item $\cW_I(\eta)
		\subseteq\cW_I(\lambda)$.
		\item $\cW(\eta)
		\subseteq\cW(\lambda)$.
	\end{enumerate}
\end{theorem}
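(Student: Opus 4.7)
The plan is to split the seven conditions into two clusters: the module-theoretic conditions (1)--(5), which follow from the Verma-BGG Theorem (Theorem \ref{BGGVerma}) together with the analysis of strong linkage in Lemma \ref{HJtypo}; and the geometric conditions (6)--(7), which follow by invoking the algebraic parameterization established in Theorem \ref{parameterization'} and exploiting the transitivity of strong linkage. Throughout, the key technical fact is that once $\eta$ is $W_{[\lambda]}$-linked to the regular weight $\lambda$, the weight $\eta$ is automatically regular, so Theorem \ref{parameterization'} applies uniformly to both $\lambda$ and $\eta$.

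For the first cluster, I would first read off (1) $\Longleftrightarrow$ (2) $\Longleftrightarrow$ (3) from the remark after Theorem \ref{BGGVerma} and the theorem itself. Next, for (3) $\Longrightarrow$ (5): if $\eta\uparrow\lambda$, then each step of the chain subtracts a positive root, so $\eta\le\lambda$ and $\eta-\lambda\in\Lambda_r$; combined with $\eta=w\cdot\lambda$ for some $w\in W$, this forces $w\in W_{[\lambda]}$. For (5) $\Longrightarrow$ (4), I would write $\lambda=w_I\ow\cdot\mu$ and $\eta=ww_I\ow\cdot\mu$, both belonging to $W_{[\lambda]}\cdot\mu$; since $\lambda$ (equivalently $\mu$) is regular, Lemma \ref{HJtypo} applied to the pair $ww_I\ow,w_I\ow\in W_{[\lambda]}$ converts $\eta\le\lambda$ directly into $\eta\uparrow_{[\lambda]}\lambda$. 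Finally, (4) $\Longrightarrow$ (3) is the remark immediately after the definition of $[\lambda]$-strong linkage.

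For the second cluster, assume (1). Then (5) gives $\eta\in W_{[\lambda]}(\lambda+\rho)-\rho$, so $\eta$ is regular and Theorem \ref{parameterization'} applies to both $\lambda$ and $\eta$; the inclusion $\cW_I(\eta)\subseteq\cW_I(\lambda)$ then reduces to showing that every $\nu\in\Lambda_I^+$ with $[M(\eta),L(\nu)]\neq 0$ also satisfies $[M(\lambda),L(\nu)]\neq 0$, which follows by transitivity of strong linkage via Theorem \ref{BGGVerma}. Conversely, Lemma \ref{multilemma}(b) guarantees $\eta+\rho\in\cW_I(\eta)$ unconditionally (since ${}^IP^{\Sigma_\mu}_{\ow,\ow}(1)=1$), so (6) places $\eta+\rho\in\cW_I(\lambda)$, and Theorem \ref{parameterization'} together with $\eta\in\Lambda_I^+$ recovers (1). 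The equivalence (1) $\Longleftrightarrow$ (7) is the special case $I=\emptyset$, where $\Lambda_\emptyset^+=\h^*$ so no integrality assumption on $\eta$ is needed; the argument is identical once one notes that $\lambda\in\mathcal{R}$ ensures $\lambda,\eta\in\Lambda_\emptyset^+\cap\mathcal{R}$ whenever $\eta$ is $W$-linked to $\lambda$. The only real subtlety is making sure Theorem \ref{parameterization'} is legitimately applicable to $\eta$, which is handled by noting that in every direction requiring it, $\eta$ has already been placed in the regular orbit of $\lambda$; everything else is routine bookkeeping of the parameterizations previously established.
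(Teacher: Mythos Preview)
Your proof is correct, and the overall architecture matches the paper's: both split into a module-theoretic cluster (1)--(5) and a Dirac-cohomology cluster (6)--(7), and both use $\eta+\rho\in\cW_I(\eta)$ (via ${}^IP^{\Sigma_\mu}_{\ow,\ow}(1)=1$) for the backward direction (6)$\Rightarrow$(1). The routes differ in two places. First, for (5)$\Rightarrow$(4) you invoke Lemma~\ref{HJtypo} directly (regular $\mu$, so $\eta\le\lambda\Leftrightarrow\eta\uparrow_{[\lambda]}\lambda$), whereas the paper instead passes through the identity $W_{[\lambda]}\cdot\lambda\cap\mathcal{L}_\lambda\cap\Lambda_I^+=\mathcal{S}_{[\lambda]}(\lambda)\cap\Lambda_I^+$ coming from Proposition~\ref{req'd subset} and Theorem~\ref{parameterization'}. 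Your route is slightly more elementary here, since it avoids invoking the full parameterization machinery for this step. Second, for the forward direction (1)/(5)$\Rightarrow$(6), you use the \emph{algebraic} parameterization (Theorem~\ref{parameterization'}) applied to both $\lambda$ and $\eta$, reducing to transitivity of strong linkage; the paper instead uses the \emph{geometric} parameterization, combining Theorem~\ref{generalcase} (which gives $\cW_I(\eta)\subseteq W_{[\eta]}(\eta+\rho)\cap\mathcal{L}_{\eta+\rho}\cap C_\mathfrak{l}$ for arbitrary $\eta\in\Lambda_I^+$) with Theorem~\ref{geompara2} for $\lambda$. The paper's route has the mild advantage that Theorem~\ref{generalcase} requires no regularity hypothesis on $\eta$, so one need not first argue that $\eta$ is regular; your route requires that extra observation (which you correctly supply), but in exchange the transitivity argument is conceptually cleaner. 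Either way the dependencies are essentially the same.
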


\begin{proof}
	By Theorem \ref{BGGVerma} and the remark following it, we get (1)$\iff$(2)$\iff$(3).
	Suppose $\eta$ satisfies (4); then by the definitions of $[\lambda]$-strong linkage and $W_{[\lambda]}$, we deduce that $\eta$ satisfies (5),
	so we get that (4)$\implies$(5).

	By Proposition \ref{req'd subset} and Theorem \ref{parameterization'}, we have \[
	W_{[\lambda]}\cdot \lambda\cap\mathcal{L}_\lambda\cap \Lambda_I^+=\{\nu\in\Lambda_I^+: M(\nu)\hookrightarrow M(\lambda)\}=\mathcal{S}_{[\lambda]}(\lambda)\cap \Lambda_I^+.
	\]
	If $\eta$ satisfies (5), then it follows that $\eta$ satisfies (2), which implies (4)
	because of the preceding identity. This means (4)$\implies $(5)$\implies $(2)$\implies $(4). 
	
	To show that (5)$\iff$(6), suppose $\eta\le \lambda$ and $\eta=w\cdot\lambda$ for some $w\in W_{[\lambda]}$. Then $\eta-\lambda\in\Lambda_r$ and hence $W_{[\eta]}=W_{[\lambda]}$. Since $\eta=w\cdot\lambda\in W_{[\lambda]}\cdot\lambda$, we get $W_{[\eta]}(\eta+\rho)=W_{[\lambda]}(\lambda+\rho)$.
	Since $\eta \le \lambda$, we get $\mathcal{L}_{\eta+\rho}\subseteq\mathcal{L}_{\lambda+\rho}$. Therefore 
	\[\cW_I(\eta)\subseteq W_{[\eta]}(\eta+\rho)\cap\mathcal{L}_{\eta+\rho}\cap C_\mathfrak{l}\subseteq W_{[\lambda]}(\lambda+\rho)\cap\mathcal{L}_{\lambda+\rho}\cap C_\mathfrak{l}=\cW_I(\lambda)\]  by Theorems \ref{generalcase} and  \ref{geompara2}. 
	Conversely, suppose $\cW_I(\eta)
	\subseteq\cW_I(\lambda)$. Then
	$\eta+\rho\in \cW_I(\eta) \subseteq \cW_I(\lambda)=W_{[\lambda]}(\lambda+\rho)\cap\mathcal{L}_{\lambda+\rho}\cap C_\mathfrak{l}$ by the argument in the proof of Theorem \ref{general1to1}, and Theorem \ref{geompara2}. This implies $\eta+\rho\in \mathcal{L}_{\lambda+\rho}$ and $\eta+\rho\in W_{[\lambda]}(\lambda+\rho)$.
	Therefore $\eta\le \lambda$ and $\eta=w\cdot\lambda$ for some $w\in W_{[\lambda]}$.
	We conclude that (5)$\iff$(6) as needed.
	
	Finally, we show (5)$\iff$(7). By the equivalence (5)$\iff$(6) with $I=\emptyset$, for all $\lambda\in\mathcal{R}$ and $\eta\in\h^*$, it holds that $\eta\le \lambda$ and $\eta=w\cdot\lambda$ for some $w\in W_{[\lambda]}$ is equivalent to $\cW(\eta)
	\subseteq\cW(\lambda)$. Since $\Lambda_I^+\cap\mathcal{R}\subseteq \mathcal{R}$ and $\Lambda_I^+\subseteq\h^*$, we conclude that (5)$\iff$(7) as needed.   
\end{proof}

\section{Dirac cohomology of Kostant modules}
\label{S:5}

\subsection{Dirac cohomology of Kostant modules}

Following \cite{BBMH}, a finite poset is called an \emph{interval} if it has a unique minimum and a unique maximum. A finite poset is called \emph{graded} if it is an interval and if all maximal chains between any two elements
have the same length. In this case the poset has a well-defined \emph{rank
	function} whose value at a vertex $x$ is the length of any maximal chain from the unique minimum to $x$.

Continue to let $\lambda\in\Lambda_I^+$. As noted in \cite[\S 3.2]{BBMH}, it holds that the posets of the form $\left({}^IW_{[\lambda]}, \le_{[\lambda]}\right)$ are graded and that the rank function on ${}^IW_{[\lambda]}$ is the restriction of the length function $\ell_{[\lambda]}$ on $W_{[\lambda]}$ (see \cite[Corollary 3.8]{VD2}).


\begin{definition} [{See \cite[\S 3.3]{BBMH}}]
For $w \in
{}^IW_{[\lambda]}^{\Sigma_\mu}$, we say that $L(w_Iw\cdot\mu)$ is a Kostant module in $\cO^\p_\mu$ if there exists a graded interval $[v, w]$ of $\left({}^IW_{[\lambda]}^{\Sigma_\mu}, \le_{[\lambda]}\right)$ such that as an $\mathfrak{l}$-module,
\[
H^i(\mathfrak{u}, L(w_Iw\cdot\mu)) \cong
\bigoplus_{\substack{ x\in [v,w] \\ r(x) = r(w)-i}}F(w_Ix\cdot\mu) \quad \text{for all $i\ge 0$},
\]
where $r$ is the rank function on $[v, w]$.
\end{definition}

%

\begin{lemma} [{See \cite[\S 3.4]{BBMH} and \cite[Theorem 5.13]{EHP}}] \label{Kostantmod}
	Let $\lambda\in\Lambda_I^+\cap\mathcal{R}$ and $w\in {}^IW_{[\lambda]}$. Then $L(w_Iw\cdot\mu)$ is a Kostant module if and only if ${}^IP_{x,w}^{\mu}(q)=1$ for all $x\le_{[\lambda]} w$.
\end{lemma}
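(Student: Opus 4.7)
The plan is to combine two ingredients: (i) the Shapiro-type isomorphism
\[
	\mathrm{Ext}^i_{\cO^\p}\bigl(M_I(w_Ix\cdot\mu),\,L(w_Iw\cdot\mu)\bigr) \cong \Hom_\mathfrak{l}\bigl(F(w_Ix\cdot\mu),\,H^i(\mathfrak{u},L(w_Iw\cdot\mu))\bigr),
\]
which arises from the adjunction between parabolic induction and $\mathfrak{u}$-coinvariants (a standard fact in category $\cO^\p$ theory, used implicitly in the proof of Theorem \ref{KLVparaKL}); and (ii) the fact (see \cite[Corollary 3.8]{VD2}) that the poset $({}^IW_{[\lambda]},\le_{[\lambda]})$ is graded with rank function inherited from $\ell_{[\lambda]}$. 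Together with Proposition \ref{subset} (which pins down the possible $\mathfrak{l}$-types), (i) expresses the $\mathfrak{l}$-module $H^i(\mathfrak{u},L(w_Iw\cdot\mu))$ as a sum of copies of $F(w_Ix\cdot\mu)$ with multiplicities $\dim\mathrm{Ext}^i_{\cO^\p}(M_I(w_Ix\cdot\mu),L(w_Iw\cdot\mu))$.

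Since $\lambda$ is regular one has $\Sigma_\mu=\emptyset$, so ${}^IW_{[\lambda]}^{\Sigma_\mu}={}^IW_{[\lambda]}$, and Definition \ref{KLV} reads ${}^IP_{x,w}^\mu(q)=\sum_{i\ge 0} q^{(\ell_{[\lambda]}(x,w)-i)/2}\dim\mathrm{Ext}^i_{\cO^\p}(M_I(w_Ix\cdot\mu),L(w_Iw\cdot\mu))$. Assume first that ${}^IP_{x,w}^\mu(q)=1$ for every $x\le_{[\lambda]} w$ in ${}^IW_{[\lambda]}$. Because each monomial contributes a distinct power of $q^{1/2}$, a term-by-term comparison forces
\[
\dim\mathrm{Ext}^i_{\cO^\p}(M_I(w_Ix\cdot\mu),L(w_Iw\cdot\mu))=\begin{cases}1, & x\le_{[\lambda]} w \text{ and } i=\ell_{[\lambda]}(x,w),\\ 0, & \text{otherwise},\end{cases}
\]
the vanishing for $x\not\le_{[\lambda]} w$ being supplied by Corollary \ref{keyequiv} (using that $w_Ix\le_{[\lambda]} w_Iw\Leftrightarrow x\le_{[\lambda]} w$ for minimal coset representatives). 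Substituting into (i) exhibits the required Kostant-module identity on the interval $[v,w]=\{x\in{}^IW_{[\lambda]}:x\le_{[\lambda]} w\}$, which by (ii) is graded with rank function the restriction of $\ell_{[\lambda]}$ shifted by $-\ell_{[\lambda]}(v)$.

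Conversely, suppose $L(w_Iw\cdot\mu)$ is Kostant on a graded interval $[v,w]$ with rank function $r$. Reading the Kostant-module identity through (i), the only nonvanishing Ext groups are $\dim\mathrm{Ext}^{r(w)-r(x)}=1$ for $x\in[v,w]$. Since the grading on $[v,w]\subseteq({}^IW_{[\lambda]},\le_{[\lambda]})$ comes from $\ell_{[\lambda]}$, we have $r(w)-r(x)=\ell_{[\lambda]}(x,w)$, so ${}^IP_{x,w}^\mu(q)=1$ for every $x\in[v,w]$. To extend this to all $x\le_{[\lambda]} w$ in ${}^IW_{[\lambda]}$, note that Corollary \ref{keyequiv} yields ${}^IP_{x,w}^\mu(1)\ne 0$ for each such $x$, which together with the Kostant decomposition forces $x\in[v,w]$. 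Hence $[v,w]=\{x\in{}^IW_{[\lambda]}:x\le_{[\lambda]} w\}$ and ${}^IP_{x,w}^\mu(q)=1$ throughout.

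The main obstacle is the careful identification of the abstract rank function $r$ on the Kostant interval with the restriction of $\ell_{[\lambda]}$; once this matching is secured (via \cite[Corollary 3.8]{VD2}), both implications reduce to term-by-term bookkeeping between the cohomological decomposition of $H^\bullet(\mathfrak{u},L(w_Iw\cdot\mu))$ and the length-normalized generating function of Definition \ref{KLV}.
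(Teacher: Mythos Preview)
The paper does not give its own proof of this lemma; it simply cites \cite[\S 3.4]{BBMH} and \cite[Theorem 5.13]{EHP}, with the remark that the integral-weight argument in \cite{BBMH} carries over to nonintegral weights. Your proposal is a faithful reconstruction of that cited argument, and the overall strategy is correct: the Shapiro-type isomorphism
\[
\mathrm{Ext}^i_{\cO^\p}\bigl(M_I(w_Ix\cdot\mu),\,L(w_Iw\cdot\mu)\bigr)\;\cong\;\Hom_{\mathfrak{l}}\bigl(F(w_Ix\cdot\mu),\,H^i(\mathfrak{u},L(w_Iw\cdot\mu))\bigr)
\]
translates the Kostant-module condition on $H^\bullet(\mathfrak{u},-)$ directly into the statement that each ${}^IP_{x,w}^\mu(q)$ is the constant polynomial $1$, and the gradedness of $({}^IW_{[\lambda]},\le_{[\lambda]})$ by $\ell_{[\lambda]}$ matches $r(w)-r(x)$ with $\ell_{[\lambda]}(x,w)$.

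One correction: your appeal to Proposition~\ref{subset} is misplaced. That proposition constrains the $\mathfrak{l}$-types in the \emph{Dirac} cohomology $H_D(V)$, not in $H^i(\mathfrak{u},V)$. What you actually need in order to conclude that every irreducible constituent of $H^i(\mathfrak{u},L(w_Iw\cdot\mu))$ is of the form $F(w_Ix\cdot\mu)$ for some $x\in{}^IW_{[\lambda]}$ is the Casselman--Osborne theorem for $\mathfrak{u}$-cohomology (any $\mathfrak{l}$-type $F(\eta)$ occurring satisfies $\eta+\rho(\mathfrak{u})\in W(\lambda+\rho)$), combined with the block constraint $L(w_Iw\cdot\mu)\in\cO^\p_\mu$ and Lemma~\ref{IW} to land in ${}^IW_{[\lambda]}$. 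This is a citation slip rather than a mathematical gap; once you swap in the correct reference, both directions go through as you wrote them.

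A second minor point: you invoke the equivalence $w_Ix\le_{[\lambda]}w_Iw\Leftrightarrow x\le_{[\lambda]}w$ for $x,w\in{}^IW_{[\lambda]}$ as a parenthetical fact. In the paper this is Lemma~\ref{bruhat}, which appears later but does not depend on the present lemma, so there is no circularity; still, it would be cleaner to cite it (or the underlying Coxeter-group fact) explicitly.
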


\begin{remark}
In \cite{BBMH}, it is assumed that $\lambda$ is integral, but the
proof in \cite[\S 3.4]{BBMH} also works for nonintegral weights (which is our context).
\end{remark}



\begin{proposition}\label{neccessary}
Let $\lambda,\lambda' \in \Lambda_I^+$. Suppose $H_D(L(\lambda))$ is isomorphic to a quotient of $H_D(L(\lambda'))$ as an $\mathfrak{l}$-module. Then $\lambda\uparrow_{[\lambda']} \lambda'$.
\end{proposition}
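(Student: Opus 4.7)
The plan is to reduce the hypothesis on $H_D(L(\lambda))$ and $H_D(L(\lambda'))$ to a set-theoretic containment $\cW_I(\lambda) \subseteq \cW_I(\lambda')$, and then to extract the required $[\lambda']$-strong linkage by locating $\lambda+\rho$ inside $\cW_I(\lambda')$ and invoking Theorem~\ref{generalcase}.

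First, by Proposition~\ref{subset}, both $H_D(L(\lambda))$ and $H_D(L(\lambda'))$ are completely reducible finite dimensional $\mathfrak{l}$-modules. If $H_D(L(\lambda))$ is isomorphic to a quotient of $H_D(L(\lambda'))$, then complete reducibility of $H_D(L(\lambda'))$ implies that $H_D(L(\lambda))$ is in fact isomorphic to a direct summand of $H_D(L(\lambda'))$: writing $H_D(L(\lambda')) = K \oplus K'$ with $K$ the kernel of the quotient map, one has $H_D(L(\lambda)) \cong K'$. Combining Theorem~\ref{maindecom} with Lemma~\ref{Schur}(4), this direct summand relation forces the set of highest weights appearing in the $\mathfrak{l}$-decomposition of $H_D(L(\lambda))$ to be contained in the corresponding set for $H_D(L(\lambda'))$. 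Translating through Definition~\ref{WIlambda}, this is exactly the inclusion $\cW_I(\lambda) \subseteq \cW_I(\lambda')$.

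Next I would locate $\lambda+\rho$ inside $\cW_I(\lambda)$. Let $\ow \in {}^IW_{[\lambda]}^{\Sigma_\mu}$ be the unique element with $\lambda = w_I \ow \cdot \mu$, which exists by the remark following Lemma~\ref{simplemod}. By Lemma~\ref{multilemma}(2), ${}^IP^{\Sigma_\mu}_{\ow,\ow}(q) = 1$, so ${}^IP^{\Sigma_\mu}_{\ow,\ow}(1) \neq 0$, and therefore $w_I\ow \cdot \mu + \rho(\mathfrak{u}) + \rho_\mathfrak{l} = \lambda + \rho \in \cW_I(\lambda)$. By the inclusion established above, $\lambda + \rho \in \cW_I(\lambda')$.

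Finally, applying Theorem~\ref{generalcase} to $\lambda'$ gives
\[
\cW_I(\lambda') \subseteq \left(\mathcal{S}_{[\lambda']}(\lambda') + \rho\right) \cap C_\mathfrak{l},
\]
so $\lambda + \rho \in \mathcal{S}_{[\lambda']}(\lambda') + \rho$, i.e.\ $\lambda \in \mathcal{S}_{[\lambda']}(\lambda')$, which by definition of $\mathcal{S}_{[\lambda']}(\lambda')$ is exactly $\lambda \uparrow_{[\lambda']} \lambda'$, as desired. The only mildly subtle step is the passage from ``quotient'' to ``direct summand'' via complete reducibility; once that is in hand, everything else is an immediate bookkeeping step using the already-established machinery.
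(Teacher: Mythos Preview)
Your proof is correct and follows essentially the same logical skeleton as the paper's: show that $F(\lambda+\rho(\mathfrak{u}))$ occurs in $H_D(L(\lambda))$, transfer it to $H_D(L(\lambda'))$ via the quotient hypothesis, and read off the $[\lambda']$-strong linkage. The differences are in the packaging. You pass from ``quotient'' to ``direct summand'' using complete reducibility (Proposition~\ref{subset}), whereas the paper instead invokes Lemma~\ref{Schur}(2) to obtain an injection $\Hom_\mathfrak{l}(H_D(L(\lambda)),F(\lambda+\rho(\mathfrak{u})))\hookrightarrow\Hom_\mathfrak{l}(H_D(L(\lambda')),F(\lambda+\rho(\mathfrak{u})))$ and computes both sides explicitly. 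More notably, for the final step you appeal to the already-proved Theorem~\ref{generalcase} as a black box, while the paper re-derives its content here directly: it finds the specific $x\in{}^IW_{[\lambda']}^{\Sigma'_{\mu'}}$ with $\lambda=w_Ix\cdot\mu'$ and ${}^IP_{x,\ow'}^{\Sigma'_{\mu'}}(1)\neq 0$, then applies Lemma~\ref{multilemma}(1) and Lemma~\ref{HJtypo} by hand. Your route is a bit more economical since Theorem~\ref{generalcase} already encodes exactly this chain of implications; the paper's version is more self-contained at this point in the exposition.
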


\begin{proof}
As in the remark following Lemma \ref{simplemod},
let $\ow \in  {}^IW_{[\lambda]}^{\Sigma_\mu}$ and $\ow' \in  {}^IW_{[\lambda']}^{\Sigma'_{\mu'}}$ be the unique elements such that $\lambda=w_I\ow\cdot \mu$ and
$\lambda'=w_I\ow'\cdot \mu'$, respectively, where $\mu'$ is the antidominant weight in $W_{[\lambda']}\cdot \lambda'$ and ${\Sigma'_{\mu'}}$ is the set of singular simple roots associated to $\mu'$ in $\Delta_{[\lambda']}$.
By Lemma \ref{multilemma}, it holds that ${}^IP^{\Sigma_\mu}_{\ow,\ow}(q)=1$.
Suppose $H_D(L(\lambda))$ is isomorphic to a quotient of $H_D(L(\lambda'))$ as an $\mathfrak{l}$-module. By Lemma \ref{Schur}, there is an injection from $\Hom_\mathfrak{l}\left(H_D(L(\lambda)),F(\lambda+\rho(\mathfrak{u}))\right)$ to $\Hom_\mathfrak{l}\left(H_D(L(\lambda')),F(\lambda+\rho(\mathfrak{u}))\right)$.
%
It holds that 
\begin{align*}
&\Hom_\mathfrak{l}\left(H_D(L(\lambda)),F(\lambda+\rho(\mathfrak{u}))\right)&\\
&\cong 
\Hom_\mathfrak{l}\left(\bigoplus_{x\in {}^I W_{[\lambda]}^{{\Sigma_{\mu}}}}{}^IP_{x,\ow}^{{\Sigma_{\mu}}}(1)F(w_Ix\cdot\mu+\rho(\mathfrak{u})),F(\lambda+\rho(\mathfrak{u}))\right)&\text{(by Theorem \ref{maindecom})}\\
&\cong 
\bigoplus_{x\in {}^I W_{[\lambda]}^{{\Sigma_{\mu}}}}{}^IP_{x,\ow}^{{\Sigma_{\mu}}}(1)\Hom_\mathfrak{l}\left(F(w_Ix\cdot\mu+\rho(\mathfrak{u})),F(\lambda+\rho(\mathfrak{u}))\right)&\text{(by Lemma \ref{Schur})}\\
&\cong \Hom_\mathfrak{l}\left(F(\lambda+\rho(\mathfrak{u})),F(\lambda+\rho(\mathfrak{u}))\right)&\text{(by Schur's Lemma, ${}^IP^{\Sigma_\mu}_{\ow,\ow}(q)=1$)}\\
&\cong 
\mathbb{C}&\text{(by Schur's Lemma)}
\end{align*}
as a vector space.
And we have
\begin{align*}
&\Hom_\mathfrak{l}\left(H_D(L(\lambda')),F(\lambda+\rho(\mathfrak{u}))\right)\\
&\cong 
\Hom_\mathfrak{l}\left(\bigoplus_{x\in {}^I W_{[\lambda']}^{{\Sigma'_{\mu'}}}}{}^IP_{x,\ow'}^{{\Sigma'_{\mu'}}}(1)F(w_Ix\cdot\mu'+\rho(\mathfrak{u})),F(\lambda+\rho(\mathfrak{u}))\right)&\text{(by Theorem \ref{maindecom})}\\
&\cong 
\bigoplus_{x\in {}^I W_{[\lambda']}^{{\Sigma'_{\mu'}}}}{}^IP_{x,\ow'}^{{\Sigma'_{\mu'}}}(1)\Hom_\mathfrak{l}\left(F(w_Ix\cdot\mu'+\rho(\mathfrak{u})),F(\lambda+\rho(\mathfrak{u}))\right)&\text{(by Lemma \ref{Schur})}
\end{align*}
as a vector space.
We therefore have an injection from $\mathbb{C}$ to
\[
\bigoplus_{x\in {}^I W_{[\lambda']}^{{\Sigma'_{\mu'}}}}{}^IP_{x,\ow'}^{{\Sigma'_{\mu'}}}(1)\Hom_\mathfrak{l}\left(F(w_Ix\cdot\mu'+\rho(\mathfrak{u})),F(\lambda+\rho(\mathfrak{u}))\right).
\]
Then there is $x\in {}^I W_{[\lambda']}^{{\Sigma'_{\mu'}}}$ such that ${}^IP_{x,\ow'}^{{\Sigma'_{\mu'}}}(1)\neq 0$ and $
\Hom_\mathfrak{l}\left(F(w_Ix\cdot\mu'+\rho(\mathfrak{u})),F(\lambda+\rho(\mathfrak{u}))\right)\cong \mathbb{C}$.
By Schur's Lemma, there is $x\in {}^I W_{[\lambda']}^{{\Sigma'_{\mu'}}}$ such that ${}^IP_{x,\ow'}^{{\Sigma'_{\mu'}}}(1)\neq 0$ and $\lambda+\rho(\mathfrak{u})=w_Ix\cdot\mu'+\rho(\mathfrak{u})$, (i.e., $\lambda=w_Ix\cdot\mu'$).
Since ${}^IP_{x,\ow'}^{{\Sigma'_{\mu'}}}(1)\neq 0$, we get $w_Ix\le_{[\lambda']} w_I\ow'$ by Lemma \ref{multilemma}.
Then by Lemma \ref{HJtypo}, we have $\lambda=w_Ix\cdot \mu'\uparrow_{[\lambda']} w_I\ow'\cdot \mu'=\lambda'$.
\end{proof}

\begin{theorem}\label{kostant-prop}
Let $\lambda,\lambda' \in \Lambda_I^+$. Suppose $\lambda$ or $\lambda'$ is regular and $L(\lambda)$ is a Kostant module.
Then
the following statements are equivalent:
\begin{enumerate}
    \item $H_D(L(\lambda))$ is isomorphic to a quotient of $H_D(L(\lambda'))$ as an $\mathfrak{l}$-module.
    \item $\lambda$ is $[\lambda']$-strongly linked to $\lambda'$.
    \item $\lambda$ is strongly linked to $\lambda'$.
    \item $M(\lambda)\hookrightarrow M(\lambda')$.
    \item $[M(\lambda'), L(\lambda)]\neq 0$.
    \item $\lambda\le \lambda'$ and $\lambda=w\cdot\lambda'$ for some $w\in W_{[\lambda']}$.
\end{enumerate}
\end{theorem}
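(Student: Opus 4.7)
The plan is to combine the parametrization theorem from the regular case (Theorem \ref{5equiv}) with the multiplicity-one phenomenon provided by the Kostant property. The implication (1) $\Rightarrow$ (2) is immediate from Proposition \ref{neccessary}. A preliminary observation is that each of the conditions (2)--(6) forces $\lambda \in W_{[\lambda']} \cdot \lambda'$: for (2) and (6) this is by definition, for (3) the chain of reflections involved lies in $\Phi_{[\lambda']}$ (since each step $s_\alpha \cdot \eta < \eta$ requires $\alpha \in \Phi_{[\eta]}$, and $\Phi_{[\cdot]}$ is preserved along the chain), and for (4), (5) it follows from the equality of central characters together with the fact that $\lambda$ is a weight of $M(\lambda')$. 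The same conclusion follows from (1) by Proposition \ref{neccessary}. Consequently $\lambda+\rho$ and $\lambda'+\rho$ are $W$-conjugate with conjugate stabilizers, so under the standing hypothesis that one of $\lambda, \lambda'$ is regular, whenever any of (1)--(6) holds both weights lie in $\Lambda_I^+ \cap \mathcal{R}$.

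Working inside this regular regime, the equivalence of (2), (3), (4), (5), (6) becomes a direct application of Theorem \ref{5equiv} with the roles of the variables $\lambda, \eta$ there played by our $\lambda', \lambda$ respectively; that theorem furthermore provides the additional equivalent reformulation $\cW_I(\lambda) \subseteq \cW_I(\lambda')$. Hence to close the cycle it is enough to prove that this inclusion implies (1), and this is where the Kostant hypothesis enters.

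For the final step, note that by Proposition \ref{subset} both $H_D(L(\lambda))$ and $H_D(L(\lambda'))$ are completely reducible finite-dimensional $\mathfrak{l}$-modules, whose decompositions are described by $\cW_I(\lambda)$ and $\cW_I(\lambda')$ via Theorem \ref{maindecom}. Because $L(\lambda)$ is a Kostant module and $\lambda$ is regular, Lemma \ref{Kostantmod} gives ${}^IP_{x,\ow}^{\mu}(q) = 1$ for every $x \le_{[\lambda]} \ow$; combined with Corollary \ref{keyequiv}, this means that every simple $\mathfrak{l}$-summand $F(\nu-\rho_\mathfrak{l})$ with $\nu \in \cW_I(\lambda)$ appears in $H_D(L(\lambda))$ with multiplicity exactly one. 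The containment $\cW_I(\lambda) \subseteq \cW_I(\lambda')$ then guarantees that each such $F(\nu-\rho_\mathfrak{l})$ appears in $H_D(L(\lambda'))$ with multiplicity at least one, so by projecting onto the appropriate isotypic components and quotienting out the remaining summands one obtains a surjection of $\mathfrak{l}$-modules $H_D(L(\lambda')) \twoheadrightarrow H_D(L(\lambda))$, which is precisely (1).

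The main obstacle is the bookkeeping needed to secure the reduction to the regular regime uniformly across all seven conditions, so that Theorem \ref{5equiv} applies without restriction. Once this is settled, the Kostant multiplicity-one property is exactly what upgrades the purely combinatorial inclusion $\cW_I(\lambda) \subseteq \cW_I(\lambda')$ into a genuine surjection of $\mathfrak{l}$-modules; without it, one would obtain only an inclusion of supports of $H_D(L(\lambda))$ inside that of $H_D(L(\lambda'))$, which is insufficient to realize one as a quotient of the other.
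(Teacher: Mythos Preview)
Your proof is correct and ultimately rests on the same key idea as the paper's: the Kostant hypothesis forces every summand of $H_D(L(\lambda))$ to have multiplicity one, so a containment of supports upgrades to a genuine surjection of $\mathfrak{l}$-modules.

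The routing differs slightly. The paper establishes the implications $(2)\Rightarrow(3)\Leftrightarrow(4)\Leftrightarrow(5)$ and $(3)\Rightarrow(6)$ directly from the definitions and Theorem~\ref{BGGVerma}, without invoking regularity at all; regularity enters only in closing the cycle via $(6)\Rightarrow(1)$, where the paper compares the polynomials ${}^IP_{x,\ow}^{\mu}(1)$ and ${}^IP_{x,\ow'}^{\mu'}(1)$ directly using Corollary~\ref{keyequiv} and Lemma~\ref{Kostantmod}. You instead front-load a reduction to the regular regime (any of (1)--(6) forces $\lambda\in W_{[\lambda']}\cdot\lambda'$, hence both weights are regular), then delegate the equivalence of (2)--(6) and the auxiliary condition $\cW_I(\lambda)\subseteq\cW_I(\lambda')$ to Theorem~\ref{5equiv}, and finally argue $\cW_I(\lambda)\subseteq\cW_I(\lambda')\Rightarrow(1)$ at the level of weight sets. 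Your approach is more modular---it reuses Theorem~\ref{5equiv} as a black box---at the cost of the extra bookkeeping you flag; the paper's approach is more self-contained but repeats some of the work already packaged in Theorem~\ref{5equiv}. Both paths are valid and the core mechanism (Kostant $\Rightarrow$ multiplicity one $\Rightarrow$ quotient) is identical.
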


\begin{proof}
We continue to follow the notations used in Proposition \ref{neccessary}. By Proposition \ref{neccessary}, we get that (1)$\implies$(2).
By the definitions of $[\lambda']$-strong linkage and strong linkage, we get that (2)$\implies$(3). By Theorem \ref{BGGVerma} and the remark below it, we get that (3)$\iff$(4)$\iff$(5). By the definitions of strong linkage and $W_{[\lambda']}$, we get (3)$\implies$(6).

To show that (6)$\implies$(1), suppose $\lambda\le\lambda'$ and $\lambda=w\cdot\lambda'$ for some $w\in W_{[\lambda']}$.
This implies $\lambda-\lambda'\in\Lambda_r$ and then $W_{[\lambda]}\cdot \lambda=W_{[\lambda']}\cdot \lambda'$. Then we get $\mu=\mu'$, where $\mu$ and $\mu'$ are the antidominant weights in $W_{[\lambda]}\cdot \lambda$ and $W_{[\lambda']}\cdot \lambda'$, respectively.
Since $\lambda$ or $\lambda'$ is regular, we get that $\mu=\mu'$ is regular. By Lemma \ref{HJtypo}, 
we have 
\[
\lambda\le \lambda' \iff w_I\ow\cdot \mu\le w_I\ow'\cdot \mu \iff w_I\ow\le_{[\lambda]} w_I\ow'.
\]
Note that we have $[\lambda]=[\mu]=[\mu']=[\lambda']$. By Corollary \ref{keyequiv}, we get
\[
{}^IP_{x,\ow}^{\mu}(1)\neq 0\iff w_Ix\le_{[\lambda]} w_I\ow \implies w_Ix\le_{[\lambda]} w_I\ow' \iff w_Ix\le_{[\lambda']} w_I\ow'\iff {}^IP_{x,\ow'}^{\mu'}(1)\neq 0.
\]
Since $L(\lambda)$ is a Kostant module, we get ${}^IP_{x,\ow}^{\mu}(1)\le 1$ by Theorem \ref{KLpoly}, Theorem \ref{paraKL} and Lemma \ref{Kostantmod}. This implies that ${}^IP_{x,\ow}^{\mu}(1)\le {}^IP_{x,\ow'}^{\mu'}(1)$.
Therefore,
\[
\bigoplus_{x\in {}^I W_{[\lambda]}}{}^IP_{x,\ow}^{\mu}(1)F(w_Ix\cdot\mu+\rho(\mathfrak{u})) \ \text{
is isomorphic to a quotient of } \bigoplus_{x\in {}^I W_{[\lambda]}}{}^IP_{x,\ow'}^{\mu'}(1)F(w_Ix\cdot\mu+\rho(\mathfrak{u}))
\]
as an $\mathfrak{l}$-module.
Since 
\[
H_D(L(\lambda))\cong \bigoplus_{x\in {}^I W_{[\lambda]}}{}^IP_{x,\ow}^{\mu}(1)F(w_Ix\cdot\mu+\rho(\mathfrak{u}))
\]
and
\[
H_D(L(\lambda'))
\cong \bigoplus_{x\in {}^I W_{[\lambda']}}{}^IP_{x,\ow'}^{\mu'}(1)F(w_Ix\cdot\mu'+\rho(\mathfrak{u}))
=\bigoplus_{x\in {}^I W_{[\lambda]}}{}^IP_{x,\ow'}^{\mu'}(1)F(w_Ix\cdot\mu+\rho(\mathfrak{u}))
\]
as $\mathfrak{l}$-modules,
we get
$H_D(L(\lambda))$ is isomorphic to a quotient of $H_D(L(\lambda'))$ as an $\mathfrak{l}$-module.
\end{proof}

\begin{remark}
Suppose $M,M',N,N'$ are $\mathfrak{l}$-modules and $P'$ is an $\mathfrak{l}$-submodule of $N'$. If $M\cong M'$, $N\cong N'$ as $\mathfrak{l}$-modules and $M'\cong N'/P'$, then $M\cong N/P$ as an $\mathfrak{l}$-module for some $\mathfrak{l}$-submodule $P$ of $N$. The reason is as follows: we have a surjective $\mathfrak{l}$-module homomorphism $\psi: N\rightarrow N'\rightarrow N'/P'\cong M'\rightarrow M$. By the First Isomorphism Theorem, we get $M\cong N/P$ as an $\mathfrak{l}$-module, where $P=\ker \psi$.
\end{remark}

\section{Simplicity criterion for parabolic Verma modules}
\label{S:6}

The Verma-BGG Theorem is the key ingredient in proving the simplicity criterion for Verma modules (see \cite[Theorem 4.8]{HJ}) and the simplicity criterion for parabolic Verma modules with regular infinitesimal character (see \cite[Theorem 9.12]{HJ}). The algebraic parameterizations of $H_D(L(\lambda))$ suggest that we can use Dirac cohomology to give a new proof of the simplicity criterion for Verma modules and derive a new simplicity criterion for parabolic Verma modules with regular infinitesimal character.

\subsection{Simplicity criterion for Verma modules}

\begin{proposition} [{\cite[Proposition 4.11]{HX}}]
	\label{HDVerma}
Suppose that $M_I(\lambda)$ is a parabolic Verma module with highest weight $\lambda\in \Lambda_I^+$. Then, there is an $\mathfrak{l}$-module isomorphism
$
H_D(M_I(\lambda))\cong F(\lambda)\otimes \mathbb{C}_{\rho(\mathfrak{u})}.
$
\end{proposition}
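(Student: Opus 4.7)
The plan is to identify the Dirac cohomology of $M_I(\lambda)$ with the $\overline{\mathfrak{u}}$-homology of $M_I(\lambda)$, twisted by $\mathbb{C}_{\rho(\mathfrak{u})}$, and then exploit the freeness of the parabolic Verma module over $U(\overline{\mathfrak{u}})$.

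First, I would realize the spin module $\mathcal{S}$ of $C(\mathfrak{s})$ explicitly using the polarization $\mathfrak{s} = \mathfrak{u} \oplus \overline{\mathfrak{u}}$ coming from the Killing form. As a vector space $\mathcal{S} \cong \bigwedge^\bullet \overline{\mathfrak{u}}$, with elements of $\overline{\mathfrak{u}}$ acting by exterior multiplication (up to scaling) and elements of $\mathfrak{u}$ acting by contraction via the Killing pairing. The $\mathfrak{l}$-action on $\mathcal{S}$ induced by the spin representation then differs from the natural $\mathfrak{l}$-action on $\bigwedge^\bullet \overline{\mathfrak{u}}$ by the one-dimensional twist $\mathbb{C}_{\rho(\mathfrak{u})}$, reflecting the fact that the trace of the $\mathfrak{l}$-action on $\bigwedge^{\mathrm{top}} \overline{\mathfrak{u}}$ is $-2\rho(\mathfrak{u})$.

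Next, taking dual bases $\{u_\alpha\} \subset \overline{\mathfrak{u}}$ and $\{u^\alpha\} \subset \mathfrak{u}$ with respect to the Killing form, the cubic Dirac operator splits as
\[
D \;=\; \sum_\alpha u_\alpha \otimes u^\alpha \;+\; \sum_\alpha u^\alpha \otimes u_\alpha \;+\; 1 \otimes v.
\]
Under the identification $M_I(\lambda) \otimes \mathcal{S} \cong M_I(\lambda) \otimes \bigwedge^\bullet \overline{\mathfrak{u}} \otimes \mathbb{C}_{\rho(\mathfrak{u})}$, the first summand lowers the exterior degree by one and the second raises it. A direct calculation shows that the cubic term $v$ is precisely the correction which makes the degree-lowering part agree with the Chevalley--Eilenberg boundary computing $H_\bullet(\overline{\mathfrak{u}}, M_I(\lambda))$, and dually for the degree-raising part (computing $H^\bullet(\mathfrak{u}, M_I(\lambda))$). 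A standard Hodge-theoretic argument — the two pieces are adjoint with respect to a natural Hermitian structure, so the kernel-modulo-image of $D$ coincides with the kernel of $D^2$, which respects the bi-grading — then yields an $\mathfrak{l}$-module isomorphism
\[
H_D(M_I(\lambda)) \;\cong\; H_\bullet(\overline{\mathfrak{u}}, M_I(\lambda)) \otimes \mathbb{C}_{\rho(\mathfrak{u})}.
\]

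Finally, the Poincaré--Birkhoff--Witt decomposition $U(\mathfrak{g}) \cong U(\overline{\mathfrak{u}}) \otimes U(\mathfrak{p})$ gives $M_I(\lambda) = U(\mathfrak{g}) \otimes_{U(\mathfrak{p})} F(\lambda) \cong U(\overline{\mathfrak{u}}) \otimes F(\lambda)$ as a $U(\overline{\mathfrak{u}})$-module, which is therefore free. Hence $H_i(\overline{\mathfrak{u}}, M_I(\lambda)) = 0$ for $i > 0$ and $H_0(\overline{\mathfrak{u}}, M_I(\lambda)) = M_I(\lambda)/\overline{\mathfrak{u}} M_I(\lambda) \cong F(\lambda)$ as $\mathfrak{l}$-modules, yielding $H_D(M_I(\lambda)) \cong F(\lambda) \otimes \mathbb{C}_{\rho(\mathfrak{u})}$. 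The main obstacle is the middle step: matching $D$ with a genuine Koszul differential demands careful sign bookkeeping, and one must verify that the cubic term $v$ exactly cancels the correction arising from the non-commutativity of $\overline{\mathfrak{u}}$ — precisely the computation originally carried out by Kostant and refined by Huang--Pandžić.
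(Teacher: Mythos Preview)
The paper does not prove this proposition; it is quoted directly from \cite[Proposition 4.11]{HX}. Your overall strategy---decompose $D = C + C^{-}$ using the polarization $\mathfrak{s} = \mathfrak{u} \oplus \overline{\mathfrak{u}}$, identify the $C^{-}$-cohomology with $H_{\bullet}(\overline{\mathfrak{u}}, M_I(\lambda)) \otimes \mathbb{C}_{\rho(\mathfrak{u})}$, and then compute the latter via the freeness of $M_I(\lambda)$ over $U(\overline{\mathfrak{u}})$---is exactly the route taken in \cite{HX}, and your first and last steps are correct.

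The gap is in the middle step. You justify $H_D(M_I(\lambda)) \cong H_{\bullet}(\overline{\mathfrak{u}}, M_I(\lambda)) \otimes \mathbb{C}_{\rho(\mathfrak{u})}$ by saying that $C$ and $C^{-}$ are ``adjoint with respect to a natural Hermitian structure'' and invoking Hodge theory. But parabolic Verma modules are almost never unitarizable, so there is no positive-definite Hermitian form on $M_I(\lambda) \otimes \mathcal{S}$ for which $C$ and $C^{-}$ are mutual adjoints; without positivity the passage from $\ker D / (\ker D \cap \operatorname{Im} D)$ to $\ker C^{-} / \operatorname{Im} C^{-}$ fails. The argument actually used in \cite{HX} avoids Hermitian forms entirely: Kostant's formula shows that $D^{2}$ differs from $-\Omega_{\mathfrak{g}} \otimes 1 + \Omega_{\mathfrak{l},\Delta}$ by a constant, hence acts semisimply on each (finite-dimensional) $\mathfrak{l}$-isotypic component of $M_I(\lambda) \otimes \mathcal{S}$. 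On components where $D^{2} \neq 0$ the operator $D$ is invertible and contributes nothing; on the zero eigenspace $D^{2} = 0$ forces $CC^{-} + C^{-}C = 0$, so $(C, C^{-})$ is a genuine bicomplex and a bounded spectral-sequence (or direct filtration) argument identifies $H(D)$ with $H(C^{-})$. You should replace the Hodge sentence with this algebraic argument; the rest of your proof then goes through.
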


Before giving a new proof of the simplicity criterion, we need two lemmas.

\begin{lemma} [{See \cite[Proposition 3.5]{JL}}] \label{initial}
Suppose $\lambda\in\mathfrak{h}^*$, $J\subseteq \Delta_{[\lambda]}$ and $w\in{}^JW_{[\lambda]}$ has a reduced expression $s_{i_1}\cdots s_{i_r}$.
Then the ``initial segment" $s_{i_1}\cdots s_{i_j}$ belongs to $ {}^JW_{[\lambda]}$ for each $j=1,\cdots,r$. 
\end{lemma}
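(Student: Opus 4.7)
The plan is to work entirely within the Coxeter system $(W_{[\lambda]}, S_{[\lambda]})$, where $S_{[\lambda]} := \{s_\alpha : \alpha \in \Delta_{[\lambda]}\}$, and reduce the statement to a single application of the Strong Exchange Condition. By Lemma \ref{bruhatorders} together with the remark following it (which identifies ${}^JW_{[\lambda]}$ with ${}^J(W_{[\lambda]})$), one has the intrinsic description
\[
{}^JW_{[\lambda]} = \{u \in W_{[\lambda]} : \ell_{[\lambda]}(s_\alpha u) > \ell_{[\lambda]}(u) \ \text{for all } \alpha \in J\},
\]
so membership in ${}^JW_{[\lambda]}$ is purely a statement about lengths inside $W_{[\lambda]}$. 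Since $W_{[\lambda]}$ is a Weyl group by Theorem \ref{intweylgp}, it is in particular a Coxeter group with simple reflections $S_{[\lambda]}$, and the Strong Exchange Condition is available.

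Next I would argue by contradiction: suppose some initial segment $u_j := s_{i_1} \cdots s_{i_j}$, with $1 \le j \le r$, fails to lie in ${}^JW_{[\lambda]}$. Then there exists $\alpha \in J$ with $\ell_{[\lambda]}(s_\alpha u_j) < \ell_{[\lambda]}(u_j)$. Because any initial subword of a reduced expression is itself reduced, $s_{i_1} \cdots s_{i_j}$ is a reduced expression for $u_j$, so the Strong Exchange Condition applied to $(W_{[\lambda]}, S_{[\lambda]})$ produces an index $k \in \{1, \ldots, j\}$ with
\[
s_\alpha u_j = s_{i_1} \cdots \widehat{s_{i_k}} \cdots s_{i_j}.
\]

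To conclude, I would right-multiply both sides by the tail $s_{i_{j+1}} \cdots s_{i_r}$ of the reduced expression for $w$, obtaining
\[
s_\alpha w = s_{i_1} \cdots \widehat{s_{i_k}} \cdots s_{i_j} s_{i_{j+1}} \cdots s_{i_r},
\]
an expression of length at most $r - 1$ in the generators $S_{[\lambda]}$. Therefore $\ell_{[\lambda]}(s_\alpha w) \le r - 1 < r = \ell_{[\lambda]}(w)$, contradicting $w \in {}^JW_{[\lambda]}$. I do not anticipate any genuine obstacle: the whole argument is a direct appeal to the Strong Exchange Condition, and the only small point requiring care is the recognition that $W_{[\lambda]}$ is an honest Coxeter group on the generators $S_{[\lambda]}$ indexed by $\Delta_{[\lambda]} \supseteq J$, which is exactly what Theorem \ref{intweylgp} provides.
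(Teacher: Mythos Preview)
Your proof is correct. The paper's own proof is simply a one-line citation: it notes that since $J\subseteq\Delta_{[\lambda]}$, one may apply \cite[Proposition~3.5]{JL} with $W$ and $I$ replaced by $W_{[\lambda]}$ and $J$. Your argument via the (Strong) Exchange Condition is precisely the kind of direct proof that underlies that cited proposition, so you have supplied the details the paper defers to the reference rather than taken a genuinely different route. One minor remark: since $\alpha\in J\subseteq\Delta_{[\lambda]}$ makes $s_\alpha$ a \emph{simple} reflection in $(W_{[\lambda]},S_{[\lambda]})$, the ordinary Exchange Condition already suffices; invoking the Strong Exchange Condition is harmless but not needed.
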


\begin{proof}
	Since $J\subseteq \Delta_{[\lambda]}$, we can replace $W$ and $I$ in \cite[Proposition 3.5]{JL} by $W_{[\lambda]}$ and $J$, respectively. 
\end{proof}

\begin{lemma}\label{ParaKL=1}
Let $\lambda\in\mathfrak{h}^*$ and $w\in W_{[\lambda]}^{\Sigma_\mu}$.
If $w=s_{i_1}\cdots s_{i_r}$ is a reduced expression, then
$P_{s_{i_1}w,w}^{{\Sigma_\mu}}(q)=1$. 
\end{lemma}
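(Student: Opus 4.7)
The plan is to reduce the claim to a short computation using Proposition \ref{recursion}, after translating the relative Kazhdan--Lusztig--Vogan polynomial into a parabolic Kazhdan--Lusztig polynomial via Theorem \ref{KLVparaKL}.

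First I would apply Theorem \ref{KLVparaKL} with $I = \emptyset$ (so $w_I = e$) to the convention $P^{\Sigma_\mu}_{x,w}(q) := {}^\emptyset P^{\Sigma_\mu}_{x,w}(q)$, giving
\[
P^{\Sigma_\mu}_{s_{i_1}w,w}(q) \;=\; P^{[\lambda],\Sigma_\mu,q}_{(s_{i_1}w)^{-1},\,w^{-1}}(q) \;=\; P^{[\lambda],\Sigma_\mu,q}_{w^{-1}s_{i_1},\,w^{-1}}(q).
\]
For this to be meaningful I must check that $s_{i_1}w \in W_{[\lambda]}^{\Sigma_\mu}$, equivalently $w^{-1}s_{i_1} \in {}^{\Sigma_\mu}W_{[\lambda]}$. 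Since $w \in W_{[\lambda]}^{\Sigma_\mu}$ has reduced expression $s_{i_1}\cdots s_{i_r}$, its inverse $w^{-1} = s_{i_r}\cdots s_{i_1}$ is a reduced expression of an element of ${}^{\Sigma_\mu}W_{[\lambda]}$. The length $(r{-}1)$ initial segment of this expression is $s_{i_r}\cdots s_{i_2} = w^{-1}s_{i_1}$, which lies in ${}^{\Sigma_\mu}W_{[\lambda]}$ by Lemma \ref{initial}.

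Next I would apply Proposition \ref{recursion} to $P^{[\lambda],\Sigma_\mu,q}_{u,v}$ with $u = w^{-1}s_{i_1}$, $v = w^{-1}$, and $s = s_{i_1}$. Here $s \in D(v)$ since $vs = w^{-1}s_{i_1}$ has length $r{-}1 < r = \ell_{[\lambda]}(v)$. Moreover $us = w^{-1}$ has length $r$, so $u < us$, and $us \in {}^{\Sigma_\mu}W_{[\lambda]}$ by assumption; we are in the case $u < us \in {}^JW$, whence
\[
\tilde P \;=\; q\,P^{[\lambda],\Sigma_\mu,q}_{w^{-1},\,w^{-1}s_{i_1}}(q) \;+\; P^{[\lambda],\Sigma_\mu,q}_{w^{-1}s_{i_1},\,w^{-1}s_{i_1}}(q).
\]
The first term vanishes by Theorem \ref{paraKL}(1) since $w^{-1} \not\le w^{-1}s_{i_1}$ by length comparison, and the second equals $1$ by Theorem \ref{paraKL}(2); thus $\tilde P = 1$.

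Finally, the correction sum in Proposition \ref{recursion} is empty: the condition $u \le w' \le vs$ forces $w' = u = w^{-1}s_{i_1}$ because $u = vs$, but then $w's < w'$ would demand $us < u$, contradicting the inequality $\ell_{[\lambda]}(us) > \ell_{[\lambda]}(u)$ observed above. Consequently $P^{[\lambda],\Sigma_\mu,q}_{w^{-1}s_{i_1},\,w^{-1}}(q) = 1$, proving the lemma. The main bookkeeping hurdle here is keeping track of the inverse appearing in Theorem \ref{KLVparaKL} so that Lemma \ref{initial} is invoked on $w^{-1}$ rather than on $w$, and identifying the correct branch of $\tilde P$; once the inverse convention is straight, the recursion collapses immediately.
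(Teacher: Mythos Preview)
Your proof is correct and follows essentially the same approach as the paper: both invoke Lemma \ref{initial} on $w^{-1}$ to place $w^{-1}s_{i_1}$ in ${}^{\Sigma_\mu}W_{[\lambda]}$, translate via Theorem \ref{KLVparaKL} to the parabolic polynomial $P^{[\lambda],\Sigma_\mu,q}_{w^{-1}s_{i_1},w^{-1}}(q)$, and then collapse the recursion of Proposition \ref{recursion} using Theorem \ref{paraKL} to get $\tilde P=1$ with an empty correction sum. Your exposition is slightly more explicit about why the correction sum is empty (noting $u=vs$), but the argument is identical in substance.
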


\begin{proof}
Since $w\in W_{[\lambda]}^{\Sigma_\mu}$, 
we have $w^{-1}\in {}^{\Sigma_\mu} W_{[\lambda]}$. Since $\Sigma_\mu\subseteq \Delta_{[\lambda]}$, we get $w^{-1}s_{i_1}\in {}^{\Sigma_\mu} W_{[\lambda]}$ by Lemma \ref{initial}.
Then by Theorem \ref{KLVparaKL}, we get $P_{s_{i_1}w,w}^{{\Sigma_\mu}}(q)
=P_{w^{-1}s_{i_1},w^{-1}}^{[\lambda],{\Sigma_\mu},q}(q)$.
Now consider Proposition \ref{recursion} with $u=w^{-1}s_{i_1}$, $v=w^{-1}$, $s=s_{i_1}\in D(w^{-1})=D(v)$ and $J=\Sigma_\mu$.
Since $w^{-1}s_{i_1}<_{[\lambda]}w^{-1}\in {}^{\Sigma_\mu} W_{[\lambda]}$, we get $u<_{[\lambda]}us\in {}^{\Sigma_\mu} W_{[\lambda]}$.
Thus by Theorem \ref{paraKL}, we deduce that the first term in the recursion is 
\[
\tilde{P}
=qP^{[\lambda],{\Sigma_\mu},q}_{us,vs}(q)+P^{[\lambda],{\Sigma_\mu},q}_{u,vs}(q)
=qP^{[\lambda],{\Sigma_\mu},q}_{w^{-1},w^{-1}s_{i_1}}(q)+P^{[\lambda],{\Sigma_\mu},q}_{w^{-1}s_{i_1},w^{-1}s_{i_1}}(q)
=P^{[\lambda],{\Sigma_\mu},q}_{w^{-1}s_{i_1},w^{-1}s_{i_1}}(q)
=1.
\]
The second term in the recursion is a sum over
\begin{align*}
    {\{u\le_{[\lambda]} x\le_{[\lambda]} vs: xs<_{[\lambda]}x\}}
    ={\{w^{-1}s_{i_1}\le_{[\lambda]} x\le_{[\lambda]} w^{-1}s_{i_1}: xs_{i_1}<_{[\lambda]}x\}}=\emptyset
\end{align*}
since $w^{-1}s_{i_1}<_{[\lambda]}w^{-1}$.
Therefore this term must be zero.
Hence
$P_{s_{i_1}w,w}^{{\Sigma_\mu}}(q)
=1+0=1$.
\end{proof}

We are now able to prove Theorem \ref{Verma}.
This result is  \cite[Theorem 4.8]{HJ}, but our method of proof using Dirac cohomology is new.

\begin{theorem} [Theorem \ref{Verma}] 
	\label{Verma'}
Let $\lambda\in\h^*$. Then 
$M(\lambda)\cong L(\lambda)$ as an $\mathfrak{g}$-module if and only if $\lambda $ is an antidominant weight.
\end{theorem}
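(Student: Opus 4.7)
The plan is to split the proof into the two implications. The ``if'' direction follows classically from the BGG part of Theorem \ref{BGGVerma} together with antidominance; the ``only if'' direction is where Dirac cohomology enters essentially, through Proposition \ref{HDVerma}, Theorem \ref{maindecom}, and Lemma \ref{ParaKL=1}.

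For the ``if'' direction, suppose $\lambda$ is antidominant. I will apply Theorem \ref{BGGVerma}(2) to see that every composition factor $L(\nu)$ of $M(\lambda)$ has $\nu$ strongly linked to $\lambda$. Chasing a chain $\nu=(s_{\alpha_1}\cdots s_{\alpha_r})\cdot\lambda\uparrow\cdots\uparrow s_{\alpha_r}\cdot\lambda\uparrow\lambda$ with $\alpha_i\in\Phi_{[\lambda]}^+$, each link $s_\beta\cdot\kappa\uparrow\kappa$ is either trivial or requires $\langle\kappa+\rho,\beta^\lor\rangle\in\mathbb{Z}^{>0}$; Proposition \ref{antidominant} rules out the latter at the rightmost step, forcing $s_{\alpha_r}\cdot\lambda=\lambda$. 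Induction on $r$ then collapses the whole chain, so $\nu=\lambda$. Since $M(\lambda)$ has finite length in $\cO$, this yields $M(\lambda)\cong L(\lambda)$.

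For the ``only if'' direction, I will assume $M(\lambda)\cong L(\lambda)$ and compare Dirac cohomologies. Specializing Proposition \ref{HDVerma} to $I=\emptyset$, where $F(\lambda)=\mathbb{C}_\lambda$ and $\rho(\mathfrak{u})=\rho$, I obtain $H_D(L(\lambda))\cong H_D(M(\lambda))\cong\mathbb{C}_{\lambda+\rho}$ as an $\mathfrak{h}$-module. Theorem \ref{maindecom} expresses $H_D(L(\lambda))$ as $\bigoplus_{x\in W_{[\lambda]}^{\Sigma_\mu}}P_{x,\ow}^{\Sigma_\mu}(1)\,\mathbb{C}_{x\cdot\mu+\rho}$, so the parameter set $\cW(\lambda)$ of Definition \ref{WIlambda} must collapse to the singleton $\{\lambda+\rho\}$. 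Arguing by contradiction, suppose $\ow\ne e$ and fix a reduced expression $\ow=s_{i_1}\cdots s_{i_r}$. Lemma \ref{ParaKL=1} (whose proof, via Lemma \ref{initial} applied to $\ow^{-1}$, simultaneously shows $s_{i_1}\ow\in W_{[\lambda]}^{\Sigma_\mu}$) yields $P_{s_{i_1}\ow,\ow}^{\Sigma_\mu}(1)=1$. Hence $s_{i_1}\ow\cdot\mu+\rho\in\cW(\lambda)=\{\lambda+\rho\}$, forcing $s_{i_1}\ow\cdot\mu=\ow\cdot\mu$; the injectivity of $x\mapsto x\cdot\mu$ on $W_{[\lambda]}^{\Sigma_\mu}$ from the bijection in Lemma \ref{simplemod} (with $I=\emptyset$) then gives $s_{i_1}\ow=\ow$, which is impossible. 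Therefore $\ow=e$, so $\lambda=\mu$ is antidominant. The main obstacle is producing a second contributing element of $W_{[\lambda]}^{\Sigma_\mu}$ beyond $\ow$ when $\ow\ne e$; the candidate $s_{i_1}\ow$ succeeds precisely because Lemma \ref{ParaKL=1} simultaneously delivers the nonvanishing of the KLV polynomial and (through Lemma \ref{initial}) the coset-minimality needed to invoke the uniqueness of $\ow$.
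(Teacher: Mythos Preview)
Your proof is correct and follows essentially the same approach as the paper. The ``only if'' direction is nearly identical: both arguments compute $H_D(M(\lambda))$ via Proposition~\ref{HDVerma}, compare it with the decomposition of $H_D(L(\lambda))$ from Theorem~\ref{maindecom}, and invoke Lemma~\ref{ParaKL=1} to produce the second summand $s_{i_1}\ow$ when $\ow\neq e$; your use of the injectivity in Lemma~\ref{simplemod} to derive the contradiction is a cosmetic repackaging of the paper's direct dimension count. For the ``if'' direction there is a minor difference: you collapse the strong-linkage chain directly via Theorem~\ref{BGGVerma}(2) and antidominance, whereas the paper parameterizes the possible composition factors using Lemma~\ref{simplemod} (with $I=\emptyset$) and then applies Lemma~\ref{HJtypo} to show $\lambda\le x\cdot\lambda$ for all $x\in W_{[\lambda]}^{\Sigma_\lambda}$. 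Both routes are standard for this classical direction; the novelty of the theorem lies entirely in the Dirac-cohomology half, where your argument and the paper's coincide.
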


\begin{proof}
	Suppose $M(\lambda)\cong L(\lambda)$ as an $\mathfrak{g}$-module and
	recall that $\mu$ is the antidominant weight in $W_{[\lambda]}\cdot \lambda$. Then by the definition of $W_{[\lambda]}$, we have $\mu-\lambda\in\Lambda_r$ and hence $L(\lambda)\in \cO_\mu$. By Lemma \ref{simplemod} with $I=\emptyset$, we have $\lambda=\ow\cdot\mu$ for some $\ow\in W_{[\lambda]}^{\Sigma_\mu}$. 
	To show that $\lambda$ is antidominant, it suffices to show that $\ow=e$.
	
	Assume $\ow\neq e$ on contrary. Then $\ow$ has a reduced expression $s_{i_1}\cdots s_{i_r}$ with $r\ge 1$, and Lemma \ref{multilemma} with $I=\emptyset$ and Lemma \ref{ParaKL=1}
	imply that $P_{\ow,\ow}^{\Sigma_\mu}(1)=1$ and $P_{s_{i_1}\ow,\ow}^{\Sigma_\mu}(1)=1$. Note that $\ow, s_{i_1}\ow\in W_{[\lambda]}^{\Sigma_\mu}$. Let $\mathfrak{n}=\bigoplus_{\alpha>0}\mathfrak{g}_\alpha$. Then by Theorems \ref{maindecom} and \ref{determine'} with $I=\emptyset$, and Proposition \ref{HDVerma} with $I=\emptyset$, it holds that as an $\mathfrak{h}$-module,
	\begin{align*}
	F(\ow\cdot\mu+\rho(\mathfrak{n}))
	\oplus F(s_{i_1}\ow\cdot\mu+\rho(\mathfrak{n}))
	&\subseteq H_D(L(\lambda))\\
	&\cong H_D(M(\lambda))\\
	&\cong 
	F(\ow\cdot\mu)\otimes\mathbb{C}_{\rho(\mathfrak{n})}\\
	&\cong 
	F(\ow\cdot\mu+\rho(\mathfrak{n})).
	\end{align*}
	This implies that $\dim F(s_{i_1}\ow\cdot\mu+\rho(\mathfrak{n}))=0$,
	which is a contradiction. Thus $\ow=e$, which implies that $\lambda=\mu$ for some antidominant weight $\mu$.
	
	To show the converse, 
	suppose $\lambda$ is an antidominant weight. Then $M(\lambda)\in \cO_\lambda$. All composition factors of $M(\lambda)$ are of the form $L(\eta)$ with $\eta\le \lambda$, and in particular with $L(\eta)\in \cO_\lambda$. Then by Lemma \ref{simplemod} with $I=\emptyset$,  all composition factors of $M(\lambda)$ are of the form $L(x\cdot\lambda)$ with $x\cdot\lambda\le \lambda$ and $x\in W_{[\lambda]}^{\Sigma_\lambda}$.
	Since $e\le_{[\lambda]} x$ for all $x\in W_{[\lambda]}^{\Sigma_\lambda}$, we get $\lambda \le x\cdot \lambda$ for all $x\in W_{[\lambda]}^{\Sigma_\lambda}$ by Lemma \ref{HJtypo}. Thus only $L(\lambda)$ can occur as a composition factor.
	By \cite[Theorem 1.2]{HJ}, we have $\dim M(\lambda)_{\lambda}=1$. This implies that $[M(\lambda),L(\lambda)]=1$ and hence $M(\lambda) \cong L(\lambda)$ as an $\mathfrak{g}$-module.
\end{proof}
%

\subsection{Simplicity criterion for parabolic Verma modules with regular infinitesimal character}

%
%

\begin{lemma}\label{bruhat}
Let $\lambda\in\Lambda_I^+$. If $x,w\in {}^IW_{[\lambda]}$ then
$x\le_{[\lambda]} w\iff w_Ix\le_{[\lambda]} w_Iw$.
\end{lemma}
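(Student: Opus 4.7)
The plan is to work entirely in the Coxeter group $(W_{[\lambda]}, S_{[\lambda]})$, inside which $W_I$ is a standard parabolic subgroup (since $I\subseteq\Delta_{[\lambda]}$ by the remark following Corollary \ref{category}). By Lemma \ref{JW} and the remark following Lemma \ref{bruhatorders}, ${}^IW_{[\lambda]}$ is the set of minimal length right coset representatives of $W_I$ in $W_{[\lambda]}$ for $\ell_{[\lambda]}$, so for any $u\in W_I$ and $v\in{}^IW_{[\lambda]}$ one has $\ell_{[\lambda]}(uv)=\ell_{[\lambda]}(u)+\ell_{[\lambda]}(v)$, and concatenating reduced expressions of $u$ and of $v$ yields a reduced expression of $uv$.

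For the forward implication, I would fix reduced expressions $w_I=t_1\cdots t_m$ (with each $t_i=s_\alpha$ for some $\alpha\in I$) and $w=s_1\cdots s_k$, so that $t_1\cdots t_m\,s_1\cdots s_k$ is a reduced expression for $w_Iw$. Assuming $x\le_{[\lambda]} w$, the subword criterion produces indices $1\le j_1<\cdots<j_r\le k$ such that $s_{j_1}\cdots s_{j_r}$ is a reduced expression for $x$; then $t_1\cdots t_m\,s_{j_1}\cdots s_{j_r}$ is a reduced subword of the above representing $w_Ix$, so $w_Ix\le_{[\lambda]} w_Iw$.

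For the converse I would invoke the lifting property of Bruhat order in the Coxeter group $W_{[\lambda]}$: if $s\in S_{[\lambda]}$ satisfies $su<_{[\lambda]}u$ and $sv<_{[\lambda]}v$, then $u\le_{[\lambda]} v \iff su\le_{[\lambda]} sv$. Fix a reduced expression $w_I=s_{\alpha_1}\cdots s_{\alpha_m}$ with $\alpha_i\in I$ and set
\[
y_j:=s_{\alpha_{j+1}}\cdots s_{\alpha_m}\,x,\qquad z_j:=s_{\alpha_{j+1}}\cdots s_{\alpha_m}\,w\qquad (0\le j\le m),
\]
so that $y_0=w_Ix$, $z_0=w_Iw$, $y_m=x$, $z_m=w$. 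Because $s_{\alpha_{j+1}}\cdots s_{\alpha_m}\in W_I$ is a reduced word and $x\in{}^IW_{[\lambda]}$, the length formula above gives $\ell_{[\lambda]}(y_j)=(m-j)+\ell_{[\lambda]}(x)$; in particular $s_{\alpha_j}y_{j-1}=y_j<_{[\lambda]}y_{j-1}$, and identically $s_{\alpha_j}z_{j-1}=z_j<_{[\lambda]}z_{j-1}$. Iterating the lifting property with $s=s_{\alpha_j}$ for $j=1,\ldots,m$ then produces the chain $w_Ix\le_{[\lambda]} w_Iw \iff y_1\le_{[\lambda]} z_1 \iff \cdots \iff x\le_{[\lambda]} w$, as desired.

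The main obstacle is essentially bookkeeping: one must be careful about which Bruhat order ($\le$ on $W$ versus $\le_{[\lambda]}$ on $W_{[\lambda]}$) is in play, but these coincide on $W_{[\lambda]}$ by Lemma \ref{bruhatorders}, so the entire argument stays inside the Coxeter group $W_{[\lambda]}$ and reduces to two standard Coxeter-theoretic ingredients, the subword criterion and the lifting property.
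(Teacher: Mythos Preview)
Your proof is correct, but it takes a genuinely different route from the paper's. The paper proves Lemma~\ref{bruhat} by a Kazhdan--Lusztig argument: from properties (a) and (e) of ordinary KL polynomials one has $P^{[\lambda]}_{w_Ix,w_Iw}(q)\neq 0\iff w_Ix\le_{[\lambda]}w_Iw$; from Theorem~\ref{paraKL} together with the remark after Proposition~\ref{paraKL-KL} (constant term $1$) one has $P^{[\lambda],I,-1}_{x,w}(q)\neq 0\iff x\le_{[\lambda]}w$; and Proposition~\ref{paraKL-KL} gives $P^{[\lambda],I,-1}_{x,w}(q)=P^{[\lambda]}_{w_Ix,w_Iw}(q)$, so the equivalence drops out. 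Your argument is instead purely Coxeter-theoretic: the subword criterion for the forward direction and an iterated application of the lifting property for the converse, both taking place inside $(W_{[\lambda]},S_{[\lambda]})$ where $W_I$ is standard parabolic. Your approach is more elementary and self-contained (it does not rely on any of the KL machinery developed in Section~\ref{S:3}), and in fact your lifting argument already proves both directions at once, so the separate subword step is redundant. The paper's approach, by contrast, reuses results already on hand and is shorter in context, at the cost of invoking Deodhar's identity for the proof of what is really a basic Bruhat-order fact.
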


\begin{proof}
%
	The proof of Corollary \ref{keyequiv} shows that for all $x,w\in {}^IW_{[\lambda]}$, we have $P_{w_Ix,w_Iw}^{[\lambda]}(q)\neq 0\iff w_Ix\le_{[\lambda]}w_Iw$.
	Note that we have Theorem \ref{paraKL} and the remark following Proposition \ref{paraKL-KL}. Then by a similar argument, for all $x,w\in {}^IW_{[\lambda]}$, we have $P_{x,w}^{[\lambda],I,-1}(q)\neq 0\iff x\le_{[\lambda]}w$.
	By Proposition \ref{paraKL-KL}, we have $P_{x,w}^{[\lambda],I,-1}(q)=P_{w_Ix,w_Iw}^{[\lambda]}(q)$. The claim follows.
\end{proof}

	
%

Finally, we prove Theorem \ref{ParaVerma}.

\begin{theorem} [Theorem \ref{ParaVerma}]
	\label{ParaVerma'}
	Let $\lambda\in\Lambda_I^+\cap \mathcal{R}$.
	Then
	$M_I(\lambda)\cong L(\lambda)$ as an $\mathfrak{g}$-module if and only if $\lambda=w_I\cdot\nu$ for some antidominant weight $\nu$.
\end{theorem}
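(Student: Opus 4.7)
The plan is to reduce the statement to a condition on the unique element $\ow\in{}^IW_{[\lambda]}^{\Sigma_\mu}$ with $\lambda=w_I\ow\cdot\mu$, where $\mu$ is the antidominant weight in $W_{[\lambda]}\cdot\lambda$. Regularity of $\lambda$ makes $\mu$ regular, so $\Sigma_\mu=\emptyset$, $W_{\Sigma_\mu}=\{e\}$, and the dot action of $W_{[\lambda]}$ on $\mu$ is free. I first show that the right-hand condition ``$\lambda=w_I\cdot\nu$ for some antidominant $\nu$'' is equivalent to $\ow=e$: any such $\nu$ lies in $W_{[\lambda]}\cdot\lambda$ and is antidominant, so by uniqueness (Proposition~\ref{antidominant}) $\nu=\mu$, and then $w_I\ow\cdot\mu=w_I\cdot\mu$ together with regularity of $\mu$ forces $\ow=e$; the converse takes $\nu=\mu$.

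For the forward direction I would assume $M_I(\lambda)\cong L(\lambda)$ as a $\g$-module. Proposition~\ref{HDVerma} together with the $W_I$-invariance of $\rho(\mathfrak{u})$ (which makes $\mathbb{C}_{\rho(\mathfrak{u})}$ a one-dimensional $\mathfrak{l}$-module) gives $H_D(L(\lambda))\cong F(\lambda+\rho(\mathfrak{u}))$ as an $\mathfrak{l}$-module. Theorem~\ref{maindecom} supplies the competing decomposition $H_D(L(\lambda))\cong\bigoplus_{x\in{}^IW_{[\lambda]}}{}^IP_{x,\ow}^{\mu}(1)\,F(w_Ix\cdot\mu+\rho(\mathfrak{u}))$. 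Since $\mu$ is regular, the weights $w_Ix\cdot\mu$ are pairwise distinct as $x$ varies over ${}^IW_{[\lambda]}$, so matching $\mathfrak{l}$-types via Lemma~\ref{Schur} forces ${}^IP_{x,\ow}^{\mu}(1)=0$ for every $x\neq\ow$. Since $e\le_{[\lambda]}\ow$ always, Corollary~\ref{keyequiv} combined with Lemma~\ref{bruhat} gives ${}^IP_{e,\ow}^{\mu}(1)\neq 0$, so $e=\ow$.

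For the converse I would assume $\ow=e$. The same consequence of Corollary~\ref{keyequiv} and Lemma~\ref{bruhat} shows that the only $x\in{}^IW_{[\lambda]}$ with ${}^IP_{x,e}^{\mu}(1)\neq 0$ is $x=e$, whence $\cW_I(\lambda)=\{\lambda+\rho\}$. Theorem~\ref{parameterization'} then identifies $\cW_I(\lambda)-\rho$ with $\{\nu\in\Lambda_I^+:[M(\lambda),L(\nu)]\neq 0\}$, so $\nu=\lambda$ is the only such weight. Now $M_I(\lambda)$ is a quotient of $M(\lambda)$ (Theorem~\ref{9.4}) lying in $\cO^\p$, so by Proposition~\ref{9.3} every composition factor of $M_I(\lambda)$ has the form $L(\nu)$ with $\nu\in\Lambda_I^+$; these are among the composition factors of $M(\lambda)$, and hence all equal $L(\lambda)$. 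Since $\dim M_I(\lambda)_\lambda=1$, the multiplicity of $L(\lambda)$ is exactly one, and therefore $M_I(\lambda)\cong L(\lambda)$.

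The main obstacle I anticipate is the converse: Dirac cohomology alone does not detect non-simple modules (Theorem~\ref{determine'} gives rigidity only among simples), and $M_I(\lambda)$ is not a priori simple. The key trick is to bypass any direct Dirac-cohomology argument on $M_I(\lambda)$ and instead use Theorem~\ref{parameterization'} to convert the singleton condition $\cW_I(\lambda)=\{\lambda+\rho\}$ into a composition-multiplicity statement about $M(\lambda)$, which, combined with the facts that $M_I(\lambda)$ is a quotient of $M(\lambda)$ in $\cO^\p$ with one-dimensional $\lambda$-weight space, pins down its composition series and yields the isomorphism.
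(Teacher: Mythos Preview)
Your proof is correct. The forward direction is essentially identical to the paper's: both compute $H_D(M_I(\lambda))\cong F(\lambda+\rho(\mathfrak{u}))$ via Proposition~\ref{HDVerma}, compare with the decomposition of $H_D(L(\lambda))$ from Theorem~\ref{maindecom}, and use Corollary~\ref{keyequiv} together with Lemma~\ref{bruhat} to extract the conclusion $\ow=e$ (the paper phrases it as a contradiction from the nonvanishing of the $x=e$ summand, while you argue that all $x\neq\ow$ summands must vanish; same content).

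For the converse the routes diverge. The paper argues directly on $M_I(\lambda)$ without mentioning $\cW_I$ or Dirac cohomology: it uses Lemma~\ref{simplemod} to write every composition factor of $M_I(\lambda)$ as $L(w_Ix\cdot\mu)$ with $x\in{}^IW_{[\lambda]}$ and $w_Ix\cdot\mu\le\lambda$, then invokes Lemma~\ref{bruhat} and Lemma~\ref{HJtypo} to get $\lambda=w_I\cdot\mu\le w_Ix\cdot\mu$, forcing $x=e$. Your route instead computes $\cW_I(\lambda)=\{\lambda+\rho\}$ and appeals to Theorem~\ref{parameterization'} to translate this into the composition-factor statement for $M(\lambda)$, then descends to the quotient $M_I(\lambda)$. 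This is perfectly valid but invokes heavier machinery: Theorem~\ref{parameterization'} itself rests on Theorem~\ref{geompara2} and the Verma--BGG Theorem~\ref{BGGVerma}. The paper's converse is more self-contained, while yours has the virtue of tying the simplicity criterion explicitly back to the parameterization results that are the paper's main theme.
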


This result partially generalizes Theorem \ref{Verma'}.

\begin{proof}
Suppose $M_I(\lambda)\cong L(\lambda)$ as an $\mathfrak{g}$-module and
recall that $\mu$ is the antidominant weight in $W_{[\lambda]}\cdot \lambda$. Then by the definition of $W_{[\lambda]}$, we have $\mu-\lambda\in\Lambda_r$ and hence $L(\lambda)\in \cO^\p_\mu$. By Lemma \ref{simplemod}, $\lambda=w_I\ow\cdot\mu$ for some $\ow\in {}^IW_{[\lambda]}$. 
It suffices to show that $\ow=e$.

Assume $\ow\neq e$ on contrary. Then by Lemma \ref{multilemma}, we get ${}^IP_{\ow,\ow}^{\mu}(1)=1$, and by Lemma \ref{bruhat} and Corollary \ref{keyequiv}, we get ${}^IP_{e,\ow}^{\mu}(1)\neq 0$ since $e\le_{[\lambda]} \ow$. Note that $\ow,e\in {}^IW_{[\lambda]}$. Then by Theorems \ref{maindecom} and \ref{determine'}, and Proposition \ref{HDVerma}, it holds that as an $\mathfrak{l}$-module,
\begin{align*}
F(w_I\ow\cdot\mu+\rho(\mathfrak{u}))
\oplus {}^IP_{e,\ow}^{\mu}(1)F(w_I\cdot\mu+\rho(\mathfrak{u}))
&\subseteq H_D(L(\lambda))\\
&\cong H_D(M_I(\lambda))\\
&\cong 
F(w_I\ow\cdot\mu)\otimes\mathbb{C}_{\rho(\mathfrak{u})}\\
&\cong 
F(w_I\ow\cdot\mu+\rho(\mathfrak{u})).
\end{align*}
This implies that $\dim F(w_I\cdot\mu+\rho(\mathfrak{u}))=0$, which is again a contradiction. Thus $\ow=e$, which implies that $\lambda=w_I\cdot\mu$ for some antidominant weight $\mu$.

Conversely, suppose $\lambda=w_I\cdot\nu$ for some antidominant weight $\nu$. 
Since $\lambda\in\Lambda_I^+$, we get $w_I\in W_I\subseteq W_{[\lambda]}$ by the remark following Corollary \ref{category}. This implies that $\nu$ is the antidominant weight in $W_{[\lambda]}\cdot\lambda$, i.e., $\nu=\mu$. Then by the definition of $W_{[\lambda]}$, we have $\mu-\lambda\in\Lambda_r$ and hence $M_I(\lambda)\in \cO^\p_\mu$. All composition factors of $M_I(\lambda)$ are of the form $L(\eta)$ with $\eta\le \lambda$. By Proposition \ref{9.3}, we have $L(\eta)\in \cO^\p$. This implies that $L(\eta)\in \cO^\p_\mu$ since $[\eta]=[\lambda]=[\mu]$. Then by Lemma \ref{simplemod}, all composition factors of $M_I(\lambda)$ are of the form $L(w_Ix\cdot\mu)$ with $w_Ix\cdot\mu\le \lambda$ and $x\in {}^IW_{[\lambda]}$.
Since $e\le_{[\lambda]} x$ for all $x\in {}^IW_{[\lambda]}$,  Lemmas \ref{bruhat} and  \ref{HJtypo} imply that $\lambda=w_I\cdot \nu=w_I\cdot \mu\le w_Ix\cdot \mu$ for all $x\in {}^IW_{[\lambda]}$. Thus only $L(\lambda)$ can occur as a composition factor.
By \cite[Theorem 1.2]{HJ}, we have $\dim M_I(\lambda)_{\lambda}=1$. This implies that $[M_I(\lambda),L(\lambda)]=1$ and hence $M_I(\lambda)\cong L(\lambda)$ as an $\mathfrak{g}$-module.
\end{proof}


Let $\Psi := \Phi \backslash \Phi_I$ and 
$\Psi^+_\lambda := \{\beta \in \Psi^+ : \langle \lambda + \rho, \beta^\lor\rangle \in \mathbb{Z}^{>0}\}$.
It is interesting to compare the above result with Jantzen's simplicity criterion:

\begin{theorem} [{See \cite[Theorem 9.12 and Corollary 9.13]{HJ}}] 
Let $\lambda \in \Lambda_I^+$ and $\lambda$ be regular. 
Then $M_I(\lambda)$ is simple if and only if $\Psi^+_\lambda = \emptyset$.
\end{theorem}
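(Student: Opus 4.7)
The plan is to derive Jantzen's criterion from Theorem~\ref{ParaVerma'} by supplying a direct root-theoretic proof of the equivalence that appears as Corollary~\ref{nontrivial}. First I would note that because $L(\lambda)$ is the unique simple quotient of $M_I(\lambda)$, simplicity of $M_I(\lambda)$ is equivalent to $M_I(\lambda)\cong L(\lambda)$ as $\mathfrak{g}$-modules. Theorem~\ref{ParaVerma'} then reduces the problem to the purely combinatorial equivalence
\[
\Psi^+_\lambda=\emptyset \iff \lambda=w_I\cdot\nu \text{ for some antidominant weight } \nu.
\]

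The key root-theoretic ingredient I would use is the standard fact that $w_I$ permutes $\Psi^+$. The quickest way to see this is to expand any root in the basis of simple roots: each reflection $s_\alpha$ with $\alpha\in I$ alters only the coefficient of $\alpha$, so every element of $W_I$ fixes the coefficients of simple roots lying in $\Delta\setminus I$. Since a root in $\Psi^+$ is characterized by having a positive coefficient on some simple root outside $I$, applying $w_I$ keeps it inside $\Psi\cap\Phi^+=\Psi^+$.

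For the forward direction, I would assume $\lambda=w_I\cdot\nu$ with $\nu$ antidominant and take $\beta\in\Psi^+$. Using the identity $\langle wx,y^\lor\rangle=\langle x,(w^{-1}y)^\lor\rangle$ together with $w_I^{-1}=w_I$, one obtains $\langle \lambda+\rho,\beta^\lor\rangle=\langle \nu+\rho,(w_I\beta)^\lor\rangle$; since $w_I\beta\in\Psi^+\subseteq\Phi^+$ and $\nu$ is antidominant, this value is not in $\mathbb{Z}^{>0}$, so $\Psi^+_\lambda=\emptyset$. Conversely, I would set $\nu:=w_I^{-1}\cdot\lambda$ and verify antidominance of $\nu$ by evaluating $\langle \nu+\rho,\beta^\lor\rangle=\langle \lambda+\rho,(w_I\beta)^\lor\rangle$ on $\beta\in\Phi^+$ in two cases. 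When $\beta\in\Phi_I^+$ one has $w_I\beta\in -\Phi_I^+$, and $\lambda\in\Lambda_I^+$ forces $\langle \lambda+\rho,\gamma^\lor\rangle\in\mathbb{Z}^{>0}$ for $\gamma\in\Phi_I^+$, so the pairing is a negative integer. When $\beta\in\Psi^+$ the permutation property gives $w_I\beta\in\Psi^+$, and the hypothesis $\Psi^+_\lambda=\emptyset$ excludes membership in $\mathbb{Z}^{>0}$.

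There is no serious technical obstacle once Theorem~\ref{ParaVerma'} is in hand; the only step needing care is the permutation property of $w_I$ on $\Psi^+$, which is well known and has the short elementary proof sketched above. The proposal thus exhibits Jantzen's criterion in the regular case as a clean corollary of the Dirac-cohomology-based simplicity criterion of Theorem~\ref{ParaVerma'}, completely bypassing the Jantzen sum formula.
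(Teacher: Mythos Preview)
Your argument is correct. Note, however, that the paper does not actually \emph{prove} this statement: it is quoted verbatim from \cite[Theorem~9.12 and Corollary~9.13]{HJ}, where it is obtained via the Jantzen sum formula, and is then combined with Theorem~\ref{ParaVerma'} to deduce Corollary~\ref{nontrivial'} as a ``non-trivial'' consequence.

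You run the logic in the opposite direction. You supply an elementary root-theoretic proof of the equivalence in Corollary~\ref{nontrivial'} (using only that $w_I$ permutes $\Psi^+$ and that $\lambda\in\Lambda_I^+$ makes $\langle\lambda+\rho,\gamma^\lor\rangle\in\mathbb{Z}^{>0}$ for $\gamma\in\Phi_I^+$), and then combine it with Theorem~\ref{ParaVerma'} to \emph{deduce} Jantzen's criterion in the regular case. This is a genuinely different route: where the paper imports Jantzen's criterion as an external input, your approach gives a self-contained proof of that criterion (for regular $\lambda$) using only the Dirac-cohomology simplicity test and elementary combinatorics, bypassing the Jantzen filtration entirely. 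A side effect is that your direct argument shows the equivalence in Corollary~\ref{nontrivial'} is not really ``non-trivial'' and in fact does not require regularity; the paper's phrasing reflects that it was obtained there only by comparing two independent simplicity criteria.
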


This leads to Corollary \ref{nontrivial}. 
\begin{corollary} [Corollary \ref{nontrivial}]
	\label{nontrivial'}
	Let $\lambda\in\Lambda_I^+\cap \mathcal{R}$. Then 
	$\Psi^+_\lambda = \emptyset$ if and only if $\lambda=w_I\cdot\nu$ for some antidominant weight $\nu$.
\end{corollary}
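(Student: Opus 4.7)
The plan is to deduce the corollary directly by chaining together the two simplicity criteria for parabolic Verma modules that are now available under the assumption $\lambda \in \Lambda_I^+ \cap \mathcal{R}$. On the one hand, Jantzen's criterion (cited just before the corollary as \cite[Theorem 9.12]{HJ}) says that, for such $\lambda$, the module $M_I(\lambda)$ is simple if and only if $\Psi^+_\lambda = \emptyset$. On the other hand, Theorem \ref{ParaVerma'}, which we just proved using Dirac cohomology, says that $M_I(\lambda) \cong L(\lambda)$ as a $\mathfrak{g}$-module if and only if $\lambda = w_I \cdot \nu$ for some antidominant weight $\nu$.

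First I would observe that ``$M_I(\lambda)$ is simple'' and ``$M_I(\lambda) \cong L(\lambda)$'' are the same condition: since $L(\lambda)$ is the unique simple quotient of $M_I(\lambda)$, the module $M_I(\lambda)$ is simple precisely when the canonical surjection $M_I(\lambda) \twoheadrightarrow L(\lambda)$ is an isomorphism. Concatenating the two equivalences therefore yields
\[
\Psi^+_\lambda = \emptyset
\iff M_I(\lambda) \text{ is simple}
\iff M_I(\lambda) \cong L(\lambda)
\iff \lambda = w_I \cdot \nu \text{ for some antidominant } \nu,
\]
which is exactly the statement of the corollary.

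There is no real obstacle beyond invoking these two theorems; the content of the corollary lies entirely in the fact that the two independently derived simplicity criteria must agree, and the ``non-trivial'' translation between the combinatorial condition $\Psi^+_\lambda = \emptyset$ on the positive roots outside $\Phi_I$ and the orbit condition $\lambda \in w_I \cdot \{\text{antidominant weights}\}$ is the substantive output. Accordingly, the proof will be a short paragraph citing Jantzen's result, citing Theorem \ref{ParaVerma'}, and remarking on the equivalence of simplicity of $M_I(\lambda)$ with $M_I(\lambda) \cong L(\lambda)$.
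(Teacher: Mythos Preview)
Your proposal is correct and matches the paper's approach exactly: the paper simply states Jantzen's criterion and then writes ``This leads to Corollary~\ref{nontrivial},'' leaving the chaining of the two simplicity criteria implicit. Your observation that simplicity of $M_I(\lambda)$ is equivalent to $M_I(\lambda)\cong L(\lambda)$ makes that implicit step explicit, which is a minor improvement in exposition.
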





\bibliographystyle{abbrv}
\bibliography{references}

\end{document}